\tikzset{nomorepostaction/.code={\let\tikz@postactions\pgfutil@empty}}
\newtheorem{thm}{Theorem}[section]
\newtheorem{cor}[thm]{Corollary}
\newtheorem{lem}[thm]{Lemma}
\newtheorem{prop}[thm]{Proposition}
\newtheorem{defn}[thm]{Definition}
\newcommand{\To}{\longrightarrow}
\DeclareMathOperator{\Bip}{Bip}
\newcommand{\Conf}{\mathcal{C}}
\newcommand{\F}{\mathcal{F}}
\newcommand{\G}{\mathcal{G}}
\newcommand{\GG}{\mathbb{G}}
\newcommand{\HH}{\mathcal{H}}
\DeclareMathOperator{\Int}{Int}
\renewcommand{\L}{\mathbb{L}}
\DeclareMathOperator{\lk}{lk}
\newcommand{\M}{\mathcal{M}}
\newcommand{\QQ}{\mathcal{Q}}
\newcommand{\R}{\mathbb{R}}
\newcommand{\s}{\mathfrak{s}}
\DeclareMathOperator{\Spin}{Spin}
\DeclareMathOperator{\Supp}{Supp}
\newcommand{\T}{\mathcal{T}}
\DeclareMathOperator{\tb}{tb}
\newcommand{\U}{\mathcal{U}}
\newcommand{\V}{\mathcal{V}}
\newcommand{\x}{{\bf x}}
\newcommand{\Z}{\mathbb{Z}}
\let\oldmarginpar\marginpar
\renewcommand\marginpar[1]{\oldmarginpar[\raggedleft\footnotesize #1]%
{\raggedright\footnotesize #1}}
\begin{document}

\title{Tight contact structures on Seifert surface complements}

\author{Tam\'{a}s K\'{a}lm\'{a}n and Daniel V. Mathews}

\date{}

\maketitle

\begin{abstract}
We consider complements of standard Seifert surfaces of special alternating links. On these handlebodies, we use Honda's method to enumerate those tight contact structures whose dividing sets are isotopic to the link, and find their number to be the leading coefficient of the Alexander polynomial. The Euler classes of the contact structures are identified with hypertrees in a certain hypergraph. Using earlier work, this establishes a connection between contact topology and the Homfly polynomial. We also show that the contact invariants of our tight contact structures form a basis for sutured Floer homology. Finally, we relate our methods and results to Kauffman's formal knot theory.
\end{abstract}

\tableofcontents

\section{Introduction}

\subsection{Overview}

This paper classifies tight contact structures on a particular family of sutured 3-manifolds arising out of bipartite plane graphs. We find interesting coincidences between the numbers of such contact structures for certain related graphs. We also connect our results to knot theory in several ways.

In theory, the classification of tight contact structures up to isotopy on a given 3-manifold can be reduced to a combinatorial question about dividing sets on surfaces, via work of Eliashberg, Giroux, Honda and others (see, e.g., \cite{ElOT, ElMartinet, Gi91, Hon02}). However, the set of 3-manifolds for which a full classification is known remains rather small (see, e.g., \cite{Gi00, GiBundles, Hon00I, Hon00II}). In particular, no explicit classification has been given for sutured solid tori in general (more precisely when there are more than 2 boundary dividing curves, cf.\ \cite{Hon00I, Hon01}); handlebodies, even less so.

In this paper we provide a full and explicit classification of tight contact structures on an infinite family of sutured handlebodies. Moreover, we explicitly calculate the invariants of all these contact structures in sutured Floer homology.

This family of sutured 3-manifolds was studied by the first author, along with Juh\'{a}sz and Rasmussen, in \cite{Juhasz-Kalman-Rasmussen12}. From a finite connected bipartite plane graph $G$, a sutured 3-manifold $(M_G, L_G)$ is constructed as follows. Thicken $G$ into a ribbon in the plane and insert a negative half-twist over each edge; this yields a minimal genus Seifert surface $F_G$ for a non-split special alternating link $L_G$. Then $M_G$ is obtained by splitting $S^3$ along $F_G$, and $L_G$ gives a set of sutures. See figure \ref{fig:median_construction}. The details of this construction, as with everything mentioned in this introduction, will be described in more detail as we proceed.

We will show that the tight contact structures on $(M_G, L_G)$ are closely related to the combinatorics of the bipartite plane graph $G$. Such graphs have been studied at least since Tutte's 1948 work \cite{Tutte48}. A bipartite plane graph $G$ naturally yields a \emph{trinity}, a 
3-coloured triangulation of the sphere containing three bipartite plane graphs $G_V$, $G_E$, and $G_R=G$. A bipartite graph can also be regarded as a \emph{hypergraph}. In fact, a bipartite graph yields two dual or ``transpose'' hypergraphs (and any hypergraph derives from a bipartite graph). Hence, a trinity naturally contains \emph{six} hypergraphs.

The three bipartite plane graphs $G_V, G_E, G_R$ are closely related. Tutte's `tree trinity theorem' says that the planar duals $G_V^*, G_E^*, G_R^*$ all have the same arborescence number $\rho$. Then, the matrix-tree theorem implies that the sandpile groups of these three directed graphs \cite{HLMPPW} all have order $\rho$. The three groups are in fact isomorphic \cite{Blackburn-McCourt}. In \cite{Kalman13_Tutte} the first author showed that $\rho$ is also equal to the number of \emph{hypertrees} in any of the six hypergraphs associated to the trinity. (A hypertree, essentially, is a vector that may arise as the degree sequence of a spanning tree, at those vertices of the bipartite graph that correspond to hyperedges in the hypergraph. See section \ref{sec:hyper_background}.) The same number $\rho$ is given by a determinant formula of Berman \cite{Berman80}. We will call $\rho$ the \emph{magic number} of the trinity.

In this paper we add to the list of questions which yield the magic number as their answer.

\begin{thm}
\label{thm:classification_of_tight_contact_structures}
The number of isotopy classes of tight contact structures on $(M_G, L_G)$ is equal to the number of hypertrees in either hypergraph of $G$.
\end{thm}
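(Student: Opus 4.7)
The plan is to apply Honda's method for classifying tight contact structures on sutured handlebodies. By Euler's formula, the handlebody $M_G$ has genus $|E(G)|-|V(G)|+1$, equal to the number of bounded faces of the plane graph $G$. I would first choose a system of compressing disks $D_1,\dots,D_k$ for $M_G$ indexed by the bounded faces of $G$, with each $\partial D_i$ lying on $\partial M_G$ and running along the strip of $F_G$ that covers the corresponding face. This choice makes the intersections $\partial D_i \cap L_G$ — and hence the number of arcs in any dividing set on $D_i$ — directly readable from the combinatorics of $G$ around the face.

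By Honda's state-transition machinery, each tight contact structure on $(M_G,L_G)$ restricts to a dividing set on each $D_i$ consisting of disjoint arcs whose endpoints pair up the intersections of $\partial D_i$ with the sutures $L_G$. Cutting $M_G$ successively along $D_1,\dots,D_k$ yields a 3-ball carrying a dividing set inherited from $L_G$ and from the doubled arcs of each $D_i$; isotopy classes of tight contact structures are in bijection with those arc configurations on the $D_i$ such that at no stage of the decomposition does a boundary-parallel (``trivial'') dividing arc appear, and such that the final dividing set on the 3-ball is a single closed curve. So the task reduces to counting non-degenerate compatible arc configurations on $D_1,\dots,D_k$.

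The heart of the argument is a bijection between such configurations and hypertrees of $G$. To each vertex $v$ of (say) the black color class, I would associate a local multiplicity $f(v)$ obtained by summing, over the disks $D_i$ that abut $v$, the number of dividing arcs incident to the strip of $L_G$ near $v$. The plan is then to prove: (a) the tightness/non-degeneracy condition forces $f$ to be a hypertree, i.e.\ realized as the hyperedge-degree sequence of some spanning tree of $G$; and conversely (b) each hypertree is realized by exactly one valid arc configuration. Combined with the result from \cite{Kalman13_Tutte} that the two hypergraphs of a bipartite plane graph have the same number of hypertrees, this yields the stated count.

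The main obstacle I expect is step (b): realizing each hypertree by a concrete non-degenerate arc configuration and verifying uniqueness up to isotopy. This is likely to require an inductive construction, processing the disks one at a time (or face-by-face through $G$), checking at each stage that the hypertree condition rules out the creation of trivial arcs in the cut manifold. A related subtlety is showing that the enumeration is independent of the order in which disks are cut, and more broadly of the choice of compressing system; I would handle this either by working within Honda's general invariance framework or, preferably, by formulating the bijection in a manifestly symmetric way that references only the hypergraph structure of $G$ rather than any particular cutting sequence.
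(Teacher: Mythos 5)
Your high-level plan---decompose $M_G$ along compressing discs, apply Honda's gluing machinery, and set up a bijection with hypertrees---is the right template, and the paper does roughly this. But there is a fundamental gap in how you interpret Honda's theorem, and it propagates into a claim that is simply false.

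Honda's theorem (as stated in theorem~\ref{thm:Honda_gluing}) gives a bijection between $\T(M,\Gamma)$ and $\pi_0(\GG_0(M,\Gamma,S))$, the set of \emph{connected components} of the graph of tight configurations. It is emphatically not a bijection with individual configurations satisfying some pointwise criterion. In the present setting, each disc $D_r$ meets $L_G$ in $2n_r$ points where $n_r$ is the number of vertices of each colour on $\partial r$, so as soon as some region has $n_r \geq 2$ there are nontrivial bypass/state transitions connecting distinct dividing sets on $D_r$, and many different arc configurations glue up to isotopic tight contact structures. Consequently your claim~(b), that ``each hypertree is realized by exactly one valid arc configuration,'' is false: typically many configurations realize a given hypertree. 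The bulk of the paper's proof (propositions~\ref{prop:tight_implies_tree-hugging} and~\ref{prop:same_degrees_same_contact_structure}) is precisely the work your plan omits: showing that every tight configuration is connected by state transitions to a ``tree-hugging'' one built from a spanning tree, and that two tree-hugging configurations with the same hypertree are likewise connected. Lemma~\ref{lem:chord_diagrams_connected} on tight cylinders and theorem~\ref{thm:tree_to_arborescence} on moving between spanning trees with the same hypertree are the tools used; your proposal contains no analogue of either. Separately, your stated tightness criterion---``at no stage of the decomposition does a boundary-parallel dividing arc appear''---is wrong. Boundary-parallel dividing arcs are ubiquitous in tight configurations here; indeed the tree-hugging configurations $\Gamma_T$ of section~\ref{sec:hypertree_configurations} contain many of them (around the ``outer components''). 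What genuinely causes overtwistedness is a closed dividing curve or a disconnected dividing set on a sphere, detected after edge-rounding via Giroux's criterion (lemma~\ref{lem:potentially_tight_conditions}).

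Two smaller points of comparison. The paper cuts along one disc per complementary region including the unbounded one ($|R|$ discs, which are linearly dependent in $H_2(M_G,\partial M_G)$), producing two 3-balls $M^\pm$ above and below the plane; this makes the condition for a configuration to be potentially tight visible as a single connected curve on $S^2$ and is technically much cleaner than your proposal of successive cutting along $|R|-1$ discs. And the bijection is first established with hypertrees of $(E,R)$, via spanning trees of $G_V$; the statement about ``either hypergraph of $G$'' follows from the combinatorial fact that all six hypergraphs of the trinity have the same number of hypertrees. Your proposed multiplicity $f(v)$ at vertices of $G$ would aim at hypertrees in $(V,E)$ directly, but since you have not given a precise definition or any reason it is well defined on bypass-equivalence classes, it does not yet give a workable alternative.
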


Thus, each of the three graphs in a trinity yields a sutured manifold with the same number of (isotopy classes of) tight contact structures, namely the magic number of the trinity. Our proof of theorem \ref{thm:classification_of_tight_contact_structures} provides an explicit construction of each tight contact structure via spanning trees. 

In \cite{Postnikov09} Postnikov proved that the set of hypertrees in a hypergraph equals the set of lattice points of a convex polytope. We will show directly that the Euler classes of tight contact structures on $(M_G, L_G)$ are equivalent to hypertrees in $(E,R)$ in a strong sense. Here $E$ is one of the two colour classes of $G$ (the other is $V$) and $R$ is the set of its regions. The pair $(E,R)$ is one of the six hypergraphs of the trinity, with $R$ as its set of hyperedges.

\begin{thm}
\label{cor:hypertree_Euler_bijection}
There is an affine bijection between hypertrees of $(E,R)$ and Euler classes of tight contact structures on $(M_G, L_G)$.
\end{thm}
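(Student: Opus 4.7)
The plan is to exploit the explicit construction of tight contact structures that (per the introduction) underlies the proof of Theorem~\ref{thm:classification_of_tight_contact_structures}: each hypertree $h$ of $(E,R)$ is realised by a concrete tight contact structure $\xi_h$ on $(M_G,L_G)$. I want to define a candidate affine map $\Phi \colon \mathbb{Z}^R \to H^2(M_G,\partial M_G;\mathbb{Z})$, verify that $\Phi(h)=e(\xi_h)$ for every hypertree $h$, and prove that $\Phi$ is injective on hypertrees. Combined with the numerical equality of Theorem~\ref{thm:classification_of_tight_contact_structures}, this yields the asserted affine bijection between hypertrees and Euler classes.

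For the first step I pin down the codomain. Since $M_G$ is a handlebody, Lefschetz duality gives $H^2(M_G,\partial M_G;\mathbb{Z}) \cong H_1(M_G;\mathbb{Z}) \cong \mathbb{Z}^{|R|-1}$, and the relative Euler class of a contact structure with convex boundary is well defined using the dividing set as a trivialisation on $\partial M_G$. I choose a basis of compressing disks $\{D_r\}$ indexed by all but one region $r\in R$; each $D_r$ may be taken to meet $F_G$ in a single arc dual to a loop around the region $r$.

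For the second step I use the standard formula for Euler classes on convex surfaces: for any properly embedded convex disk $D\subset M_G$ with dividing set $\Gamma_D$,
\[
\langle e(\xi), [D]\rangle \;=\; \chi(R_+(\Gamma_D)) - \chi(R_-(\Gamma_D)).
\]
Applied to each $D_r$, this reduces the calculation to a signed count of the components of $D_r\setminus \Gamma_{D_r}$, determined entirely by the number and sign pattern of dividing arcs on $D_r$. In the construction of $\xi_h$, the dividing set on $D_r$ is prescribed by the hypertree's ``degree'' at the region $r$, so I expect the pairing to be an integer affine function of $h(r)$.

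The third and most delicate step is the combinatorial identification between Euler pairings and hypertree coordinates. I anticipate a formula of the shape $\langle e(\xi_h),[D_r]\rangle = 2h(r)+c_r$, where the constant $c_r$ depends only on the combinatorics of the trinity near the region $r$ and not on $h$. Proving this cleanly is the main obstacle: it requires a careful accounting of how the dividing set on each $D_r$ is assembled from the data of the hypertree, which in turn rests on the explicit spanning-tree construction established in the proof of Theorem~\ref{thm:classification_of_tight_contact_structures}. Once such a formula is in place, $\Phi$ is manifestly affine, and injectivity on hypertrees follows because distinct hypertrees differ in at least one coordinate $h(r)$, producing distinct pairings. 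The equality in cardinality provided by Theorem~\ref{thm:classification_of_tight_contact_structures} then upgrades injectivity to the desired bijection.
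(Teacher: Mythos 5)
Your plan is essentially the paper's proof: lemma~\ref{lem:Euler_class_from_hypertree} together with proposition~\ref{prop:Euler_class_xif} establishes $e(\xi_f)[D_r]=2f(r)-n_r+1$ (so your anticipated constant is $c_r = -n_r + 1$), and proposition~\ref{prop:affine_planes} supplies the sum relations that make the injective map $f \mapsto e(\xi_f)$ extend to an affine isomorphism between the hyperplanes containing the hypertrees and the Euler classes. The one small inaccuracy to flag is that the relative Euler class is evaluated against a section of $\xi|_{\partial M_G}$ that is positively tangent to the Legendrian boundary $\partial D_r$, not ``the dividing set as a trivialisation''; this is what makes the formula $\chi(R_+)-\chi(R_-)$ you invoke correct, and the non-separating condition on $\partial D_r$ guarantees such a section exists.
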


In \cite{Juhasz-Kalman-Rasmussen12}, the first author together with Juh\'{a}sz and Rasmussen studied the sutured Floer homology of the manifolds $(M_G, L_G)$. They proved that $SFH(M_G, L_G)$ has a single $\Z$ summand at each \mbox{spin-c} structure in its support, and that the support is affine isomorphic to the set of hypertrees of $(E,R)$ (or of $(V,R)$). We will show that the spin-c structures of the tight contact structures on $(M_G, L_G)$ coincide with the support of $SFH$. Moreover, the contact invariants \cite{HKM09} of the tight contact structures essentially provide a basis for $SFH(M_G,L_G)$.

\begin{thm}
\label{thm:classification_of_contact_invariants}
Each isotopy class of tight contact structures on $(M_G, L_G)$ has a distinct spin-c structure. A tight contact structure $\xi_\s$ for the spin-c structure $\s$ exists exactly when $SFH(-M_G, -L_G, \s)\not\cong0$. In this case the contact invariant $c(\xi_\s)$ is $\{\pm x\}$, where $x$ generates $SFH(-M_G, -L_G, \s) \cong \Z$.
\end{thm}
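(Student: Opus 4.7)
I would split the theorem into two parts: (a) the bijection between tight contact structures and spin-c structures supporting $SFH$, and (b) the computation of each contact invariant as $\{\pm x\}$ for a generator $x$.

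For part (a), distinctness of the spin-c structures follows from Theorem~\ref{cor:hypertree_Euler_bijection}: the relative Euler class $e(\xi)$ determines $c_1(\s_\xi)$, the Euler classes of tight contact structures on $(M_G,L_G)$ are pairwise distinct, and the relevant cohomology of $(M_G,\partial M_G)$ is torsion-free, so $\s_\xi$ is determined by $e(\xi)$. Next, by Theorem~\ref{cor:hypertree_Euler_bijection} the spin-c structures arising from tight $\xi$ are in affine bijection with hypertrees of $(E,R)$, and by \cite{Juhasz-Kalman-Rasmussen12} so is $\Supp SFH(-M_G,-L_G)$. To conclude they coincide as subsets of the spin-c torsor, I would pin down the spin-c structure corresponding to a single distinguished hypertree (for instance one arising from a canonical spanning tree) on both sides and invoke affinity.

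For part (b), since $SFH(-M_G,-L_G,\s)\cong\Z$ and $c(\xi_\s)$ lies in $SFH(-M_G,-L_G,\s)/\{\pm 1\}$, the claim $c(\xi_\s)=\{\pm x\}$ is equivalent to $c(\xi_\s)$ being a primitive element. My plan is to realize $\xi_\s$ as a sequence of bypass attachments terminating in the tight sutured ball (following the construction from the proof of Theorem~\ref{thm:classification_of_tight_contact_structures}), convert this into a partial open book decomposition, and then apply the Honda--Kazez--Mati\'c machinery to produce an explicit cycle representing $c(\xi_\s)$ in a sutured Heegaard diagram. I would then identify this diagram with the one used in \cite{Juhasz-Kalman-Rasmussen12} to compute $SFH$, so that the contact class is visibly the generator of $\Z$ in the correct spin-c summand.

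The main obstacle is executing this diagram identification uniformly across all hypertrees $\s$. A fallback approach is to prove non-vanishing first---either by tracing contact invariants through product-disk decompositions via Juh\'asz's surface decomposition theorem combined with the naturality of contact invariants from \cite{HKM09}, or by embedding the sutured handlebody into a closed Stein fillable contact 3-manifold and applying the HKM gluing formula---and then deduce primitivity from a counting argument: the contact invariants of the distinct tight $\xi_\s$ span a subgroup of $SFH(-M_G,-L_G)$ whose rank equals the magic number, which coincides with the rank of $SFH$ itself, forcing each $c(\xi_\s)$ to generate its $\Z$ summand.
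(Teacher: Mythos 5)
Your proposal identifies some of the right ingredients but has a genuine gap in the primitivity argument, and your overall organization misses the single observation that makes the paper's proof clean.

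The paper's proof of primitivity hinges entirely on Proposition~\ref{prop:inclusion_into_S3}: each $\xi_f$ extends to the standard tight contact structure on $S^3$. The TQFT map
$\Phi_{\xi_G}\colon SFH(-M_G,-L_G) \to \widehat{HF}(-S^3) \cong \Z$ then sends $c(\xi_f)$ to $c(\xi_{st}) = \pm 1$. This single computation gives \emph{both} non-vanishing \emph{and} primitivity at once: if $c(\xi_f)$ were a non-primitive multiple $nx$ of a generator, its image would be divisible by $n$, contradicting $\Phi_{\xi_G}c(\xi_f) = \pm 1$. Your fallback ``embed into a Stein fillable manifold'' is essentially this, but you separate it off as a non-vanishing argument and then defer primitivity to a counting argument, which fails: if each nonzero $c(\xi_\s)$ lies in a distinct $\Z$ summand, the $c(\xi_\s)$ generate a rank-$\rho$ free subgroup whether or not each is a generator of its summand; rank equality gives only finite index, not primitivity. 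So your fallback, as stated, does not close.

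Your primary plan for part (b) --- converting each tree-hugging contact structure to a partial open book, producing an explicit $SFH$ cycle, and matching it against the diagrams of~\cite{Juhasz-Kalman-Rasmussen12} --- is a genuinely different and substantially heavier route, and nothing in the paper supports the uniform diagram identification you would need. For part (a), your instinct about distinctness via Euler classes and torsion-freeness is correct and matches Lemma~\ref{lem:distinct_spin-c_structures}. But the matching of $\{\s_f\}$ with $\Supp(-M_G,-L_G)$ does not proceed by pinning down a distinguished hypertree and invoking affinity; affinity of the two lattices to $B_{(E,R)}$ does not by itself show they occupy the same subset of the spin-c torsor. Instead, once you know each $c(\xi_f) \neq 0$, each $\s_f$ lies in the support; the $\s_f$ are pairwise distinct and equinumerous with $\Supp(-M_G,-L_G)$, so they coincide with it. The cardinality argument replaces the need for any canonical basepoint.
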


Contact invariants in $SFH$ are only defined up to sign \cite{HKM08}, so this is in fact a complete description of them. This theorem quickly follows from theorem \ref{thm:classification_of_tight_contact_structures} and the TQFT property of sutured Floer homology \cite{HKM08}, because every tight contact structure on $(M_G, L_G)$ extends to the unique tight contact structure on $S^3$, cf.\ Proposition \ref{prop:inclusion_into_S3}.

Our results are related to knot theory in at least two ways. To describe the first way, we recall that the leading coefficient of the Alexander polynomial of $L_G$ is also given by the magic number $\rho$ of the trinity. This fact follows immediately from \cite[thm.\ 2]{Murasugi-Stoimenow03} and it is also a consequence of \cite[thm.\ 1.3]{K-Murakami}. A short proof can be given based on Kauffman's state expansion formula (cf.\ section \ref{sec:FKT} and proposition \ref{prop:alexander_magic}). We also mention the following corollary.

\begin{cor}
\label{cor:Alexander_magic}
The leading coefficients of the Alexander polynomials of the three special alternating links $L_{G_V}, L_{G_E}, L_{G_R}$ of a trinity are all equal, given by the number of tight contact structures on each of $(M_{G_V}, L_{G_V})$, $(M_{G_E}, L_{G_E})$, $(M_{G_R}, L_{G_R})$.
\end{cor}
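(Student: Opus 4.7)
The plan is to assemble the corollary directly from three inputs already on the table: Theorem \ref{thm:classification_of_tight_contact_structures}, the tree trinity theorem (and its hypertree incarnation from \cite{Kalman13_Tutte}), and the identification of the leading Alexander coefficient with the magic number.

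First I would fix a trinity and recall that each of the three bipartite plane graphs $G_V, G_E, G_R$ gives rise to two hypergraphs (its two transposes), so the trinity carries six hypergraphs in total. By \cite{Kalman13_Tutte}, the number of hypertrees in each of these six hypergraphs is the same integer $\rho$, the magic number of the trinity. Applying Theorem \ref{thm:classification_of_tight_contact_structures} to $G = G_V, G_E, G_R$ in turn then gives that $(M_{G_V}, L_{G_V})$, $(M_{G_E}, L_{G_E})$, $(M_{G_R}, L_{G_R})$ each carry exactly $\rho$ isotopy classes of tight contact structures.

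Next I would invoke the fact, attributed in the introduction to \cite{Murasugi-Stoimenow03} (or \cite{K-Murakami}, with a Kauffman-state proof to appear as Proposition \ref{prop:alexander_magic}), that for any finite connected bipartite plane graph $G$ the leading coefficient of the Alexander polynomial $\Delta_{L_G}(t)$ equals the magic number of the trinity containing $G$. Since $G_V, G_E, G_R$ all sit in the same trinity, they share one and the same magic number $\rho$, and therefore the leading coefficients of $\Delta_{L_{G_V}}, \Delta_{L_{G_E}}, \Delta_{L_{G_R}}$ all equal $\rho$.

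Combining the two bullets gives the corollary: each of the three leading Alexander coefficients equals $\rho$, and so does the count of tight contact structures on each of the three sutured manifolds. There is essentially no obstacle here beyond citing the right results; the only point requiring a line of care is the observation that the hypergraph count in Theorem \ref{thm:classification_of_tight_contact_structures} is genuinely a trinity invariant (i.e.\ independent of which of the three graphs in the trinity one picks), which is exactly the content of \cite{Kalman13_Tutte}.
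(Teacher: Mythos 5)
Your proposal is correct and takes essentially the same route as the paper: the paper's proof also combines (a) the fact that the magic number is shared across the trinity, (b) Proposition \ref{prop:alexander_magic} identifying each leading Alexander coefficient with the magic number, and (c) Theorem \ref{thm:classification_of_tight_contact_structures}. The only cosmetic difference is that you phrase the trinity-invariance via hypertree counts in the six hypergraphs, while the paper phrases it via arborescence numbers of the three dual graphs; these are equal by the results of \cite{Kalman13_Tutte} recalled in section \ref{sec:hypergraphs_hypertrees}, so the arguments are interchangeable.
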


The Homfly polynomials of the three links (via the identity $\Delta(t)=P(1,t^{1/2}-t^{-1/2})$, where $\Delta$ is the Alexander and $P$ is the Homfly polynomial) induce partitions of this coefficient which do not coincide, but each can be derived from the appropriate set of hypertrees using the \emph{interior polynomial} introduced by the first author in \cite{Kalman13_Tutte}. (See \cite[Corollary 1.2]{K-Postnikov}. The proof also uses results from \cite{K-Murakami}.) Hence theorem \ref{thm:classification_of_contact_invariants} establishes a direct connection between certain Homfly coefficients and tight contact structures on a naturally constructed sutured manifold, namely the complement of a minimal genus Seifert surface.

The second connection to knot theory is an application of theorem \ref{thm:classification_of_tight_contact_structures} to the \emph{formal knot theory} of Kauffman \cite{Kauffman_FKT83}. From a universe $\U$, we construct a bipartite plane graph $G_\U$ as shown in figure \ref{fig:figure_8_red_graph}. We relate the states of $\U$ to the contact topology of $(M_{G_\U}, L_{G_\U})$ to obtain the following result.

\begin{thm}
\label{thm:Kauffman_states}
The number of states of a universe $\U$ is equal to the number of isotopy classes of tight contact structures on $(M_{G_\U}, L_{G_\U})$.
\end{thm}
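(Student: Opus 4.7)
The plan is to combine Theorem~\ref{thm:classification_of_tight_contact_structures} with a direct combinatorial identification of Kauffman states with hypertrees. By that theorem, the number of tight contact structures on $(M_{G_\U}, L_{G_\U})$ equals the number of hypertrees in either hypergraph associated to $G_\U$, so Theorem~\ref{thm:Kauffman_states} reduces to matching Kauffman states of $\U$ with hypertrees in one of these hypergraphs.

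First I would unpack the construction $\U \mapsto G_\U$: one colour class of $G_\U$ records the crossings of $\U$ and the other records one checkerboard colour of the regions of $\U$, with an edge of $G_\U$ for every crossing--region incidence. A Kauffman state places at each crossing a marker pointing into one of the four adjacent regions, subject to the rule that every region except two chosen adjacent star regions receives exactly one marker. Under the dictionary above, the marker at a crossing $c$ singles out a distinguished edge of $G_\U$ at the vertex representing $c$, so a state determines a subset of edges of $G_\U$ of size equal to the number of crossings.

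Given a state $s$, I would associate the nonnegative integer vector $f_s$ on the ``region'' vertex class of $G_\U$ recording how many markers of $s$ lie in each region. The Kauffman constraint forces $f_s$ to be almost constant, with the correct coordinate sum to be, after the standard shift from section~\ref{sec:hyper_background}, a lattice point of Postnikov's hypertree polytope. I would then argue that the edges selected by the state, together with boundary data supplied by the star regions, assemble into a spanning tree of $G_\U$ whose degree sequence at the hyperedge-vertices is exactly $f_s$, so that $f_s$ is a genuine hypertree of one of the hypergraphs of $G_\U$.

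The main obstacle is bijectivity, and in particular surjectivity: showing that every hypertree arises from a Kauffman state. I expect this to follow from Kauffman's clock theorem, whose transpositions should correspond to the elementary moves that connect adjacent hypertrees in the polytope; combined with injectivity, this reduces the problem to a connectivity match on both sides. A sanity check is provided by Corollary~\ref{cor:Alexander_magic} together with Kauffman's state-sum formula for the Alexander polynomial: both counts already equal the magic number $\rho$ of the trinity associated with $G_\U$, giving equinumerosity independently of any explicit bijection.
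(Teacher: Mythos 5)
Your proposed reduction to Theorem~\ref{thm:classification_of_tight_contact_structures} is a reasonable starting point, but there are serious gaps in how you set up the correspondence between states and hypertrees, and the argument as written would not close.

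The central problem is the vector $f_s$. Taking one checkerboard colour class of regions as the index set and letting $f_s$ count markers per region yields the same vector for \emph{every} state: by definition of a Kauffman state each unstarred region receives exactly one marker, so $f_s$ is identically $1$ on unstarred regions and $0$ on the starred one. A constant assignment cannot distinguish states, so this map cannot underlie any bijection. The invariant you want is indexed instead by the crossings $R$ (which are the complementary regions of $G_\U$, not a colour class of it), and should record, for each crossing $r$, \emph{which side} of the crossing the marker points to — say $g_s(r)=1$ if the marker lies in an emerald region and $g_s(r)=0$ otherwise. This is a $\{0,1\}$-vector because every face of $G_\U$ is a quadrilateral (so $n_r=2$), and it is exactly the hypertree of the tree-hugging configuration that the trail produces. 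Related to this, your claim that the selected edges ``assemble into a spanning tree of $G_\U$'' cannot hold for the edge set you describe: the number of markers pointing into emerald regions is $|E|-1$, which is far short of the $|R|+|E|-1$ edges that a spanning tree of a graph on $R\sqcup E$ would require.

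Your treatment of bijectivity is also incomplete. The clock theorem gives connectivity of the state graph under transpositions, but that is neither injectivity nor surjectivity of a map to hypertrees; in general a transposition changes two coordinates of $g_s$ by $\pm 1$, and nothing in your sketch rules out two distinct states landing on the same hypertree. The final ``sanity check'' is likewise not sound as stated: Kauffman's state sum gives the Alexander polynomial, but the number of states is not its leading coefficient in any obvious way, so Corollary~\ref{cor:Alexander_magic} does not directly count states.

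For contrast, the paper's actual proof sidesteps hypertrees entirely. Because every face of $G_\U$ has four sides, there are exactly two admissible chord diagrams on each cutting disc, corresponding to the two splittings of $\U$ at that crossing; the state-trail correspondence then gives a bijection between states and tight configurations. Crucially, with only two chords per disc there are \emph{no} nontrivial bypass moves, so the configuration graph $\GG_0$ has no edges and each tight configuration is already its own isotopy class of tight contact structures. This produces the bijection with states directly, without invoking Theorem~\ref{thm:classification_of_tight_contact_structures}. If you wish to pursue the hypertree route instead, it can be made to work, but you must replace $f_s$ by the ``marker side'' invariant $g_s$, verify that distinct trails give distinct dividing sets (which here forces distinct Euler classes and hence distinct hypertrees), and argue surjectivity from the tree-hugging construction rather than from the clock theorem.
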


In particular, the number of states of $\U$ is the magic number of the trinity of $G_\U$. This follows relatively easily after noticing that the two Tait graphs (the black and white graphs constructed from a checkerboard colouring) of the universe are among the hypergraphs of the trinity. But we obtain our result not just as a combinatorial, but as a geometric correspondence: in fact, the Euler--Jordan trails describing the Kauffman states turn out to be the dividing curves of the corresponding contact structures.

\begin{figure}
\begin{center}
\begin{tikzpicture}[scale = 0.15]
\coordinate (r3) at (1,6);
\coordinate (e1) at (-1,4); 
\coordinate (v1) at (-1.5, 2);
\coordinate (e2) at (-2,0); 
\coordinate (v2) at (-2.3,-2); 
\coordinate (e3) at (-2.5,-4); 
\coordinate (r0) at (-3,-7); 
\coordinate (v4) at (3,4);
\coordinate (r2) at (2.5,0);
\coordinate (v3) at (4,-3);
\coordinate (e0) at (7,3);
\coordinate (v0) at (-6,6); 
\coordinate (r1) at (-6, 0); 

\draw [ultra thick, draw=none] (r3) to[out=210,in=70] (e1); 
\draw [ultra thick, red] (e1) to [out = 250, in = 70] (v1); 
\draw [ultra thick, red] (v1) to (e2); 
\draw [ultra thick, red] (e2) to (v2); 
\draw [ultra thick, red] (v2) to (e3); 
\draw [ultra thick, draw=none] (e3) to [out=270, in=90] (r0); 
\draw [ultra thick, draw=none] (r0)
.. controls ($ (r0) + (270:8) $) and ($ (e0) + (345:10) $) .. (e0); 
\draw [ultra thick, red] (e0) to [out=165, in=0] (v4); 
\draw [ultra thick, red] (v4) to [out=180, in=10] (e1); 
\draw [ultra thick, draw=none] (e1) to [out=190, in=90] (r1); 
\draw [ultra thick, draw=none] (r1) to [out=270, in=170] (v2); 
\draw [ultra thick, draw=none] (v2) to [out=350, in=210] (r2); 
\draw [ultra thick, draw=none] (r2) to [out=30, in=300] (v4); 
\draw [ultra thick, draw=none] (v4) to [out=120, in=300] (r3); 
\draw [ultra thick, draw=none] (r3) to [out=120, in=30] (v0); 
\draw [ultra thick, red] (v0)
.. controls ($ (v0) + (210:8) $) and ($ (e3) + (200:8) $) .. (e3); 
\draw [ultra thick, draw=none] (e3) to [out=20, in=270] (r2); 
\draw [ultra thick, draw=none] (r2) to [out=90, in=330] (e1); 
\draw [ultra thick, red] (e1) to [out=150, in=330] (v0); 
\draw [ultra thick, draw=none] (v0)
.. controls ($ (v0) + (150:6) $) and ($ (r0) + (180:15) $) .. (r0); 
\draw [ultra thick, draw=none] (r0) to [out=0, in=270] (v3); 
\draw [ultra thick, draw=none] (v3) to [out=90, in=300] (r2); 
\draw [ultra thick, draw=none] (r2) to [out=120, in=0] (v1); 
\draw [ultra thick, draw=none] (v1) to [out=180, in=60] (r1); 
\draw [ultra thick, draw=none] (r1)
.. controls ($ (r1) + (240:5) $) and ($ (e3) + (120:2) $) .. (e3); 
\draw [ultra thick, red] (e3) to [out=300, in=210] (v3); 
\draw [ultra thick, red] (v3) to [out=30, in=300] (e0); 
\draw [ultra thick, draw=none] (e0) to [out=120, in=30] (r3); 

\draw [ultra thick, red] (e0) 
.. controls ($ (e0) + (60:7) $) and ($ (v0) + (60:7) $) .. (v0); 
\draw [ultra thick, draw=none] (v0)
.. controls ($ (v0) + (240:3) $) and ($ (r1) + (150:3) $) .. (r1); 
\draw [ultra thick, draw=none] (r1) to [out=330, in=180] (e2); 
\draw [ultra thick, draw=none] (e2) to [out=0, in=180] (r2); 
\draw [ultra thick, draw=none] (r2) to [out=0, in=240] (e0); 


\foreach \x/\word in {(e0)/e0, (e1)/e1, (e2)/e2, (e3)/e3}
{
\draw [green!50!black, fill=green!50!black] \x circle  (10pt);
}

\foreach \x/\word in {(v0)/v0, (v1)/v1, (v2)/v2, (v3)/v3, (v4)/v4}
{
\draw [blue, fill=blue] \x circle (10pt);
}

\end{tikzpicture}
\begin{tikzpicture}[scale = 0.4]
\coordinate (r3) at (1,6);
\coordinate (e1) at (-1,4); 
\coordinate (v1) at (-1.5, 2);
\coordinate (e2) at (-2,0); 
\coordinate (v2) at (-2.3,-2); 
\coordinate (e3) at (-2.5,-4); 
\coordinate (r0) at (-3,-7); 
\coordinate (v4) at (3,4);
\coordinate (r2) at (2.5,0);
\coordinate (v3) at (4,-3);
\coordinate (e0) at (7,3);
\coordinate (v0) at (-6,6); 
\coordinate (r1) at (-6, 0); 

\coordinate (v0a) at ($ (v0) + (15:1) $);
\coordinate (v0b) at ($ (v0) + (135:1) $);
\coordinate (v0c) at ($ (v0) + (270:1) $);

\coordinate (v1a) at ($ (v1) + (340:1) $);
\coordinate (v1b) at ($ (v1) + (160:1) $);

\coordinate (v2a) at ($ (v2) + (350:1) $);
\coordinate (v2b) at ($ (v2) + (170:1) $);

\coordinate (v3a) at ($ (v3) + (120:1) $);
\coordinate (v3b) at ($ (v3) + (300:1) $);

\coordinate (v4a) at ($ (v4) + (90:1) $);
\coordinate (v4b) at ($ (v4) + (270:1) $);

\coordinate (e0a) at ($ (e0) + (0:1) $);
\coordinate (e0b) at ($ (e0) + (112:1) $);
\coordinate (e0c) at ($ (e0) + (232:1) $);

\coordinate (e1a) at ($ (e1) + (80:1) $);
\coordinate (e1b) at ($ (e1) + (200:1) $);
\coordinate (e1c) at ($ (e1) + (310:1) $);

\coordinate (e2a) at ($ (e2) + (345:1) $);
\coordinate (e2b) at ($ (e2) + (165:1) $);

\coordinate (e3a) at ($ (e3) + (12:1) $);
\coordinate (e3b) at ($ (e3) + (142:1) $);
\coordinate (e3c) at ($ (e3) + (250:1) $);

\draw [draw=none, fill=black!10!white]
(e1b) 
to [out = 250, in = 70] (v1a)
to (v1b)
to [out=70, in=250] (e1c)
to (e1b);

\draw [draw=none, fill=black!10!white]
(v1a)
to [out=250, in=75] (e2b)
to (e2a)
to [out=75, in=250] (v1b)
to (v1a);

\draw [draw=none, fill=black!10!white]
(e2b)
to [out=255, in=80] (v2a)
to (v2b)
to [out=80, in=255] (e2a)
to (e2b);

\draw [draw=none, fill=black!10!white]
(v2a)
to [out=260, in=85] (e3b) 
to (e3a)
to [out=85, in=260] (v2b) 
to (v2a);

\draw [draw=none, fill=black!10!white]
(e3b)
.. controls ($ (e3b) + (200:8) $) and ($ (v0b) + (210:8) $) .. (v0b)
to (v0c)
.. controls ($ (v0c) + (210:8) $) and ($ (e3c) + (200:6) $) .. (e3c)
to (e3b);

\draw [draw=none, fill=black!10!white]
(v0b)
.. controls ($ (v0b) + (60:7) $) and ($ (e0b) + (60:5) $) .. (e0b) 
to (e0a)
.. controls ($ (e0a) + (60:7) $) and ($ (v0a) + (60:6) $) .. (v0a)
to (v0b);

\draw [draw=none, fill=black!10!white]
(e0b)
to [out=165, in=0] (v4b)
to (v4a)
to [out=0, in=165] (e0c)
to (e0c);

\draw [draw=none, fill=black!10!white]
(v4b)
to [out=180, in=10] (e1a)
to (e1c)
to [out=10, in=180] (v4a)
to (v4b);

\draw [draw=none, fill=black!10!white]
(e1a)
to [out=150, in=330] (v0c)
to (v0a)
to [out=330, in=150] (e1b)
to (e1a);

\draw [draw=none, fill=black!10!white]
(e3c)
to [out=330, in=220] (v3a)
to (v3b)
to [out=220, in=320] (e3a)
to (e3c);

\draw [draw=none, fill=black!10!white]
(v3a)
to [out=40, in=300] (e0a)
to (e0c)
to [out=300, in=20] (v3b)
to (v3a);

\draw [draw=none, fill=black!10!white]
(v0a) to (v0b) to (v0c) to (v0a);

\draw [draw=none, fill=black!10!white]
(e0a) to (e0b) to (e0c) to (e0a);

\draw [draw=none, fill=black!10!white]
(e1a) to (e1b) to (e1c) to (e1a);

\draw [draw=none, fill=black!10!white]
(e3a) to (e3b) to (e3c) to (e3a);

\draw [ultra thick, red] (e1) to [out = 250, in = 70] (v1); 
\draw [ultra thick, red] (v1) to (e2); 
\draw [ultra thick, red] (e2) to (v2); 
\draw [ultra thick, red] (v2) to (e3); 
\draw [ultra thick, red] (e0) to [out=165, in=0] (v4); 
\draw [ultra thick, red] (v4) to [out=180, in=10] (e1); 
\draw [ultra thick, red] (v0)
.. controls ($ (v0) + (210:8) $) and ($ (e3) + (200:8) $) .. (e3); 
\draw [ultra thick, red] (e1) to [out=150, in=330] (v0); 
\draw [ultra thick, red] (e3) to [out=300, in=210] (v3); 
\draw [ultra thick, red] (v3) to [out=30, in=300] (e0); 
\draw [ultra thick, red] (e0) 
.. controls ($ (e0) + (60:7) $) and ($ (v0) + (60:7) $) .. (v0); 

\begin{knot}[consider self intersections, 
clip width=2,
flip crossing=2,
flip crossing=4,
flip crossing=6,
flip crossing=8,
flip crossing=11
]
\strand [thick]
(e1b)
to [out = 250, in = 70] (v1a)
to [out=250, in=75] (e2b) 
to [out=255, in=80] (v2a) 
to [out=260, in=85] (e3b) 
.. controls ($ (e3b) + (200:8) $) and ($ (v0b) + (210:8) $) .. (v0b)
.. controls ($ (v0b) + (60:7) $) and ($ (e0b) + (60:5) $) .. (e0b) 
to [out=165, in=0] (v4b)
to [out=180, in=10] (e1a)
to [out=150, in=330] (v0c)
.. controls ($ (v0c) + (210:8) $) and ($ (e3c) + (200:6) $) .. (e3c) 
to [out=330, in=220] (v3a) 
to [out=40, in=300] (e0a) 
.. controls ($ (e0a) + (60:7) $) and ($ (v0a) + (60:6) $) .. (v0a) 
to [out=330, in=150] (e1b);
\strand[thick]
(e1c)
to [out=10, in=180] (v4a)
to [out=0, in=165] (e0c)
to [out=300, in=20] (v3b) 
to [out=220, in=320] (e3a) 
to [out=85, in=260] (v2b) 
to [out=80, in=255] (e2a) 
to [out=75, in=250] (v1b) 
to [out=70, in=250] (e1c); 
\end{knot}


\foreach \x/\word in {(e0)/e0, (e1)/e1, (e2)/e2, (e3)/e3}
{
\draw [green!50!black, fill=green!50!black] \x circle  (10pt);
}

\foreach \x/\word in {(v0)/v0, (v1)/v1, (v2)/v2, (v3)/v3, (v4)/v4}
{
\draw [blue, fill=blue] \x circle (10pt);
}


\end{tikzpicture}
\end{center}
\caption{A bipartite plane graph $G$, and the construction yielding the link $L_G$ and surface $F_G$.}
\label{fig:median_construction}
\end{figure}
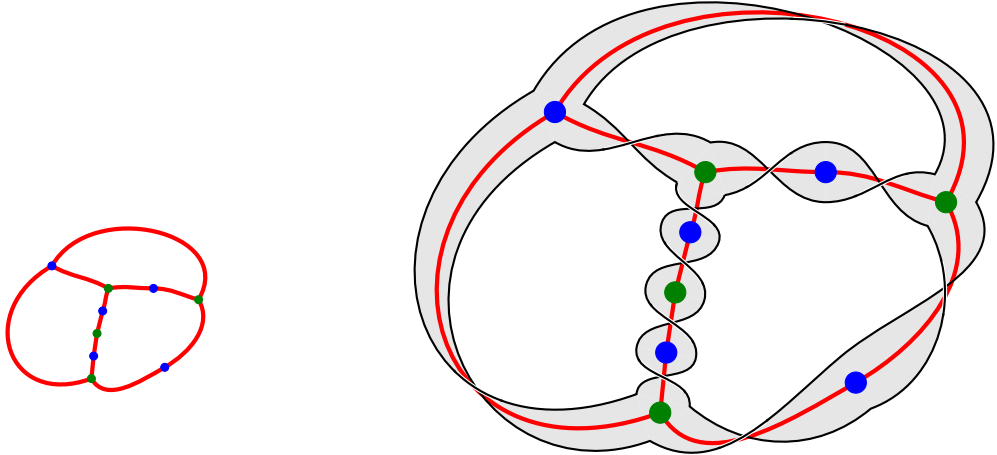

\begin{figure}
\begin{center}
\begin{minipage}{0.3\textwidth}
\begin{tikzpicture}[scale = 0.15,
knot/.style={ultra thick},
marker/.style={fill=black},
violetedge/.style={ultra thick, blue},
emeraldedge/.style={ultra thick, green!50!black},
rededge/.style={ultra thick, red}
]
\coordinate (b) at (0,-9);
\coordinate (m) at (0,-3);
\coordinate (l) at (-3,0);
\coordinate (r) at (3,0);
\coordinate (vl) at (-5 , -5);
\coordinate (vr) at (5,-5);
\coordinate (vt) at (0, 2);
\coordinate (eb) at (0,-6);
\coordinate (em) at (0,-1);
\coordinate (et) at (0, 6);


\draw [knot] (b) 
.. controls ($ (b) + (-30:8) $) and ($ (r) + (-15:8) $) .. (r)
to [out=165, in=15] (l)
.. controls ($ (l) + (195:8) $) and ($ (b) + (210:8) $) .. (b)
.. controls ($ (b) + (30:3) $) and ($ (m) + (-30:3) $) .. (m)
to [out=150, in=-60] (l)
.. controls ($ (l) + (120:7) $) and ($ (r) + (60:7) $) .. (r)
to [out=240, in=30] (m)
.. controls ($ (m) + (210:3) $) and ($ (b) + (150:3) $) .. (b);

\draw (4,-8) node {\Huge $*$};
\draw (6,-11) node {\Huge $*$};

\end{tikzpicture}
\end{minipage}
\begin{minipage}{0.5\textwidth}
\begin{tikzpicture}[scale = 0.4,
knot/.style={ultra thick},
marker/.style={fill=black},
violetedge/.style={ultra thick, blue},
emeraldedge/.style={ultra thick, green!50!black},
rededge/.style={ultra thick, red}
]
\coordinate (b) at (0,-9);
\coordinate (m) at (0,-3);
\coordinate (l) at (-3,0);
\coordinate (r) at (3,0);
\coordinate (vl) at (-5 , -5);
\coordinate (vr) at (5,-5);
\coordinate (vt) at (0, 2);
\coordinate (eb) at (0,-6);
\coordinate (em) at (0,-1);
\coordinate (et) at (0, 6);


\draw [knot] (b) 
.. controls ($ (b) + (-30:8) $) and ($ (r) + (-15:8) $) .. (r)
to [out=165, in=15] (l)
.. controls ($ (l) + (195:8) $) and ($ (b) + (210:8) $) .. (b)
.. controls ($ (b) + (30:3) $) and ($ (m) + (-30:3) $) .. (m)
to [out=150, in=-60] (l)
.. controls ($ (l) + (120:7) $) and ($ (r) + (60:7) $) .. (r)
to [out=240, in=30] (m)
.. controls ($ (m) + (210:3) $) and ($ (b) + (150:3) $) .. (b);

\draw (4,-9) node {\Huge $*$};
\draw (6,-11) node {\Huge $*$};

\draw [rededge] (vl) -- (eb);
\draw [rededge] (vr) -- (eb);
\draw [rededge] (vl) -- (em);
\draw [rededge] (vr) -- (em);
\draw [rededge] (vt) -- (em);
\draw [rededge] (vt) -- (et);
\draw [rededge] (vl) .. controls ($ (vl) + (180:8) $) and ($ (et) + (180:8) $) .. (et);
\draw [rededge] (vr) .. controls ($ (vr) + (0:8) $) and ($ (et) + (0:8) $) .. (et);

\foreach \x/\word in {(vl)/vl, (vr)/vr, (vt)/vt}
{
\draw [blue, fill=blue] \x circle (10pt);
}

\foreach \x/\word in {(eb)/eb, (em)/em, (et)/et}
{
\draw [green!50!black, fill=green!50!black] \x circle  (10pt);
}

\foreach \x/\word in {(b)/b, (m)/m, (l)/l, (r)/r}
{
\draw [red, fill=red] \x circle  (10pt);
}

\end{tikzpicture}
\end{minipage}
\end{center}
\caption{A formal knot theory universe $\U$, and the bipartite plane graph $G_\U$ constructed from it.}
\label{fig:figure_8_red_graph}
\end{figure}
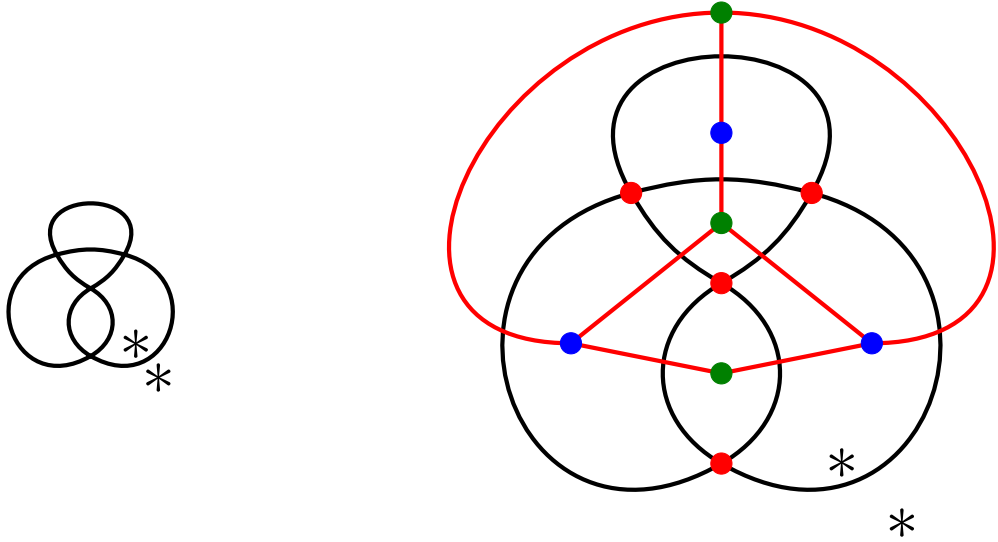

An exposition of some of the mathematics in this paper, and related background, is also available as \cite{Me17_polytopes}.

\subsection{Organisation of the paper}

In sections \ref{sec:contact_background}--\ref{sec:hyper_background} we recall some background from the disparate fields required: in section \ref{sec:contact_background}, contact topology, including sutured Floer homology and spin-c structures; in section \ref{sec:FKT}, formal knot theory, including universes and states; and in section \ref{sec:hyper_background} graph theory, including results on bipartite plane graphs, trinities, the magic number, hypergraphs, and hypertrees.

In section \ref{sec:trinities_SFH} we recall recent work of the first author and others which connects the combinatorics of trinities and hypertrees with sutured Floer homology of the associated sutured manifolds.

Section \ref{sec:contact_structures} is the main part of the paper and classifies the tight contact structures on the sutured manifold $(M_G, L_G)$. Using Giroux's theory of convex surfaces \cite{Gi91} and the gluing theorem of Honda \cite{Hon02}, among other work, we reduce the classification of contact structures to a combinatorial problem about dividing curves on discs in the complement $S^2\setminus G$. We apply results on bypass surgeries, hypertrees and arborescences to show that tight contact structures are bijective with hypertrees in an appropriate hypergraph.

Having classified contact structures on $(M_G, L_G)$, in section \ref{sec:properties_of_contact_structures} we investigate some further details. We compute their Euler classes in terms of the corresponding hypertrees, and show that all the contact structures include into the tight contact structure on $S^3$. Applying the TQFT property of $SFH$ \cite{HKM08}, we can then prove theorem \ref{thm:classification_of_contact_invariants}.

Finally, in section \ref{sec:alexander} we consider the Alexander polynomial, and we construct the bipartite plane graph of a formal knot theory universe. We observe that its states correspond precisely to configurations found in section \ref{sec:contact_structures}, which we then show are bijective with isotopy classes of tight contact structures, proving theorem \ref{thm:Kauffman_states}.

\subsection{Acknowledgments}

During the course of this work the first author was supported by a Japan Society for the Promotion of Science Grant-in-Aid for Young Scientists (B), no.\ 25800037. He was also the recipient of a ``Challenging Research Award'' by the Tokyo Institute of Technology which made it possible for the second author to visit Japan so that we could carry out this research. The second author is supported by Australian Research Council grant DP160103085.

\section{Contact topology background}
\label{sec:contact_background}

We briefly recall concepts we will need from 3-dimensional contact topology. We refer generally to \cite{Et02} and \cite{Geiges_Introduction} for an introduction to the subject.

\subsection{Contact 3-manifolds}

Let $M$ be a smooth oriented $3$-manifold. (Since a contact structure always induces an orientation, we lose no generality here.) A \emph{contact structure} $\xi$ on $M$ is a non-integrable 2-plane distribution on $M$. Such a pair $(M, \xi)$ is called a \emph{contact 3-manifold}. Locally, a contact structure is the kernel of a 1-form $\alpha$. We only consider contact structures which are (co-)orientable, i.e., in this paper all contact structures $\xi$ have globally-defined contact forms $\alpha$. Given a contact form $\alpha$, we can regard each contact plane as having an ``upward'' and ``downward''-facing side: a vector $X$ points ``upward'' or ``downward'' with respect to $\xi$ accordingly as $\alpha(X)$ is positive or negative.

The non-integrability of $\xi$ is equivalent to the non-vanishing of $\alpha \wedge d\alpha$. Any contact form $\alpha$ for $\xi$ has $\alpha \wedge d\alpha$ of the same constant sign, relative to the orientation. In this paper we consider only \emph{positive} contact structures, that is, those for which $\alpha \wedge d\alpha > 0$. 
Our choice of $\xi$ thus determines the orientation of $M$ and our choice of $\alpha$ determines a coorientation, and hence an orientation, for $\xi$. In the sequel we will usually assume that such structures have been fixed.

A smooth curve $C$ in $(M, \xi)$ is \emph{Legendrian} if it is everywhere tangent to $\xi$. The contact planes along $C$ define a framing of $C$. If $C$ is homologically trivial then it bounds an orientable surface $S$; this surface defines another framing of $C$. The \emph{Thurston--Bennequin number} of $C$, denoted by $\tb(C)$, is the twisting of the contact framing relative to the surface framing. More specifically, if $C'$ is a pushoff of $C$ along a vector field transverse to $\xi$, then $\tb(C)$ is given by the linking number $\lk(C,C')$. The Thurston--Bennequin number does not depend on the choice of surface or pushoff or orientation of $C$.

An \emph{overtwisted disc} is a 2-dimensional disc embedded in $M$ with Legendrian boundary of Thurston--Bennequin number $0$. A contact 3-manifold or contact structure containing an overtwisted disc is called \emph{overtwisted}. Eliashberg in \cite{ElOT} showed that the classification of overtwisted contact structures on a 3-manifold $M$ is equivalent to the classification of 2-plane fields on $M$ up to homotopy. A contact structure which is not overtwisted is called \emph{tight}. 

A smoothly embedded surface $S$ in $M$ has a singular 1-dimensional foliation defined by $TS \cap \xi|_S$, called its \emph{characteristic foliation}. (Note that it is impossible for a surface to be everywhere tangent to $\xi$.) For a generic surface, the singularities of this foliation are isolated points of two possible types: elliptic (where leaves spiral into the singular point) and hyperbolic (saddle points). The characteristic foliation on $S$ determines the germ of the contact structure near $S$ \cite{Gi91}.  

As an oriented 2-plane bundle on $M$, the contact structure $\xi$ has an \emph{Euler class} $e(\xi) \in H^2(M)$. If $S$ is an oriented embedded closed surface in $M$, then $e(\xi)$ evaluates on $S$ and we write $e(\xi)[S] \in \Z$. The Euler class is the obstruction to the existence of a non-vanishing section of $\xi|_S$. It evaluates to zero on the boundary: 
\begin{equation}
\label{eq:eulerboundary}
e(\xi)[\partial M] = \langle e(\xi), [\partial M] \rangle = \langle \delta e(\xi), [M] \rangle = 0.
\end{equation} 
Hence $\xi|_{\partial M}$ is a trivial bundle over $\partial M$; let $s$ be a nowhere-vanishing section. Then we also have the \emph{relative Euler class} $e(\xi, s) \in H^2 (M, \partial M)$. The map $H^2 (M, \partial M) \To H^2 (M)$ sends $e(\xi, s)$ to $e(\xi)$.  If $S$ is an oriented properly embedded surface, hence with $\partial S \subset \partial M$, then $e(\xi,s)[S]$ is the obstruction to extending the section $s|_{\partial S}$ to a section of $\xi|_S$ without zeroes. When the section $s$ is understood we write $e(\xi)$ instead of $e(\xi, s)$.

\subsection{Convex surfaces} 
\label{sec:convex_surfaces}

Convex surfaces are crucial to the contact geometry in this paper. We refer to \cite{Gi91, Hon00I, Hon02} for background on the subject. Let $(M,\xi)$ be a contact manifold.

A vector field on $M$ is called a \emph{contact vector field} if its flow preserves $\xi$. Given a contact form $\alpha$, contact vector fields on $M$ are naturally in bijective correspondence with smooth functions $H\colon M \To \R$: for each such $H$, there is a unique contact vector field $X_H$ such that $\alpha(X_H) = H$ (see, e.g., \cite[sec.\ 2.3]{Geiges_Introduction}). Consequently, any contact vector field defined on a submanifold of $M$ extends over $M$.

A closed surface $S$ smoothly embedded in $M$ is called \emph{convex} if there is a contact vector field on $M$ which is transverse to $S$. Equivalently, by the extensibility of contact vector fields, $S$ is convex if there is a contact vector field, defined in a neighbourhood of $S$, which is transverse to $S$. If $S$ has boundary, we require it to be Legendrian: $S$ is convex if its boundary is Legendrian and there is a contact vector field transverse to $S$. Any convex surface is coorientable, hence orientable. The contact vector field transverse to $S$ defines a homeomorphism from a neighbourhood of $S$ in $M$ to $S \times \R$, such that $S$ maps to $S \times \{0\}$ and $\xi$ is invariant under translations in the $\R$ direction. 

A convex surface $S$ in $(M, \xi)$ with transverse contact vector field $X$ has a \emph{dividing set} $\Gamma$, which is defined to be the set of points on $S$ where $X$ is tangent to $\xi$; equivalently, if $\alpha$ is a contact form, it is the set of points on $S$ where $\alpha(X) = 0$. In symbols,
\[
\Gamma = \{x \in S \mid X_x \in \xi_x \} 
= \{x \in S \mid \alpha_x (X_x) = 0\}.
\]
It can be shown that $\Gamma$ is a properly embedded 1-dimensional submanifold of $S$, transverse to the characteristic foliation, and its isotopy class does not depend on the choice of $X$ \cite{Gi91}. If $S$ has nonempty 
boundary, then $\partial S$ consists of several leaves (possibly with singularities) of the characteristic foliation, and $\Gamma$ intersects $\partial S$ transversely. At a point $x$ of $S$ not on $\Gamma$, the vector $X_x$ is not tangent to $\xi_x$ and so $\alpha_x (X_x)$ is either positive or negative. Accordingly, we say $x$ lies in the \emph{positive region} $R_+$ or \emph{negative region} $R_-$ of $S$; together we will call $R_\pm$ the \emph{signed regions}. Thus
\[
S = R_+ \sqcup R_- \sqcup \Gamma, \quad
R_+ = \{ x \in S \mid \alpha_x (X_x) > 0 \}, \quad
R_- = \{ x \in S \mid \alpha_x (X_x) < 0 \}.
\]
Note that if we use the contact vector field $-X$ instead of $X$, or the contact form $-\alpha$ instead of $\alpha$, the regions $R_+$ and $R_-$ exchange roles. This ambiguity can be removed by fixing a coorientation for $S$ (and one for $\xi$, which we have already done), and requiring $X$ to point towards the positive side of $S$. Thus, for a cooriented convex surface in a cooriented convex structure, $R_+$, $R_-$, and $\Gamma$ are well-defined up to isotopy.

Away from $\Gamma$, that is for $x\in R_{\pm}$, the tangent plane $T_x S$ can be identified with $\xi_x$ via projection along $X_x$. In this way, the orientation of $\xi$ induces orientations on $R_\pm$. These in turn induce an orientation on $\Gamma$ (the same by $R_+$ and by $R_-$). With the coorientation conventions above, the orientation of $R_+$ as a subset of $S$ agrees with the orientation induced by $\xi$, whereas along $R_-$ the two are opposite.

Any embedded surface in a contact manifold can be approximated by convex surfaces. This involves isotoping the boundary so that it becomes Legendrian. On the other hand if there is a boundary that is already Legendrian and we wish to keep it fixed, then care needs to be taken. Suppose we have a compact, oriented, properly embedded surface $S$ in $M$, with Legendrian boundary all of whose components are such that the difference between the framings from the contact planes and from $S$ is non-positive.
Then the surface $S$ can be made convex by a $C^0$-small isotopy of a neighbourhood of its boundary (fixing $\partial S$), followed by a $C^\infty$-small isotopy of $S$ fixing this neighbourhood of the boundary (\cite[prop.\ 3.1]{Hon00I}, see also \cite[thm.\ 3.8]{KanTB}). The framing condition here is essential: note that, by equation (\ref{eq:tb_from_Gamma}) below, a Legendrian curve with positive Thurston--Bennequin number cannot be the boundary of a convex surface.

If $S$ is convex with the dividing set $\Gamma$ and signed regions $R_\pm$, then $\Gamma$ is not only transverse to the characteristic foliation $\F$: it \emph{divides} $\F$, in the sense that $\F$ can be directed by a vector field which dilates an area form on $R_+$ and contracts it on $R_-$. In particular, elliptic singularities in $R_+$ become sources while those in $R_-$ become sinks.
See \cite{Gi91} for details.

The dividing set on a convex surface $S$ ``essentially'' determines the contact structure near $S$. More precisely, first use the contact vector field $X$ to describe a neighbourhood of $S$ in $M$ as $S \times \R$, such that $S = S \times \{0\}$, the vector field $X$ points in the positive $\R$ direction, and $\xi$ is invariant under translations in the $\R$ direction. Let $\F$ be any singular $1$-dimensional foliation on $S$ divided by $\Gamma$. \emph{Giroux's Flexibility Theorem} says that there exists a $C^0$-small isotopy $\phi_t$ ($0\le t\le1$) of $S$ in $S \times \R$, starting from the identity $\phi_0\colon S \To S \times \{0\}$, at the end of which the characteristic foliation on $\phi_1 S$ 
is $\phi_1 \F$. This 
means that any characteristic foliation on $S$ divided by $\Gamma$ can be achieved by a small isotopy of $S$; this characteristic foliation then determines the germ of a contact structure along $S$.

Since any characteristic foliation divided by $\Gamma$ can be achieved by a small isotopy of $S$, many collections of curves on $S$ can be made into leaves of the characteristic foliation, hence Legendrian curves, via a small isotopy of $S$. The \emph{Legendrian realisation principle} \cite[thm.\ 3.7]{Hon00I} says precisely which collections of curves $C$ can be made Legendrian in this way. Namely, a properly embedded 1-submanifold $C$ of $S$ can be made Legendrian if and only if it is transverse to $\Gamma$, and \emph{nonisolating} in the sense that every connected component of $S \setminus (\Gamma \cup C)$ has boundary intersecting $\Gamma$. The nonisolating condition roughly means that no point is isolated from $\Gamma$ by $C$: any point can escape to $\Gamma$ without crossing $C$.

As noted in \cite{EH05_Cabling}, the proof of the Legendrian realisation principle in \cite{Hon00I} applies equally to embedded \emph{graphs} in $S$. Namely, if a graph $G$ embedded in $S$ is nonisolating (meaning, again, that every component of $S \setminus (\Gamma \cup G)$ has boundary intersecting $\Gamma$), then $G$ can be made Legendrian via a small isotopy of $S$. Obviously, $G$ fails to be a $1$-manifold at each vertex of degree $3$ or more, so the vertices become singularities of the characteristic foliation. (In fact, even in the case of Legendrian realisations of smooth curves there are in general singularities of the characteristic foliation along each curve.)

A closed Legendrian curve $C$ on a convex surface $S$ has two natural framings: one from $\xi$, and one from $S$ or, equivalently, the contact vector field $X$. When $C$ bounds a subsurface of $S$, the twisting of the former with respect to the latter is $\tb(C)$. As we proceed along $C$, the contact planes (always tangent to $C$) spin so that they are tangent to $X$ at the points of $C \cap \Gamma$. Furthermore, since the flow of $X$ preserves $\xi$ and $\xi$ induces the orientation of $M$ (recall that we assumed $\xi$ to be a positive contact structure), at all these instances the spinning of $\xi$, relative to $X$, is in the same direction. The two framings will coincide at every other point of $C \cap \Gamma$ (and will be opposite at the rest).
Hence the twisting of $\xi$ with respect to $S$ is given in absolute value by $\frac{1}{2} | C \cap \Gamma|$, and it is not hard to check that it is in fact $-\frac{1}{2} |C \cap \Gamma|$. By the same token, for the Legendrian boundary $\partial S$ of the convex surface $S$ with dividing set $\Gamma$, we have 
\begin{equation}\label{eq:tb_from_Gamma}
\tb(\partial S) = -\frac{1}{2} |\partial S \cap \Gamma|.
\end{equation}

Since $\Gamma$ essentially determines the contact structure near $S$, we can ask whether this contact structure near $S$ is tight. \emph{Giroux's criterion} gives us a precise answer in terms of the dividing set $\Gamma$:
\begin{itemize}
\item if $S$ is a sphere, then $S$ has a tight neighbourhood if and only if $\Gamma$ is connected;
\item otherwise, $S$ has a tight neighbourhood if and only if $\Gamma$ has no contractible curves.
\end{itemize}
The second case covers all convex surfaces (with or without boundary) other than spheres, including closed surfaces with positive genus. In either case, when the criterion fails, one can Legendrian realise a contractible curve parallel to a contractible dividing curve (possibly after applying the ``folding'' technique of \cite[sec.\ 5.3]{Hon00I} if necessary), resulting in an overtwisted disc.

If $S$ is an oriented embedded closed convex surface in $(M, \xi)$ with dividing set $\Gamma$ and signed regions $R_\pm$, then the evaluation of the Euler class $e(\xi)$ on the homology class of $S$ is given by
\begin{equation}
\label{eq:eulerclass}
e(\xi)[S] = \chi(R_+) - \chi(R_-),
\end{equation}
where $\chi$ denotes Euler characteristic. More generally, consider an oriented properly embedded convex surface $S$, with Legendrian boundary $\partial S \subset \partial M$. Then $\partial S$ is an oriented curve tangent to $\xi$, and so tangent vectors to $\partial S$ give a natural (homotopy class of) nowhere-vanishing section $s_\partial$ of $\xi|_{\partial S}$. If we take a nowhere-vanishing section $s$ of $\xi|_{\partial M}$ which agrees with this section $s_\partial$ over $\partial S$ (i.e., $s|_{\partial S} = s_\partial$), then similarly, the evaluation of $e(\xi, s)$ on $[S]$ is given by
\[
e(\xi, s)[S] = \chi(R_+) - \chi(R_-).
\]
Note that such an extension $s$ of $s_\partial$ over $\partial M$ need not always exist. However, if $\partial S$ is a non-separating curve on $\partial M$, then such an extension $s$ does always exist. In this case, cutting $\partial M$ along the non-separating $\partial S$ gives a connected surface $U$ with an even number of boundary components, and the section $s_\partial$ gives a nowhere-vanishing section of $\xi|_{\partial U}$ which extends to a nowhere-vanishing section of $\xi|_U$ (the obstructions to extensions from pairs of boundary components cancel); gluing $U$ back into $\partial S$ we obtain the desired section $s$. This will be sufficient for our purposes. See \cite[sec.\ 4.2]{Hon00I} (also \cite{KanTB}) for details. More generally, if a nowhere-vanishing section $s$ of $\xi|_{\partial M}$ differs over $\partial S$ by $k$ full twists from $s_\partial$, then $e(\xi,s)$ differs from $\chi(R_+) - \chi(R_-)$ by $2k$ (see also section \ref{sec:spin-c_structures} below).

\subsection{Contact structures on sutured manifolds}
\label{sec:sutured_manifolds}

Rather than starting from a contact structure and considering convex surfaces with their dividing sets and signed regions, we can start instead from surfaces and their data of $\Gamma, R_\pm$, and consider contact structures related to them.

Suppose we are given $(S, \Gamma, R_+, R_-)$, where $S$ is a smooth compact oriented surface (with or without boundary), $\Gamma$ is a properly embedded smooth oriented 1-submanifold of $S$, 
and every component of $S$ contains at least one component of $\Gamma$.
Suppose further that $R_+$ (resp.\ $R_-$) is a sub-surface of $S$ oriented the same as (resp.\ opposite to) $S$, such that $S \setminus \Gamma = R_+ \sqcup R_-$ and $\Gamma \subset \partial R_+, \partial R_-$ as oriented 1-manifolds. Clearly not every smooth oriented 1-submanifold $\Gamma$ of $S$ has such an $R_+$ and $R_-$; the existence of $R_+$ and $R_-$ places strong restrictions on $\Gamma$. However, when such $R_+$ and $R_-$ do exist they can be deduced from the orientations on $S$ and $\Gamma$. We call $(S, \Gamma)$ a \emph{sutured surface}. 

The structure of a sutured surface, being identical to the structure of a dividing set, thus describes a contact structure in a product neighbourhood of $S$. See \cite{HKM02} for details.

We define a \emph{sutured 3-manifold} $(M, \Gamma)$ to be a smooth, oriented, compact 3-manifold $M$, together with an oriented 1-submanifold $\Gamma$ of $\partial M$ such that $(\partial M, \Gamma)$ is a sutured surface. So a sutured structure on $M$ provides boundary conditions for a contact structure on $M$. The notion of sutured 3-manifold originated with Gabai's study of foliations on 3-manifolds \cite{Gabai83}, though his definition was slightly different.

Given a sutured 3-manifold $(M, \Gamma)$ one has a contact structure $\xi_\partial$ defined near $\partial M$ such that $\partial M$ is convex and $\Gamma$ is the dividing set. The orientations of $\Gamma$ and $M$ determine a (co)orientation for $\xi_\partial$.
One can then ask how many cooriented, tight contact structures exist on $M$ extending $\xi_\partial$, up to isotopy fixing the boundary. This is the problem of \emph{classification of tight contact structures} on $(M, \Gamma)$. 

Of course, if the dividing set $\Gamma$ on $\partial M$ fails Giroux's criterion, then the contact structure $\xi_\partial$ near $\partial M$ has an overtwisted disc, so there are no tight contact structures near $\partial M$, let alone on $M$. Thus, we may assume $\Gamma$ satisfies Giroux's criterion and $\xi_\partial$ is tight. 
Another natural assumption is that $\chi(R_+)=\chi(R_-)$; indeed if there is any extension of $\xi_\partial$, then this follows by (\ref{eq:eulerboundary}) and (\ref{eq:eulerclass}) above. Sutured manifolds with $\chi(R_+)=\chi(R_-)$ are called \emph{balanced}.

Classifications of tight contact structures are known for some sutured 3-manifolds. The most fundamental result, due to Eliashberg, provides a classification for the 3-ball. When $(M, \Gamma)$ is a 3-ball, $M=B^3$, with a connected dividing set (so that Giroux's criterion is satisfied and $\xi_\partial$ is tight), Eliashberg showed there is a unique extension of $\xi_\partial$ to a tight contact structure on $B^3$ (up to isotopy relative to the boundary): see \cite[thm.\ 2.1.3]{ElMartinet}, also \cite[thm.\ 4.1]{Hon00I}. When $\Gamma$ is disconnected, Giroux's criterion fails, $\xi_\partial$ is overtwisted and there are no tight contact structures on $(B^3, \Gamma)$.

In a similar vein, Eliashberg proved that there is a unique (\emph{standard}) tight contact structure $\xi$ on $S^3$, up to isotopy \cite[thm.\ 2.1.1]{ElMartinet}. We take a ball neighbourhood of a point $p \in S^3$; by Darboux's theorem $p$ has a standard neighbourhood, which is tight. We can take the ball to have convex boundary $S$, which must then have connected dividing set. Now $S$ bounds balls on either side, both tight. Thus, the standard tight contact structure can be obtained by gluing together two balls bounded by convex spheres with connected dividing sets.

More generally, contact 3-manifolds can be decomposed into simpler 3-manifolds, eventually into 3-balls, by cutting them along convex surfaces: this is the idea of the \emph{convex decomposition theory} of Honda--Kazez--Mati\'{c} \cite{HKM02}. This theory parallels the theory of sutured manifolds, producing a sutured manifold hierarchy as in \cite{Gabai83}; it is closely connected to the theory of taut foliations.

This process can also be reversed, so that a general contact 3-manifold can be obtained by gluing together simple 3-manifolds (such as 3-balls) with convex boundary.

Since it will be useful, we state precisely how this gluing works. Let $(M, \xi)$ be a contact 3-manifold with convex boundary; let $\Gamma$ be the dividing set on $\partial M$ and $R_\pm$ the signed regions. Let $U_1, U_2$ be disjoint subsurfaces of $\partial M$ with (possibly empty) Legendrian boundary, and let $\Gamma_i = U_i \cap \Gamma$, $R_{\pm,i} = U_i \cap R_\pm$ for $i =1,2$. Suppose there is a homeomorphism $\phi\colon U_1 \To U_2$ such that $\phi(\Gamma_1) = - \Gamma_2$ and $\phi(R_{\pm,0}) = -R_{\mp,1}$. (The minus signs refer to reversal of orientation.) Then we may glue $U_1$ to $U_2$ using $\phi$ to obtain a new 3-manifold $M'$. If $U_1, U_2$ have empty boundary then we may use Giroux flexibility to obtain foliations $\F_1, \F_2$ on $U_1, U_2$ related by $\phi$; these define germs of contact structures which glue together to obtain a contact structure on $M'$. If $U_1, U_2$ have nonempty boundary then the same procedure applies, but we must first ``crease'' $\partial M$ along $\partial U_1$ and $\partial U_2$ so that upon gluing $M'$ has smooth boundary. (This yields interleaving dividing sets, as discussed in the next section). Note that the requirement that $U_1, U_2$ have Legendrian boundary means that $\partial U_1 \cup \partial U_2$ must be Legendrian realisable, that is, transverse to $\Gamma$ and nonisolating.

\subsection{Edge-rounding and bypasses}
\label{sec:edge_rounding_bypasses}

We will need to consider what happens when convex surfaces intersect transversely. Let $S_1, S_2$ be convex surfaces in the contact manifold $(M, \xi)$, with dividing sets $\Gamma_1, \Gamma_2$. Suppose $S_1$ and $S_2$ intersect transversely along a Legendrian curve $C$. (Hence $\Gamma_1$ and $\Gamma_2$ are both transverse to $C$.) As described in \cite[sec.\ 3.3.2]{Hon00I}, $S_1$ and $S_2$, together with their transverse contact vector fields, can be arranged so that the dividing sets $\Gamma_1$ and $\Gamma_2$ do not intersect but \emph{interleave} along $C$. (Note that if $C$ is closed, then this implies $|C \cap \Gamma_1| = |C \cap \Gamma_2|$. Since $S_1$ and $S_2$ induce the same framing on $C$, this also follows from the analysis in section \ref{sec:convex_surfaces}.)

If $C$ is part of the boundary of both $S_1$ and $S_2$, then we can consider $S_1 \cup S_2$ as a connected surface with a \emph{corner} along $C$. We may \emph{round the corner} at $C$ to obtain a smooth convex embedded surface in $M$, which is identical to $S_1 \cup S_2$ outside a small neighbourhood of $C$, and which in this neighbourhood has a standard form, including a standard transverse contact vector field. In particular, the dividing curves of $\Gamma_1$ and $\Gamma_2$ are rounded as shown in figure \ref{fig:a1}. The procedure works just as well in reverse: we may ``crease'' a convex surface, to obtain a surface with a corner with equivalent contact topology.

\begin{figure}
\begin{center}
\begin{tikzpicture}
\draw (0,0) node {\includegraphics[scale=0.4]{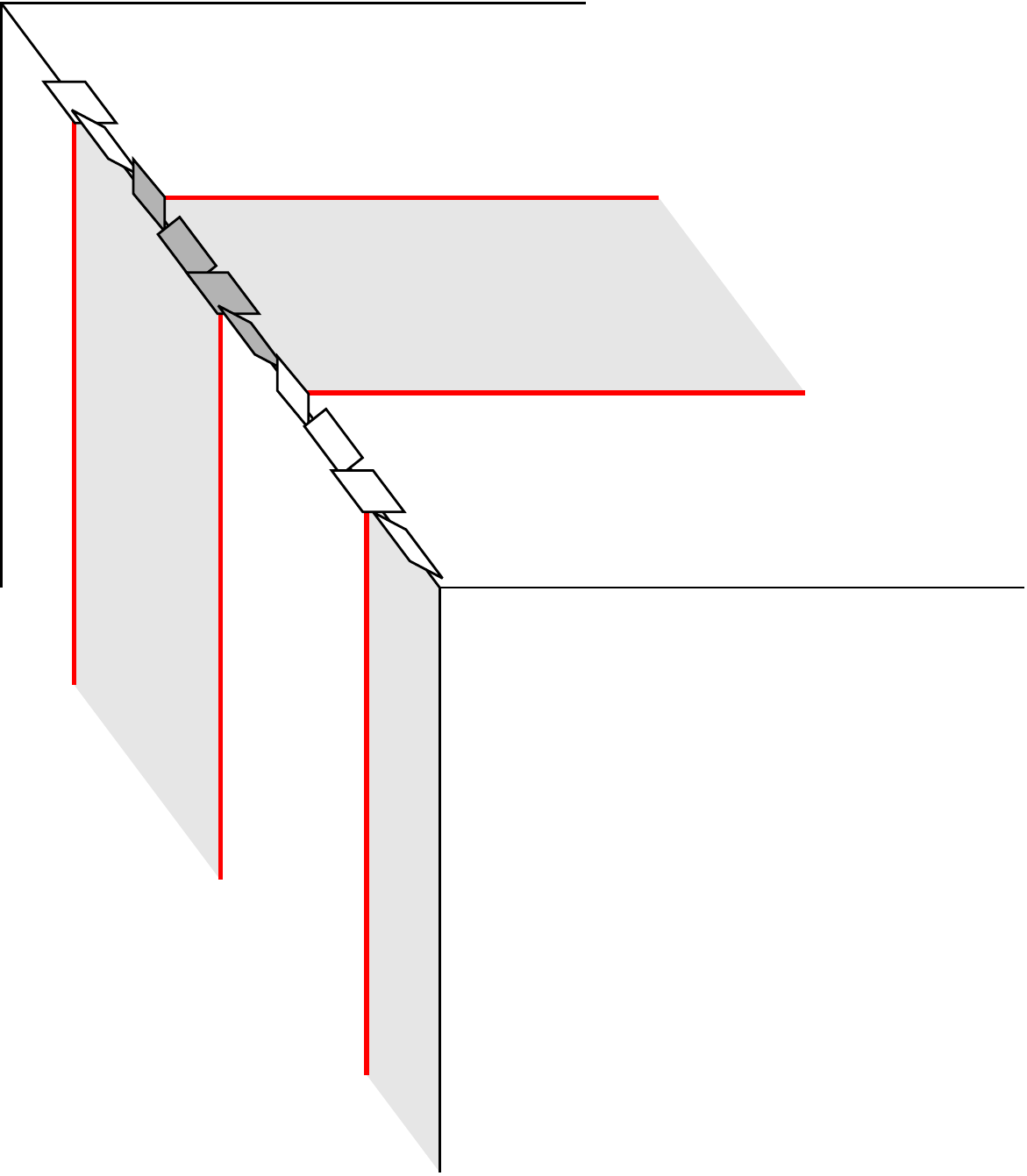}};
\draw [<->] (3,0) -- (4,0);
\draw (7,0) node {\includegraphics[scale=0.4]{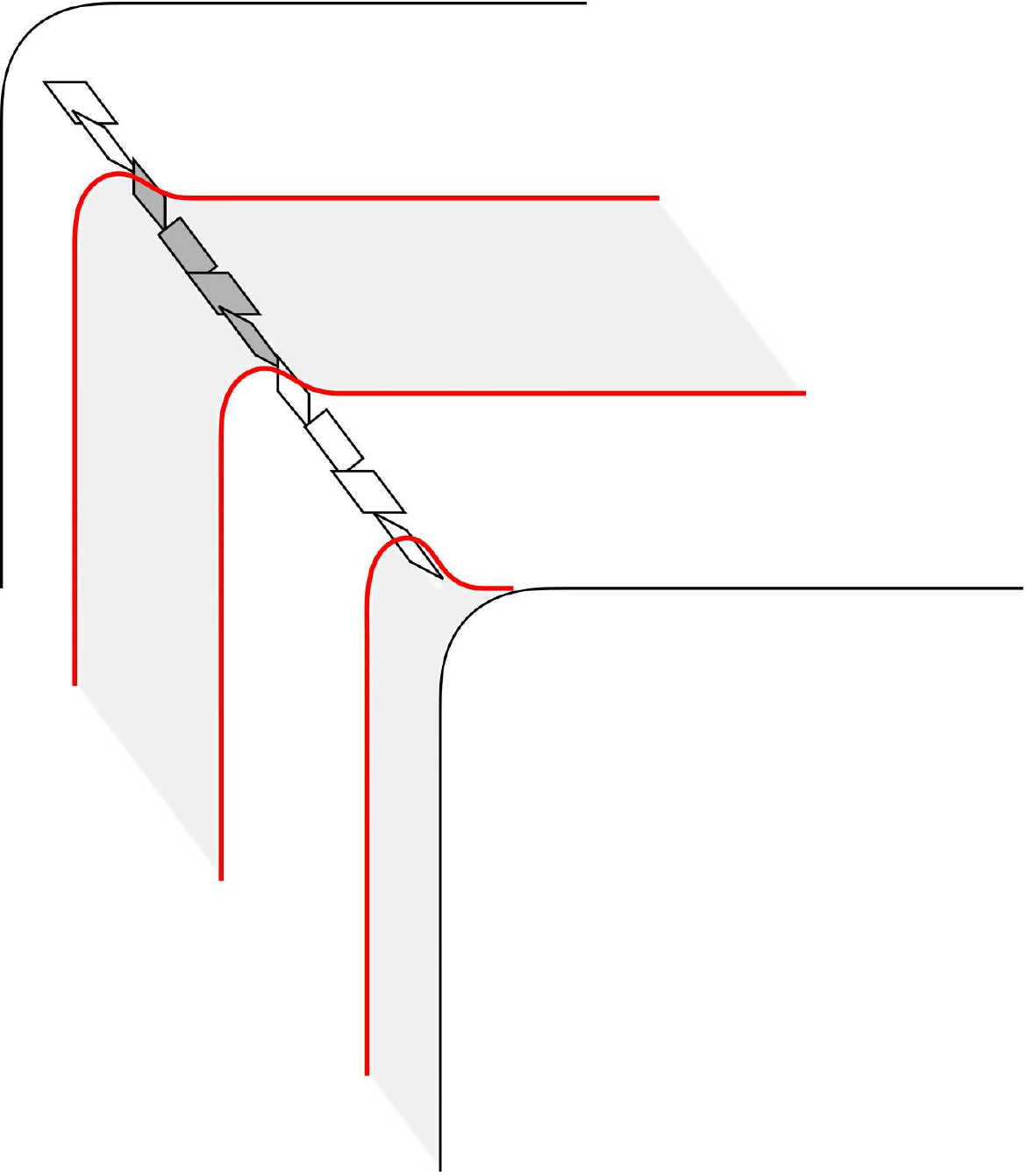}};
\end{tikzpicture}
\end{center}
\caption{Edge-rounding and creasing.}
\label{fig:a1}
\end{figure}

In a similar fashion, if $(M, \xi)$ is a contact $3$-manifold with convex boundary which is smooth, except for corners on $\partial M$ along closed Legendrian curves, we can apply a 
corner-rounding procedure and obtain a $3$-manifold $M'$ with 
boundary and a contact structure $\xi' = \xi|_{M'}$; as $\partial M'$ and $\partial M$ can be made arbitrarily close, there is a natural bijective correspondence between isotopy classes (relative to boundary) of contact structures on $M$ and on $M'$. Thus $M$ and $M'$ have equivalent contact topology.  

Motivated by these edge-rounding and creasing procedures, we can regard a surface with corners along closed curves, interleaving sutures along the creases, 
and appropriately defined positive and negative subsurfaces,
as a generalised sutured surface; we refer to this notion as a \emph{sutured surface with corners}. When $(\partial M, \Gamma)$ is a sutured surface with corners, we refer to $(M, \Gamma)$ as a \emph{sutured manifold with corners}. Upon performing edge-rounding, a sutured surface with corners becomes a bona fide sutured surface, and a sutured manifold with corners becomes a bona fide sutured manifold.

\begin{figure}[h]
\begin{center}
\includegraphics[scale=0.35]{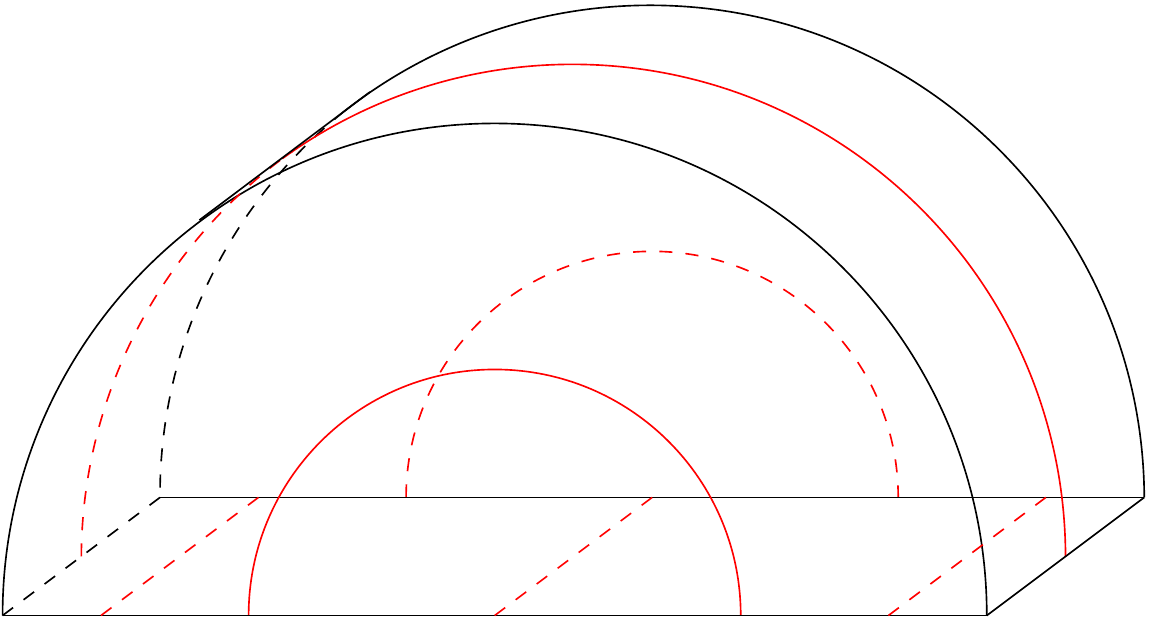}
\end{center}
\caption{A bypass}
\label{fig:a2}
\end{figure}

Another type of adjustment to contact structures and dividing sets comes from \emph{bypass addition}. A \emph{bypass} is a particular contact 3-manifold with boundary: it is half of a thickened overtwisted disc. More precisely, consider a convex overtwisted disc $D$ with dividing set consisting of a single closed loop $\gamma$ and Legendrian boundary with Thurston--Bennequin number $0$. Then $D \times [0,1]$ is a 3-manifold, with boundary $D \times \{0,1\} \cup \partial D \times [0,1]$, corners along $\partial D \times \{0,1\}$, and with a contact structure so that the dividing set is $\gamma \times \{0,1\} \cup \partial D \times \{1/2\}$. A bypass is obtained by slicing this object through a diameter times $[0,1]$. This ``sliced'' rectangle is the ``base'' of the bypass. On the base, the dividing set consists of three arcs of the form $\{\cdot\} \times [0,1]$. See figure \ref{fig:a2}.

\begin{figure}
\begin{center}
\begin{tikzpicture}
\draw (0,0) node {\includegraphics[scale=0.25]{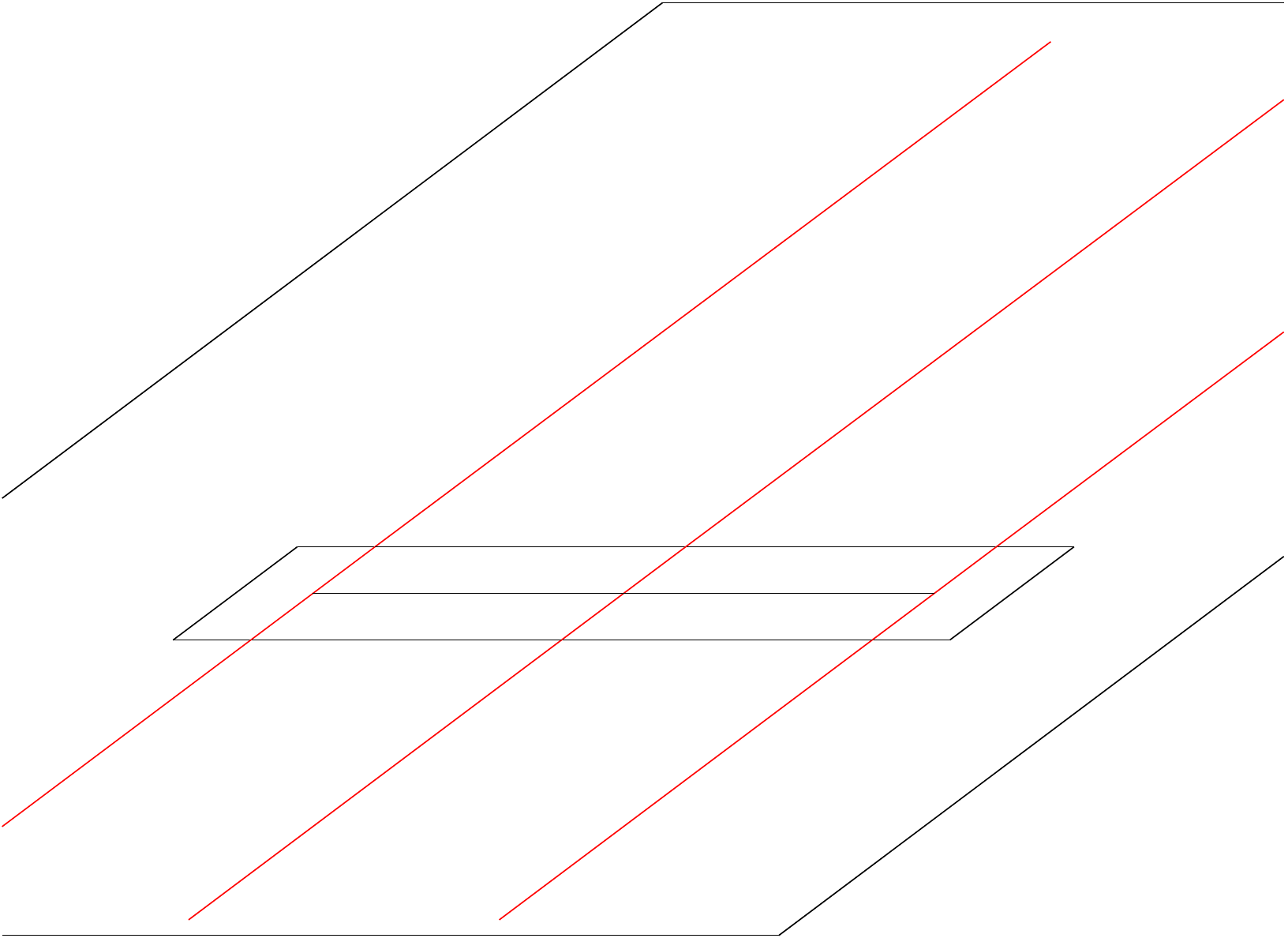}};
\draw [->] (3,0) -- (4,0);
\draw (7,0) node {\includegraphics[scale=0.25]{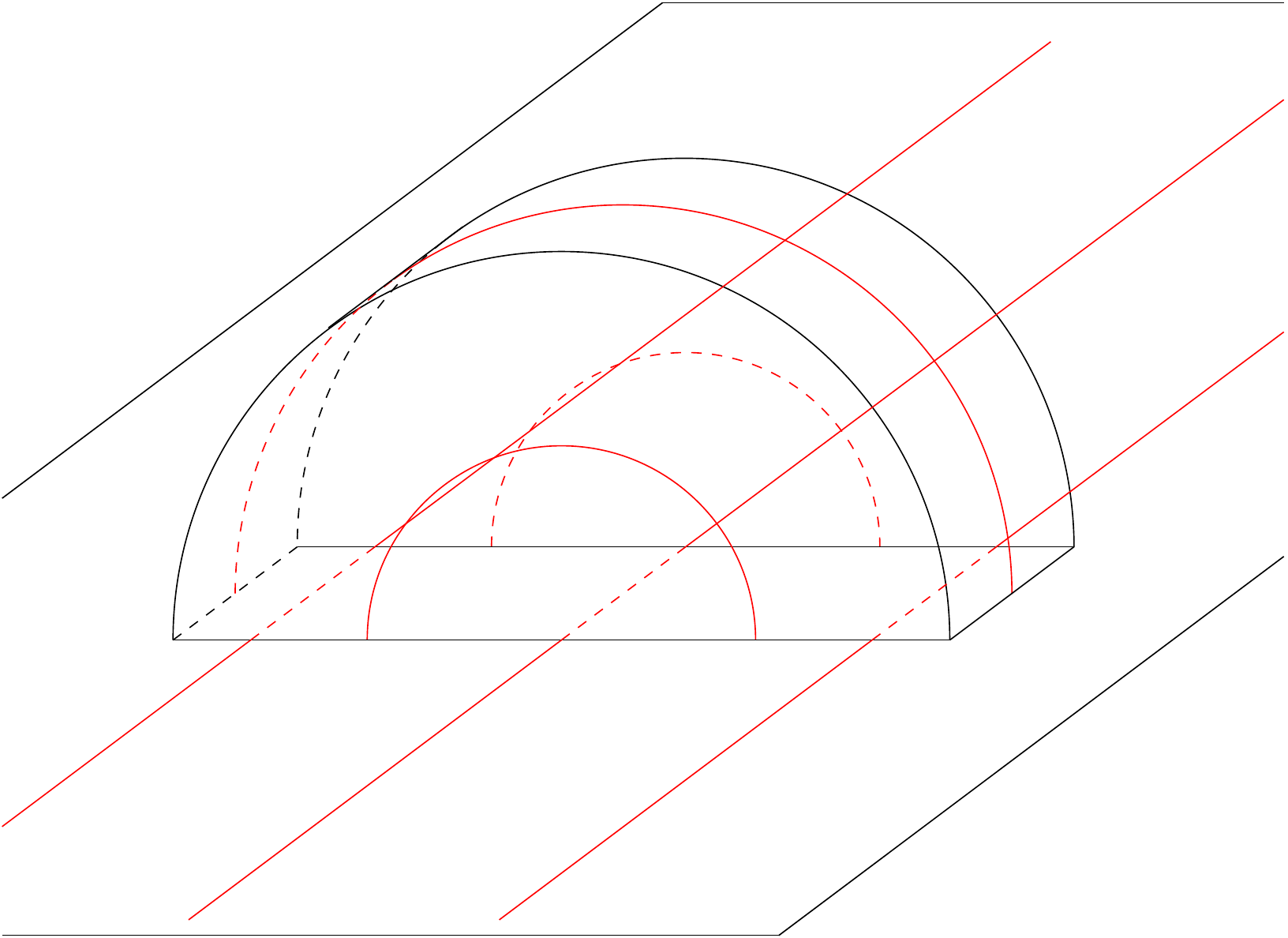}};
\draw [->] (7,-2) -- (7,-3);
\draw (7,-5) node {\includegraphics[scale=0.25]{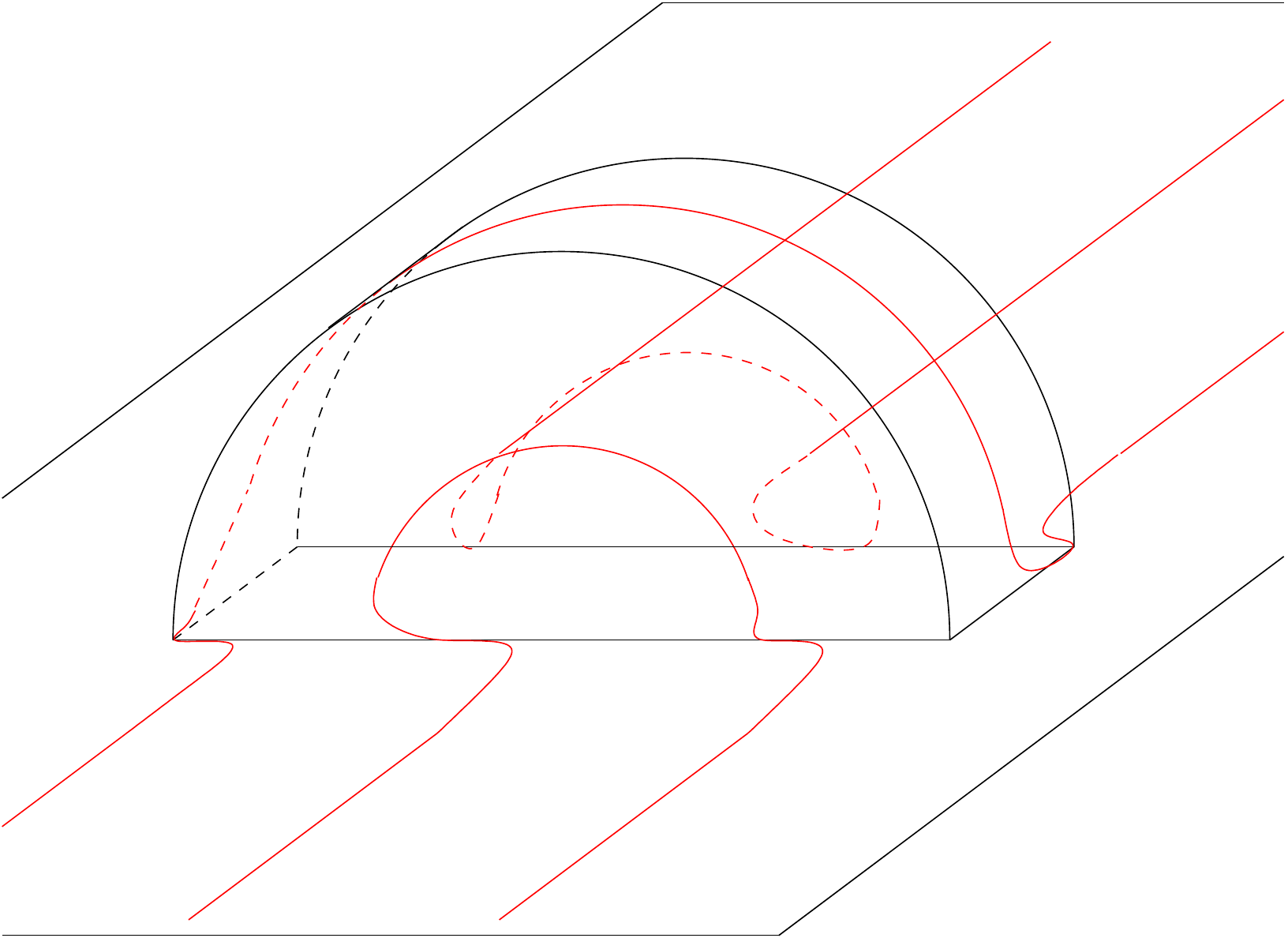}};
\draw [->] (4,-5) -- (3,-5);
\draw (0,-5) node {\includegraphics[scale=0.25]{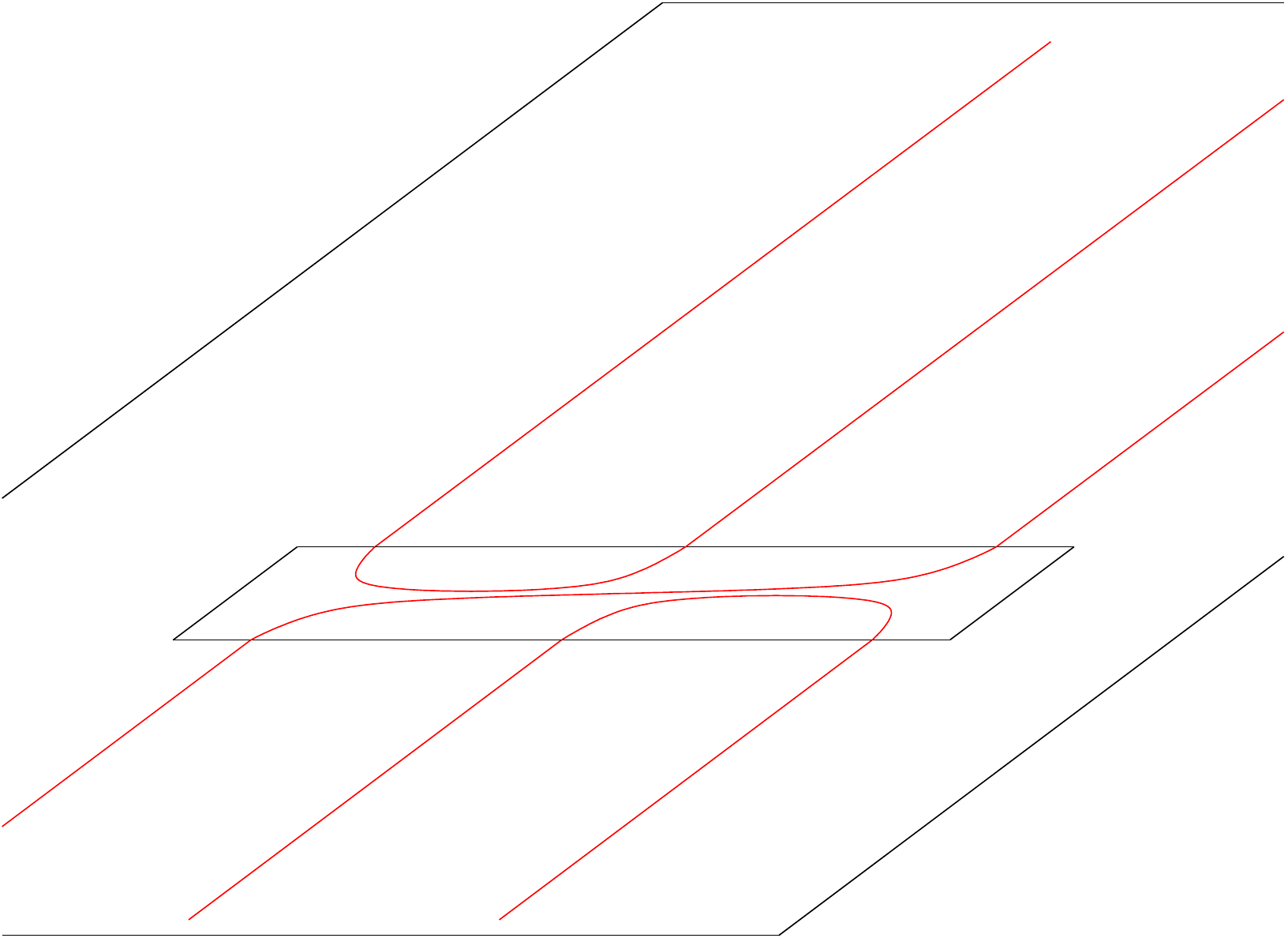}};
\end{tikzpicture}
\end{center}
\caption{Attaching a bypass.}
\label{fig:a2b}
\end{figure}

An \emph{attaching arc} on a convex surface $S$ is an embedded arc on $S$ which intersects the dividing set $\Gamma$ at precisely three points, namely its two endpoints and one interior point. An attaching arc has a neighbourhood in $S$ which contains precisely three arcs of $\Gamma$, and hence we can identify this neighbourhood with the base of a bypass. 

Thus, given a contact 3-manifold $(M, \xi)$ with convex boundary, we can attach a bypass to the boundary along an attaching arc $c$ on $\partial M$. The result is a slightly larger contact 3-manifold $(M', \xi')$. It has corners on its boundary, but these can be rounded so as to obtain a convex boundary: see figure \ref{fig:a2b}. As $M'$ is ``$M$ with a bump on the boundary'', clearly $M'$ is homeomorphic to $M$. However in general the two contact manifolds are not contactomorphic. The effect of a bypass attachment on the dividing set is shown in figures \ref{fig:a2b} and \ref{fig:a3} and we refer to it as \emph{outwards bypass surgery} along $c$. 

If one side of the surface (such as `upward' or `downward') and the attaching arc are specified, then that determines the effect of bypass surgery. In such cases we call the surgery \emph{upwards or downwards bypass surgery}, respectively.

Note how `attachment' is an operation on contact manifolds and `surgery' is one on sutured surfaces, and the latter does not determine the former. In other words, it is not enough to keep track only of dividing sets. For instance, if we perform the three operations of figure \ref{fig:a3} in sequence, the dividing set returns to its original configuration. However after two steps, the bypasses join to form an overtwisted disk. So the resulting contact structure is always overtwisted; in fact, in general it is not even homotopic to the original contact structure \cite{Huang14_bypass}. 

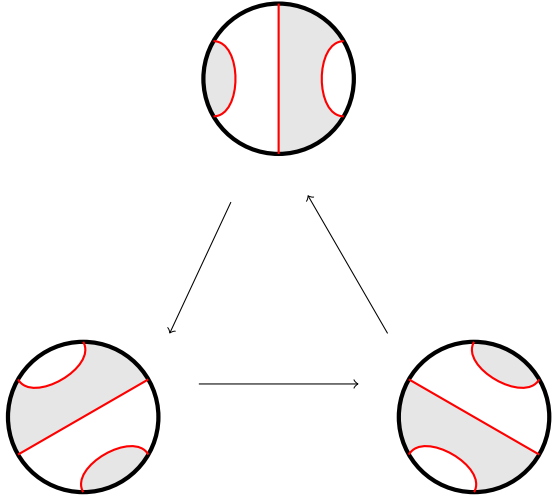
\begin{figure}
\begin{center}
\begin{tikzpicture}[
scale=1, 
suture/.style={thick, draw=red},
boundary/.style={ultra thick},
vertex/.style={draw=red, fill=red}]
\coordinate (0a) at ($ (90:1) + (-150:3) $);
\coordinate (1a) at ($ (30:1) + (-150:3) $);
\coordinate (2a) at ($ (-30:1) + (-150:3) $);
\coordinate (3a) at ($ (-90:1) + (-150:3) $);
\coordinate (4a) at ($ (-150:1) + (-150:3) $);
\coordinate (5a) at ($ (150:1) + (-150:3) $);
\filldraw[fill=black!10!white, draw=none] (4a) arc (210:150:1) to [bend right=90] (0a) arc (90:30:1) -- cycle;
\filldraw[fill=black!10!white, draw=none] (2a) to [bend right=90] (3a) arc (-90:-30:1);
\draw [boundary] (-150:3) circle (1 cm);
\draw [suture] (5a) to [bend right=90] (0a);
\draw [suture] (4a) -- (1a);
\draw [suture] (2a) to [bend right=90] (3a);
\coordinate (0) at ($ (90:1) + (90:3) $);
\coordinate (1) at ($ (30:1) + (90:3) $);
\coordinate (2) at ($ (-30:1)  + (90:3) $);
\coordinate (3) at ($ (-90:1) + (90:3) $);
\coordinate (4) at ($ (-150:1) + (90:3) $);
\coordinate (5) at ($ (150:1) + (90:3) $);
\filldraw[fill=black!10!white, draw=none] (0) arc (90:30:1) to [bend right=90] (2) arc (-30:-90:1) -- cycle;
\filldraw[fill=black!10!white, draw=none] (4) to [bend right=90] (5) arc (150:210:1);
\draw [boundary] (90:3) circle (1 cm);
\draw [suture] (1) to [bend right=90] (2);
\draw [suture] (0) -- (3);
\draw [suture] (4) to [bend right=90] (5);
\coordinate (0b) at ($ (90:1) + (-30:3) $);
\coordinate (1b) at ($ (30:1) + (-30:3) $);
\coordinate (2b) at ($ (-30:1) + (-30:3) $);
\coordinate (3b) at ($ (-90:1) + (-30:3) $);
\coordinate (4b) at ($ (-150:1) + (-30:3) $);
\coordinate (5b) at ($ (150:1) + (-30:3) $);
\filldraw[fill=black!10!white, draw=none] (2b) arc (-30:-90:1) to [bend right=90] (4b) arc (210:150:1) -- cycle;
\filldraw[fill=black!10!white, draw=none] (0b) to [bend right=90] (1b) arc (30:90:1);
\draw [boundary] (-30:3) circle (1 cm);
\draw [suture] (3b) to [bend right=90] (4b);
\draw [suture] (2b) -- (5b);
\draw [suture] (0b) to [bend right=90] (1b);
\draw [->] (115:1.5) -- (-165:1.5);
\draw [->] (-135:1.5) -- (-45:1.5);
\draw [->] (-15:1.5) -- (75:1.5);
\end{tikzpicture}
\end{center}
\caption{Effect of bypass addition on the dividing set.}
\label{fig:a3}
\end{figure}

Also note that while bypass surgery certainly changes the dividing set $\Gamma$ into a new dividing set $\Gamma'$, the resulting $\Gamma'$ may be isotopic to $\Gamma$ after just one step. In this case the resulting $(M', \xi')$ is in fact contactomorphic to $(M, \xi)$ \cite[lem.\ 2.10]{Hon02} and we say the bypass is a \emph{trivial bypass}.

Bypass addition is, in a certain sense, the smallest nontrivial change that can be made to a contact 3-manifold. It does not change the topology of the manifold, 
but it can change the isotopy class of the contact structure.
 Indeed, the operation of bypass surgery on a dividing set is the simplest local change one can make to a dividing set which maintains the Euler class evaluation $\chi(R_+) - \chi(R_-)$ on the surface. 

Apart from adding bypasses to the exterior of a contact 3-manifold, we can also \emph{subtract} them. Let $c$ be an attaching arc on $\partial M$. We say that a bypass \emph{exists inside $M$ along $c$} if there exists a submanifold $(N, \xi|_N) \subset (M, \xi)$ which is contactomorphic to a bypass, via a contactomorphism which identifies the base of the bypass with $N \cap \partial M$, where $N \cap \partial M$ is a neighbourhood of $c$ on $\partial M$. That is, a bypass exists inside $M$ along $c$ if we may ``dig out'' a bypass, digging down from (a neighbourhood of) $c$ as the base of the bypass. 

If a bypass exists inside $M$ along $c$, it is unique in an appropriate sense, 
as we prove below. (It is certainly not new, but we have not seen an explicit proof in the literature.)

In general, given a contact 3-manifold $(M, \xi)$ with convex boundary, dividing set $\Gamma$ on $\partial M$, and attaching arc $c$, it may be difficult to tell whether a bypass exists inside $M$ along $c$. However, some things may be said: see \cite[sec.\ 3.2]{Me09Paper} 
for further discussion. We can easily describe what the dividing set would be after removing the bypass: it is the result of performing the inverse operation of outwards bypass surgery, which we call \emph{inwards bypass surgery} along $c$. If a bypass exists in $M$ along $c$, and then we attach (outside $M$) a bypass along $c$, then the two bypasses join to give an overtwisted disc. If $\xi$ is tight, and the result of inwards bypass surgery along $c$ is a dividing set which fails Giroux's criterion, hence with an overtwisted neighbourhood, then no bypass exists inside $M$ along $c$.

On the other hand, if performing inwards bypass surgery along $c$ results in a dividing set $\Gamma'$ isotopic to the original $\Gamma$, then a bypass exists and removing the bypass gives a contact 3-manifold $(M', \xi')$ contactomorphic to the original $(M, \xi)$. Indeed, in this case $(M, \xi)$ is obtained from $(M', \xi')$ by a trivial bypass attachment. Thus, ``trivial bypasses always exist''. Honda--Kazez--Mati\'{c} flippantly refer to this idea as the ``right to life'' principle (see \cite[lem.\ 2.9]{Hon02}, \cite[prop.\ 2.2]{HKM01}).

We now prove the uniqueness of bypasses existing in $M$ along an attaching arc.
\begin{lem}
\label{lem:unique_bypass}
Let $c,c'$ be isotopic attaching arcs (through other attaching arcs) on the boundary of a contact manifold $(M, \xi)$. Then a bypass $B$ exists inside $M$ along $c$ if and only if a bypass $B'$ exists along $c'$. Moreover, $M \setminus B$ and $M \setminus B'$ are contactomorphic.
\end{lem}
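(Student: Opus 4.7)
The plan is to realize the isotopy of attaching arcs from $c$ to $c'$ by an ambient contact isotopy of $M$ which transports any bypass along $c$ to a bypass along $c'$. By compactness of the parameter interval, it suffices to handle arbitrarily small isotopies: I would subdivide the family $\{c_t\}$ of attaching arcs into finitely many consecutive pieces such that each pair $c_{t_i}, c_{t_{i+1}}$ lies in a common small disc $D_i \subset \partial M$ containing the three arcs of $\Gamma$ required by the attaching-arc condition, then compose the resulting local contactomorphisms.

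For the local step, take a disc $D \subset \partial M$ containing both $c$ and $c'$, and form a product collar $D \times [0,\epsilon) \subset M$ using the contact vector field transverse to $\partial M$; on this collar $\xi$ is $\R$-invariant in the standard sense. There is a smooth ambient isotopy $\phi_t$ of $D$ fixing $\Gamma \cap D$ setwise, fixing $\partial D$ pointwise, and carrying $c$ to $c'$. After using Giroux flexibility to adjust the characteristic foliation on $D$ so that it too is preserved by $\phi_t$, this surface isotopy lifts to a contact isotopy $\Phi_t$ of the collar, supported away from $D \times \{\epsilon/2\}$ and from $\partial D \times [0,\epsilon)$; extending by the identity produces an ambient contact isotopy of $M$. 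If $B \subset M$ is a bypass along $c$, the $\R$-invariance of the collar lets me isotope $B$ so that it lies within $D \times [0, \epsilon/2)$; then $B' := \Phi_1(B)$ is contactomorphic to a bypass with base a neighborhood of $c'$, hence is a bypass along $c'$. The reverse implication is symmetric, and $\Phi_1$ restricts to the required contactomorphism $M \setminus B \cong M \setminus B'$.

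The principal technical obstacle is the contact lift in the middle step: the surface isotopy $\phi_t$ on $D$ a priori preserves only the dividing set and not the characteristic foliation, so it does not automatically integrate to a contact isotopy. This is overcome by combining Giroux's flexibility theorem (which allows one to replace the characteristic foliation by any other singular foliation divided by $\Gamma \cap D$) with the fact that the contact structure on a convex half-ball is determined, rel boundary, by its boundary dividing set; the resulting comparison contactomorphism supplies the needed lift in the standard model, after which the global statement follows by concatenating the finitely many local moves produced in the first step.
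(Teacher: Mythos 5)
Your approach is genuinely different from the paper's. The paper gives a short, purely ``combinatorial'' argument via the right-to-life principle: when $c,c'$ are disjoint, removing the bypass $B$ along $c$ makes $c'$ a trivial attaching arc, so a trivial bypass $B'$ exists along $c'$ inside $M\setminus B$; then $M\setminus B$, $M\setminus(B\cup B')$, and $M\setminus B'$ are all contactomorphic since each pair differs by a trivial bypass. The general case is reduced to the disjoint case by inserting a third arc $c''$ disjoint from both. Your plan, by contrast, is to build an explicit ambient contact isotopy carrying $c$ to $c'$, which is a more geometric and direct idea, but it runs into difficulties the paper's argument avoids.

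The central gap is the ``contact lift'' step, which you correctly identify as the hard part but do not actually resolve. Any contactomorphism $\Phi_1$ of $M$ preserving $\partial M$ (as a set) must preserve the characteristic foliation on $\partial M$, not merely the dividing set; a surface diffeomorphism $\phi_1$ that carries $c$ to $c'$ while fixing $\Gamma$ setwise will generically \emph{not} preserve the characteristic foliation, and therefore cannot be the boundary trace of a contactomorphism as stated. You propose to remedy this via Giroux flexibility and via uniqueness of the tight ball, but both moves are problematic as invoked: Giroux flexibility changes the characteristic foliation only after a $C^0$-small \emph{ambient} isotopy of $D$ in $M$, which itself moves the surface and is not supported away from the inner face $D\times\{\epsilon/2\}$; and Eliashberg's uniqueness of the tight contact structure on a ball is a statement rel a \emph{fixed} boundary germ (or fixed boundary characteristic foliation), whereas the two contact structures you wish to compare on the collar agree on the boundary only up to dividing set. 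Neither fact, as stated, produces the contactomorphism you claim, let alone one that is the identity on $D\times\{\epsilon/2\}$ and on $\partial D\times[0,\epsilon)$. In addition, the preliminary step ``isotope $B$ so that it lies within $D\times[0,\epsilon/2)$'' is not justified and is false in general (the half-disc of a bypass can reach arbitrarily deep into $M$), though fortunately that step is also unnecessary, since a global ambient contactomorphism would carry $B$ to a bypass along $c'$ wherever $B$ sits. The lemma is indeed folklore and is surely provable by some refinement of your strategy, but as written your proof has a genuine unfilled hole precisely where the contact geometry is hardest; the paper's right-to-life argument, by working entirely at the level of bypass attachments and dividing sets, sidesteps all of this.
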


\begin{proof}
First suppose $c,c'$ are disjoint, and the bypass $B$ exists along $c$.  After removing $B$, it is easy to check that the attaching arc $c'$ becomes trivial, so by the right-to-life principle a trivial bypass $B'$ exists along $c'$ in $M \setminus B$. In particular, $B'$ is disjoint from $B$. Similarly, after removing $B'$ from $M$, the bypass $B$ becomes trivial. Thus $M \setminus B$, $M \setminus B'$ and $M \setminus (B \cup B')$ are contactomorphic, and the result holds in this case.

If $c$ and $c'$ are not disjoint, we may take a third attaching arc $c''$ disjoint from both, and apply the previous paragraph to the pairs $(c,c'')$ and $(c',c'')$.
\end{proof}

One useful application of these ideas is when our contact 3-manifold is a tight 3-ball $B$, with connected dividing set $\Gamma$ on $\partial B$. Any attaching arc $c$ on $\partial B$ must be in one of the two configurations shown in figure \ref{fig:a4}. In the first case, a bypass added along $c$ produces a disconnected dividing set, hence an overtwisted contact structure, but a bypass exists inside $B$ along $c$ by the right-to-life principle. In the second case, we obtain precisely opposite results: a bypass added along $c$ is a trivial bypass, so results in a tight contact structure, but no bypass exists inside $B$ along $c$, as inwards bypass surgery produces a disconnected dividing set. We call the attaching arc \emph{inner} or \emph{outer} accordingly.

\begin{figure}
\begin{center}
\includegraphics[scale=0.3]{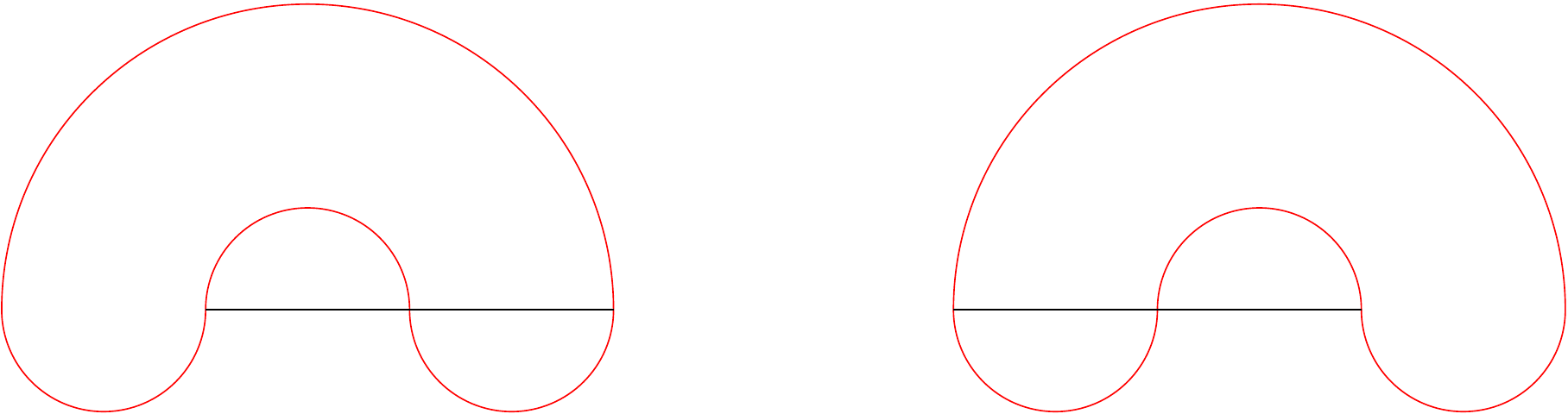}
\end{center}
\caption{Inner (left) and outer (right) attaching arcs on a tight $\partial B^3$.}
\label{fig:a4}
\end{figure}

\subsection{Honda's theorem} 
\label{sec:gluing_tight_contact_structures}

From the above discussion follows a general principle for classifying the tight contact structures on a sutured 3-manifold $(M, \Gamma)$, via decomposition along convex surfaces. This is Honda's theorem of \cite{Hon02}, which built on work of Kanda \cite{Kan97}, Torisu \cite{Torisu00} and Colin \cite{Colin97_chirurgies,Colin99_recollement}.

Roughly, the idea is as follows. Suppose we can cut $M$ along a properly embedded surface $S$ to obtain another, simpler 3-manifold $M'$. We assume that we have a complete understanding of the tight contact structures on $M'$, and use this understanding to give a classification of all tight contact structures on $M$. 

When we say we assume a ``complete" understanding of contact structures on $M'$, we really mean \emph{complete}! We need to know, for every possible set of sutures $\Gamma'$ on $\partial M'$: a complete classification of all tight contact structures $\xi'$ on $(M', \Gamma')$; for each such contact structure $\xi'$, the locations of all attaching arcs on $\partial M'$ where bypasses exist inside $M'$; and also for each such contact structure $\xi'$, the effect of adding a bypass along any attaching arc. The idea is that if we know all this information about $M'$, then this is enough to give us similar information for $M$. In particular, any tight contact structure $\xi$ on $(M, \Gamma)$ will restrict to a tight contact structure $\xi'$ on $(M', \Gamma')$ for some $\Gamma'$, and hence $\xi$ can be constructed from the fully-understood $\xi'$ on $(M', \Gamma')$. There may be several $\xi'$ on $(M', \Gamma')$, with various $\Gamma'$, which glue up to give contact structures isotopic to $\xi$ on $(M, \Gamma)$. But the idea is that if $\xi'_1, \xi'_2$ on $M'$ glue up to give contact isotopic contact structures $\xi_1, \xi_2$ on $M$, then, starting from $\xi_1$, we can move bypasses through $S$ in $M$, until we obtain $\xi_2$. And the effect of moving these bypasses is equivalent to bypass addition and subtraction on $M'$, which is fully understood.

Let us now be a bit more precise. Consider a sutured 3-manifold $(M, \Gamma)$. Then $\Gamma$ defines a contact structure $\xi_\partial$ near $\partial M$. We consider a compact, oriented, properly embedded surface $S$ in $M$, along which we wish to cut in the manner of convex decomposition theory as in section \ref{sec:sutured_manifolds}. Hence we require that $\partial S \subset \partial M$ satisfy the condition in the Legendrian realisation principle. In fact, we assume the slightly stronger condition that 
each component of
$\partial S$ intersects $\Gamma$ nontrivially.

Then, given any contact structure $\xi$ on $(M, \Gamma)$, we can make $S$ convex as described in section \ref{sec:convex_surfaces}; 
note that, after making $\partial S$ Legendrian, the non-positive (in fact, negative) twisting of $\xi$ along $\partial S$ follows from the previous assumption.

We consider the possible dividing sets $\Gamma_S$ which can arise on $S$. Any such $\Gamma_S$ must interleave with the dividing set $\Gamma$ along $\partial S$, as discussed in section \ref{sec:edge_rounding_bypasses}. The set of isotopy classes of such $\Gamma_S$ is countable. Let us fix one such isotopy class.

Cutting $M$ along $S$ produces a manifold with boundary and corners; the boundary consists of $\partial M$ and two copies of $S$; each copy of $S$ meets $\partial M$ along the corners. Considering sutures $\Gamma$ on $\partial M$ and $\Gamma_S$ on each copy of $S$, we obtain a sutured manifold with corners. Rounding the corners yields a sutured 3-manifold which we denote with $(M', \Gamma')$; so $\Gamma'$ is obtained from $\Gamma$ and the two copies of $\Gamma_S$ by edge-rounding.

By assumption, we can then enumerate the isotopy classes of tight contact structures on $(M', \Gamma')$. A contact structure $\xi'$ on $(M', \Gamma')$ determines a contact structure $\xi$ on $(M, \Gamma)$ by gluing the two copies of $S$ on $\partial M'$ back together as discussed in section \ref{sec:sutured_manifolds}. Moreover, every (isotopy class of) contact structure $\xi$ on $(M, \Gamma)$ arises by gluing up some $\xi'$ on some $(M', \Gamma')$ in this way.

\begin{defn}
\label{def:configuration}
Let $(M, \Gamma)$ be a sutured 3-manifold, so that $\Gamma$ defines a contact structure near $\partial M$. Let $S$ be an embedded 
surface in $M$ all of whose boundary components are Legendrian in $\partial M$. Then a \emph{configuration} on $(M, \Gamma, S)$ is a pair $(\Gamma_S, \xi')$, where
\begin{itemize}
\item 
$\Gamma_S$ is a set of curves on $S$, up to isotopy, whose endpoints interleave with $\Gamma \cap \partial S$ along $\partial S$, so that cutting $M$ along $S$, drawing sutures $\Gamma_S$ on both copies of $S$, and edge-rounding yields a sutured manifold $(M', \Gamma')$, and
\item $\xi'$ is an (isotopy class of) contact structure on $(M', \Gamma')$. 
\end{itemize}
We denote the set of all configurations on $(M, \Gamma, S)$ by $\Conf(M,\Gamma,S)$.
\end{defn}

Note that in a configuration $(\Gamma_S, \xi')$, neither $\Gamma_S$ nor the resulting $\Gamma'$ need to satisfy Giroux's criterion, and even if they do, $\xi'$ can be overtwisted.
On the other hand, if $(M, \Gamma)$ is balanced, then automatically so is $(M', \Gamma')$. This is because if a connected component of $S\setminus\Gamma_S$ belongs to $R_+$ along one copy of $S$, then the identical set along the other copy of $S$ belongs to $R_-$.
In order to discuss the classification of contact structures, we introduce the following notation.

\begin{defn}
Let $(M, \Gamma)$ be a sutured 3-manifold.
\begin{enumerate}
\item Denote by $\T(M, \Gamma)$ the set of isotopy classes of tight contact structures on $(M, \Gamma)$.
\item Define $\T_* (M, \Gamma) = \T(M, \Gamma) \cup \{*\}$, where $*$ is an extra element which we use to denote overtwisted contact structures.
\end{enumerate}
\end{defn}

A tight contact structure on $(M, \Gamma)$ gives an element of $\T(M, \Gamma)$, and any contact structure whatsoever on $(M, \Gamma)$  gives an element of $\T_* (M, \Gamma)$.

The above discussion constructs a map $F\colon \Conf(M, \Gamma, S) \To \T_* (M, \Gamma)$, which takes a configuration $(\Gamma_S, \xi')$ to the element of $\T_* (M, \Gamma)$ corresponding to the contact structure obtained by gluing together the two copies of $S$ on the boundary of $(M',\xi')$. The argument above shows that $F$ is surjective. But $F$ may not be injective: many configurations may lead to isotopic contact structures on $(M, \Gamma)$. 

By studying bypasses, we can find which configurations correspond to tight contact structures, and which configurations correspond to isotopic tight contact structures. For instance, suppose that $F(\Gamma_S, \xi') = \xi$, and a bypass exists in $(M', \xi')$ along an attaching arc $c$ on one of the two copies of $S$. Then if we remove this bypass from $M'$, and attach a bypass to the other copy of $c$ (on the other copy of $S$), we obtain another configuration $(\Gamma'_S, \eta')$. Here $\Gamma'_S$ differs from $\Gamma_S$ by a bypass surgery along $c$. This configuration gives a contact structure $\eta$ on $M$ isotopic to the original $\xi$: indeed, $\xi$ and $\eta$ differ only in that a bypass has been passed through $S$. Thus $F(\Gamma_S, \xi') = F(\Gamma'_S, \eta')$. We refer to this ``passing a bypass" as a ``state transition" between configurations, as defined below. 

\begin{defn}
\label{def:transition}
Let $(\Gamma_S, \eta$) and $(\Gamma'_S, \eta')$ be configurations on $(M, \Gamma, S)$. There is a \emph{state transition} $(\Gamma_S, \eta) \To (\Gamma'_S, \eta')$ if
\begin{enumerate}
\item $\eta'$ can be obtained from $\eta$ by removing a bypass from $\eta$ along an attaching arc $c$ on one copy of $S$, and attaching a bypass along the other copy of $c$ on the other copy of $S$, and
\item the effect of the bypass removal and attachment on the dividing set yields $\Gamma'_S$ on each copy of $S$.
\end{enumerate}
\end{defn}

A configuration $C = (\Gamma_S, \xi')$, where $\xi'$ is overtwisted, will definitely glue up to give an overtwisted contact structure on $M$; but if $\xi'$ is tight, gluing it up may give a tight or overtwisted contact structure on $M$. Hence we make the following definition.

\begin{defn}
A configuration $C = (\Gamma_S, \xi') \in \Conf(M, \Gamma, S)$ is \emph{potentially tight} if $\xi'$ is tight. We call $C$ \emph{overtwisted} if $\xi'$ is overtwisted.
\end{defn}

The essence of Honda's theorem is that (under certain natural conditions) state transitions effected by bypass operations are enough to connect any configurations representing the same contact structure on $(M, \Gamma)$, giving a precise meaning to the idea that bypass addition is the smallest nontrivial operation on a contact manifold. Formally, we make the following definitions.

\begin{defn}
\label{def:configuration_graph}
The \emph{configuration graph} $\GG(M, \Gamma, S)$ is a directed graph with vertices given by the configurations $C \in \Conf(M, \Gamma, S)$, and directed edges given by the state transitions.
\end{defn}

The graph $\GG(M, \Gamma, S)$ is in fact bidirected: a bypass that has been pushed across $S$ can be pushed right back. In other words, we may think of $\GG(M, \Gamma, S)$ as an undirected graph. Some vertices of $\GG(M, \Gamma, S)$ are potentially tight; we now define a notion of tightness that will correspond to the contact-topological notion.

\begin{defn} \
\label{def:tight_configs}
\begin{enumerate}
\item
A connected component of $\GG(M, \Gamma, S)$ is \emph{tight} if every configuration in the component is potentially tight. 
\item A configuration $C \in \Conf(M, \Gamma, S)$ is \emph{tight} if it lies in a tight component of $\GG(M, \Gamma, S)$.
\item Let $\GG_0(M, \Gamma, S)$ be the subgraph of $\GG(M, \Gamma, S)$ consisting of its tight components.
\end{enumerate}
\end{defn}

The statement of Honda's theorem is as follows.

\begin{thm}[Honda \cite{Hon02}]
\label{thm:Honda_gluing}
Let $(M, \Gamma)$ be an irreducible sutured 3-manifold, so that $\Gamma$ defines a contact structure near $\partial M$. Let $S$ be a properly embedded, incompressible surface in $M$, all of whose boundary components (possibly there are none) are Legendrian and intersect $\Gamma$ nontrivially. Then there is a bijection $\pi_0 (\GG_0(M, \Gamma, S)) \To \T(M, \Gamma)$, induced by the gluing map $F$ above.
\qed
\end{thm}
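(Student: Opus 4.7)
The plan is to show that the gluing map $F \colon \Conf(M, \Gamma, S) \to \T_*(M, \Gamma)$ (i) respects state transitions, so it descends to a well-defined map on $\pi_0(\GG(M,\Gamma,S))$; (ii) restricts to a map $\pi_0(\GG_0(M,\Gamma,S)) \to \T(M, \Gamma)$; and (iii) is a bijection on $\pi_0(\GG_0)$. Well-definedness under state transitions is essentially built into the definition: a state transition removes a bypass from $(M', \xi')$ along one copy of $S$ and attaches an identical bypass along the mirror arc on the other copy, so after regluing the two copies of $S$ the bypass merely shifts location inside $M$ by an ambient isotopy supported in a neighbourhood of $S$. Hence the glued contact structure on $(M, \Gamma)$ is unchanged up to isotopy, and $F$ descends to $\pi_0(\GG)$.

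Next I would show that $F$ sends tight components to tight contact structures, and conversely that every tight contact structure arises from a tight component. For the forward direction, if $\xi = F(\Gamma_S, \xi')$ were overtwisted with overtwisted disc $D$, one isotopes $D$ off $S$ using a standard innermost-disc argument on $D \cap S$, relying on the incompressibility of $S$ and the irreducibility of $M$; then $D$ sits inside $M'$, contradicting tightness of $\xi'$. For the converse, let $\xi$ be tight on $(M, \Gamma)$. Since each component of $\partial S$ meets $\Gamma$ nontrivially, the Legendrian realisation principle guarantees non-positive twisting of $\xi$ along $\partial S$, so $S$ may be isotoped to a convex surface with Legendrian boundary. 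Cutting along this convex $S$ produces a configuration $(\Gamma_S, \xi')$ with $\xi'$ tight (as a sub-contact structure of the tight $\xi$) and $F(\Gamma_S, \xi') = \xi$. Every state transition from this configuration again represents $\xi$, hence is potentially tight, so its entire component lies in $\GG_0$. This simultaneously yields surjectivity and confirms the image lands in $\T(M, \Gamma)$.

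The core difficulty is injectivity. Suppose two tight configurations $C_i = (\Gamma_S^i, \xi_i')$, $i = 1, 2$, satisfy $F(C_1) = F(C_2) = \xi$. Fix a contact isotopy realising this equality and transport $S$ along it, producing, inside the fixed $(M, \xi)$, two convex realisations $S^0, S^1$ of $S$ with the same Legendrian boundary, joined by a topological isotopy. The key technical input, due to Honda and Colin (cf.\ \cite{Hon02, Colin99_recollement}), is that this isotopy can be perturbed to a family $\{S^t\}_{t \in [0,1]}$ which is convex except at finitely many bifurcation times, and such that at each bifurcation the dividing set $\Gamma_{S^t}$ changes by a single bypass move. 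Under the cutting construction, such a bypass move corresponds precisely to removing a bypass from $(M', \xi')$ along one copy of $S$ and attaching the mirror bypass along the other copy, i.e., a state transition. Since $(M, \xi)$ is tight, no overtwisted disc appears along the family, so every intermediate configuration is potentially tight and the path stays inside $\GG_0$, connecting $C_1$ to $C_2$. The main obstacle is justifying this isotopy-discretisation rigorously: one must arrange that each bifurcation of the convex family is exactly a bypass move rather than a more global rearrangement, and here the incompressibility of $S$ together with the irreducibility of $M$ are essential for ruling out topological ambiguities that would otherwise obstruct the reduction to bypass moves.
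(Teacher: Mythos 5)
The paper does not prove theorem \ref{thm:Honda_gluing}; it is cited from \cite{Hon02} and stated without proof, followed only by an informal sketch of the proof idea. Your proposal agrees with that sketch, and with the argument in \cite{Hon02}, on two of the three main points: $F$ descends to $\pi_0(\GG)$ because a state transition merely shifts a bypass through $S$ inside $(M, \xi)$, and injectivity follows from discretising the isotopy between two convex realisations of $S$ into a finite sequence of bypass moves.

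Your forward direction --- that a tight component maps to a tight glued contact structure --- is however flawed. You propose that if $\xi = F(\Gamma_S, \xi')$ contained an overtwisted disc $D$, one could isotope $D$ off $S$ by a standard innermost-disc argument (using incompressibility of $S$ and irreducibility of $M$), placing $D$ inside $M'$ and contradicting tightness of $\xi'$. This cannot work: a topological isotopy of $D$ does not preserve the property of $D$ being an overtwisted disc for $\xi$, and one cannot move $D$ through $S$ contact-geometrically without altering the dividing set $\Gamma_S$. Moreover, the conclusion you reach --- that $\xi'$ itself is overtwisted --- is too strong and in fact false in general: potentially tight configurations can glue up to overtwisted contact structures, which is precisely why the theorem works with tight \emph{components} rather than with potential tightness of a single configuration. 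The correct argument, as the paper sketches after the theorem and as in \cite{Hon02}, uses the same isotopy discretisation you invoke for injectivity: one pushes the overtwisted disc away from $S$ by a finite sequence of bypass moves, each of which is a state transition, until one reaches a configuration whose cut-open contact structure is overtwisted. The correct conclusion is that the \emph{component} of $(\Gamma_S, \xi')$ in $\GG$ contains an overtwisted configuration and hence is not tight --- not that $\xi'$ itself is overtwisted. Modulo replacing the innermost-disc step with this discretisation argument, your sketch follows the standard route.
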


Here $\pi_0(\GG_0)$ refers to the set of connected components of $\GG_0(M, \Gamma,S)$, i.e., to the set of tight components of $\GG(M, \Gamma,S)$. 

The bijection in Honda's theorem is essentially a quotient of the map $F\colon \Conf(M, \Gamma, S) \To \T_* (M, \Gamma)$ defined above. Since two configurations related by a state transition give isotopic contact structures, $F$ descends to a map $\pi_0 (\GG(M,\Gamma_S)) \To \T_* (M, \Gamma)$. Overtwisted configurations map to $*$, as do any configurations connected to them by state transitions. A potentially tight configuration from which state transitions can only reach other potentially tight configurations yields a tight contact structure. Equivalently, if there is an overtwisted disc in a contact structure, we can eventually push it away from $S$ via bypass operations, reaching a configuration on $M'$ that is overtwisted. Any two configurations which correspond to isotopic contact structures on $(M, \Gamma)$ are related by bypass operations and an isotopy in $M'$. 

If we understand the contact topology of $M'$, then we can, at least in principle, construct the graph $\GG(M,\Gamma,S)$, and then the theorem provides an understanding of the contact topology of $M$.

An arbitrary sutured 3-manifold $(M, \Gamma)$ can be successively decomposed as above, until we arrive at a collection of balls. Thus, in principle at least, our understanding of the contact topology of $B^3$ can (eventually) give an understanding of the contact topology of $(M, \Gamma)$.

For us $M$ will always be a handlebody, $S$ a disjoint union of discs and $M'$ the disjoint union of two balls, $M' = M^+ \sqcup M^-$. As we have discussed, the contact topology of a ball $B$ is quite simple. Namely, if the dividing set $\Gamma$ on $\partial B$ is connected then there is a unique (isotopy class of) tight contact structure on $(B, \Gamma)$, and every attaching arc is inner or outer.

Thus, in this case, if $C = (\Gamma_S, \xi')$ is a potentially tight configuration, then $\Gamma_S$ must consist of a \emph{chord diagram} on each disc of $S$ (any closed curves would produce an overtwisted contact structure), such that, when corners are rounded, we obtain a connected dividing set on each of $\partial M^+$ and $\partial M^-$; and $\xi'$ must belong to the unique isotopy class of tight contact structures on $M^+$ and $M^-$. Since there is a unique such $\xi'$, up to isotopy, we can specify our configurations simply by $\Gamma_S$.

\subsection{Dividing sets in contact cylinders}
\label{sec:dividing_sets_cylinders}

It will be important later to consider some contact structures on a specific and simple family of sutured 3-manifolds: solid cylinders $D^2 \times I$, with ``vertical" sutures on the ``side" $\partial D^2 \times I$. It turns out that fixing sutures on the side, and considering possible dividing sets on the ``top" and ``bottom" discs $D^2 \times \{0,1\}$, leads to an interesting structure, studied by the second author in \cite{Me09Paper, Me10_Sutured_TQFT}.

More precisely, take the closed disc $D^2$, so $D^2 \times [0,1]$ is topologically a ball, with corners along $\partial D^2 \times \{0,1\}$. Consider sutures on this manifold, interleaving along the corners, as follows. Let $n$ be a positive integer and let $F$ be a set of $2n$ points on $\partial D^2$. Take sets of sutures $\Gamma_0, \Gamma_1$ respectively on $D^2 \times \{0\}, D^2 \times \{1\}$ with boundaries $F \times \{0\}, F \times \{1\}$. Take sutures on $\partial D^2 \times I$ consisting of $2n$ vertical curves $\{\cdot\} \times [0,1]$, interleaving with the curves of the $\Gamma_i$.

Following \cite{Me09Paper}, we denote this cylinder, a sutured manifold with corners, by $\M(\Gamma_0, \Gamma_1)$. As usual, the sutures define a contact structure $\xi_\partial$ near the boundary. 

If $\Gamma_0$ or $\Gamma_1$ contains a closed curve then $\xi_\partial$ is overtwisted; if $\Gamma_0, \Gamma_1$ contain no closed curves then they are chord diagrams, and $\xi_\partial$ may be tight or overtwisted. If, after edge rounding, the dividing set consists of a single closed curve, then $\xi_\partial$ is tight and there is a unique (isotopy class of) tight contact structure on the ball extending $\xi_\partial$. On the other hand, if after edge rounding the dividing set is disconnected, then $\xi_\partial$ is overtwisted, and there is no tight contact structure on $\M(\Gamma_0, \Gamma_1)$. We refer to $\M(\Gamma_0, \Gamma_1)$ as \emph{tight} or \emph{overtwisted} respectively, as in \cite[defn.\ 3.5]{Me09Paper}.
It is not hard to show that $\M(\Gamma, \Gamma)$ is tight for any chord diagram $\Gamma$.

In \cite[lem.\ 3.1]{Me09Paper}, the second author proved the following. It is a version of the \emph{isotopy discretisation} principle: see also \cite{Colin97_chirurgies} and \cite{Hon02}.

\begin{lem}[\cite{Me09Paper}]
\label{lem:cylinders_from_bypasses}
If $\M(\Gamma, \Gamma')$ is tight, then the tight contact structure so obtained on the solid cylinder is contactomorphic to 
$\M(\Gamma, \Gamma)$
with a finite set of bypass attachments.
\qed
\end{lem}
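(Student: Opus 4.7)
The plan is to prove this by isotopy discretisation, adapting the arguments of \cite{Colin97_chirurgies, Hon02} to the setting of solid cylinders with vertical side sutures. Given a tight contact structure $\xi$ on $\M(\Gamma, \Gamma')$, the idea is to foliate the cylinder by the horizontal family of discs $D_t = D^2 \times \{t\}$ for $t \in [0,1]$, make each slice convex in a generic one-parameter way, and track how the induced chord diagram evolves from $\Gamma$ at the bottom to $\Gamma'$ at the top. Each elementary change in the chord diagram as $t$ increases should then correspond to a bypass attachment, stacking up to build the given $\xi$ from the vertically invariant structure $\M(\Gamma, \Gamma)$.

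First, for each $t$ the boundary circle $\partial D^2 \times \{t\}$ meets the $2n$ vertical sutures on the side transversely and satisfies the nonisolating condition, and the contact framing relative to the disc framing equals $-n \le 0$, so the Legendrian realisation principle together with \cite[prop.\ 3.1]{Hon00I} allows $D_t$ to be perturbed to be convex with Legendrian boundary. In a generic one-parameter family the slices $D_t$ are convex except at finitely many times $t_1 < \cdots < t_k$, and the dividing set $\Gamma_t$ is constant up to isotopy on each open interval in between. Because tightness of $\xi$ restricts to tightness on any subcylinder, no closed dividing curve can appear on any $D_t$, so each $\Gamma_t$ is a chord diagram. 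Between consecutive non-generic times, the resulting $I$-invariant piece is contactomorphic to the unique tight $\M(\Gamma_t, \Gamma_t)$. At each $t_i$ the chord diagram undergoes a single elementary retrogression, which is precisely a bypass move along an attaching arc on $D_{t_i}$. Concatenating these pieces from bottom to top realises $\xi$ as the vertically invariant $\M(\Gamma, \Gamma)$ followed by a finite ordered sequence of bypass attachments.

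The main obstacle is the second step: rigorously controlling the one-parameter family of convex discs. One must ensure the Legendrian boundary condition can be maintained uniformly in $t$ (in particular that the endpoints of $\Gamma_t$ remain parked on the prescribed $2n$ points throughout), that generic transitions really consist of single bypass moves rather than more complicated simultaneous modifications, and that no closed dividing curve can sneak into $\Gamma_t$ at a non-generic time. These technical points are worked out in families in \cite{Hon02} and specifically for the $\M$-cylinders in \cite{Me09Paper}, and I would import those arguments essentially verbatim.
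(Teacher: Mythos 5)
Your proposal takes the same route the paper indicates: this lemma is cited from \cite[lem.\ 3.1]{Me09Paper} and described there as ``a version of the isotopy discretisation principle'' of \cite{Colin97_chirurgies, Hon02}, which is exactly the movie-of-convex-discs argument you sketch. You correctly flag the genuine technical content --- genericizing the family while keeping the boundary Legendrian, and showing that non-generic moments correspond to single bypass moves --- as what one imports from those references.
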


In particular, if $\M(\Gamma, \Gamma')$ is tight, then attaching the bypasses of lemma \ref{lem:cylinders_from_bypasses} in sequence, we obtain a sequence of dividing sets
\[
\Gamma = \Gamma_0, \Gamma_1, \ldots, \Gamma_m = \Gamma'
\]
where for each $i \geq 0$, the set $\Gamma_{i+1}$ is obtained from $\Gamma_i$ by an upwards bypass surgery. Each $\M(\Gamma, \Gamma_i)$ obtains a contact structure by attaching the first $i$ of these bypasses to 
$\M(\Gamma, \Gamma)$.
Being contactomorphic to a sub-cylinder of the tight $\M(\Gamma, \Gamma')$, each $\M(\Gamma, \Gamma_i)$ is tight.

We shall need the following lemma later.

\begin{lem}
\label{lem:chord_diagrams_connected}
If $\M(\Gamma_0, \Gamma)$ and $\M(\Gamma_0, \Gamma')$ are both tight, then there is a sequence of chord diagrams
\[
\Gamma = \Gamma_1, \Gamma_2, \ldots, \Gamma_m = \Gamma'
\]
where each $\Gamma_{i+1}$ is obtained from $\Gamma_i$ by a bypass surgery, and each $\M(\Gamma_0, \Gamma_i)$ is tight.
\end{lem}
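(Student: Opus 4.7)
The plan is to apply Lemma \ref{lem:cylinders_from_bypasses} to both given tight cylinders and splice the resulting sequences together through $\Gamma_0$.

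First, apply Lemma \ref{lem:cylinders_from_bypasses} to the tight cylinder $\M(\Gamma_0,\Gamma)$. Since $\M(\Gamma_0,\Gamma_0)$ is tight (as noted in the paragraph preceding Lemma \ref{lem:cylinders_from_bypasses}), and $\M(\Gamma_0,\Gamma)$ is tight by hypothesis, the lemma yields a sequence of chord diagrams
\[
\Gamma_0 = \Delta_0,\ \Delta_1,\ \ldots,\ \Delta_p = \Gamma,
\]
where each $\Delta_{i+1}$ is obtained from $\Delta_i$ by an (upwards) bypass surgery, and each $\M(\Gamma_0,\Delta_i)$ is tight, being contactomorphic to a sub-cylinder of $\M(\Gamma_0,\Gamma)$. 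Apply the lemma analogously to $\M(\Gamma_0,\Gamma')$ to obtain a second sequence
\[
\Gamma_0 = \Delta'_0,\ \Delta'_1,\ \ldots,\ \Delta'_q = \Gamma',
\]
with the same properties relative to $\Gamma'$.

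Next, I would concatenate these after reversing the first. The key observation is that bypass surgery on chord diagrams is a symmetric relation: the local picture of Figure \ref{fig:a3} shows that if $\Delta_{i+1}$ is obtained from $\Delta_i$ by a bypass surgery along an attaching arc, then $\Delta_i$ is likewise obtained from $\Delta_{i+1}$ by a bypass surgery (inwards instead of outwards, or along the ``rotated'' attaching arc). Hence the reversed sequence $\Gamma = \Delta_p,\Delta_{p-1},\ldots,\Delta_0 = \Gamma_0$ is still a valid chain of bypass surgeries. Splicing at $\Gamma_0$ produces
\[
\Gamma = \Delta_p,\ \Delta_{p-1},\ \ldots,\ \Delta_0 = \Gamma_0 = \Delta'_0,\ \Delta'_1,\ \ldots,\ \Delta'_q = \Gamma',
\]
with each consecutive pair related by a bypass surgery. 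Relabel this as $\Gamma = \Gamma_1,\ldots,\Gamma_m = \Gamma'$. Every intermediate $\M(\Gamma_0,\Gamma_i)$ coincides with some $\M(\Gamma_0,\Delta_j)$ or $\M(\Gamma_0,\Delta'_j)$, and hence is tight. Each $\Gamma_i$ is then automatically a chord diagram, since otherwise a contractible closed dividing curve on the disc $D^2 \times\{1\}$ would violate Giroux's criterion.

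The only point requiring a little care is the reversal step. One could worry that Lemma \ref{lem:cylinders_from_bypasses} produces ``upwards'' bypass surgeries, whereas we want a purely symmetric, direction-free sequence. This is resolved by the observation above that the inverse of a bypass surgery is itself a bypass surgery on chord diagrams (the three local pictures in Figure \ref{fig:a3} form a cyclic orbit), so the conclusion of the lemma—which only asks for bypass surgeries between consecutive terms—is insensitive to direction. This, together with the fact that sub-cylinders of tight cylinders are tight, completes the argument.
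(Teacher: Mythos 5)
Your proof is correct and follows essentially the same route as the paper: apply Lemma~\ref{lem:cylinders_from_bypasses} to each of $\M(\Gamma_0,\Gamma)$ and $\M(\Gamma_0,\Gamma')$ to get upward-bypass sequences from $\Gamma_0$, then reverse the first and splice at $\Gamma_0$. The paper's own proof does exactly this, noting (as you also observe) that the first half of the resulting chain consists of downwards surgeries and the second half of upwards surgeries, which is permitted since the lemma only asks for bypass surgeries between consecutive terms.
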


\begin{proof}
The previous paragraph, applied to $\M(\Gamma_0, \Gamma)$, provides a sequence of dividing sets $\Gamma_0, \ldots, \Gamma_m = \Gamma$ such that for each $i \geq 0$, the set $\Gamma_{i+1}$ is obtained from $\Gamma_i$ by an upwards bypass surgery, and each $\M(\Gamma_0, \Gamma_i)$ is tight. The same argument on $\M(\Gamma_0, \Gamma')$ yields a sequence of dividing sets $\Gamma_0 = \Gamma'_0, \ldots, \Gamma'_l = \Gamma'$ such that for each $i \geq 0$, the set $\Gamma'_{i+1}$ is obtained from $\Gamma'_i$ by an upwards bypass surgery, and each $\M(\Gamma_0, \Gamma'_i)$ is tight. Putting these two sequences together gives the sequence
\[
\Gamma = \Gamma_m, \Gamma_{m-1}, \ldots, \Gamma_1, \Gamma_0 = \Gamma'_0, \Gamma'_1, \ldots, \Gamma'_l = \Gamma'.
\]
which has the desired properties. Note the first $m$ bypass surgeries are downwards, and the last $l$ surgeries are upwards.
\end{proof}

\subsection{Sutured Floer homology and contact invariants}
\label{sec:SFHbackground}

Sutured Floer homology is an invariant of balanced sutured 3-manifolds, introduced by Juh\'{a}sz in \cite{Ju06}, extending the (hat version of) Heegaard Floer homology of Ozsv\'{a}th--Szab\'{o} \cite{OS04Prop, OS04Closed, OSContact, OS06} for closed 3-manifolds.

We refer to \cite{Ju06} for a full definition of $SFH$. It suffices here to mention a few details. To define $SFH(M, \Gamma)$, we start by taking a Heegaard diagram for $(M, \Gamma)$ consisting of a surface $\Sigma$ and curves $\alpha_i$, $\beta_j$. This means that thickening $\Sigma$ to $\Sigma \times [0,1]$ and performing surgery along each $\alpha_i \times \{0\}$ and $\beta_j \times \{1\}$ yields $M$, and, furthermore, $\Gamma = \partial \Sigma \times \{1/2\}$. The signed region $R_+$ consists of the surgered $\Sigma \times \{1\}$ and $\partial \Sigma \times (1/2, 1)$; similarly $R_-$ consists of the surgered 
$\Sigma \times \{0\}$ and $\partial \Sigma \times (0, 1/2)$.

The $\alpha_i$ and $\beta_j$ form asymptotic conditions for holomorphic curves in $\Sigma \times I \times \R$ (using the cylindrical reformulation of Lipshitz \cite{Lip}). Sutured Floer homology is the homology of a chain complex generated by these asymptotic conditions (precisely, complete intersections of $\alpha_i \cap \beta_j$; note that the balanced condition implies that the $\alpha$- and $\beta$-curves form equinumerous sets), with a differential defined by counts of rigid holomorphic curves.

Originally defined over $\Z_2$ coefficients, $SFH(M, \Gamma)$ can be extended to work with $\Z$ coefficients or twisted coefficients \cite{Ghiggini_Honda08, Massot09, Me14_twisty_itsy_bitsy}. In this paper we always consider $\Z$ coefficients.

Whatever coefficient ring is used, $SFH(M, \Gamma)$ is a bigraded module over this ring. The first grading is by spin-c structures on $(M, \Gamma)$. That is,
\[
SFH(M, \Gamma) = \bigoplus_{\s \in \Spin^c (M, \Gamma)} SFH(M, \Gamma, \s).
\]
Spin-c structures on $M$ are in bijective correspondence with $H^2 (M, \partial M)$. 
In section \ref{sec:spin-c_structures} below we discuss in detail spin-c structures 
and their relationship to the Euler class.
The second (Maslov) grading is a relative $\Z_2$ homological grading and is determined by a homology orientation on $H_*(M, R_-)$; we do not need it in this paper and refer to \cite[sec.\ 2.4]{FJR11} for details.

Most importantly for our purposes, a contact structure $\xi$ on $(M, \Gamma)$ gives rise to a \emph{contact invariant} (or \emph{contact element} or \emph{contact class}) $c(\xi)$ in $SFH$.  See \cite{HKM09, HKMContClass, OSContact}. This generalises the situation in Heegaard Floer homology of closed manifolds. Precisely, $c(\xi)$ lies in $SFH(-M, -\Gamma)$, where the minus signs refer to reversed orientation. The contact invariant has a $(\pm 1)$ ambiguity: it is a well defined element when $\Z_2$ coefficients are used, but with integer or twisted coefficients it is given by a pair of elements $c(\xi) = \{\pm a\}$, for some $a \in SFH(-M, -\Gamma)$ \cite{HKM08}. Of course if $0 \in c(\xi)$, then $c(\xi)$ has a single element, as $\pm 0 = 0$. In that case we will write $c(\xi)=0$. In general we write $c(\xi) = \pm a$.

The contact invariant $c(\xi)$ can be constructed using the Giroux correspondence between open book decompositions and contact structures (\cite{Et06, Gi02}; see also \cite{Etgu_Ozbagci11, HKM09} for the sutured case), building a Heegaard decomposition from an open book supporting $\xi$, and taking a specific associated element of Floer homology. We refer to \cite{HKMContClass} for the closed case and \cite{HKM09} for the sutured case.

We mention some properties of contact elements. When $\xi$ is overtwisted, $c(\xi) = 0$ \cite{HKM09}. The following TQFT-like property was proved by Honda--Kazez--Mati\'{c} \cite{HKM08} over $\Z$ coefficients, further discussed in \cite[sec.\ 8.2]{Me12_itsy_bitsy} and extended to twisted coefficients in \cite[thm.\ 4.14]{Me14_twisty_itsy_bitsy}. Suppose we have a balanced sutured manifold $(M', \Gamma')$ lying in the interior of another balanced sutured manifold $(M, \Gamma)$. The ``intermediate'' region between $M'$ and $M$ is naturally a sutured manifold $(M - \Int M', \Gamma \cup -\Gamma')$. Let $\xi$ be a contact structure on this intermediate sutured manifold. Then there is a natural map
\[
\Phi_\xi\colon SFH(-M', -\Gamma') \To SFH(-M, -\Gamma),
\]
well defined up to an overall sign. A contact structure $\xi'$ on $(M', \Gamma')$ naturally extends to a contact structure $\xi' \cup \xi$ on $(M, \Gamma)$, and $\Phi_\xi$ takes $c(\xi')$ to $c(\xi' \cup \xi)$. Thus inclusions of sutured manifolds, with a contact structure between them, naturally give linear maps on sutured Floer homology, which are natural with respect to contact elements.

At the end of the next section 
we discuss how the Euler class of a contact structure relates to the spin-c grading of its contact invariant in sutured Floer homology. But first we must discuss spin-c structures themselves.

\subsection{Spin-c structures and the Euler class}
\label{sec:spin-c_structures}

We briefly review spin-c structures here, and refer to \cite{OS04Closed} and \cite{Turaev97_torsion} for details.

A \emph{spin-c structure} on a closed, connected, oriented 3-manifold $M$ is a homology class of nonvanishing vector fields on $M$. Two (nonvanishing) vector fields are \emph{homologous} if they are homotopic through nonvanishing vector fields in the complement of a 3-ball in $M$. Equivalently, they are homologous if they are homotopic in the complement of finitely many disjoint 3-balls in $M$. We denote the set of all spin-c structures on $M$ by $\Spin^c (M)$.

There are several equivalent definitions useful for our purposes. If $M$ has a Riemannian metric then we may restrict to unit vector fields.
Further, unit vector fields $v$ are naturally identified with oriented 2-plane fields $\zeta_v$ by taking the orthogonal complement. If we fix a trivialisation $\tau$ of the tangent bundle $TM$, a unit vector field $v$ gives a map $f_v \colon M \To S^2$. Each of the unit vector field $v$, oriented 2-plane bundle $\zeta_v$ or map $f_v$ determines all the others, and a homotopy of one is equivalent to a homotopy of the others. So a spin-c structure can also be defined as any one of these, up to homotopy in the complement of one (or finitely many) balls.

Obstruction-theoretically, if we give $M$ the structure of a CW complex, then for $v_1, v_2$ (or $f_{v_1}, f_{v_2}$ or $\zeta_{v_1}, \zeta_{v_2}$) to be homologous means that they are homotopic on the 2-skeleton of $M$. The obstruction to such a homotopy lies in $H^2(M; \Z)$, and vanishes if and only if $f_{v_1}, f_{v_2}$ induce the same maps on cohomology, i.e., $f_{v_1}^* = f_{v_2}^* \colon H^2 (S^2; \Z) \To H^2 (M; \Z)$. Since $H^2(S^2; \Z) \cong \Z$, letting $\mu$ be an arbitrarily chosen generator of $H^2(S^2; \Z)$, the spin-c structure of $f_v$ is determined by the cohomology class $f_v^* (\mu)$; furthermore all cohomology classes in $H^2 (M; \Z)$ can arise from spin-c structures in this way. So the assignment $\delta^\tau \colon \Spin^c (M) \To H^2(M; \Z)$ which sends $v \mapsto f_v^* (\mu)$ is a bijection. This assignment depends on the trivialisation $\tau$, but the \emph{difference} $\delta^\tau (v_1) - \delta^\tau (v_2) \in H^2 (M; \Z)$ does \emph{not}; it only depends on $v_1$ and $v_2$ \cite{OS04Closed}. Switching to multiplicative notation, we write $[v_1] / [v_2] \in H^2 (M;\Z) \cong H_1 (M;\Z)$ for this (co)homology class.

Thus, $\Spin^c (M)$ is naturally an affine space over $H^2 (M; \Z)$, and different trivialisations of $TM$ give different identifications between $\Spin^c (M)$ and $H^2 (M; \Z)$. The action of $H^2 (M; \Z) \cong H_1 (M; \Z)$ on a spin-c structure represented by a nonvanishing vector field $v$ can be given explicitly: one can ``add'' a homology class $h \in H_1 (M; \Z)$ to the spin-c structure of $v$ by performing \emph{Reeb turbularisation}, which is a certain modification of $v$ in the neighbourhood of an embedded curve representing $h$ \cite{Turaev89_Euler}.

Juh\'{a}sz in \cite[sec.\ 4]{Ju06} considered spin-c structures on sutured 3-manifolds. Letting $(M, \Gamma)$ denote a connected balanced sutured 3-manifold, we define a vector field $\nu_0$ on $\partial M$. This vector field points out of $M$ along $R_+$, into $M$ along $R_-$, and is tangent to $\partial M$ along $\Gamma$, transverse to $\Gamma$ and pointing from $R_-$ to $R_+$. 
The set of such vector fields forms a contractible space and up to homotopy among such vector fields $\nu_0$ is well defined.

We consider nonvanishing vector fields on $M$ which extend $\nu_0$. Two such vector fields on $M$ are again homologous if they are homotopic, through nonvanishing extensions of $\nu_0$, in the complement of an open ball (or finitely many open balls) in the interior of $M$. A \emph{spin-c structure} on $(M, \Gamma)$ is a homology class of nonzero vector fields on $M$ which restrict to $\nu_0$ on $\partial M$. The set of spin-c structures on $(M, \Gamma)$ is denoted by $\Spin^c (M, \Gamma)$.

The set $\Spin^c (M, \Gamma)$ is an affine space over $H^2 (M, \partial M) \cong H_1 (M)$. Again we may speak equally of a vector field $v$ extending $\nu_0$, an orthogonal 2-plane bundle $\zeta_v$ (using a Riemannian metric) extending $\nu_0^\perp$ (which in turn can be taken to be $T(\partial M)$ along $\partial M$, outside of a neighbourhood of $\Gamma$), or a function $f_v\colon M \To S^2$ extending $f_{\nu_0} \colon \partial M \To S^2$ (using a trivialisation of $TM$). To say that $v_1, v_2$ (or $f_{v_1}, f_{v_2}$ or $\zeta_{v_1}, \zeta_{v_2}$) are homologous means that they are homotopic on the 2-skeleton of $M$, relative to the boundary where they agree. Thus the obstruction to such a homotopy lies in $H^2 (M, \partial M)$. The obstruction vanishes if $f_{v_1}, f_{v_2}$ induce the same maps on cohomology, $f_{v_1}^* = f_{v_2}^* \colon H^2 (S^2) \To H^2 (M)$. Since $f_{v_1}, f_{v_2}$ agree on $\partial M$, the map $f_{v_1}^* - f_{v_2}^*$ has image in $H^2 (M, \partial M)$. Again this difference does not depend on the choice of trivialisation, and the action of $H_1 (M)$ on $\Spin^c (M, \Gamma)$ can be achieved explicitly by Reeb turbularisation.

It follows from the above that an oriented 2-plane bundle $\zeta$ on a closed oriented 3-manifold $M$ (such as a contact structure) belongs to a spin-c structure, and $\Spin^c (M)$ is affine over $H^2 (M)$. But $\zeta$ also has an \emph{Euler class} in $H^2(M)$. It will be important in the sequel to understand the relationship between these two invariants.

It is well known that the Euler class $e(\zeta)$ of an oriented 2-plane bundle $\zeta$ on a CW complex $X$ only depends on the homotopy class of $\zeta$. Moreover, $e(\zeta)$ is the obstruction to finding a nonvanishing section of the bundle over the 2-skeleton of $X$ (see, e.g., \cite[thm.\ 12.5]{Milnor_Stasheff}).

Hence two oriented 2-plane fields on $M$ in the same spin-c class have the same Euler class. Since nonvanishing vector fields and oriented 2-plane fields, and their homotopies, correspond via orthogonal complements, we may abuse notation and write $e(v) = e(\zeta) \in H^2 (M)$ for a nonvanishing vector field $v$, where $\zeta = v^\perp$. Vector fields in the same spin-c class have the same Euler class, so we may then write $e(\s) = e(v)$, where $\s$ is the spin-c class of $v$.

Consider a 2-dimensional vector bundle $\zeta$, orthogonal to the unit vector field $u$. Then the Euler class $e(\zeta)$ is the obstruction $[u]/[-u]$ to a homotopy from $-u$ to the opposite vector field $u$ over the 2-skeleton. 
Indeed, a homotopy exists if and only if ``it knows where to cross $\zeta$.'' Let us also recall that the spin-c structures $[u]+h$ and $[-u]-h$, where $h\in H_1 (M)$, can always be represented by a non-vanishing vector field $v$ and its opposite $-v$, respectively.
See \cite{Turaev89_Euler} for details.

This means that if we ``add'' a homology class $h$ to a spin-c class $\s$, then we add $2h$ to its Euler class (more precisely, its Poincar\'{e} dual):
\begin{equation}
\label{eqn:spin-c_and_euler}
e(\s+h) = 2 PD(h) + e(\s)
\end{equation}
See \cite[thm.\ 5.3.1]{Turaev89_Euler} for full details. 
(Note we write the action of $H_1 \cong H^2$ on spin-c structures additively, whereas Turaev writes it multiplicatively.)

Turaev also effectively treats the sutured case, considering nonvanishing vector fields on a 3-manifold $M$ with boundary, which point into $M$ on a subset $R_- \subset \partial M$ and out on $R_+ \subset \partial M$; these are precisely the balanced sutured manifolds with vector fields extending $\nu_0$, 
whose homology classes form $\Spin^c (M, \Gamma)$. For such vector fields $u$ and $v$, the obstruction $[u]/[v]$ to a homotopy over the 2-skeleton lies in $H^2 (M, \partial M) \cong H_1(M)$. Nonvanishing vector fields extending $\nu_0$ (or equivalently, oriented 2-plane fields extending $\nu_0^\perp$) also have Euler class lying in $H^2 (M, \partial M)$, and again we may speak of the Euler class of a spin-c structure on $(M, \Gamma)$. We again obtain equation (\ref{eqn:spin-c_and_euler}), where now $\s \in \Spin^c (M, \Gamma)$, $h \in H_1 (M)$, and $e(\s), e(\s+h) \in H^2(M, \partial M)$.

As mentioned earlier, $SFH(M, \Gamma)$ splits as a direct sum $\bigoplus_\s SFH(M, \Gamma, \s)$ over spin-c structures $\s \in \Spin^c (M, \Gamma)$. We have also seen that if $\xi$ is a contact structure on $(M, \Gamma)$, then there is a contact invariant $c(\xi) \subset SFH(-M,-\Gamma)$. On the other hand, a 2-plane field such as $\xi$ determines a spin-c class $\s_\xi$ (i.e., the spin-c class of $\xi^\perp$). It would be natural for $c(\xi)$ to lie in the spin-c summand of $SFH(-M,-\Gamma)$ corresponding to the spin-c class of $\xi$.
This is in fact the case; it appears as proposition 9.4 of \cite{Ju16_cobordisms}.

\begin{prop}
\label{prop:contact_invariant_spin-c_summand}
Let $\xi$ be a contact structure on the balanced sutured manifold $(M, \Gamma)$, with spin-c structure $\s_\xi$. Then $c(\xi) \subset SFH(-M, -\Gamma, \s_\xi)$.
\qed
\end{prop}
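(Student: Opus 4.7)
The plan is to verify the claim at the chain level: pick a Heegaard diagram in which $c(\xi)$ is represented by a specific intersection point, and then show that the spin-c structure of that generator (as defined by Juh\'{a}sz) agrees with $\s_\xi$.

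First I would use the sutured version of the Giroux correspondence established in \cite{HKM09} (see also Etg\"{u}--\"{O}zbagci). Let $(S, P, h)$ be a partial open book decomposition compatible with $\xi$, where $S$ is a page, $P \subset S$ is the partial page, and $h\colon P \To S$ is the monodromy. A basis $\{a_i\}$ of properly embedded arcs in $P$ and its image $\{b_i = h(a_i)\}$ (perturbed so as to intersect $a_i$ minimally) yield a sutured Heegaard diagram $(\Sigma, \boldsymbol{\alpha}, \boldsymbol{\beta})$ for $(-M, -\Gamma)$, with $\Sigma = P \cup_\partial (-S)$. By construction of Honda--Kazez--Mati\'{c}, the contact invariant $c(\xi)$ is represented (up to sign) by the distinguished generator $\mathbf{x}_\xi \in \boldsymbol{\alpha} \cap \boldsymbol{\beta}$ whose components all lie on $\partial P$.

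Next I would unpack Juh\'{a}sz's definition of the spin-c structure $\s(\mathbf{x}_\xi) \in \Spin^c(-M, -\Gamma)$ in Morse-theoretic terms. One chooses a self-indexing Morse function $f$ on $-M$, with gradient-like vector field $w$ extending $\nu_0$ (with appropriate signs for the reversed orientation), such that $\Sigma$ is a regular level set and the $\alpha$- and $\beta$-curves arise from its index-$1$ and index-$2$ critical points. The generator $\mathbf{x}_\xi$ selects, for each index-$1$ critical point, a flow line to a matched index-$2$ critical point; deleting neighbourhoods of these flow lines and canceling the paired zeros of $w$ produces a nowhere-vanishing vector field $w_{\mathbf{x}_\xi}$ on $-M$ that extends $\nu_0$, and $\s(\mathbf{x}_\xi)$ is by definition its homology class.

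The key step is then to identify $w_{\mathbf{x}_\xi}$ (up to homotopy through nonvanishing extensions of $\nu_0$ on the $2$-skeleton) with a vector field built directly from $\xi$. Concretely, fix a contact form $\alpha$ for $\xi$ adapted to $(S,P,h)$ and let $R_\alpha$ be its Reeb field, so $\xi = R_\alpha^\perp$ in a compatible metric. One arranges the Morse function so that on a thickened page $P \times I$ the gradient-like field $w$ is (homotopic through admissible vector fields to) $-R_\alpha$, and so that the cancelling flow lines selected by $\mathbf{x}_\xi$ are exactly those that run parallel to the binding through $\partial P$. Because $\mathbf{x}_\xi$ is supported on $\partial P$, the cancellation is geometrically tautological, and the resulting $w_{\mathbf{x}_\xi}$ agrees with $-R_\alpha$ on the $2$-skeleton after a homotopy supported in balls. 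Taking orthogonal complements, this identifies $\s(\mathbf{x}_\xi)$ with the spin-c class of $\xi$.

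The main obstacle is this last matching step: reconciling the combinatorial prescription of $\mathbf{x}_\xi$ on the Heegaard diagram with the geometric dynamics of the Reeb field of a contact form compatible with the partial open book. A cleaner alternative would be to prove naturality under contact handle attachments (each of which shifts the spin-c grading in a known way) and reduce to the elementary model case of the standard tight $3$-ball, where both $c(\xi)$ and $\s_\xi$ are forced to be the unique choice; Juh\'{a}sz's proof in \cite{Ju16_cobordisms} effectively follows this second route.
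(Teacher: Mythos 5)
The paper gives no proof of this proposition: it is stated with a closing $\qed$ and the sentence immediately preceding it explicitly defers to \cite{Ju16_cobordisms}, Proposition~9.4. So there is nothing in this paper for your argument to be compared against, and the correct move here is precisely what you observe at the end of your write-up --- cite Juh\'asz.

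That said, your two sketches are both sensible. The first (direct Morse-theoretic comparison, cancelling the flow lines selected by the distinguished generator $\mathbf{x}_\xi$ supported on $\partial P$ and matching the resulting nonvanishing vector field with a Reeb field of an adapted contact form) is the ``in principle'' argument, but as you say the matching step is the hard part: one has to be careful with orientations, since $c(\xi)$ lives in $SFH(-M,-\Gamma)$ rather than $SFH(M,\Gamma)$, and the vector field $\nu_0$ and the gradient-like field are conventions on $-M$; the sign you wrote ($w \simeq -R_\alpha$) is exactly where these conventions collide and would need to be verified carefully. Your second route --- establishing naturality of the spin-c grading under contact handle attachments and reducing to the tight $3$-ball, where both $c(\xi)$ and $\s_\xi$ are unique --- is indeed what Juh\'asz does and is the cleaner argument to carry out in full. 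For the purposes of this paper, though, the citation is the ``proof,'' and your proposal correctly identifies both the content and the source.
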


\section{Background on formal knot theory}
\label{sec:FKT}

We now give a brief discussion of some aspects of Kauffman's \emph{formal knot theory}; for further details we refer to \cite{Kauffman_FKT83}.

Consider a connected, unoriented knot diagram where crossing data is forgotten. This yields a connected plane graph where each vertex has degree $4$. Using Euler's formula, one can show that the number of complementary regions of the graph exceeds the number of vertices by $2$. 

A \emph{universe} is a connected planar graph, where each vertex has degree $4$, and two adjacent complementary regions are labelled with stars. See figure \ref{fig:figure_8_knot_universe}. Thus in a universe, the number of vertices equals the number of unstarred regions. One can draw a universe in the plane so that the graph consists of an immersion of some circles into $\R^2$, with all intersections being transverse double points. 

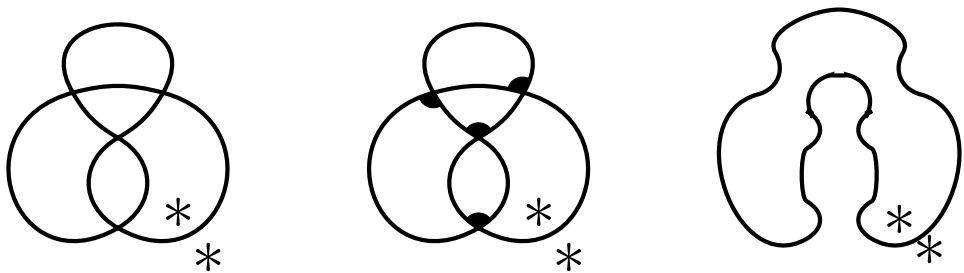
\begin{figure}
\begin{center}
\begin{minipage}{0.3\textwidth}
\begin{tikzpicture}[scale = 0.2,
knot/.style={ultra thick},
marker/.style={fill=black},
violetedge/.style={ultra thick, blue},
emeraldedge/.style={ultra thick, green!50!black},
rededge/.style={ultra thick, red}
]
\coordinate (b) at (0,-9);
\coordinate (m) at (0,-3);
\coordinate (l) at (-3,0);
\coordinate (r) at (3,0);
\coordinate (vl) at (-5 , -5);
\coordinate (vr) at (5,-5);
\coordinate (vt) at (0, 2);
\coordinate (eb) at (0,-6);
\coordinate (em) at (0,-1);
\coordinate (et) at (0, 6);


\draw [knot] (b) 
.. controls ($ (b) + (-30:8) $) and ($ (r) + (-15:8) $) .. (r)
to [out=165, in=15] (l)
.. controls ($ (l) + (195:8) $) and ($ (b) + (210:8) $) .. (b)
.. controls ($ (b) + (30:3) $) and ($ (m) + (-30:3) $) .. (m)
to [out=150, in=-60] (l)
.. controls ($ (l) + (120:7) $) and ($ (r) + (60:7) $) .. (r)
to [out=240, in=30] (m)
.. controls ($ (m) + (210:3) $) and ($ (b) + (150:3) $) .. (b);

\draw (4,-8) node {\Huge $*$};
\draw (6,-11) node {\Huge $*$};

\end{tikzpicture}
\end{minipage}
\begin{minipage}{0.3\textwidth}
\begin{tikzpicture}[scale = 0.2,
knot/.style={ultra thick},
marker/.style={fill=black},
violetedge/.style={ultra thick, blue},
emeraldedge/.style={ultra thick, green!50!black},
rededge/.style={ultra thick, red}
]
\coordinate (b) at (0,-9);
\coordinate (m) at (0,-3);
\coordinate (l) at (-3,0);
\coordinate (r) at (3,0);
\coordinate (vl) at (-5 , -5);
\coordinate (vr) at (5,-5);
\coordinate (vt) at (0, 2);
\coordinate (eb) at (0,-6);
\coordinate (em) at (0,-1);
\coordinate (et) at (0, 6);


\draw [knot] (b) 
.. controls ($ (b) + (-30:8) $) and ($ (r) + (-15:8) $) .. (r)
to [out=165, in=15] (l)
.. controls ($ (l) + (195:8) $) and ($ (b) + (210:8) $) .. (b)
.. controls ($ (b) + (30:3) $) and ($ (m) + (-30:3) $) .. (m)
to [out=150, in=-60] (l)
.. controls ($ (l) + (120:7) $) and ($ (r) + (60:7) $) .. (r)
to [out=240, in=30] (m)
.. controls ($ (m) + (210:3) $) and ($ (b) + (150:3) $) .. (b);

\draw (4,-8) node {\Huge $*$};
\draw (6,-11) node {\Huge $*$};

\draw [marker] (b) -- ($ (b) + (35:1) $) arc (35:145:1) -- cycle;
\draw [marker] (m) -- ($ (m) + (32:1) $) arc (32:148:1) -- cycle;
\draw [marker] (l) -- ($ (l) + (200:1) $) arc (200:300:1) -- cycle;
\draw [marker] (r) -- ($ (r) + (65:1) $) arc (60:165:1) -- cycle;

\end{tikzpicture}
\end{minipage}
\begin{minipage}{0.3\textwidth}
\begin{tikzpicture}[scale = 0.2,
knot/.style={ultra thick},
marker/.style={fill=black},
violetedge/.style={ultra thick, blue},
emeraldedge/.style={ultra thick, green!50!black},
rededge/.style={ultra thick, red}
]
\coordinate (b) at (0,-9);
\coordinate (m) at (0,-3);
\coordinate (l) at (-3,0);
\coordinate (r) at (3,0);
\coordinate (vl) at (-5 , -5);
\coordinate (vr) at (5,-5);
\coordinate (vt) at (0, 2);
\coordinate (eb) at (0,-6);
\coordinate (em) at (0,-1);
\coordinate (et) at (0, 6);


\draw [knot, rounded corners=0.5 cm] (0,5) 
.. controls ($ (0,5) + (180:2) $) and ($ (l) + (120:4) $)  .. (l)
.. controls ($ (l) + (195:8) $) and ($ (b) + (210:8) $) .. (b)
.. controls ($ (b) + (150:3) $) and ($ (m) + (210:3) $) .. (m)
to [out=150, in=-60] (l)
to [out=15, in=165] (r)
to [out=240, in=30] (m)
.. controls ($ (m) + (-30:3) $) and ($ (b) + (30:3) $) .. (b)
.. controls ($ (b) + (-30:8) $) and ($ (r) + (-15:8) $) .. (r)
.. controls ($ (r) + (60:4) $) and ($ (0,5) + (0:2) $) .. (0,5);
;

\draw (4,-9) node {\Huge $*$};
\draw (6,-11) node {\Huge $*$};

\end{tikzpicture}
\end{minipage}
\end{center}
\caption{
Left: a universe $\U$ based on the figure 8 knot. Centre: a state of $\U$. Right: The cor\-res\-pond\-ing 
Euler--Jordan trail.
}
\label{fig:figure_8_knot_universe}
\end{figure}

At each vertex $v$ of a universe $\U$, four edges meet and between them lie four quadrants, i.e., corners of regions. (The four regions involved need not be distinct.) A \emph{marker} at $v$ is a choice of one of the four quadrants at $v$.

A \emph{state} of a universe $\U$ is a choice of marker at each vertex of $\U$, so that each unstarred region contains a marker. Thus a state of $\U$ provides a bijection between vertices of $\U$ and (adjacent) unstarred regions of $\U$. See figure \ref{fig:figure_8_knot_universe} (centre).

It is sometimes useful to \emph{split} the crossings in a universe. A \emph{splitting} of a universe $\U$ at a vertex $v$ replaces a neighbourhood of $v$ with two non-intersecting arcs in one of two possible ways, as shown in figure \ref{fig:vertex_splitting}. If $v$ has a state marker, the marker specifies a splitting as shown in figure \ref{fig:state_marker_splitting}. 

\begin{figure}
\begin{center}
\begin{tikzpicture}[
scale=0.5]

\draw [ultra thick] (-2,-2) -- (2,2);
\draw [ultra thick] (-2,2) -- (2,-2);

\draw [->] (3,0) -- (5,0);
\draw [->] (-3,0) -- (-5,0);

\draw [xshift = 8 cm, ultra thick] (-2,-2) to [out=45, in=135] (2,-2);
\draw [xshift = 8 cm, ultra thick] (-2,2) to [out=-45, in=225] (2,2);

\draw [xshift = -8 cm, ultra thick] (-2,-2) to [out=45, in=-45] (-2,2);
\draw [xshift = -8 cm, ultra thick] (2,-2) to [out=135, in= 225] (2,2);

\end{tikzpicture}
\end{center}
\caption{A vertex of a universe can be split in two possible ways.}
\label{fig:vertex_splitting}
\end{figure}
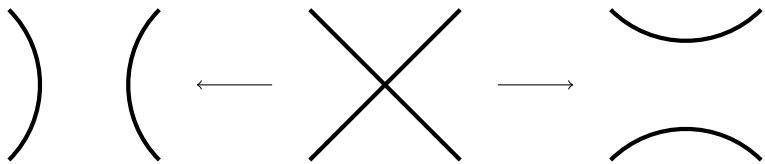

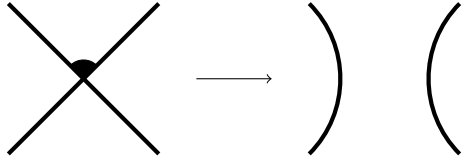
\begin{figure}
\begin{center}
\begin{tikzpicture}[
scale=0.5]

\draw [ultra thick] (-2,-2) -- (2,2);
\draw [ultra thick] (-2,2) -- (2,-2);
\draw [fill=black] (0,0) -- (45:0.5) arc (45:135:0.5) -- cycle;

\draw [->] (3,0) -- (5,0);

\draw [xshift = 8 cm, ultra thick] (-2,-2) to [out=45, in=-45] (-2,2);
\draw [xshift = 8 cm, ultra thick] (2,-2) to [out=135, in= 225] (2,2);

\end{tikzpicture}
\end{center}
\caption{Splitting a vertex according to a state marker.}
\label{fig:state_marker_splitting}
\end{figure}

If every vertex of $\U$ is split in some way, then the result will be a collection of 
loops, i.e., non-intersecting embedded circles, in the plane. If there is just one such loop, then we call it an \emph{Euler--Jordan trail} or just a \emph{trail}. That is, a trail 
is a single loop obtained by splitting each vertex. 

The splitting at each vertex provided by the markers of a state produces a trail. See figure \ref{fig:figure_8_knot_universe} (right). Conversely, a splitting of each vertex of $\U$ that produces a trail arises from a state of $\U$. Thus there is a bijective correspondence between states of $\U$ and trails on $\U$, known as the \emph{state-trail correspondence}.

A \emph{transposition} is a transition between states of a universe $\U$, which involves switching two state markers under certain circumstances. Suppose $v$ and $w$ are distinct vertices of $\U$. 
Suppose further that two regions $R_1, R_2$
near $v$ are also regions
near $w$, so that the situation is as shown in figure \ref{fig:transposition}. Now suppose $s_1$ is a state of $\U$ where the vertex $v$ has marker in $R_1$ and $w$ has marker in $R_2$, as shown. Then the assignment $s_2$ obtained by switching the markers at $v$ and $w$ so that $v$ has marker in $R_2$ and $w$ has marker in $R_1$ is also a state, and we say $s_2$ is obtained from $s_1$ by a \emph{transposition}. We say the transposition is \emph{clockwise} since, in switching from $s_1$ to $s_2$, markers have moved $90^\circ$ clockwise around $v$ and $w$. Conversely, from $s_2$ there is a \emph{counterclockwise transposition} to $s_1$.

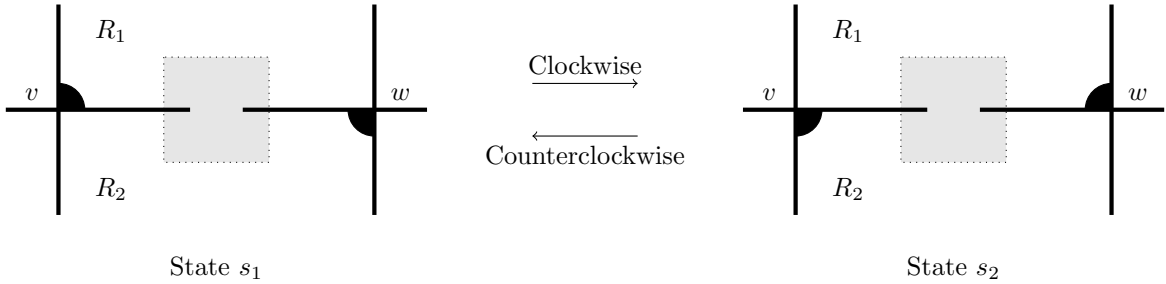
\begin{figure}
\begin{center}
\begin{tikzpicture}[
scale=0.7]

\foreach \x in {-7, 7}
{
\draw [xshift = \x cm, dotted, fill=black!10!white] (-1,1) -- (-1,-1) -- (1,-1) -- (1,1) -- cycle;
\draw [xshift= \x cm, ultra thick] (-4,0) -- (-0.5,0);
\draw [xshift= \x cm, ultra thick] (4,0) -- (0.5,0);
\draw [xshift = \x cm, ultra thick] (-3,-2) -- (-3,2);
\draw [xshift = \x cm, ultra thick] (3,-2) -- (3,2);
\draw [xshift = \x cm] (-3.5,0.3) node {$v$};
\draw [xshift = \x cm] (3.5,0.3) node {$w$};
\draw [xshift = \x cm] (-2,1.5) node {$R_1$};
\draw [xshift =\x cm] (-2,-1.5) node {$R_2$};
}

\draw [->] (-1,0.5) -- node [above] {Clockwise} (1,0.5);
\draw [->] (1,-0.5) -- node [below] {Counterclockwise} (-1,-0.5);

\draw [xshift = -7 cm, fill=black] (-2.5,0) arc (0:90:0.5) -- (-3,0) -- cycle;
\draw [xshift = -7 cm, fill=black] (2.5,0) arc (180:270:0.5) -- (3,0) -- cycle;
\draw [xshift = -7 cm] (0,-3) node {State $s_1$};

\draw [xshift = 7 cm, fill=black] (-2.5,0) arc (0:-90:0.5) -- (-3,0) -- cycle;
\draw [xshift = 7 cm, fill=black] (2.5,0) arc (180:90:0.5) -- (3,0) -- cycle;
\draw [xshift = 7 cm] (0,-3) node {State $s_2$};

\end{tikzpicture}
\end{center}
\caption{A state transposition. The two vertices $v,w$ are both adjacent to regions $R_1, R_2$. 
There may be other vertices and edges of the universe in the shaded box.}
\label{fig:transposition}
\end{figure}

The set of states on a universe $\U$ has an interesting structure. We form a directed graph $\L_\U$ whose vertices are the states of $\U$, and which has a directed edge from $s_1$ to $s_2$ if the state $s_2$ can be obtained from $s_1$ by a clockwise transposition. More generally we write $s_1 \leq s_2$ if 
there is a directed path from $s_1$ to $s_2$ in $\L_\U$. \emph{Kauffman's clock theorem} \cite{Kauffman_FKT83} (in its slightly extended form due to Gilmer and Litherland \cite{Gilmer-Litherland86}) says that $\L_\U$ is in fact a \emph{lattice}: the relation $\leq$ is a partial order on states, and any two states have a least upper bound and greatest lower bound with respect to this order.

\section{Background on hypertrees and trinities}
\label{sec:hyper_background}

\subsection{Plane graphs and trinities}
\label{sec:planar_graphs_links_trinities}

Let $G$ be a finite plane graph, possibly with multiple edges. That is, $G$ has a fixed embedding in $\R^2 \subset \R^3$; we may also compactify and regard $G$ as embedded in $S^2 \subset S^3$. 
If $G$ is connected then all its complementary regions are homeomorphic to discs. By placing a new vertex in each region and connecting it to the surrounding vertices, we obtain a triangulation of $S^2$.

The same vertices are used in the construction of the dual graph $G^*$.
The spanning trees of 
$G$ and $G^*$ 
are closely related. Each spanning tree $T$ of $G$ yields a dual spanning tree $T^*$ of $G^*$; the edges of $T^*$ are precisely those edges of $G^*$ that correspond to edges of $G$ not contained in $T$. Conversely, each spanning tree of $G^*$ yields a dual spanning tree of $G$; the spanning trees of $G$ and $G^*$ are in bijective correspondence via planar duality.

In this paper we are mostly concerned with the case when $G$ is bipartite. Then, as discussed below, $G^*$ is naturally directed and the triangulation constructed from $G$ is properly 3-coloured.
Such a structure was studied by Tutte in \cite{Tutte48}. 
We present some background here and refer to \cite[sec.\ 9]{Kalman13_Tutte} for further details.

Denote the two vertex classes of $G$ by $V$ and $E$; call them violet and emerald. 
Place a red vertex in each region of $G$; call this set of vertices $R$. 
Then the triangulation above is such that its $1$-skeleton is a tripartite plane graph: there are three vertex classes/colours $V,E,R$, and each edge joins vertices of different colours. We colour each edge by the unique colour different from its endpoints.

Each triangle has its three vertices of the three distinct colours. (The same is true of its edges.) 
Such a structure --- a triangulation of $S^2$ with a three-colouring of the vertices --- is called a  \emph{trinity}. 
See figure \ref{fig:trinity_construction} for an example.
In each triangle of the trinity, the vertices are coloured violet, emerald and red in clockwise or anticlockwise order; thus triangles come in two types, and we colour them black or white accordingly. Further, two triangles sharing an edge are of opposite colours; in other words, the dual graph to the triangulation is bipartite.

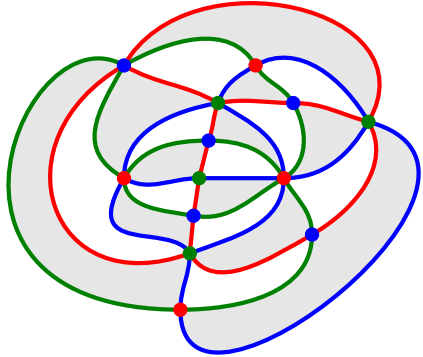
\begin{figure}
\begin{center}
\begin{tikzpicture}[scale = 0.25]

\clip (-13,-10) rectangle (11,11);

\coordinate (r3) at (1,6);
\coordinate (e1) at (-1,4); 
\coordinate (v1) at (-1.5, 2);
\coordinate (e2) at (-2,0); 
\coordinate (v2) at (-2.3,-2); 
\coordinate (e3) at (-2.5,-4); 
\coordinate (r0) at (-3,-7); 
\coordinate (v4) at (3,4);
\coordinate (r2) at (2.5,0);
\coordinate (v3) at (4,-3);
\coordinate (e0) at (7,3);
\coordinate (v0) at (-6,6); 
\coordinate (r1) at (-6, 0); 

\fill [black!10!white] (e0)
.. controls ($ (e0) + (60:7) $) and ($ (v0) + (60:7) $) .. (v0)
to [out=30, in=120] (r3)
to [out=30, in=120] (e0);

\fill [black!10!white] (r3)
to[out=210,in=70] (e1)
to [out=10, in=180] (v4)
to [out=120, in=300] (r3);

\fill [black!10!white] (v0)
.. controls ($ (v0) + (240:3) $) and ($ (r1) + (150:3) $) .. (r1)
to [out=90, in=190] (e1)
to [out=150, in=330] (v0);

\fill [black!10!white] (e1)
to [out = 250, in = 70] (v1)
to [out=0, in=120] (r2)
to [out=90, in=330] (e1);

\fill [black!10!white] (v1)
to (e2)
to [out=180, in=330] (r1)
to [out=60, in=180] (v1);

\fill [black!10!white] (e2)
to (v2)
to [out=350, in=210] (r2)
to [out=180, in=0] (e2);

\fill [black!10!white] (v2)
to (e3)
.. controls ($ (e3) + (120:2) $)  and ($ (r1) + (240:5) $) .. (r1)
to [out=270, in=170] (v2);

\fill [black!10!white] (v0) 
.. controls ($ (v0) + (210:8) $) and ($ (e3) + (200:8) $) .. (e3)
to [out=270, in=90] (r0)
.. controls ($ (r0) + (180:15) $) and ($ (v0) + (150:6) $) .. (v0);

\fill [black!10!white] (v4)
to [out=0, in=165] (e0)
to [out=240, in=0] (r2)
to [out=30, in=300] (v4);

\fill [black!10!white] (v3) 
to [out=90, in=300] (r2)
to [out=270, in=20] (e3)
to [out=300, in=210] (v3);

\fill [black!10!white] (v3) 
to [out=30, in=300] (e0)
.. controls ($ (e0) + (345:10) $)  and ($ (r0) + (270:8) $) .. (r0)
to [out=0, in=270] (v3);

\draw [ultra thick, red] (e1) to [out = 250, in = 70] (v1); 
\draw [ultra thick, red] (v1) to (e2); 
\draw [ultra thick, red] (e2) to (v2); 
\draw [ultra thick, red] (v2) to (e3); 
\draw [ultra thick, red] (e0) to [out=165, in=0] (v4); 
\draw [ultra thick, red] (v4) to [out=180, in=10] (e1); 
\draw [ultra thick, red] (v0)
.. controls ($ (v0) + (210:8) $) and ($ (e3) + (200:8) $) .. (e3); 
\draw [ultra thick, red] (e1) to [out=150, in=330] (v0); 
\draw [ultra thick, red] (e3) to [out=300, in=210] (v3); 
\draw [ultra thick, red] (v3) to [out=30, in=300] (e0); 
\draw [ultra thick, red] (e0) 
.. controls ($ (e0) + (60:7) $) and ($ (v0) + (60:7) $) .. (v0); 

\draw [ultra thick, blue] (e1) to [out=190, in=90] (r1); 
\draw [ultra thick, green!50!black] (r1) to [out=270, in=170] (v2); 
\draw [ultra thick, green!50!black] (v1) to [out=180, in=60] (r1); 
\draw [ultra thick, blue] (r1)
.. controls ($ (r1) + (240:5) $) and ($ (e3) + (120:2) $) .. (e3); 
\draw [ultra thick, green!50!black] (v0)
.. controls ($ (v0) + (240:3) $) and ($ (r1) + (150:3) $) .. (r1); 
\draw [ultra thick, blue] (r1) to [out=330, in=180] (e2); 

\draw [ultra thick, green!50!black] (v2) to [out=350, in=210] (r2); 
\draw [ultra thick, green!50!black] (r2) to [out=30, in=300] (v4); 
\draw [ultra thick, blue] (e3) to [out=20, in=270] (r2); 
\draw [ultra thick, blue] (r2) to [out=90, in=330] (e1); 
\draw [ultra thick, green!50!black] (v3) to [out=90, in=300] (r2); 
\draw [ultra thick, green!50!black] (r2) to [out=120, in=0] (v1); 
\draw [ultra thick, blue] (e2) to [out=0, in=180] (r2); 
\draw [ultra thick, blue] (r2) to [out=0, in=240] (e0); 

\draw [ultra thick, blue] (r3) to[out=210,in=70] (e1); 
\draw [ultra thick, green!50!black] (v4) to [out=120, in=300] (r3); 
\draw [ultra thick, green!50!black] (r3) to [out=120, in=30] (v0); 
\draw [ultra thick, blue] (e0) to [out=120, in=30] (r3); 

\draw [ultra thick, draw=none] (e3) to [out=270, in=90] (r0); 
\draw [ultra thick, draw=none] (r0)
.. controls ($ (r0) + (270:8) $) and ($ (e0) + (345:10) $) .. (e0); 
\draw [ultra thick, draw=none] (v0)
.. controls ($ (v0) + (150:6) $) and ($ (r0) + (180:15) $) .. (r0); 
\draw [ultra thick, draw=none] (r0) to [out=0, in=270] (v3); 

\draw [ultra thick, blue] (e3) to [out=270, in=90] (r0); 
\draw [ultra thick, blue] (r0)
.. controls ($ (r0) + (270:8) $) and ($ (e0) + (345:10) $) .. (e0); 
\draw [ultra thick, green!50!black] (v0)
.. controls ($ (v0) + (150:6) $) and ($ (r0) + (180:15) $) .. (r0); 
\draw [ultra thick, green!50!black] (r0) to [out=0, in=270] (v3); 

\foreach \x/\word in {(r0)/r0, (r1)/r1, (r2)/r2, (r3)/r3}
{
\draw [red, fill=red] \x circle  (10pt);
}

\foreach \x/\word in {(e0)/e0, (e1)/e1, (e2)/e2, (e3)/e3}
{
\draw [green!50!black, fill=green!50!black] \x circle  (10pt);
}

\foreach \x/\word in {(v0)/v0, (v1)/v1, (v2)/v2, (v3)/v3, (v4)/v4}
{
\draw [blue, fill=blue] \x circle (10pt);
}

\end{tikzpicture}
\end{center}
\caption{The trinity constructed from the bipartite planar graph of figure \ref{fig:median_construction}.}
\label{fig:trinity_construction}
\end{figure}

A trinity contains three connected bipartite plane graphs, consisting of the subgraphs given by the edges of each colour \cite{Juhasz-Kalman-Rasmussen12, Kalman13_Tutte}. The \emph{violet graph} $G_V$ has violet edges and vertex classes $E$, $R$; the \emph{emerald graph} $G_E$ has emerald edges and vertex classes $R$, $V$; the \emph{red graph} $G_R$ has red edges and vertex classes $V$, $E$. Any one of these three graphs yields an equivalent triangulation of $S^2$ and hence the same trinity. In other words, as is obvious from the definition, the roles of the three colours in a trinity are perfectly symmetric.
Conversely (and to further underscore the last point), given a triangulation of $S^2$ with bipartite dual, one can always find a proper 3-colouring of its vertices (i.e., with each edge joining vertices of different colours) endowing the triangulation with the structure of a trinity. This is an old result, proven, e.g., in \cite[prop.\ 9.4]{Kalman13_Tutte}.

\subsection{Arborescence number and Tutte's tree trinity theorem}
\label{sec:arborescence_number}

Let us fix a trinity with vertex sets $V,E,R$ as above, and let $n$ denote the number of its white triangles.
Each red edge lies on the boundary of precisely one white triangle, and each white triangle has precisely one red edge, so the number of red edges is $n$. By the same argument the number of black triangles, the number of violet edges, and the number of emerald edges are also $n$. So there are $3n$ edges and $2n$ faces in the triangulation, whence Euler's formula gives $|V|+|E|+|R| - 3n + 2n = 2$; thus the total number of vertices exceeds the number of white triangles by $2$.

\begin{figure}
\begin{center}
\begin{tikzpicture}[scale = 0.25]

\clip (-13,-10) rectangle (11,12);

\coordinate (r3) at (1,6);
\coordinate (e1) at (-1,4); 
\coordinate (v1) at (-1.5, 2);
\coordinate (e2) at (-2,0); 
\coordinate (v2) at (-2.3,-2); 
\coordinate (e3) at (-2.5,-4); 
\coordinate (r0) at (-3,-7); 
\coordinate (v4) at (3,4);
\coordinate (r2) at (2.5,0);
\coordinate (v3) at (4,-3);
\coordinate (e0) at (7,3);
\coordinate (v0) at (-6,6); 
\coordinate (r1) at (-6, 0); 

\fill [black!10!white] (e0)
.. controls ($ (e0) + (60:7) $) and ($ (v0) + (60:7) $) .. (v0)
to [out=30, in=120] (r3)
to [out=30, in=120] (e0);

\fill [black!10!white] (r3)
to[out=210,in=70] (e1)
to [out=10, in=180] (v4)
to [out=120, in=300] (r3);

\fill [black!10!white] (v0)
.. controls ($ (v0) + (240:3) $) and ($ (r1) + (150:3) $) .. (r1)
to [out=90, in=190] (e1)
to [out=150, in=330] (v0);

\fill [black!10!white] (e1)
to [out = 250, in = 70] (v1)
to [out=0, in=120] (r2)
to [out=90, in=330] (e1);

\fill [black!10!white] (v1)
to (e2)
to [out=180, in=330] (r1)
to [out=60, in=180] (v1);

\fill [black!10!white] (e2)
to (v2)
to [out=350, in=210] (r2)
to [out=180, in=0] (e2);

\fill [black!10!white] (v2)
to (e3)
.. controls ($ (e3) + (120:2) $)  and ($ (r1) + (240:5) $) .. (r1)
to [out=270, in=170] (v2);

\fill [black!10!white] (v0) 
.. controls ($ (v0) + (210:8) $) and ($ (e3) + (200:8) $) .. (e3)
to [out=270, in=90] (r0)
.. controls ($ (r0) + (180:15) $) and ($ (v0) + (150:6) $) .. (v0);

\fill [black!10!white] (v4)
to [out=0, in=165] (e0)
to [out=240, in=0] (r2)
to [out=30, in=300] (v4);

\fill [black!10!white] (v3) 
to [out=90, in=300] (r2)
to [out=270, in=20] (e3)
to [out=300, in=210] (v3);

\fill [black!10!white] (v3) 
to [out=30, in=300] (e0)
.. controls ($ (e0) + (345:10) $)  and ($ (r0) + (270:8) $) .. (r0)
to [out=0, in=270] (v3);

\begin{scope}[dotted, ultra thick, decoration={
    markings,
    mark=at position 0.5 with {\arrow{>}}}
    ] 
\draw [blue, postaction={decorate}] (v4) to [bend right=10] (v0);
\draw [blue, postaction={decorate}] (v0) to (v1);
\draw [blue, postaction={decorate}] (v1) to (v4);
\draw [blue, postaction={decorate}] (v0) .. controls ($ (v0) + (50:4) $) and ($ (v4) + (45:7) $) .. (v4);
\draw [blue, postaction={decorate}] (v4) to [bend left=30] (v3);
\draw [blue, postaction={decorate}] (v3) to (v2);
\draw [blue, postaction={decorate}] (v2) .. controls ($ (v2) + (225:8) $) and ($ (v0) + (225:6) $) ..  (v0);
\draw [blue, postaction={decorate}] (v0) .. controls ($ (v0) + (205:15) $) and ($ (v3) + (235:11) $) .. (v3);
\draw [blue, postaction={decorate}] (v2) to [bend right=80] (v1);
\draw [blue, postaction={decorate}] (v1) to [bend right=80] (v2);
\draw [blue, postaction={decorate}] (v3) .. controls ($ (v3) + (20:15) $) and ($ (v0) + (70:15) $) .. (v0);
\end{scope}

\draw [ultra thick, blue] (r3) to[out=210,in=70] (e1); 
\draw [ultra thick, red] (e1) to [out = 250, in = 70] (v1); 
\draw [ultra thick, red] (v1) to (e2); 
\draw [ultra thick, red] (e2) to (v2); 
\draw [ultra thick, red] (v2) to (e3); 
\draw [ultra thick, blue] (e3) to [out=270, in=90] (r0); 
\draw [ultra thick, blue] (r0)
.. controls ($ (r0) + (270:8) $) and ($ (e0) + (345:10) $) .. (e0); 
\draw [ultra thick, red] (e0) to [out=165, in=0] (v4); 
\draw [ultra thick, red] (v4) to [out=180, in=10] (e1); 
\draw [ultra thick, blue] (e1) to [out=190, in=90] (r1); 
\draw [ultra thick, green!50!black] (r1) to [out=270, in=170] (v2); 
\draw [ultra thick, green!50!black] (v2) to [out=350, in=210] (r2); 
\draw [ultra thick, green!50!black] (r2) to [out=30, in=300] (v4); 
\draw [ultra thick, green!50!black] (v4) to [out=120, in=300] (r3); 
\draw [ultra thick, green!50!black] (r3) to [out=120, in=30] (v0); 
\draw [ultra thick, red] (v0)
.. controls ($ (v0) + (210:8) $) and ($ (e3) + (200:8) $) .. (e3); 
\draw [ultra thick, blue] (e3) to [out=20, in=270] (r2); 
\draw [ultra thick, blue] (r2) to [out=90, in=330] (e1); 
\draw [ultra thick, red] (e1) to [out=150, in=330] (v0); 
\draw [ultra thick, green!50!black] (v0)
.. controls ($ (v0) + (150:6) $) and ($ (r0) + (180:15) $) .. (r0); 
\draw [ultra thick, green!50!black] (r0) to [out=0, in=270] (v3); 
\draw [ultra thick, green!50!black] (v3) to [out=90, in=300] (r2); 
\draw [ultra thick, green!50!black] (r2) to [out=120, in=0] (v1); 
\draw [ultra thick, green!50!black] (v1) to [out=180, in=60] (r1); 
\draw [ultra thick, blue] (r1)
.. controls ($ (r1) + (240:5) $) and ($ (e3) + (120:2) $) .. (e3); 
\draw [ultra thick, red] (e3) to [out=300, in=210] (v3); 
\draw [ultra thick, red] (v3) to [out=30, in=300] (e0); 
\draw [ultra thick, blue] (e0) to [out=120, in=30] (r3); 

\draw [ultra thick, red] (e0) 
.. controls ($ (e0) + (60:7) $) and ($ (v0) + (60:7) $) .. (v0); 
\draw [ultra thick, green!50!black] (v0)
.. controls ($ (v0) + (240:3) $) and ($ (r1) + (150:3) $) .. (r1); 
\draw [ultra thick, blue] (r1) to [out=330, in=180] (e2); 
\draw [ultra thick, blue] (e2) to [out=0, in=180] (r2); 
\draw [ultra thick, blue] (r2) to [out=0, in=240] (e0); 

\foreach \x/\word in {(r0)/r0, (r1)/r1, (r2)/r2, (r3)/r3}
{
\draw [red, fill=red] \x circle  (10pt);
}

\foreach \x/\word in {(e0)/e0, (e1)/e1, (e2)/e2, (e3)/e3}
{
\draw [green!50!black, fill=green!50!black] \x circle  (10pt);
}

\foreach \x/\word in {(v0)/v0, (v1)/v1, (v2)/v2, (v3)/v3, (v4)/v4}
{
\draw [blue, fill=blue] \x circle (10pt);
}
\end{tikzpicture}
\end{center}
\caption{A trinity, together with the dual graph $G_V^*$ of $G_V$ drawn in dotted blue.}
\label{fig:trinity_with_dual}
\end{figure}
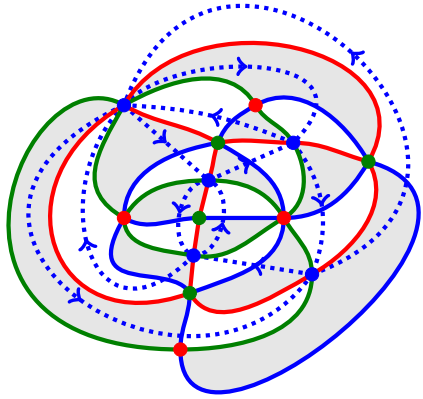

Consider the dual graph $G_V^*$ of $G_V$. The vertex set 
of $G_V^*$ is $V$, and each edge of $G_V^*$ runs 
through a black and a white triangle, crossing exactly one 
edge of $G_V$. See figure \ref{fig:trinity_with_dual}.
This allows us to orient 
the edges of $G_V^*$ 
to point from black to white, giving $G_V^*$ 
the structure of a directed plane graph (possibly with loops and multiple edges). An equivalent way of defining the direction is to say that each edge of $G_V^*$ runs between an emerald vertex to its left and a red vertex to its right.

Each violet vertex is surrounded by triangles which alternate in colour between black and white. Hence at each vertex of $G_V^*$, edges are alternately incoming and outgoing; in particular, the in-degree and out-degree are equal. A directed graph where each vertex has equal in-degree and out-degree is called 
\emph{Eulerian} or
\emph{balanced}.
Of course, $G_R^*$ and $G_E^*$ are also balanced directed plane graphs. 

We may fix a white triangle $t_0$ of the trinity as a \emph{root} or \emph{outer} triangle, and call the three adjacent vertices $v_0 \in V$, $e_0 \in E$, $r_0 \in R$ the \emph{root} violet, emerald, and red vertices, respectively. 
For any directed graph $D$ with a root vertex $r$, a \emph{(spanning) arborescence} of $D$ is a spanning tree $T$ of $D$ all of whose edges point away from $r$; that is, for each vertex $v$ of $D$ the unique path in $T$ between $r$ and $v$ has all edges oriented from $r$ to $v$.

When $D$ is a balanced finite directed graph, van Aardenne-Ehrenfest and de Bruijn 
in \cite{vA-E_dB51} 
showed 
that its number of spanning arborescences does not depend on the choice of root vertex. Moreover, this number is also equal to the number of spanning trees all of whose edges point \emph{towards} the root, wherever the root may be. We call this number the \emph{arborescence number} $\rho(D)$ of $D$.

Tutte's \emph{tree trinity theorem} \cite{Tutte48} 
generalises planar duality of spanning trees (reviewed in section \ref{sec:planar_graphs_links_trinities}) to the statement
that the arborescence numbers of 
$G_V^*, G_E^*, G_R^*$ agree:
\[
\rho(G_V^*) = \rho(G_E^*) = \rho(G_R^*).
\]
We will call this number the \emph{magic number} of the trinity.
Tutte 
gave a `trijective' proof of his result in \cite{Tutte75}. That is, he arranged all arborescences in triples, one from each graph, where each triple is described in the form of a bijection between non-outer white triangles and adjacent non-root vertices. 
Berman \cite{Berman80} formulated this as an expression for the magic number as the determinant of an $(n-1) \times (n-1)$ adjacency matrix.
The latter formula was refined by the first author \cite{Kalman13_Tutte} to enumerate the hypertrees in the corresponding hypergraphs.
We explain these notions in the next section.

\subsection{Hypergraphs and hypertrees}
\label{sec:hypergraphs_hypertrees}

A \emph{hypergraph} is a pair $\HH = (V, E)$, where $V$ is a set of \emph{vertices} and $E$ is a (multi-)set of \emph{hyperedges}. A hyperedge is a non-empty subset of $V$. 
A multiset is used to allow for hyperedges with multiplicity.
When each hyperedge contains precisely two vertices, a hypergraph reduces to a graph in the usual sense (with no loop edges but possibly with multiple edges).

A hypergraph $\HH$ naturally determines a bipartite graph $\Bip \HH$, which has vertex classes $V$ and $E$; an edge connects $v \in V$ to $e \in E$ in $\Bip \HH$ if and only if the hyperedge $e$ contains $v$.
Conversely, given a bipartite graph $G$ with vertex classes $V_0, V_1$, we may form a hypergraph $\G = (V_0, V_1)$ with vertex set $V_0$ and hyperedge set $V_1$: for $v_1 \in V_1$, the hyperedge $v_1$ contains all vertices in $V_0$ to which $v_1$ is connected in $G$. We may also reverse the roles of $V_0$ and $V_1$, forming a hypergraph $\G' = (V_1, V_0)$ with vertex set $V_1$ and hyperedge set $V_0$, constructed in similar fashion. Note that $\Bip \G = \Bip \G' = G$. We say that the hypergraphs $\G$ and $\G'$ are \emph{induced} by the bipartite graph $G$. The two hypergraphs $\G$ and $\G'$ are called \emph{abstract dual} or \emph{transpose} to one another: the abstract dual of a hypergraph is given by reversing the roles of vertices and hyperedges, and the incidence matrix of one is the transpose of the other.
These considerations are independent of planar embeddings.

There is another notion of duality related to hypergraphs, one that was implicit in the previous two subsections. When $\Bip\HH$ is given as a plane graph, one may replace the set $V$ of vertices with the set $R$ of complementary regions, while keeping the same set $E$ of hyperedges. The containment relation is defined in the obvious way, by adjacency in the violet graph $G_V$; in other words, $\Bip(R,E)=G_V$. Note how this generalizes the way one takes the dual of a usual plane graph. We will indeed call $(R,E)$ the \emph{planar dual} of $(V,E)$.

When we have a connected plane bipartite graph --- or equivalently, a trinity --- we can use both these notions of duality. Indeed, a trinity naturally contains \emph{six} hypergraphs. For if we take a trinity with vertex classes $V, E, R$, then we have the three plane graphs $G_V, G_E, G_R$ described above, 
and each of these determines two hypergraphs.
Denoting planar duality with a star, and abstract duality with a bar, letting $\HH = (V,E)$ gives the six hypergraphs as
\[
\HH = (V,E), \quad
\HH^* = (R,E), \quad
\overline{\HH^*} = (E,R), \quad
\overline{\HH^*}^* = \overline{\overline{\HH}^*} = (V,R), \quad
\overline{\HH}^* = (R,V), \quad
\overline{\HH} = (E,V).
\]

Returning to the not-necessarily-planar context, we define
a \emph{hypertree} in a hypergraph $\HH = (V,E)$ as a function $f\colon E \To \Z_{\ge0}$
such that there exists a spanning tree in the bipartite graph $\Bip \HH$ with degree $f(e) + 1$ at each node $e \in E$. Such a spanning tree in $\Bip \HH$ is said to \emph{realise} the hypertree $f$. 

Note that in any hypertree, for any $e \in E$ we have $0 \leq f(e) \leq |e| - 1$. When $\HH$ is just a graph, so every $e \in E$ satisfies $|e| = 2$, a hypertree reduces to a tree: each $f(e) = 0$ or $1$ and a tree is chosen by selecting those edges with $f(e) = 1$.

A function $E \To \Z_{\ge0}$ can be regarded as an element of $\Z^E \subset \R^E$. Thus the set of hypertrees of $\HH = (V,E)$ can be regarded as a subset of the $|E|$-dimensional integer lattice $\Z^E$, and it
turns out to  be the set of lattice points of a convex polytope $\QQ_\HH \subset \R^E$
\cite{Postnikov09}.
See for example \cite[Theorem 3.4]{Kalman13_Tutte} for a description of $\QQ_\HH$ by linear inequalities. 
Let us write $B_\HH = \QQ_\HH \cap \Z^E$ for the set of hypertrees in $\HH$. 
For an arbitrary pair of abstract-dual hypergraphs, Postnikov \cite{Postnikov09} showed that their hypertrees are equinumerous. In symbols, 
\begin{equation}\label{eq:postnikov}
|B_\HH|=|B_{\overline{\HH}}|.
\end{equation}

Given a bipartite plane graph $G$ with vertex classes $V_0, V_1$, we can form the two induced abstract-dual hypergraphs $\G_0 = (V_1, V_0)$ and $\G_1 = (V_0, V_1)$. We can also form the planar dual $G^*$ of $G$ (as a graph; this is not to be confused with planar dual hypergraphs). 
We saw that $G^*$ is a balanced directed graph. The first author showed \cite[Theorem 10.1]{Kalman13_Tutte} that the number of hypertrees in $\G_0$, and (either by the same proof, or as a consequence of (\ref{eq:postnikov})) the number of hypertrees in $\G_1$, is equal to the arborescence number of $G^*$. In symbols,
\[
\rho(G^*) = |B_{\G_0}| = |B_{\G_1}|.
\]
In a trinity, then, 
the arborescence numbers of all three planar duals and the number of hypertrees of all six hypergraphs are equal:
\begin{equation*}
\rho(G_V^*) 
= \rho(G_E^*) = \rho(G_R^*) 
= | B_{(V,E)} |
= | B_{(E,V)} |
= | B_{(E,R)} | 
= | B_{(R,E)} |
= | B_{(R,V)} |
= | B_{(V,R)} |.
\end{equation*}

\section{Special alternating links and sutured Floer homology}
\label{sec:trinities_SFH}

Work of the first author with Juh\'{a}sz and Rasmussen established a connection between hypertrees in trinities
and sutured Floer homology. 
We review a few relevant details here.

\subsection{Links and sutured manifolds from trinities}
\label{sec:sutured_manifolds_trinities}

Suppose we have an oriented link $L \subset S^3$, and a Seifert surface $R$ for $L$, so $\partial R = L$. Splitting $S^3$ open along $R$ produces a 3-manifold with boundary consisting of two homeomorphic copies of $R$, which we denote by $R_\pm$. Taking $L = \partial R_\pm$ as a set of sutures and $R_\pm$ as signed regions yields a balanced sutured 3-manifold $S^3(R)$. 

Now from a plane graph $G$, we may construct a surface $F_G$ bounding an alternating link $L_G$ via the \emph{median construction}. Take a regular neighbourhood $U$ of $G$ in $\R^2$, which can be considered as a union of discs around each vertex of $G$, and a band along each edge. Then insert a negative half twist in each band of $U$ to obtain $F_G$, and let $L_G = \partial F_G$. 
The link $L_G$ is a union of arcs of circles around vertices of $G$, and arcs twisting around the edges of $G$.  It may be drawn so that its crossings correspond bijectively to edges of $G$. Since $L_G$ twists negatively around each edge, the diagram obtained of $L_G$ is alternating. See figure \ref{fig:median_construction} for an example.

Drawn in this way, a side of $F_G$ is facing up at each vertex of $G$. If two vertices are connected by an edge, then opposite sides of $F_G$ are facing up at them. Hence $F_G$ is orientable if and only if $G$ is bipartite, in which case the vertex classes may be distinguished by which side of $F_G$ faces up.
In fact, if $G$ is bipartite then $L_G$ is naturally oriented as the boundary of 
$F_G$, and $F_G$ is a Seifert surface for $L_G$. The orientation of $L_G$ is such that the arcs of circles around vertices run anticlockwise or clockwise according to the vertex class.

The primary objects of study in this paper are sutured 3-manifolds $(M_G,L_G)$ obtained from the general construction of $S^3(R)$ performed on the Seifert surface $R=F_G$ and oriented link $L=L_G$, 
associated to a connected bipartite plane graph $G$.

Let the vertex classes of $G$ be $V$ and $E$ and colour them violet and emerald, respectively.
For definiteness, orient $L_G$ to run anticlockwise around violet vertices, and clockwise around emerald vertices. Then the orientation on $F_G$ agrees with that of $S^2$ near violet vertices, and disagrees near emerald vertices.

Since $F_G$ deformation retracts onto $G$, the manifold $M_G$ is homeomorphic to $S^3 - N(G)$, where $N(G)$ is a regular neighbourhood of the graph $G$ in $S^3$. In particular, the manifold is a handlebody. 

The median construction provides a diagram for $L_G$ which is alternating and \emph{special}: every Seifert circle is innermost in $S^2$. (Each Seifert circle runs around a single vertex of $G$ in $S^2$, so all are innermost.) Applying Seifert's algorithm to an alternating diagram always yields a minimal genus Seifert surface \cite{Crowell59, Gabai86_genera, Murasugi58}, and applying Seifert's algorithm to $L_G$ yields $F_G$.

In sum, the surface $F_G$ obtained from the median construction on the connected bipartite plane graph $G$ is a minimal genus Seifert surface for the non-split special alternating link $L_G$. A converse of this result is also true: any minimal genus Seifert surface of a non-split prime special alternating link arises as $F_G$ from the median construction on a connected bipartite plane graph $G$ \cite{Banks11, Hirasawa-Sakuma96}.

\subsection{Sutured $L$-spaces and $SFH$ support}
\label{sec:sutured_L-spaces}

There are certain sutured 3-manifolds $(M, \Gamma)$ where, for each spin-c structure $\mathfrak{s}$, sutured Floer homology is either zero or $\Z$, i.e., $SFH(M, \Gamma, \mathfrak{s}) \cong \Z$ 
or $0$. The sutured 3-manifolds $(M_G, L_G)$ considered in this paper are of this type.

Friedl--Juh\'{a}sz--Rasmussen in \cite{FJR11} define a \emph{sutured $L$-space} to be a balanced sutured 3-manifold $(M, \Gamma)$ such that $SFH(M, \Gamma)$ is torsion free and supported in a single $\Z_2$ homological grading. It follows immediately that for every spin-c structure $\s \in \Spin^c (M, \Gamma)$, the abelian group $SFH(M, \Gamma, \s)$ is trivial or free. Friedl--Juh\'{a}sz--Rasmussen in fact showed \cite[cor.\ 1.7]{FJR11} that each $SFH(M, \Gamma, \s)$ is either trivial or isomorphic to $\Z$.

Thus, for a sutured $L$-space, to understand $SFH(M, \Gamma)$, it is sufficient to know, for each spin-c structure $\s$, whether or not the group $SFH(M, \Gamma, \s)$ is trivial. That is, it is sufficient to know the \emph{support} of $SFH(M, \Gamma)$, which is defined as
\[
\Supp(M, \Gamma) = \left\{ \s \in \Spin^c (M, \Gamma) \mid SFH(M, \Gamma, \s) \neq 0 \right\}.
\]

Examples of sutured $L$-spaces come from Seifert surfaces for links. Given an oriented link $L \subset S^3$ and a Seifert surface $R$, consider the balanced sutured 3-manifold $S^3 (R)$ discussed in the previous subsection, obtained by splitting $S^3$ along $R$. In \cite[cor.\ 6.11]{FJR11} it is shown that if $L$ is a non-split alternating link and $R$ is a minimal genus Seifert surface, then $S^3(R)$ is a sutured $L$-space. 
Thus $S^3(F_G) = (M_G, L_G)$ is a sutured $L$-space, and hence $SFH(M_G, L_G)$ consists of a direct sum of $\Z$'s, one for each spin-c structure in $\Supp(M_G, L_G)$.

\subsection{Sutured Floer homology and hypergraphs}
\label{sec:SFH_and_hypergraphs}

In \cite{Juhasz-Kalman-Rasmussen12}, the first author, with Juh\'{a}sz and Rasmussen, showed that for manifolds $(M_G, L_G)$, the support of sutured Floer homology essentially coincides with the set of hypertrees in a hypergraph associated to $G$. For the precise statement, let $G$ be a connected bipartite plane graph, with vertex classes $V$ and $E$ and complementary regions $R$. Let us form a trinity coloured in violet, emerald and red as in section \ref{sec:planar_graphs_links_trinities}.

On the one hand, we have the hypergraphs associated to the trinity, and hypertrees in them, as discussed in section \ref{sec:hypergraphs_hypertrees}. For current purposes it is useful to consider the hypergraphs $(E,R)$ and $(V,R)$, which are planar duals. 

On the other hand, we may perform the median construction on $G$ and split $S^3$ along the resulting Seifert surface to obtain the sutured $L$-space $(M_G, L_G)$, whose sutured Floer homology is determined by its support. The main result of \cite{Juhasz-Kalman-Rasmussen12} is that the support $\Supp(M_G,L_G)$ and the sets of hypertrees $B_{(E,R)}, B_{(V,R)}$ are essentially the same:
\[
\Supp(M_G, L_G) \cong B_{(E,R)} \cong -B_{(V,R)}.
\]

To make these equivalences precise, first note that $B_{(E,R)} \cong -B_{(V,R)}$ means that these two sets in $\Z^R$ are translates of each other. It is in fact a general property of planar dual hypergraphs $\HH$ and $\HH^*$ that the convex polytopes $\QQ_\HH$ and $\QQ_{\HH^*}$ are reflections of each other in a certain point \cite{Kalman13_Tutte}, so that $B_\HH$ and $-B_{\HH^*}$ are translates.

As for $\Supp(M_G, L_G)$, it lies in $\Spin^c(M_G, L_G)$, which is affine over $H_1(M_G)$. As we discuss in detail below in section \ref{sec:graph_to_sutured_manifold}, cutting the handlebody $M_G$ along $|R|$ discs, one disc in each complementary region of $G$, breaks $M_G$ into two balls, one above and one below the plane of the diagram, so $M_G$ has genus $|R|-1$ and $H_1(M_G) \cong \Z^{|R|-1}$. In the equivalence $\Supp(M_G,L_G) \cong B_{(E,R)}$, the affine $\Z^{|R|-1}$ space $\Spin^c(M_G, L_G)$ is identified with an affine hyperplane of $\Z^R$. We will describe this explicitly when it is needed in section \ref{sec:euler_classes_of_contact_structures}.
In particular, we will see then that hypertrees lie along a hyperplane because the sum of their coordinates is a constant, just as the number of edges in a spanning tree of a given graph is constant.

\section{Contact structures on plane bipartite graph complements}
\label{sec:contact_structures}

We now come to the main objective of the paper, which is to consider tight contact structures on the sutured manifolds $(M_G, L_G)$ of section \ref{sec:sutured_manifolds_trinities}. As before, let $G$ be a connected bipartite plane graph, with colour classes $V$ and $E$, and complementary regions $R$.

\subsection{From graph to sutured manifold}
\label{sec:graph_to_sutured_manifold}

We first analyse $(M_G, L_G)$ in detail, and develop some notation which will be useful in the sequel.
The set $S^2 \setminus G$ consists of $|R|$ open discs; we denote the closures of these discs by $\tilde{D}_r$, for each $r \in R$. See figure \ref{fig:complementary_regions} (left).

Now, consider $G$ in 3 dimensions, with 
$G\subset S^2 \subset S^3$. Then $S^2$ splits $S^3$ into two open 3-balls, one lying above and one below the plane $\R^2\subset S^2$ of our diagrams. Let the closures of these two balls be $B^+$ and $B^-$ respectively. So $B^+, B^-$ are closed balls whose intersection is $S^2$.

\begin{figure}
\begin{center}
\begin{minipage}{0.3\textwidth}
\begin{tikzpicture}[scale = 0.2]
\coordinate (r3) at (1,6);
\coordinate (e1) at (-1,4); 
\coordinate (v1) at (-1.5, 2);
\coordinate (e2) at (-2,0); 
\coordinate (v2) at (-2.3,-2); 
\coordinate (e3) at (-2.5,-4); 
\coordinate (r0) at (-3,-7); 
\coordinate (v4) at (3,4);
\coordinate (r2) at (2.5,0);
\coordinate (v3) at (4,-3);
\coordinate (e0) at (7,3);
\coordinate (v0) at (-6,6); 
\coordinate (r1) at (-6, 0); 

\fill [black!10!white] 
(v0)
to [out=330, in=150] (e1)
to [out=250, in=70] (v1)
to (e2)
to (v2)
to (e3)
.. controls ($ (e3)+(200:8) $) and ($ (v0)+(210:8) $) .. (v0);

\draw [ultra thick, draw=none] (r3) to[out=210,in=70] (e1); 
\draw [ultra thick, red] (e1) to [out = 250, in = 70] (v1); 
\draw [ultra thick, red] (v1) to (e2); 
\draw [ultra thick, red] (e2) to (v2); 
\draw [ultra thick, red] (v2) to (e3); 
\draw [ultra thick, draw=none] (e3) to [out=270, in=90] (r0); 
\draw [ultra thick, draw=none] (r0)
.. controls ($ (r0) + (270:8) $) and ($ (e0) + (345:10) $) .. (e0); 
\draw [ultra thick, red] (e0) to [out=165, in=0] (v4); 
\draw [ultra thick, red] (v4) to [out=180, in=10] (e1); 
\draw [ultra thick, draw=none] (e1) to [out=190, in=90] (r1); 
\draw [ultra thick, draw=none] (r1) to [out=270, in=170] (v2); 
\draw [ultra thick, draw=none] (v2) to [out=350, in=210] (r2); 
\draw [ultra thick, draw=none] (r2) to [out=30, in=300] (v4); 
\draw [ultra thick, draw=none] (v4) to [out=120, in=300] (r3); 
\draw [ultra thick, draw=none] (r3) to [out=120, in=30] (v0); 
\draw [ultra thick, red] (v0)
.. controls ($ (v0) + (210:8) $) and ($ (e3) + (200:8) $) .. (e3); 
\draw [ultra thick, draw=none] (e3) to [out=20, in=270] (r2); 
\draw [ultra thick, draw=none] (r2) to [out=90, in=330] (e1); 
\draw [ultra thick, red] (e1) to [out=150, in=330] (v0); 
\draw [ultra thick, draw=none] (v0)
.. controls ($ (v0) + (150:6) $) and ($ (r0) + (180:15) $) .. (r0); 
\draw [ultra thick, draw=none] (r0) to [out=0, in=270] (v3); 
\draw [ultra thick, draw=none] (v3) to [out=90, in=300] (r2); 
\draw [ultra thick, draw=none] (r2) to [out=120, in=0] (v1); 
\draw [ultra thick, draw=none] (v1) to [out=180, in=60] (r1); 
\draw [ultra thick, draw=none] (r1)
.. controls ($ (r1) + (240:5) $) and ($ (e3) + (120:2) $) .. (e3); 
\draw [ultra thick, red] (e3) to [out=300, in=210] (v3); 
\draw [ultra thick, red] (v3) to [out=30, in=300] (e0); 
\draw [ultra thick, draw=none] (e0) to [out=120, in=30] (r3); 

\draw [ultra thick, red] (e0) 
.. controls ($ (e0) + (60:7) $) and ($ (v0) + (60:7) $) .. (v0); 
\draw [ultra thick, draw=none] (v0)
.. controls ($ (v0) + (240:3) $) and ($ (r1) + (150:3) $) .. (r1); 
\draw [ultra thick, draw=none] (r1) to [out=330, in=180] (e2); 
\draw [ultra thick, draw=none] (e2) to [out=0, in=180] (r2); 
\draw [ultra thick, draw=none] (r2) to [out=0, in=240] (e0); 


\draw (r1) node {$\tilde{D}_r$};

\foreach \x/\word in {(e0)/e0, (e1)/e1, (e2)/e2, (e3)/e3}
{
\draw [green!50!black, fill=green!50!black] \x circle  (10pt);
}

\foreach \x/\word in {(v0)/v0, (v1)/v1, (v2)/v2, (v3)/v3, (v4)/v4}
{
\draw [blue, fill=blue] \x circle (10pt);
}

\end{tikzpicture}
\end{minipage}
\hspace{1cm}
\begin{minipage}{0.6\textwidth}
\begin{tikzpicture}[scale = 0.4]

\clip (-12,-7) rectangle (11,11); 

\coordinate (r3) at (1,6);
\coordinate (e1) at (-1,4); 
\coordinate (v1) at (-1.5, 2);
\coordinate (e2) at (-2,0); 
\coordinate (v2) at (-2.3,-2); 
\coordinate (e3) at (-2.5,-4); 
\coordinate (r0) at (-3,-7); 
\coordinate (v4) at (3,4);
\coordinate (r2) at (2.5,0);
\coordinate (v3) at (4,-3);
\coordinate (e0) at (7,3);
\coordinate (v0) at (-6,6); 
\coordinate (r1) at (-6, 0); 

\coordinate (v0a) at ($ (v0) + (15:1) $);
\coordinate (v0b) at ($ (v0) + (135:1) $);
\coordinate (v0c) at ($ (v0) + (270:1) $);

\coordinate (v1a) at ($ (v1) + (340:1) $);
\coordinate (v1b) at ($ (v1) + (160:1) $);

\coordinate (v2a) at ($ (v2) + (350:1) $);
\coordinate (v2b) at ($ (v2) + (170:1) $);

\coordinate (v3a) at ($ (v3) + (120:1) $);
\coordinate (v3b) at ($ (v3) + (300:1) $);

\coordinate (v4a) at ($ (v4) + (90:1) $);
\coordinate (v4b) at ($ (v4) + (270:1) $);

\coordinate (e0a) at ($ (e0) + (0:1) $);
\coordinate (e0b) at ($ (e0) + (112:1) $);
\coordinate (e0c) at ($ (e0) + (232:1) $);

\coordinate (e1a) at ($ (e1) + (80:1) $);
\coordinate (e1b) at ($ (e1) + (200:1) $);
\coordinate (e1c) at ($ (e1) + (310:1) $);

\coordinate (e2a) at ($ (e2) + (345:1) $);
\coordinate (e2b) at ($ (e2) + (165:1) $);

\coordinate (e3a) at ($ (e3) + (12:1) $);
\coordinate (e3b) at ($ (e3) + (142:1) $);
\coordinate (e3c) at ($ (e3) + (250:1) $);

\fill [black!10!white] (v0c) 
.. controls ($ (v0c) + (210:6) $) and ($ (e3b) + (200:6) $) .. (e3b)
to [bend left=12] (e1b)
to (v0c);

\begin{knot}[consider self intersections, 
clip width=5,
flip crossing=2,
flip crossing=4,
flip crossing=6,
flip crossing=8,
flip crossing=11
]
\strand [ultra thick]
(e1b)
to [out = 250, in = 70] (v1a)
to [out=250, in=75] (e2b) 
to [out=255, in=80] (v2a) 
to [out=260, in=85] (e3b) 
.. controls ($ (e3b) + (200:8) $) and ($ (v0b) + (210:8) $) .. (v0b)
.. controls ($ (v0b) + (60:7) $) and ($ (e0b) + (60:5) $) .. (e0b) 
to [out=165, in=0] (v4b)
to [out=180, in=10] (e1a)
to [out=150, in=330] (v0c)
.. controls ($ (v0c) + (210:8) $) and ($ (e3c) + (200:6) $) .. (e3c) 
to [out=330, in=220] (v3a) 
to [out=40, in=300] (e0a) 
.. controls ($ (e0a) + (60:7) $) and ($ (v0a) + (60:6) $) .. (v0a) 
to [out=330, in=150] (e1b);
\strand[ultra thick]
(e1c)
to [out=10, in=180] (v4a)
to [out=0, in=165] (e0c)
to [out=300, in=20] (v3b) 
to [out=220, in=320] (e3a) 
to [out=85, in=260] (v2b) 
to [out=80, in=255] (e2a) 
to [out=75, in=250] (v1b) 
to [out=70, in=250] (e1c); 
\end{knot}




\fill [blue, opacity=0.2] (v0a)
to [out=135, in=30] (v0b)
to [out=240, in=180] (v0c)
to [out=30, in=270] (v0a);

\fill [brown, opacity=0.2] (v0a)
to [out=270, in=30] (v0c)
to (e1b)
to [out=90, in=210] (e1a)
to (v0a);
\draw [brown] (v0a) to [out=270, in=30] (v0c);
\draw [brown] (e1b) to [out=90, in=210] (e1a);

\fill [brown, opacity=0.2] (v0c)
to [out=180, in=240] (v0b)
.. controls ($ (v0b) + (210:10) $) and ($ (e3c) + (200:9) $) .. (e3c)
to [out=150, in=240] (e3b)
.. controls ($ (e3b) + (200:6) $) and ($ (v0c) + (210:6) $) .. (v0c);
\draw [brown] (v0c) to [out=180, in=240] (v0b);
\draw [brown] (e3c) to [out=150, in=240] (e3b);


\draw (r1) node {$D_r$};

\foreach \x/\word in {(e0)/e0, (e1)/e1, (e2)/e2, (e3)/e3}
{
\draw [green!50!black, fill=green!50!black] \x circle  (10pt);
}

\foreach \x/\word in {(v0)/v0, (v1)/v1, (v2)/v2, (v3)/v3, (v4)/v4}
{
\draw [blue, fill=blue] \x circle (10pt);
}

\foreach \x/\word in {(v0a)/a, (v0b)/b, (v0c)/c, (v1a)/a, (v1b)/b, (v2a)/a, (v2b)/b, (v3a)/a, (v3b)/b, (v4a)/a, (v4b)/b, (e0a)/a, (e0b)/b, (e0c)/c, (e1a)/a, (e1b)/b, (e1c)/c, (e2a)/a, (e2b)/b, (e3a)/a, (e3b)/b, (e3c)/c}
{
}


\draw [ultra thick, brown] (v0c) 
.. controls ($ (v0c) + (210:6) $) and ($ (e3b) + (200:6) $) .. (e3b)
to [bend left=12] (e1b)
to (v0c);

\draw [ultra thick, brown] (e1c)
to [bend left=2] (e3a)
to [bend right=5] (v3a)
to [bend right=10] (e0c)
to (v4b)
to (e1c);

\draw [ultra thick, brown] (v0a)
to (e1a)
to (v4a)
to (e0b)
.. controls ($ (e0b) + (60:5) $) and ($ (v0a) + (60:5) $) .. (v0a);

\draw [ultra thick, brown] (v0b)
.. controls ($ (v0b) + (210:10) $) and ($ (e3c) + (200:9) $) .. (e3c)
to [out=300, in=220] (v3b) 
to [out=20, in=300] (e0a)
.. controls ($ (e0a) + (60:8) $) and ($ (v0b) + (60:8) $) .. (v0b);

\end{tikzpicture}
\end{minipage}
\end{center}
\caption{Left: Bipartite plane graph $G$, and a complementary disc region $\tilde{D}_r$. Right: The sutured manifold $(M_G, L_G)$. Violet and emerald vertices of $G$ are shown. The boundary $\partial M_G = \partial N(G)$ is indicated in brown. The sutures/link $L_G$ is drawn in black. A decomposing disc $D_r$ for $(M_G, L_G)$ is shaded in grey. Two half-tubes $H_\varepsilon^+$ are shown, shaded in brown, about two edges, which meet consecutively at a violet vertex $v$; the polygon $H_v^+$ is shaded in violet.
}
\label{fig:complementary_regions}
\label{fig:M_G}
\end{figure}
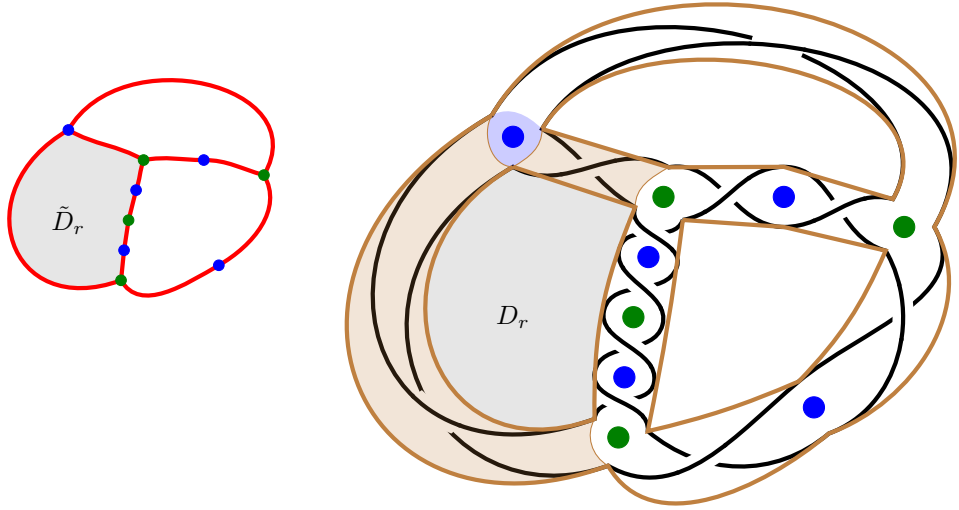

Recall that
$M_G$ is given by $S^3$ with a neighbourhood $N(G)$ of $G$ removed. The neighbourhood $N(G)$ can be regarded as a union of open balls about the vertices of $G$, and tubes about the edges of $G$. The sutures consist of the link $L_G$, which is drawn on the boundary of $N(G)$. See figure \ref{fig:M_G} (right).
It will be useful to decompose the boundary $\partial N(G) = \partial M_G$ into closed polygons as follows:
\begin{itemize}
\item
A cylinder $H_\varepsilon$ around each edge $\varepsilon$ of $G$, which is further split into two rectangles $H_\varepsilon^+ = H_\varepsilon \cap B^+$ and $H_\varepsilon^- = H_\varepsilon \cap B^-$.
\item
Two polygons $H_v^+, H_v^-$ around each vertex $v$ of $G$ of degree $\geq 2$. The polygons $H_v^+, H_v^-$ lie in $B^+, B^-$ respectively. Each polygon $H_v^\pm$ has its number of sides equal to the degree of the vertex $v$ in $G$. Each side of $H_v^\pm$ is shared with a rectangle $H_\varepsilon^\pm$, for an edge $\varepsilon$ incident to $v$.
\end{itemize}
Figure \ref{fig:M_G} (right) shows two rectangles $H_\varepsilon^+$ and a polygon $H_v^+$, which is a triangle since the vertex in the example has degree $3$.

As shown, we can take the cylinders $H_{\varepsilon}$ so that two cylinders $H_\varepsilon, H_{\varepsilon'}$ about two distinct edges $\varepsilon \neq \varepsilon'$ meet if and only if the edges $\varepsilon,\varepsilon'$ have a common endpoint $v$, and $\varepsilon,\varepsilon'$ are consecutive edges around $v$. In this case, $H_\varepsilon$ and $H_{\varepsilon'}$ intersect in a single point near $v$, which is also a vertex of $H_v^+$ and $H_v^-$. See figure \ref{fig:M_G} (right). 

When $G$ has a vertex $v$ of degree 1, the endpoint of an edge $\varepsilon$, we take $H_\varepsilon$ to be the boundary of a ``sock"-shaped neighbourhood; but by cutting along a semicircular arc (transverse to $S^2$) near $v$, at the closed end of the sock, we can regard $H_\varepsilon$ as a cylinder with one end glued up by identifications. Again we obtain two rectangles $H_\varepsilon^\pm$; the two vertices of $H_\varepsilon^+$ near $v$, and the two vertices of $H_\varepsilon^-$ near $v$, are all identified. 

In each complementary region $r \in R$ of $G$, we had a disc $\tilde{D}_r$. However not all of $\tilde{D}_r$ lies in $M_G$. We denote the intersection of $\tilde{D}_r$ with $M_G$ by $D_r$; so $D_r$ is a closed disc which is a slightly shrunken version (deformation retract) of $\tilde{D}_r$; again see figure \ref{fig:M_G} (right). Note that $D_r$ is a properly embedded, incompressible disc in $M_G$.

Now the sutures/link $L_G$ can be drawn on the boundary $\partial M_G = \partial N(G)$ so that they lie entirely in the tubes $H_\varepsilon$, and do not pass into the interior of any polygon $H_v^\pm$ about any vertex. In fact, they can be drawn so as to pass through every vertex of every rectangle $H_\varepsilon^\pm$, and intersect the interior of each $H_\varepsilon^\pm$ in a single arc, which runs diagonally across the rectangle, twisting negatively around the edge $\varepsilon$ of $G$.

If we cut $M_G$ along all the discs $D_r$, over all $r \in R$, then $M_G$ is cut into two balls $B^\pm \cap M_G$ which we denote by $M^\pm$. 
The components of the sutures $L_G$ lying in $M^+$ are the arcs of $L_G$ drawn on each $H_\varepsilon^+$, and the components of $L_G$ lying in $M^-$ are the arcs drawn on each $H_\varepsilon^-$.

Consider an $r \in R$, which corresponds to a complementary disc region $\tilde{D}_r$ of $G$, and a disc $D_r \subset M_G$. Recall that $D_r \subset \tilde{D}_r$. Proceeding around the boundary of $\tilde{D}_r$, there is a sequence of vertices and edges of $G$, with the vertices alternating in colour between violet and emerald; let there be $n$ vertices of each colour. Correspondingly, proceeding around the boundary of $D_r$, there is a sequence of arcs of rectangles $H_\varepsilon^\pm$. The sutures $L_G$ intersect $\partial D_r$ in precisely $2n$ points, which are all vertices of rectangles $H_\varepsilon^\pm$. The points of $L_G \cap \partial D_r$ are naturally in bijection with the vertices around the boundary of the complementary region $r$, i.e., with the vertices on $\partial \tilde{D}_r$.

Thus there is a natural homeomorphism $\tilde{D}_r \To D_r$ which takes each vertex $v$ (violet or emerald) on $\partial \tilde{D}_r$ to a vertex of the polygons $H_v^\pm$ on $\partial M_G$ about $v$; and which takes each edge $\varepsilon$ of $G$ on $\tilde{D}_r$ to a common edge of the rectangles $H_\varepsilon^\pm$.

\subsection{Applying the gluing theorem}
\label{sec:applying_gluing_thm}

To classify tight contact structures on $(M_G, L_G)$, we will decompose $M_G$ along the discs $D_r$ to obtain the two balls $M^+, M^-$. In other words, we apply theorem \ref{thm:Honda_gluing} to the sutured manifold $(M, \Gamma) = (M_G, L_G)$, cutting surface $S = \sqcup_{r \in R} D_r$, and cut-up manifold $M' = M^+ \cup M^-$. 

As a handlebody, $(M_G, L_G)$ is a compact, oriented, irreducible sutured 3-manifold. We may take a contact structure $\xi_{\partial}$ near $\partial M_G$ such that $\partial M_G$ is convex, with dividing set $L_G$. Recall (section \ref{sec:sutured_manifolds_trinities}) that we orient $F_G$ so that it carries the orientation of $S^2$ near violet vertices, and the opposite orientation near emerald vertices; accordingly $L_G$ runs anticlockwise around violet vertices, and clockwise around emerald vertices. 

The surface $S$ is properly embedded and incompressible in $M_G$. Every component of $\partial D_r$ intersects $L_G$ nontrivially; indeed $|\partial D_r \cap L_G|$ is the number of vertices around the boundary of the complementary region $r$. We will denote this number by $2n_r$; so the boundary of $r$ contains $n_r$ violet and $n_r$ emerald vertices. In particular, the Legendrian realisation principle applies to $\partial S$ and we may take the contact structure $\xi_\partial$ near $\partial M_G$ so that $\partial S$ is Legendrian.

The hypotheses of Honda's gluing theorem \ref{thm:Honda_gluing} are thus satisfied, and so there is a bijection between the tight components of the configuration graph $\GG(M_G, L_G, S)$, and the set $\T(M_G, L_G)$ of isotopy classes of tight contact structures on $(M_G, L_G)$.

Since cutting $M_G$ along the discs $S = \sqcup_{r \in R} D_r$ yields the very simple manifold $M' = M^+ \cup M^-$ consisting of two balls, we will be able to use the gluing theorem, together with our knowledge of the contact topology of 3-balls, to obtain a classification of the tight contact structures on $(M_G, L_G)$. 

We thus turn to an analysis of configurations on $(M_G, L_G, S)$.

\subsection{Analysing configurations}

From definition \ref{def:configuration}, a configuration on $(M_G, L_G, S)$ is a pair $(\Gamma_S, \xi')$, where $\Gamma_S$ is an (isotopy class of) set of sutures on $S$, and $\xi'$ is an (isotopy class of) contact structure on the sutured manifold $(M', \Gamma')$ obtained by cutting $M$ along $S$, drawing sutures $\Gamma_S$ on both copies of $S$, and edge-rounding.

A dividing set $\Gamma_S$ on $S$ consists of a dividing set $\Gamma_r$ on each disc $D_r$. Since $M'$ consists of the balls $M^+$ and $M^-$, each $\xi'$ is either overtwisted, or the unique tight contact structure on each of the balls $M^+$ and $M^-$ with the given boundary conditions.

We just saw that for a complementary region $r \in R$ of $G$, the disc $D_r$ has boundary $\partial D_r$ intersecting the sutures 
$L_G$ at $2n_r$ points near the $2n_r$ vertices ($n_r$ violet and $n_r$ emerald) around $\partial \tilde{D}_r$. The dividing set $\Gamma_r$ must therefore have endpoints interleaving with the $2n_r$ points of $L_G \cap \partial D_r$. In particular, $\partial \Gamma_r$ contains a point $f_{r,\varepsilon}$ for each edge $\varepsilon$ on the boundary of $\tilde{D}_r$, which lies on the cylinder $H_\varepsilon$. We can regard $f_{r,\varepsilon}$ as a point on $H_\varepsilon^+\cap H_\varepsilon^-$, midway along $\varepsilon$; each $f_{r,\varepsilon}$ thus lies near a crossing of $L_G$.

If any $\Gamma_r$ contains a closed curve, then the configuration is overtwisted. So in a potentially tight configuration, each $\Gamma_r$ consists of a collection of disjoint arcs, joining the points $f_{r,\varepsilon}$ associated to the boundary edges $\varepsilon$ of $r$. See figure \ref{fig:configuration} for an example.

\begin{figure}
\begin{center}
\begin{tikzpicture}[scale = 0.4]

\clip (-12,-8) rectangle (11,11);

\coordinate (r3) at (1,6);
\coordinate (e1) at (-1,4); 
\coordinate (v1) at (-1.5, 2);
\coordinate (e2) at (-2,0); 
\coordinate (v2) at (-2.3,-2); 
\coordinate (e3) at (-2.5,-4); 
\coordinate (r0) at (-3,-7); 
\coordinate (v4) at (3,4);
\coordinate (r2) at (2.5,0);
\coordinate (v3) at (4,-3);
\coordinate (e0) at (7,3);
\coordinate (v0) at (-6,6); 
\coordinate (r1) at (-6, 0); 

\coordinate (v0a) at ($ (v0) + (15:1) $);
\coordinate (v0b) at ($ (v0) + (135:1) $);
\coordinate (v0c) at ($ (v0) + (270:1) $);

\coordinate (v1a) at ($ (v1) + (340:1) $);
\coordinate (v1b) at ($ (v1) + (160:1) $);

\coordinate (v2a) at ($ (v2) + (350:1) $);
\coordinate (v2b) at ($ (v2) + (170:1) $);

\coordinate (v3a) at ($ (v3) + (120:1) $);
\coordinate (v3b) at ($ (v3) + (300:1) $);

\coordinate (v4a) at ($ (v4) + (90:1) $);
\coordinate (v4b) at ($ (v4) + (270:1) $);

\coordinate (e0a) at ($ (e0) + (0:1) $);
\coordinate (e0b) at ($ (e0) + (112:1) $);
\coordinate (e0c) at ($ (e0) + (232:1) $);

\coordinate (e1a) at ($ (e1) + (80:1) $);
\coordinate (e1b) at ($ (e1) + (200:1) $);
\coordinate (e1c) at ($ (e1) + (310:1) $);

\coordinate (e2a) at ($ (e2) + (345:1) $);
\coordinate (e2b) at ($ (e2) + (165:1) $);

\coordinate (e3a) at ($ (e3) + (12:1) $);
\coordinate (e3b) at ($ (e3) + (142:1) $);
\coordinate (e3c) at ($ (e3) + (250:1) $);

\draw [ultra thick] ($ (e1c)!0.5!(v4b) $) 
.. controls ($ (e1c)!0.5!(v4b) + (250:3) $) and ($ (v4b)!0.5!(e0c) + (250:3) $) .. ($ (v4b)!0.5!(e0c) $);
\draw [ultra thick] ($ (e0c)!0.5!(v3a) + (0.3,0) $) .. controls ($ (e0c)!0.5!(v3a) + (135:2) $) and ($ (v3a)!0.5!(e3a) + (135:2) $) .. ($ (v3a)!0.5!(e3a) $);
\draw [ultra thick] ($ (e3a)!0.4!(v2a) $) 
to [out=350, in=330] ($ (v1a)!0.5!(e1c) $);
\draw [ultra thick] ($ (v2a)!0.5!(e2a) $) 
to [out=330, in=330] ($ (e2a)!0.5!(v1a) $);

\draw [ultra thick] ($ (v1b)!0.5!(e2b) $)
to [out=150, in=60] ($ (e3b) + (-4.8,1) $);
\draw [ultra thick] ($ (e2b)!0.5!(v2b) $) 
.. controls ($ (e2b)!0.5!(v2b) + (150:2) $) and ($ (v2b)!0.5!(e3b) + (150:2) $) .. ($ (v2b)!0.5!(e3b) $);
\draw [ultra thick] ($ (v0c)!0.5!(e1b) $) to [out=225, in=225] ($ (e1b)!0.5!(v1b) $);

\draw [ultra thick] ($ (v4a)!0.5!(e0b) $) to [bend left=15] ($ (e0b) + (-3.5, 4.3) $);
\draw [ultra thick] ($ (v4a)!0.3!(e1a) $) to [bend right=75] ($ (e1a)!0.5!(v0a) $);

\draw [ultra thick] ($ (v3b) + (-3.8, -2.2) $) to [bend left=60] ($ (e3c) + (-6.8, 0.6) $);
\draw [ultra thick] ($ (e0a) + (-0.2, -4.1) $) .. controls ($ (e0a) + (-0.2, -4.1) + (0:6) $) and ($ (e0a) + (-4.3, 7) + (30:6) $) .. ($ (e0a) + (-4.3, 7) $);

\begin{knot}[consider self intersections, 
clip width=5,
flip crossing=2,
flip crossing=4,
flip crossing=6,
flip crossing=8,
flip crossing=11
]
\strand [ultra thick]
(e1b)
to [out = 250, in = 70] (v1a)
to [out=250, in=75] (e2b) 
to [out=255, in=80] (v2a) 
to [out=260, in=85] (e3b) 
.. controls ($ (e3b) + (200:8) $) and ($ (v0b) + (210:8) $) .. (v0b)
.. controls ($ (v0b) + (60:7) $) and ($ (e0b) + (60:5) $) .. (e0b) 
to [out=165, in=0] (v4b)
to [out=180, in=10] (e1a)
to [out=150, in=330] (v0c)
.. controls ($ (v0c) + (210:8) $) and ($ (e3c) + (200:6) $) .. (e3c) 
to [out=330, in=220] (v3a) 
to [out=40, in=300] (e0a) 
.. controls ($ (e0a) + (60:7) $) and ($ (v0a) + (60:6) $) .. (v0a) 
to [out=330, in=150] (e1b);
\strand[ultra thick]
(e1c)
to [out=10, in=180] (v4a)
to [out=0, in=165] (e0c)
to [out=300, in=20] (v3b) 
to [out=220, in=320] (e3a) 
to [out=85, in=260] (v2b) 
to [out=80, in=255] (e2a) 
to [out=75, in=250] (v1b) 
to [out=70, in=250] (e1c); 
\end{knot}

\draw [ultra thick, brown] (v0c) 
.. controls ($ (v0c) + (210:6) $) and ($ (e3b) + (200:6) $) .. (e3b)
to [bend left=12] (e1b) 
to (v0c);

\draw [ultra thick, brown] (e1c)
to (v1a) to (e2a) to (v2a) to (e3a)
to [bend right=5] (v3a)
to [bend right=10] (e0c)
to (v4b)
to (e1c);

\draw [ultra thick, brown] (v0a)
to (e1a)
to (v4a)
to (e0b)
.. controls ($ (e0b) + (60:5) $) and ($ (v0a) + (60:5) $) .. (v0a);

\draw [ultra thick, brown] (v0b)
.. controls ($ (v0b) + (210:10) $) and ($ (e3c) + (200:9) $) .. (e3c)
to [out=300, in=220] (v3b) 
to [out=20, in=300] (e0a)
.. controls ($ (e0a) + (60:8) $) and ($ (v0b) + (60:8) $) .. (v0b);


\foreach \x/\word in {(e0)/e0, (e1)/e1, (e2)/e2, (e3)/e3}
{
\draw [green!50!black, fill=green!50!black] \x circle  (10pt);
}

\foreach \x/\word in {(v0)/v0, (v1)/v1, (v2)/v2, (v3)/v3, (v4)/v4}
{
\draw [blue, fill=blue] \x circle (10pt);
}

\foreach \x/\word in {(v0a)/a, (v0b)/b, (v0c)/c, (v1a)/a, (v1b)/b, (v2a)/a, (v2b)/b, (v3a)/a, (v3b)/b, (v4a)/a, (v4b)/b, (e0a)/a, (e0b)/b, (e0c)/c, (e1a)/a, (e1b)/b, (e1c)/c, (e2a)/a, (e2b)/b, (e3a)/a, (e3b)/b, (e3c)/c}
{
\draw [black, fill=black] \x circle (3pt);
}


\end{tikzpicture}
\end{center}
\caption{A configuration on $(M_G, L_G, S)$. The dividing sets $L_G$ on $\partial M_G$, and $\Gamma_S$ on $S$, are both drawn in black. Intersection points of $L_G$ with $S$ are marked with black dots, which interleave with the points of $\partial\Gamma_S$.}
\label{fig:configuration}
\end{figure}
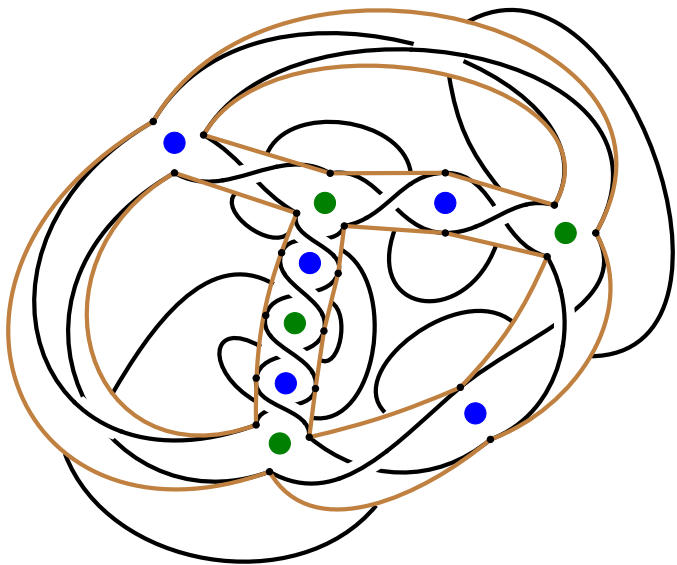

Given a dividing set $\Gamma_S$ on $S$, we obtain dividing sets on $\partial M^+$ and $\partial M^-$. The dividing set on $\partial M^+$ (resp.\ $\partial M^-$) consists of the arcs of $L_G$ in each rectangle $H_\varepsilon^+$ (resp.\ $H_\varepsilon^-$), together with the arcs of $\Gamma_S$. This gives $(M', \Gamma_S \cup L_G)$ the structure of a sutured manifold with corners along $\partial S = \sqcup_{r \in R} \partial D_r$. 

Upon rounding the corners, the dividing sets become a collection $\Gamma'$ of smooth curves. If we obtain a single connected curve on both $\partial M^+$ and $\partial M^-$, then there is a potentially tight configuration $(\Gamma_S, \xi')$, where $\xi'$ is the unique isotopy class of tight contact structures on $(M', \Gamma')$. If the sutures obtained on $\partial M^+$ or $\partial M^-$ are disconnected, then there are no potentially tight configurations. 

\begin{figure}
\begin{center} 
\begin{minipage}{0.4\textwidth}
\begin{tikzpicture}[scale = 0.3]

\clip (-12,-8) rectangle (11,11);

\coordinate (r3) at (1,6);
\coordinate (e1) at (-1,4); 
\coordinate (v1) at (-1.5, 2);
\coordinate (e2) at (-2,0); 
\coordinate (v2) at (-2.3,-2); 
\coordinate (e3) at (-2.5,-4); 
\coordinate (r0) at (-3,-7); 
\coordinate (v4) at (3,4);
\coordinate (r2) at (2.5,0);
\coordinate (v3) at (4,-3);
\coordinate (e0) at (7,3);
\coordinate (v0) at (-6,6); 
\coordinate (r1) at (-6, 0); 

\coordinate (v0a) at ($ (v0) + (15:1) $);
\coordinate (v0b) at ($ (v0) + (135:1) $);
\coordinate (v0c) at ($ (v0) + (270:1) $);

\coordinate (v1a) at ($ (v1) + (340:1) $);
\coordinate (v1b) at ($ (v1) + (160:1) $);

\coordinate (v2a) at ($ (v2) + (350:1) $);
\coordinate (v2b) at ($ (v2) + (170:1) $);

\coordinate (v3a) at ($ (v3) + (120:1) $);
\coordinate (v3b) at ($ (v3) + (300:1) $);

\coordinate (v4a) at ($ (v4) + (90:1) $);
\coordinate (v4b) at ($ (v4) + (270:1) $);

\coordinate (e0a) at ($ (e0) + (0:1) $);
\coordinate (e0b) at ($ (e0) + (112:1) $);
\coordinate (e0c) at ($ (e0) + (232:1) $);

\coordinate (e1a) at ($ (e1) + (80:1) $);
\coordinate (e1b) at ($ (e1) + (200:1) $);
\coordinate (e1c) at ($ (e1) + (310:1) $);

\coordinate (e2a) at ($ (e2) + (345:1) $);
\coordinate (e2b) at ($ (e2) + (165:1) $);

\coordinate (e3a) at ($ (e3) + (12:1) $);
\coordinate (e3b) at ($ (e3) + (142:1) $);
\coordinate (e3c) at ($ (e3) + (250:1) $);

\draw [ultra thick] ($ (e1c)!0.5!(v4b) $) 
.. controls ($ (e1c)!0.5!(v4b) + (250:3) $) and ($ (v4b)!0.5!(e0c) + (250:3) $) .. ($ (v4b)!0.5!(e0c) $);
\draw [ultra thick] ($ (e0c)!0.5!(v3a) + (0.3,0) $) .. controls ($ (e0c)!0.5!(v3a) + (135:2) $) and ($ (v3a)!0.5!(e3a) + (135:2) $) .. ($ (v3a)!0.5!(e3a) $);
\draw [ultra thick] ($ (e3a)!0.4!(v2a) $) 
to [out=350, in=330] ($ (v1a)!0.5!(e1c) $);
\draw [ultra thick] ($ (v2a)!0.5!(e2a) $) 
to [out=330, in=330] ($ (e2a)!0.5!(v1a) $);

\draw [ultra thick] ($ (v1b)!0.5!(e2b) $)
to [out=150, in=60] ($ (e3b) + (-4.8,1) $);
\draw [ultra thick] ($ (e2b)!0.5!(v2b) $) 
.. controls ($ (e2b)!0.5!(v2b) + (150:2) $) and ($ (v2b)!0.5!(e3b) + (150:2) $) .. ($ (v2b)!0.5!(e3b) $);
\draw [ultra thick] ($ (v0c)!0.5!(e1b) $) to [out=225, in=225] ($ (e1b)!0.5!(v1b) $);

\draw [ultra thick] ($ (v4a)!0.5!(e0b) $) to [bend left=15] ($ (e0b) + (-3.5, 4.3) $);
\draw [ultra thick] ($ (v4a)!0.3!(e1a) $) to [bend right=75] ($ (e1a)!0.5!(v0a) $);

\draw [ultra thick] ($ (v3b) + (-3.8, -2.2) $) to [bend left=60] ($ (e3c) + (-6.8, 0.6) $);
\draw [ultra thick] ($ (e0a) + (-0.2, -4.1) $) .. controls ($ (e0a) + (-0.2, -4.1) + (0:6) $) and ($ (e0a) + (-4.3, 7) + (30:6) $) .. ($ (e0a) + (-4.3, 7) $);

\begin{knot}[consider self intersections, 
clip width=10,
flip crossing=2,
flip crossing=4,
flip crossing=6,
flip crossing=8,
flip crossing=11
]
\strand [ultra thick]
(e1b)
to [out = 250, in = 70] (v1a);
\strand[ultra thick] (e2b)
to [out=255, in=80] (v2a); 
\strand[ultra thick] (e3b) 
.. controls ($ (e3b) + (200:8) $) and ($ (v0b) + (210:8) $) .. (v0b);
\strand[ultra thick](e0b)
to [out=165, in=0] (v4b);
\strand[ultra thick] (e1a)
to [out=150, in=330] (v0c);
\strand[ultra thick] (e3c) 
to [out=310, in=220] (v3a); 
\strand [ultra thick] (e0a) 
.. controls ($ (e0a) + (60:7) $) and ($ (v0a) + (60:6) $) .. (v0a); 
\strand[ultra thick] (e1c)
to [out=10, in=180] (v4a);
\strand[ultra thick] (e0c)
to [out=300, in=20] (v3b); 
\strand [ultra thick](e3a) 
to [out=85, in=260] (v2b); 
\strand [ultra thick] (e2a) 
to [out=75, in=250] (v1b); 
\end{knot}

\draw [ultra thick, brown] (v0c) 
.. controls ($ (v0c) + (210:6) $) and ($ (e3b) + (200:6) $) .. (e3b)
to [bend left=12] (e1b) 
to (v0c);

\draw [ultra thick, brown] (e1c)
to (v1a) to (e2a) to (v2a) to (e3a)
to [bend right=5] (v3a)
to [bend right=10] (e0c)
to (v4b)
to (e1c);

\draw [ultra thick, brown] (v0a)
to (e1a)
to (v4a)
to (e0b)
.. controls ($ (e0b) + (60:5) $) and ($ (v0a) + (60:5) $) .. (v0a);

\draw [ultra thick, brown] (v0b)
.. controls ($ (v0b) + (210:10) $) and ($ (e3c) + (200:9) $) .. (e3c)
to [out=300, in=220] (v3b) 
to [out=20, in=300] (e0a)
.. controls ($ (e0a) + (60:8) $) and ($ (v0b) + (60:8) $) .. (v0b);


\foreach \x/\word in {(e0)/e0, (e1)/e1, (e2)/e2, (e3)/e3}
{
\draw [green!50!black, fill=green!50!black] \x circle  (10pt);
}

\foreach \x/\word in {(v0)/v0, (v1)/v1, (v2)/v2, (v3)/v3, (v4)/v4}
{
\draw [blue, fill=blue] \x circle (10pt);
}

\foreach \x/\word in {(v0a)/a, (v0b)/b, (v0c)/c, (v1a)/a, (v1b)/b, (v2a)/a, (v2b)/b, (v3a)/a, (v3b)/b, (v4a)/a, (v4b)/b, (e0a)/a, (e0b)/b, (e0c)/c, (e1a)/a, (e1b)/b, (e1c)/c, (e2a)/a, (e2b)/b, (e3a)/a, (e3b)/b, (e3c)/c}
{
\draw [black, fill=black] \x circle (3pt);
}


\end{tikzpicture}
\end{minipage}
\hspace{1cm}
\begin{minipage}{0.4\textwidth}
\begin{tikzpicture}[scale = 0.3]

\clip (-12,-8) rectangle (11,11);

\coordinate (r3) at (1,6);
\coordinate (e1) at (-1,4); 
\coordinate (v1) at (-1.5, 2);
\coordinate (e2) at (-2,0); 
\coordinate (v2) at (-2.3,-2); 
\coordinate (e3) at (-2.5,-4); 
\coordinate (r0) at (-3,-7); 
\coordinate (v4) at (3,4);
\coordinate (r2) at (2.5,0);
\coordinate (v3) at (4,-3);
\coordinate (e0) at (7,3);
\coordinate (v0) at (-6,6); 
\coordinate (r1) at (-6, 0); 

\coordinate (v0a) at ($ (v0) + (15:1) $);
\coordinate (v0b) at ($ (v0) + (135:1) $);
\coordinate (v0c) at ($ (v0) + (270:1) $);

\coordinate (v1a) at ($ (v1) + (340:1) $);
\coordinate (v1b) at ($ (v1) + (160:1) $);

\coordinate (v2a) at ($ (v2) + (350:1) $);
\coordinate (v2b) at ($ (v2) + (170:1) $);

\coordinate (v3a) at ($ (v3) + (120:1) $);
\coordinate (v3b) at ($ (v3) + (300:1) $);

\coordinate (v4a) at ($ (v4) + (90:1) $);
\coordinate (v4b) at ($ (v4) + (270:1) $);

\coordinate (e0a) at ($ (e0) + (0:1) $);
\coordinate (e0b) at ($ (e0) + (112:1) $);
\coordinate (e0c) at ($ (e0) + (232:1) $);

\coordinate (e1a) at ($ (e1) + (80:1) $);
\coordinate (e1b) at ($ (e1) + (200:1) $);
\coordinate (e1c) at ($ (e1) + (310:1) $);

\coordinate (e2a) at ($ (e2) + (345:1) $);
\coordinate (e2b) at ($ (e2) + (165:1) $);

\coordinate (e3a) at ($ (e3) + (12:1) $);
\coordinate (e3b) at ($ (e3) + (142:1) $);
\coordinate (e3c) at ($ (e3) + (250:1) $);

\draw [ultra thick] ($ (e1c)!0.5!(v4b) $) 
.. controls ($ (e1c)!0.5!(v4b) + (250:3) $) and ($ (v4b)!0.5!(e0c) + (250:3) $) .. ($ (v4b)!0.5!(e0c) $);
\draw [ultra thick] ($ (e0c)!0.5!(v3a) + (0.3,0) $) .. controls ($ (e0c)!0.5!(v3a) + (135:2) $) and ($ (v3a)!0.5!(e3a) + (135:2) $) .. ($ (v3a)!0.5!(e3a) $);
\draw [ultra thick] ($ (e3a)!0.4!(v2a) $) 
to [out=350, in=330] ($ (v1a)!0.5!(e1c) $);
\draw [ultra thick] ($ (v2a)!0.5!(e2a) $) 
to [out=330, in=330] ($ (e2a)!0.5!(v1a) $);

\draw [ultra thick] ($ (v1b)!0.5!(e2b) $)
to [out=150, in=60] ($ (e3b) + (-4.8,1) $);
\draw [ultra thick] ($ (e2b)!0.5!(v2b) $) 
.. controls ($ (e2b)!0.5!(v2b) + (150:2) $) and ($ (v2b)!0.5!(e3b) + (150:2) $) .. ($ (v2b)!0.5!(e3b) $);
\draw [ultra thick] ($ (v0c)!0.5!(e1b) $) to [out=225, in=225] ($ (e1b)!0.5!(v1b) $);

\draw [ultra thick] ($ (v4a)!0.5!(e0b) $) to [bend left=15] ($ (e0b) + (-3.5, 4.3) $);
\draw [ultra thick] ($ (v4a)!0.3!(e1a) $) to [bend right=75] ($ (e1a)!0.5!(v0a) $);

\draw [ultra thick] ($ (v3b) + (-3.8, -2.2) $) to [bend left=60] ($ (e3c) + (-6.8, 0.6) $);
\draw [ultra thick] ($ (e0a) + (-0.2, -4.1) $) .. controls ($ (e0a) + (-0.2, -4.1) + (0:6) $) and ($ (e0a) + (-4.3, 7) + (30:6) $) .. ($ (e0a) + (-4.3, 7) $);

\draw [ultra thick] ($ (e1c)!0.5!(v4b) $) -- ($ (v4a)!0.3!(e1a) $);
\draw [ultra thick] ($ (v4b)!0.5!(e0c) $) -- ($ (v4a)!0.5!(e0b) $);
\draw [ultra thick] ($ (e0c)!0.5!(v3a) + (0.3,0) $) -- ($ (e0a) + (-0.2, -4.1) $);
\draw [ultra thick] ($ (v3a)!0.5!(e3a) $) -- ($ (v3b) + (-3.8, -2.2) $) ;
\draw [ultra thick] ($ (e3a)!0.4!(v2a) $) -- ($ (v2b)!0.5!(e3b) $) ;
\draw [ultra thick] ($ (v2a)!0.5!(e2a) $) -- ($ (e2b)!0.5!(v2b) $);
\draw [ultra thick] ($ (e2a)!0.5!(v1a) $) -- ($  (v1b)!0.5!(e2b) $);
\draw [ultra thick] ($ (v1a)!0.5!(e1c) $) -- ($ (e1b)!0.5!(v1b) $);
\draw [ultra thick] ($ (e3b) + (-4.8,1) $) -- ($ (e3c) + (-6.8, 0.6) $);
\draw [ultra thick] ($ (e0b) + (-3.5, 4.3) $) -- ($ (e0a) + (-4.3, 7) $);
\draw [ultra thick] ($ (v0c)!0.5!(e1b) $) -- ($ (e1a)!0.5!(v0a) $);

\draw [ultra thick, brown] (v0c) 
.. controls ($ (v0c) + (210:6) $) and ($ (e3b) + (200:6) $) .. (e3b)
to [bend left=12] (e1b) 
to (v0c);

\draw [ultra thick, brown] (e1c)
to (v1a) to (e2a) to (v2a) to (e3a)
to [bend right=5] (v3a)
to [bend right=10] (e0c)
to (v4b)
to (e1c);

\draw [ultra thick, brown] (v0a)
to (e1a)
to (v4a)
to (e0b)
.. controls ($ (e0b) + (60:5) $) and ($ (v0a) + (60:5) $) .. (v0a);

\draw [ultra thick, brown] (v0b)
.. controls ($ (v0b) + (210:10) $) and ($ (e3c) + (200:9) $) .. (e3c)
to [out=300, in=220] (v3b) 
to [out=20, in=300] (e0a)
.. controls ($ (e0a) + (60:8) $) and ($ (v0b) + (60:8) $) .. (v0b);


\foreach \x/\word in {(e0)/e0, (e1)/e1, (e2)/e2, (e3)/e3}
{
\draw [green!50!black, fill=green!50!black] \x circle  (10pt);
}

\foreach \x/\word in {(v0)/v0, (v1)/v1, (v2)/v2, (v3)/v3, (v4)/v4}
{
\draw [blue, fill=blue] \x circle (10pt);
}

\foreach \x/\word in {(v0a)/a, (v0b)/b, (v0c)/c, (v1a)/a, (v1b)/b, (v2a)/a, (v2b)/b, (v3a)/a, (v3b)/b, (v4a)/a, (v4b)/b, (e0a)/a, (e0b)/b, (e0c)/c, (e1a)/a, (e1b)/b, (e1c)/c, (e2a)/a, (e2b)/b, (e3a)/a, (e3b)/b, (e3c)/c}
{
\draw [black, fill=black] \x circle (3pt);
}


\end{tikzpicture}
\end{minipage}
\end{center}
\caption{Dividing sets on $M^+$. Left: dividing set from $\Gamma_S$ and $L_G$. Right: Effect of rounding.}
\label{fig:configuration_rounding}
\end{figure}
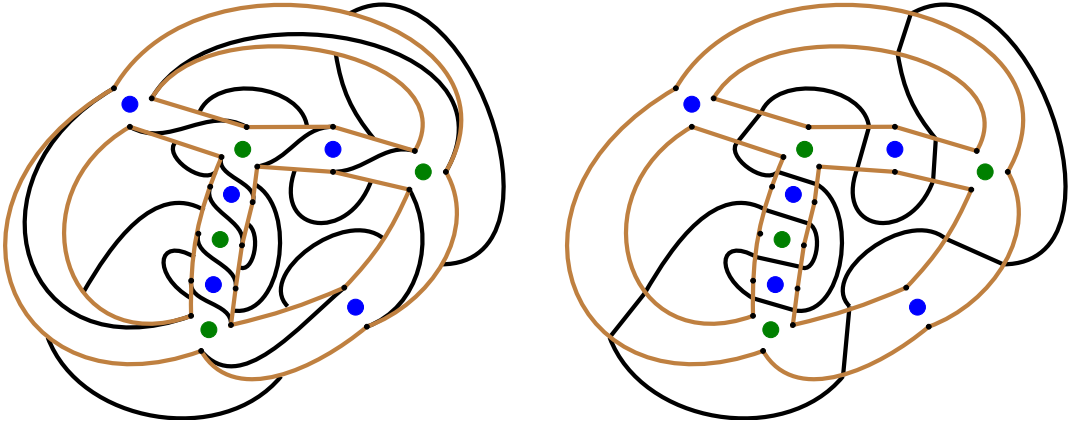

We consider the rounding process on $M^+$ in detail; see figure \ref{fig:configuration_rounding}. For each edge $\varepsilon$ of $G$, there is a diagonal arc $\gamma_\varepsilon^+$ of $L_G$ on the rectangle $H_\varepsilon^+$, making a negative half-twist around $\varepsilon$. There are two regions $r, r'$ on either side of $\varepsilon$ (possibly $r=r'$), with discs $D_r, D_{r'}$ and dividing sets $\Gamma_r, \Gamma_{r'}$. A dividing curve $\gamma_r$ of $\Gamma_r$ ends at a point $f_{r,\varepsilon}$ on $H_\varepsilon^+$, and a dividing curve $\gamma_{r'}$ of $\Gamma_{r'}$ ends at a point $f_{r',\varepsilon}$ on $H_\varepsilon^+$. Rounding the corners 
(cf.\ section \ref{sec:edge_rounding_bypasses}, especially figure \ref{fig:a1})
on either side of $H_\varepsilon^+$ results simply in straightening $\gamma_r, \gamma_\varepsilon^+, \gamma_{r'}$ into a single arc which proceeds directly across the rounded $H_\varepsilon^+$. 

Similarly, rounding the corners of $M^-$, the dividing curves $\gamma_r$ and $\gamma_{r'}$ are connected with an arc $\gamma_\varepsilon^-$ on $H_\varepsilon^-$ and the result is to straighten $\gamma_r, \gamma_\varepsilon^-, \gamma_{r'}$ into a single arc which proceeds directly across the rounded $H_\varepsilon^-$.

Thus rounding corners simply has the effect of connecting up the dividing curves $\Gamma_r$ on each disc $D_r$ directly across the rectangles $H_\varepsilon^\pm$. Hence, we can effectively ignore the dividing curves on $\partial M_G$. In fact, we can effectively ignore the entire neighbourhood $N(G)$ of $G$, and simply draw the dividing sets $\Gamma_r$ on the larger discs $\tilde{D}_r$, the complementary regions of $G$, and connect the dividing sets directly across the edges of $G$. The same diagram then describes the dividing curves on $M^+$ or $M^-$. We have now proved the following lemma.

\begin{lem}
\label{lem:potentially_tight_conditions}
Let $\Gamma_S = \sqcup_{r \in R} \Gamma_r$ be a dividing set on $S = \sqcup_{r \in R} D_r$. The following are equivalent:
\begin{enumerate}
\item 
After edge-rounding, $\Gamma_S \cup (L_G\cap M^+)$ yields a connected curve on $\partial M^+$.
\item
After edge-rounding, $\Gamma_S \cup (L_G\cap M^-)$ yields a connected curve on $\partial M^-$.
\item\label{condition_number_three}
Drawing each $\Gamma_r$ on the complementary regions $\tilde{D}_r$ of $G$ and connecting them across each edge of $G$ yields a connected curve in the plane.
\qed
\end{enumerate}
\end{lem}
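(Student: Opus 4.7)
The proof proposal is essentially a bookkeeping argument that reads off directly from the corner-rounding analysis immediately preceding the lemma, so I expect no serious obstacle — the task is to make the combinatorial identification precise.

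First, I would fix an edge $\varepsilon$ of $G$, with adjacent regions $r$ and $r'$ (possibly equal), and revisit what happens on $H_\varepsilon^+$. On this rectangle there is exactly one diagonal arc $\gamma_\varepsilon^+$ of $L_G$, and its two endpoints are precisely the corners at which the endpoints $f_{r,\varepsilon}$ and $f_{r',\varepsilon}$ of the dividing arcs of $\Gamma_r, \Gamma_{r'}$ meet $H_\varepsilon^+$ (and similarly on $H_\varepsilon^-$). The edge-rounding discussion (figure \ref{fig:a1}) shows that the three arcs $\gamma_r, \gamma_\varepsilon^+, \gamma_{r'}$ fuse into a single smooth arc crossing the rounded $H_\varepsilon^+$; the same description applies on $\partial M^-$, using $\gamma_\varepsilon^-$.

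Next, I would describe the combinatorial gluing abstractly. Consider the topological space $X$ obtained from $\bigsqcup_{r \in R} D_r$ by identifying, for each edge $\varepsilon$ of $G$ shared by regions $r, r'$, the two points $f_{r,\varepsilon}$ and $f_{r',\varepsilon}$. The dividing system $\Gamma_S$ descends to a 1-complex $\widehat{\Gamma}$ in $X$ consisting of the arcs of $\Gamma_r$ joined in pairs at these identified boundary points. By the paragraph above, after edge-rounding the dividing set on $\partial M^+$ deformation retracts onto $\widehat{\Gamma}$ (one may collapse each fused rectangle $H_\varepsilon^+$ back to the shared point $f_{r,\varepsilon}=f_{r',\varepsilon}$, and likewise collapse each polygon $H_v^\pm$). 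The same argument applies to $\partial M^-$. Finally, under the natural homeomorphism $\tilde D_r \cong D_r$ of section \ref{sec:graph_to_sutured_manifold}, the curve described in condition (iii) in $S^2$ is precisely $\widehat{\Gamma}$, because the gluing points across each edge $\varepsilon$ of $G$ match the identifications defining $X$.

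Since all three constructions produce the same 1-complex $\widehat{\Gamma}$ up to homeomorphism, their sets of connected components coincide. The equivalence of (i), (ii), (iii) — in particular, the equivalence between being a connected curve on $\partial M^+$, being a connected curve on $\partial M^-$, and being a connected curve in the plane — is then immediate. The only point requiring slight care is that the 1-complex $\widehat{\Gamma}$ is indeed a $1$-manifold, so that ``connected curve'' is an unambiguous notion; but this follows because each identified point has exactly two incident arcs (one from each of $\Gamma_r, \Gamma_{r'}$), so $\widehat{\Gamma}$ is locally a smooth arc at every point.
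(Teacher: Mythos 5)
Your proposal is correct and follows essentially the same route as the paper: the lemma is proved there by the discussion immediately preceding it, which observes that edge-rounding straightens $\gamma_r,\gamma_\varepsilon^\pm,\gamma_{r'}$ into a single arc across each rectangle $H_\varepsilon^\pm$, so that the dividing sets on $\partial M^+$, on $\partial M^-$, and the curve of condition (iii) all realise the same combinatorial pattern of joining the $\Gamma_r$ across the edges of $G$. Your quotient-space formalisation (the $1$-complex $\widehat{\Gamma}$, which is a $1$-manifold since each identified point carries exactly two arcs) is just a more explicit packaging of that identification.
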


If any (hence all) of the above equivalent conditions apply to $\Gamma_S$, then there is a unique potentially tight configuration $(\Gamma_S, \xi')$, where $\xi'$ is the unique isotopy class of tight contact structures on $(M', \Gamma')$, and we call $\Gamma_S$ \emph{potentially tight} accordingly. On the other hand, if the conditions do not apply, there are only overtwisted contact structures on $(M', \Gamma')$, and only overtwisted configurations with dividing set $\Gamma_S$. Thus, as discussed in section \ref{sec:gluing_tight_contact_structures}, potentially tight configurations on $(M_G, L_G, S)$ can be described simply by the dividing set $\Gamma_S$ and they correspond precisely to the potentially tight dividing sets $\Gamma_S$.

We now turn our attention to the state transitions of definition \ref{def:transition}. Perhaps the key contact-topological property of $(M_G, L_G)$ is that it turns out to be impossible to have a state transition from a potentially tight to an overtwisted configuration.

\begin{prop}
\label{prop:tight_to_tight}
If $C$ is potentially tight, and there is a state transition from $C$ to $C'$, then $C'$ is potentially tight. 
\end{prop}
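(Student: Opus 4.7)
I will write the state transition as the removal of a bypass $B$ from one of $M^{\pm}$ (WLOG $M^+$, the other case being symmetric) along an attaching arc $c$ on $S$, followed by the attachment of $B$ to $M^-$ along the corresponding arc on the other copy of $S$. The goal splits into two parts: (a) show that $\Gamma'_S$ is potentially tight, i.e. its planar curve is connected; and (b) show that the new contact structure $\eta'$ is tight on $M'=M^+\sqcup M^-$.

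For (a), I will exploit that tightness descends to submanifolds. Since $\eta|_{M^+}$ is tight and $B\subset M^+$, the restriction $\eta|_{M^+\setminus B}$ is tight. As $M^+\setminus B$ is topologically a ball, its boundary is a sphere carrying a connected dividing set. This dividing set is obtained from the original one on $\partial M^+$ by inwards bypass surgery along $c$, and it is precisely the realisation of $\Gamma'_S$ on $\partial M^+$ once we draw $\Gamma'_S$ on the $M^+$-copy of $S$, append the arcs of $L_G\cap M^+$, and round corners. By Lemma~\ref{lem:potentially_tight_conditions}, connectivity of this boundary dividing set is equivalent to connectivity of the planar curve from $\Gamma'_S$, so $\Gamma'_S$ is potentially tight.

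For (b), the $M^+$-side is immediate: $\eta'|_{M^+\setminus B}=\eta|_{M^+\setminus B}$ is a restriction of a tight contact structure, hence tight. The $M^-$-side is more delicate: $\eta'|_{M^-\cup B}$ is obtained from the tight ball $(M^-,\eta|_{M^-})$ by attaching the bypass $B$ along $c$. By (a) together with Lemma~\ref{lem:potentially_tight_conditions} applied symmetrically, the new dividing set on $\partial(M^-\cup B)$ is connected. Here I will invoke the inner/outer dichotomy on tight balls with connected dividing set: an attachment along an inner arc disconnects the dividing set and produces an overtwisted structure, so the connectivity just established forces $c$ to be outer on $\partial M^-$. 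The attachment is then a trivial bypass, so $\eta'|_{M^-\cup B}$ is contactomorphic to $\eta|_{M^-}$ and in particular tight. Combining both sides, $\eta'$ is tight on $M'$ and $C'$ is potentially tight.

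The main technical obstacle lies in the $M^-$-side of (b): while tightness on the $M^+$-side is automatic from the restriction argument, ruling out overtwistedness of the bypass attachment on the $M^-$-side requires the full force of Lemma~\ref{lem:potentially_tight_conditions} (which identifies the dividing sets on $\partial M^+$ and $\partial M^-$ with the same planar curve) together with the sharpness of the inner/outer dichotomy on tight balls. The asymmetric roles of the two sides, and the need to transfer the tightness guaranteed by the submanifold argument on $M^+$ across $S$ to control the bypass attachment on $M^-$, is the heart of the argument.
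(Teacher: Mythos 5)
Your proposal is correct and follows essentially the same argument as the paper's proof: you exploit the inner/outer dichotomy for attaching arcs on tight balls, use Lemma~\ref{lem:potentially_tight_conditions} to transfer connectivity of the rounded dividing set between $\partial M^+$ and $\partial M^-$, and conclude that the attaching arc is outer on $M^-$ so the attachment there preserves tightness. Your observation that the attachment on the $M^-$-side is a trivial bypass (since any two connected single curves on a sphere are isotopic, so the new dividing set is isotopic to the old one) is a valid and slightly more explicit rephrasing of the paper's appeal to the fact that adding a bypass along an outer arc on a tight ball gives a tight ball.
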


\begin{proof}
Consider two configurations $C = (\Gamma_S, \xi)$ and $C' = (\Gamma'_S, \xi')$, where $\Gamma_S = \sqcup_{r \in R} \Gamma_r$ and $\Gamma'_S =  \sqcup_{r \in R} \Gamma'_r$, and let
$C$ be potentially tight. Then $\Gamma_S$ gives connected dividing sets on $\partial M^+$ and $\partial M^-$ after rounding edges. The existence of a transition $C \To C'$ means that $\xi'$ is obtained from $\xi$ by passing a bypass across some disc $D_r$; in particular, $\Gamma'_r$ is obtained from $\Gamma_r$ by bypass surgery on some attaching arc $a$ on $D_r$. 

Without loss of generality suppose that the bypass is removed from $M^+$ and added to $M^-$. Then, as discussed in section \ref{sec:edge_rounding_bypasses}, $\Gamma'_S$ is obtained from $\Gamma_S$ by inwards bypass surgery along $a$ in $\partial M^+$, and outwards bypass surgery along $a$ in $\partial M^-$. As a bypass exists in $M^+$ along $a$, it is an inner attaching arc for $M^+$.  Hence performing inwards bypass surgery on $\partial M^+$ along $a$ results in a connected dividing set on $M^+$ (after rounding edges); so by lemma \ref{lem:potentially_tight_conditions} then $\Gamma'_S$ is a potentially tight dividing set.

Now inwards bypass surgery on $\partial M^+$ is equivalent to outwards bypass surgery on $\partial M^-$. Applying lemma \ref{lem:potentially_tight_conditions} again, since inwards bypass surgery yields a connected dividing set on $M^+$, outwards bypass surgery on $\partial M^-$ along $a$ must result in a connected dividing set on $M^-$. Hence $a$ is outer for $M^-$.

Given a tight ball, removing a bypass along an inner attaching arc, or adding a bypass along an outer attaching arc, results in another tight ball. Thus $\xi'$ is tight, and $C'$ is potentially tight.
\end{proof}

Recall from section \ref{sec:gluing_tight_contact_structures} that the configuration graph $\GG(M_G, L_G, S)$ (definition \ref{def:configuration_graph}) has vertices given by configurations in $\Conf(M_G, L_G, S)$, and edges given by state transitions. Further, a component of $\GG(M_G, L_G, S)$ is tight if all its configurations are potentially tight; a configuration is tight if it lies in a tight component (definition \ref{def:tight_configs}). Proposition \ref{prop:tight_to_tight} says that there is no edge connecting a potentially tight configuration with an overtwisted configuration, so we immediately obtain the following.

\begin{prop}
\label{prop:potentially_tight_tight}
All potentially tight configurations on $(M_G, L_G, S)$ are tight.
\qed
\end{prop}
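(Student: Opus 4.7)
The plan is to obtain this as an essentially immediate corollary of Proposition \ref{prop:tight_to_tight}. Unpacking Definition \ref{def:tight_configs}, a configuration $C_0$ is tight precisely when every configuration in its connected component of $\GG(M_G, L_G, S)$ is potentially tight. So given a potentially tight $C_0$, the task reduces to showing that any configuration $C$ lying in the same connected component of $\GG(M_G, L_G, S)$ as $C_0$ is also potentially tight.

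To do this I would fix a path $C_0 = D_0, D_1, \ldots, D_n = C$ in the underlying undirected graph. As observed after Definition \ref{def:configuration_graph}, the graph $\GG(M_G, L_G, S)$ is bidirected: any state transition $D_i \To D_{i+1}$ has a reverse transition $D_{i+1} \To D_i$ obtained by pushing the bypass back across $S$. Consequently, for each edge $D_i D_{i+1}$ of the path there is a state transition in each direction. Applying Proposition \ref{prop:tight_to_tight} to the appropriate orientation (the proof there is symmetric under the exchange of the roles of $M^+$ and $M^-$, so it gives the preservation of potential tightness along transitions regardless of which of $D_i$, $D_{i+1}$ is the source), one obtains the implication ``if $D_i$ is potentially tight, then $D_{i+1}$ is potentially tight.'' A straightforward induction on $i$, starting from the given potential tightness of $D_0 = C_0$, then shows that every $D_i$ is potentially tight; in particular $D_n = C$ is.

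The real content of this result lies entirely in Proposition \ref{prop:tight_to_tight}, which established that the inner/outer dichotomy for attaching arcs on a tight ball, together with the way bypasses move between $M^+$ and $M^-$, forbids a single transition from leaving the potentially tight stratum. Once that is in hand, the present proposition is just the observation that this local preservation property propagates along paths in the (bidirected) configuration graph, so I do not anticipate any further obstacle.
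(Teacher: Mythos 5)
Your argument is correct and is exactly the paper's reasoning: the paper deduces this proposition immediately from Proposition \ref{prop:tight_to_tight} together with the observation that the configuration graph is bidirected, so that no edge can join a potentially tight configuration to an overtwisted one. Your write-up simply makes the induction along a path in the component explicit.
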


In light of this result, in our context we can simply refer to potentially tight configurations, or their dividing sets, as \emph{tight}.
We also obtain an additional nice fact about state transitions in $(M_G, L_G, S)$.

\begin{lem}
\label{lem:existence_of_state_transitions}
Let $\Gamma_S = \sqcup_{r \in R} \Gamma_r$ be a tight configuration and let $a$ be an attaching arc on some $D_r$. Then there exists precisely one state transition involving a bypass with attaching arc $a$.
\end{lem}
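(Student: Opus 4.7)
The plan is to identify state transitions at $a$ with a dichotomy among the two possible directions for pushing a bypass across $D_r$ at $a$. By Definition \ref{def:transition}, a state transition involving $a$ is determined by a choice of direction: either remove a bypass from the $M^+$-copy of $D_r$ and attach one to the $M^-$-copy, or do the reverse. Call these candidate transitions $T_+$ and $T_-$; so there are at most two state transitions at $a$, and I aim to show exactly one is realized.

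First I would use that $\Gamma_S$ is tight to conclude, via Lemma \ref{lem:potentially_tight_conditions}, that the dividing sets on $\partial M^+$ and $\partial M^-$ are connected after edge-rounding, so both $M^\pm$ carry the unique tight contact structure with the given boundary condition. Under a tight structure on a 3-ball with connected dividing set, every attaching arc is \emph{inner} or \emph{outer}, as recalled just before Lemma \ref{lem:unique_bypass}. By the right-to-life principle, $T_+$ can be performed if and only if a bypass exists inside $M^+$ along $a$, which happens precisely when $a$ is inner for $M^+$; analogously, $T_-$ is realized exactly when $a$ is inner for $M^-$. The lemma therefore reduces to the claim that $a$ is inner for precisely one of $M^+$ and $M^-$.

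The key geometric observation is that, at the single attaching arc $a$ on $D_r$, inwards bypass surgery on the $M^+$-side produces the same new dividing set $\Gamma'_r$ as outwards bypass surgery on the $M^-$-side, because both operations amount to sliding a single bypass across $D_r$ from $M^+$ into $M^-$. Granted this, I would chain equivalences: $a$ is inner for $M^+$ if and only if $\Gamma'_r$ (viewed as inwards surgery on $\partial M^+$) gives a connected dividing set on $\partial M^+$; by Lemma \ref{lem:potentially_tight_conditions} this is equivalent to $\Gamma'_r$ yielding a connected dividing set on $\partial M^-$; which in turn, by the definition of outer, is equivalent to outwards surgery on $\partial M^-$ at $a$ being trivial, i.e., $a$ being outer for $M^-$. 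This yields the required dichotomy. Uniqueness of the bypass removed, given the side, is provided by Lemma \ref{lem:unique_bypass}. The main obstacle I foresee is a careful geometric justification of the identity between inwards-surgery-on-$M^+$ and outwards-surgery-on-$M^-$ at a shared attaching arc; the rest is largely bookkeeping within the contact-topological framework already set up.
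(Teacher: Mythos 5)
Your proposal is correct and follows essentially the same route as the paper: you reduce to showing that $a$ is inner for exactly one of $M^+, M^-$ via the observation that inwards surgery on one copy of $D_r$ gives the same chord diagram as outwards surgery on the other copy, then apply Lemma~\ref{lem:potentially_tight_conditions} to chain the connectivity equivalences, and finally invoke Lemma~\ref{lem:unique_bypass} for uniqueness. The only very minor quibble is that the right-to-life principle is not really what establishes ``$T_+$ is realizable iff a bypass exists inside $M^+$'' (that is essentially definitional); what it supports is the inner/outer dichotomy for tight balls recalled before Lemma~\ref{lem:unique_bypass}, which you also use and which the paper cites in the same way via the proof of Proposition~\ref{prop:tight_to_tight}.
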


\begin{proof}
Consider the dividing set $\Gamma'$ obtained by edge-rounding on $\partial M'$. With respect to $\Gamma'$, the arc $a$ is inner on precisely one of $M^+$ or $M^-$; without loss of generality suppose $M^+$. Then, as in the proof of proposition \ref{prop:tight_to_tight}, $a$ is outer on $M^-$. So there is a state transition removing a bypass from $M^+$, and attaching it to $M^-$, along $a$, but not the other way. 
The statement now follows from Lemma \ref{lem:unique_bypass}.
\end{proof}

\subsection{Scheme of the proof}

We have now essentially reduced the proof of theorem \ref{thm:classification_of_tight_contact_structures} to a purely combinatorial problem. So we pause at this point to give an overview of the remaining portion of the proof.

To classify the tight contact structures on $(M_G, L_G)$, by the gluing theorem we must compute $\pi_0 (\G_0 (M_G, L_G, S))$, the connected components of the graph of tight configurations. By proposition \ref{prop:potentially_tight_tight}, there is no distinction between potentially tight and tight configurations. We only need to determine which tight configurations are connected to which others by state transitions. The tight configurations are given by dividing sets on each complementary disc $\tilde{D}_r$ without closed curves (i.e., chord diagrams or crossingless matchings) which join across edges of $G$ to give a connected curve. The transitions are bypass surgeries localised on a single $D_r$. 

Our goal is to show that there is a bijection between the components of $\GG_0 (M_G, L_G, S)$, and hypertrees in $(E,R)$. We prove this as follows:
\begin{itemize}
\item 
In section \ref{sec:hypertree_configurations} we introduce a special class of configurations associated to spanning trees of $G_V$, which we call ``tree-hugging". We show tree-hugging configurations are tight. 
\item
In section \ref{sec:hypertrees_Euler_class} we show that the Euler class of a tree-hugging configuration is determined by the associated hypertree of $(E,R)$. Hence two tree-hugging dividing sets with distinct associated hypertrees cannot be connected by state transitions.
\item
In section \ref{sec:tight_tree-hugging} we show that any tight configuration is connected to a tree-hugging configuration by state transitions.
\item
Finally, in section \ref{sec:hypertrees_same_hypergraph} we show that tree-hugging dividing sets with the same associated hypertree are connected by state transitions.
\end{itemize}

Thus, every component of $\G_0 (M_G, L_G, S)$ contains tree-hugging configurations, all representing the same hypertree; and every hypertree is represented by a configuration. This gives the desired bijection.

\subsection{Configurations from spanning trees}
\label{sec:hypertree_configurations}

Let, as usual, $G$ be a connected plane bipartite graph, with colour classes $V$ and $E$, and complementary regions $R$.
As we saw in section \ref{sec:hypergraphs_hypertrees}, $G$ is part of a trinity, where one of the six 
hypergraphs is $(E,R)$ with vertices $E$ (emerald vertices of $G$) and hyperedges $R$ (complementary regions, i.e.\ faces, of $G$). The associated bipartite graph is the violet graph $G_V$ of section \ref{sec:planar_graphs_links_trinities}, with vertex classes $E$ and $R$, the latter viewed as one red vertex in each face of $G$. 

At a red vertex $r \in R$ the trinity has $2n_r$ 
alternating violet and emerald edges, and so the degree of $r$ in $G_V$ is $n_r$.
Consider a hypertree $f\colon R \To \Z_{\ge0}$ of $(E,R)$ and an associated spanning tree $T$
of $G_V$ with degree $f(r)+1$ at each $r \in R$. (Note $1 \leq f(r) + 1 \leq n_r$.) We now derive a configuration $\Gamma_T$, via its dividing set, from the spanning tree $T$.

We construct $\Gamma_T$ as the boundary of a 
tubular neighbourhood $U$ of $T$ in $S^2$. The curve $\partial U\subset S^2$ intersects each complementary region $\tilde{D}_r$ in a collection of arcs. The endpoints of the arcs lie on $\partial \tilde{D}_r$ and interleave with the violet and emerald vertices. See figure \ref{fig:spanning_tree}(left) for an example.

As seen in section \ref{sec:graph_to_sutured_manifold}, the decomposing discs $D_r$ of $M_G$ are slightly shrunken deformation retracts of the $\tilde{D}_r$, with natural homeomorphisms $\tilde{D}_r \To D_r$ taking each vertex $v$ to a vertex of the polygons $H_v^\pm$ on $\partial M_G$ about $v$. Consider the images of $T$ and $U$ under this homeomorphism to regard them as lying on each $D_r$.

Thus, on each $D_r$, the tree $T$ consists of the red vertex $r$, together with $f(r)+1$ edges connecting $r$ to points of $\partial D_r$ which are vertices of the polygons $H_e^\pm$ near some emerald vertices $e$. 
The neighbourhood $U$ of $T$, in each $D_r$, consists of a ``central component", the boundary of a regular neighbourhood of the edges of $T$ just described, together with ``outer components", each being a small neighbourhood of a vertex of $D_r$ corresponding to an emerald vertex to which $r$ is not adjacent in $T$. The endpoints of the arcs of $\partial U$ 
interleave with the points of $L_G \cap S$, 
and hence can be taken to be the points $f_{r,\varepsilon}$ associated to the incidences between faces $r$ and edges $\varepsilon$ of $G$. See figure \ref{fig:spanning_tree} (right).

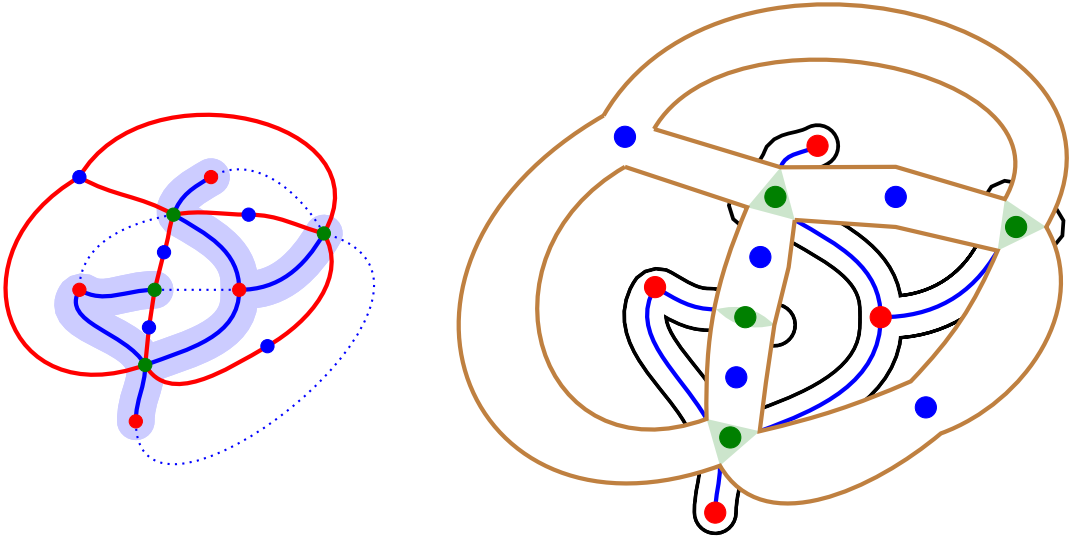
\begin{figure}
\begin{center}
\begin{minipage}{0.4\textwidth}
\begin{tikzpicture}[scale = 0.25]

\clip (-12,-10) rectangle (11,11);

\coordinate (r3) at (1,6);
\coordinate (e1) at (-1,4); 
\coordinate (v1) at (-1.5, 2);
\coordinate (e2) at (-2,0); 
\coordinate (v2) at (-2.3,-2); 
\coordinate (e3) at (-2.5,-4); 
\coordinate (r0) at (-3,-7); 
\coordinate (v4) at (3,4);
\coordinate (r2) at (2.5,0);
\coordinate (v3) at (4,-3);
\coordinate (e0) at (7,3);
\coordinate (v0) at (-6,6); 
\coordinate (r1) at (-6, 0); 

\draw [line width=5mm, blue!20, rounded corners=1mm, line cap=round] (r3)
to [out=210, in=70] (e1)
to [out=330, in=90] (r2)
to [out=270, in=20] (e3)
to [out=120, in=240] (r1)
to [out=330, in=180] (e2);

\draw [line width=5mm, blue!20, rounded corners=1mm, line cap=round] (e2)
to [out=180, in=330] (r1)
to [out=240, in=120] (e3)
to [out=270, in=90] (r0);

\draw [line width=5mm, blue!20, rounded corners=1mm, line cap=round] (r0)
to [out=90, in=270] (e3)
to [out=20, in=270] (r2)
to [out=0, in=240] (e0);

\draw [line width=5mm, blue!20, rounded corners=1mm, line cap=round] (e0)
to [out=240, in=0] (r2)
to [out=90, in=330] (e1)
to [out=70, in=210] (r3);

\draw [thick, blue, dotted] (e1) to [out=190, in=90] (r1); 
\draw [ultra thick, blue] (r1) to [out=240, in=120] (e3); 
\draw [ultra thick, blue] (r1) to [out=330, in=180] (e2); 

\draw [ultra thick, blue] (e3) to [out=20, in=270] (r2); 
\draw [ultra thick, blue] (r2) to [out=90, in=330] (e1); 
\draw [thick, blue, dotted] (e2) to [out=0, in=180] (r2); 
\draw [ultra thick, blue] (r2) to [out=0, in=240] (e0); 

\draw [ultra thick, blue] (r3) to[out=210,in=70] (e1); 
\draw [thick, blue, dotted] (e0) to [out=120, in=30] (r3); 

\draw [ultra thick, blue] (e3) to [out=270, in=90] (r0); 
\draw [thick, blue, dotted] (r0)
.. controls ($ (r0) + (270:8) $) and ($ (e0) + (345:10) $) .. (e0); 

\draw [ultra thick, red] (e1) to [out = 250, in = 70] (v1); 
\draw [ultra thick, red] (v1) to (e2); 
\draw [ultra thick, red] (e2) to (v2); 
\draw [ultra thick, red] (v2) to (e3); 
\draw [ultra thick, red] (e0) to [out=165, in=0] (v4); 
\draw [ultra thick, red] (v4) to [out=180, in=10] (e1); 
\draw [ultra thick, red] (v0)
.. controls ($ (v0) + (210:8) $) and ($ (e3) + (200:8) $) .. (e3); 
\draw [ultra thick, red] (e1) to [out=150, in=330] (v0); 
\draw [ultra thick, red] (e3) to [out=300, in=210] (v3); 
\draw [ultra thick, red] (v3) to [out=30, in=300] (e0); 
\draw [ultra thick, red] (e0) 
.. controls ($ (e0) + (60:7) $) and ($ (v0) + (60:7) $) .. (v0); 

\foreach \x/\word in {(r0)/r0, (r1)/r1, (r2)/r2, (r3)/r3}
{
\draw [red, fill=red] \x circle  (10pt);
}

\foreach \x/\word in {(e0)/e0, (e1)/e1, (e2)/e2, (e3)/e3}
{
\draw [green!50!black, fill=green!50!black] \x circle  (10pt);
}

\foreach \x/\word in {(v0)/v0, (v1)/v1, (v2)/v2, (v3)/v3, (v4)/v4}
{
\draw [blue, fill=blue] \x circle (10pt);
}

\end{tikzpicture}
\end{minipage}
\begin{minipage}{0.55\textwidth}
\begin{tikzpicture}[scale = 0.4]

\clip (-12,-7.5) rectangle (11,11);

\coordinate (r3) at (0.4,5.7); 
\coordinate (e1) at (-1,4); 
\coordinate (v1) at (-1.5, 2);
\coordinate (e2) at (-2,0); 
\coordinate (v2) at (-2.3,-2); 
\coordinate (e3) at (-2.5,-4); 
\coordinate (r0) at (-3,-6.5); 
\coordinate (v4) at (3,4);
\coordinate (r2) at (2.5,0);
\coordinate (v3) at (4,-3);
\coordinate (e0) at (7,3);
\coordinate (v0) at (-6,6); 
\coordinate (r1) at (-5, 1); 

\coordinate (v0a) at ($ (v0) + (15:1) $);
\coordinate (v0b) at ($ (v0) + (135:1) $);
\coordinate (v0c) at ($ (v0) + (270:1) $);

\coordinate (v1a) at ($ (v1) + (340:1) $);
\coordinate (v1b) at ($ (v1) + (160:1) $);

\coordinate (v2a) at ($ (v2) + (350:1) $);
\coordinate (v2b) at ($ (v2) + (170:1) $);

\coordinate (v3a) at ($ (v3) + (120:1) $);
\coordinate (v3b) at ($ (v3) + (300:1) $);

\coordinate (v4a) at ($ (v4) + (90:1) $);
\coordinate (v4b) at ($ (v4) + (270:1) $);

\coordinate (e0a) at ($ (e0) + (0:1) $);
\coordinate (e0b) at ($ (e0) + (112:1) $);
\coordinate (e0c) at ($ (e0) + (232:1) $);

\coordinate (e1a) at ($ (e1) + (80:1) $);
\coordinate (e1b) at ($ (e1) + (200:1) $);
\coordinate (e1c) at ($ (e1) + (310:1) $);

\coordinate (e2a) at ($ (e2) + (345:1) $);
\coordinate (e2b) at ($ (e2) + (165:1) $);

\coordinate (e3a) at ($ (e3) + (12:1) $);
\coordinate (e3b) at ($ (e3) + (142:1) $);
\coordinate (e3c) at ($ (e3) + (250:1) $);


\draw [line width=6mm, rounded corners=1mm, line cap=round] (r3)
to [out=210, in=70] (e1a)
to (e1b)
to (e1c)
to [out=330, in=90] (r2)
to [out=270, in=20] (e3a)
to (e3b)
to [out=120, in=240] (r1)
to [out=330, in=180] (e2b)
to (e2a);

\draw [line width=6mm, rounded corners=1mm, line cap=round] (e2a)
to (e2b)
to [out=180, in=330] (r1)
to [out=240, in=120] (e3b)
to (e3c)
to [out=270, in=90] (r0);

\draw [line width=6mm, rounded corners=1mm, line cap=round] (r0)
to [out=90, in=270] (e3c)
to (e3a)
to [out=20, in=270] (r2)
to [out=0, in=240] (e0c)
to (e0a)
to (e0b)
to (e0c)
to [out=240, in=0] (r2)
to [out=90, in=330] (e1c)
to (e1a);
to [out=70, in=210] (r3);

\draw [line width=5mm, white, rounded corners=1mm, line cap=round] (r3)
to [out=210, in=70] (e1a)
to (e1b)
to (e1c)
to [out=330, in=90] (r2)
to [out=270, in=20] (e3a)
to (e3b)
to [out=120, in=240] (r1)
to [out=330, in=180] (e2b)
to (e2a);

\draw [line width=5mm, white, rounded corners=1mm, line cap=round](e2a)
to (e2b)
to [out=180, in=330] (r1)
to [out=240, in=120] (e3b)
to (e3c)
to [out=270, in=90] (r0);

\draw [line width=5mm, white, rounded corners=1mm, line cap=round](r0)
to [out=90, in=270] (e3c)
to (e3a)
to [out=20, in=270] (r2)
to [out=0, in=240] (e0c)
to (e0a)
to (e0b)
to (e0c)
to [out=240, in=0] (r2)
to [out=90, in=330] (e1c)
to (e1a);	
to [out=70, in=210] (r3);

\draw [ultra thick, blue] (r1) to [out=240, in=120] (e3b); 
\draw [ultra thick, blue] (r1) to [out=330, in=180] (e2b); 
\draw [ultra thick, blue] (e3a) to [out=20, in=270] (r2); 
\draw [ultra thick, blue] (r2) to [out=90, in=330] (e1c); 
\draw [ultra thick, blue] (r2) to [out=0, in=240] (e0c); 
\draw [ultra thick, blue] (r3) to[out=210,in=70] (e1a); 
\draw [ultra thick, blue] (e3c) to [out=270, in=90] (r0);

\def\boundaries{(v0c) 
.. controls ($ (v0c) + (210:6) $) and ($ (e3b) + (200:6) $) .. (e3b)
to [bend left=12] (e1b) 
to (v0c)

(e1c)
to (v1a) to (e2a) to (v2a) to (e3a)
to [bend right=5] (v3a)
to [bend right=10] (e0c)
to (v4b)
to (e1c)

(v0a)
to (e1a)
to (v4a)
to (e0b)
.. controls ($ (e0b) + (60:5) $) and ($ (v0a) + (60:5) $) .. (v0a)

(v0b)
.. controls ($ (v0b) + (210:10) $) and ($ (e3c) + (200:9) $) .. (e3c)
to [out=300, in=220] (v3b) 
to [out=20, in=300] (e0a)
.. controls ($ (e0a) + (60:8) $) and ($ (v0b) + (60:8) $) .. (v0b)}

\begin{scope}[even odd rule]
\clip \boundaries;
\fill [white] \boundaries;
\end{scope}

\draw [draw=none, fill=green!50!black, opacity=0.2]
(e0a) to (e0b) to (e0c) to (e0a);

\draw [draw=none, fill=green!50!black, opacity=0.2]
(e1a) to (e1b) to (e1c) to (e1a);

\draw [draw=none, fill=green!50!black, opacity=0.2]
(e2a) to [bend right=30] (e2b) to [bend right=30] (e2a);

\draw [draw=none, fill=green!50!black, opacity=0.2]
(e3a) to (e3b) to (e3c) to (e3a);

\foreach \x/\word/\num in {(r0)/r0/0, (r1)/r1/1, (r2)/r2/2, (r3)/r3/3}
{
\draw [red, fill=red] \x circle  (10pt);
}

\foreach \x/\word in {(e0)/e0, (e1)/e1, (e2)/e2, (e3)/e3}
{
\draw [green!50!black, fill=green!50!black] \x circle  (10pt);
}

\foreach \x/\word in {(v0)/v0, (v1)/v1, (v2)/v2, (v3)/v3, (v4)/v4}
{
\draw [blue, fill=blue] \x circle (10pt);
}


\draw [ultra thick, brown] (v0c) 
.. controls ($ (v0c) + (210:6) $) and ($ (e3b) + (200:6) $) .. (e3b)
to [bend left=12] (e1b) 
to (v0c);

\draw [ultra thick, brown] (e1c)
to (v1a) to (e2a) to (v2a) to (e3a)
to [bend right=5] (v3a)
to [bend right=10] (e0c)
to (v4b)
to (e1c);

\draw [ultra thick, brown] (v0a)
to (e1a)
to (v4a)
to (e0b)
.. controls ($ (e0b) + (60:5) $) and ($ (v0a) + (60:5) $) .. (v0a);

\draw [ultra thick, brown] (v0b)
.. controls ($ (v0b) + (210:10) $) and ($ (e3c) + (200:9) $) .. (e3c)
to [out=300, in=220] (v3b) 
to [out=20, in=300] (e0a)
.. controls ($ (e0a) + (60:8) $) and ($ (v0b) + (60:8) $) .. (v0b);

\foreach \x/\word in {(v0a)/0a, (v0b)/0b, (v0c)/0c, (v1a)/1a, (v1b)/1b, (v2a)/2a, (v2b)/2b, (v3a)/3a, (v3b)/3b, (v4a)/4a, (v4b)/4b, (e0a)/0a, (e0b)/0b, (e0c)/0c, (e1a)/1a, (e1b)/1b, (e1c)/1c, (e2a)/2a, (e2b)/2b, (e3a)/3a, (e3b)/3b, (e3c)/3c, (r0)/0, (r1)/1, (r2)/2, (r3)/3}
{
}

\end{tikzpicture}
\end{minipage}
\end{center}
\caption{Construction of the dividing set $\Gamma_T$ from a spanning tree $T$ of $G_V$. Left: the tree $T$ and the graph $G=G_R$; the neighbourhood $U$ is shaded. Right: $T$ and $\Gamma_T$ drawn on the cutting discs $S$ of $(M_G, L_G)$. The polygons $H_e^\pm$ are drawn about emerald vertices on $\partial M_G$. Note that the dividing set so obtained on each disc agrees with the dividing set in figure \ref{fig:configuration_rounding}.}
\label{fig:spanning_tree}
\end{figure}

We denote the curve so obtained on $S = \sqcup_{r \in R} D_r$ as $\Gamma_T$. 
Since $T$ is a spanning tree, it is clear that $\Gamma_T$ satisfies condition \ref{condition_number_three} of lemma \ref{lem:potentially_tight_conditions}. Therefore it also satisfies the other conditions and, as explained after the lemma, we may refer to $\Gamma_T$ as the dividing set of a (tight) configuration.
Because $\Gamma_T$ runs close to the tree $T$, 
we name these configurations as follows.

\begin{defn}
The configuration $\Gamma_T$ is called the \emph{$T$-hugging} configuration; we say $\Gamma_T$ \emph{hugs} $T$. A potentially tight
configuration $\Gamma_S$ is called \emph{tree-hugging} if it hugs some tree. A contact structure $\xi$ on $(M_G, L_G)$ is \emph{tree-hugging} if it arises from a tree-hugging configuration.
\end{defn}

Note that a contact structure $\xi$ on $(M_G, L_G)$ (more precisely, its isotopy class) in general corresponds to many configurations, some of which may be tree-hugging and some not; as long as one such configuration is tree-hugging, we say $\xi$ is tree-hugging. From the discussion above and proposition \ref{prop:potentially_tight_tight}, the following is clear.

\begin{prop}
\label{prop:tree-hugging_tight}
Tree-hugging contact structures are tight.
\qed
\end{prop}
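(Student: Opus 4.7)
The plan is to show that $\Gamma_T$ is a potentially tight configuration in the sense of Lemma \ref{lem:potentially_tight_conditions}, and then invoke Proposition \ref{prop:potentially_tight_tight} directly.

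Concretely, I will verify condition (iii) of Lemma \ref{lem:potentially_tight_conditions}, which asks that drawing $\Gamma_r = \Gamma_T \cap D_r$ on the corresponding complementary region $\tilde{D}_r$ for each $r \in R$ and connecting the arcs across the edges of $G$ yield a single connected curve in $S^2$. The key topological observation is that $T$, being a spanning tree of $G_V$, is a contractible subset of $S^2$, so any sufficiently small tubular neighbourhood $U$ of $T$ is homeomorphic to a disc; in particular $\partial U$ is a single simple closed curve in $S^2$.

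First I will examine where $\partial U$ sits with respect to the decomposition of $S^2$ into the faces $\tilde{D}_r$ and neighbourhoods of the edges of $G$. The edges of $T$ are violet, hence lie inside the faces of $G$ and do not cross any red edge. So $\partial U$ only enters an edge-neighbourhood of $G$ near an emerald endpoint $e$ of that edge, where $U$ includes a small disc about $e \in T$. Thus, near each emerald vertex $e$, the curve $\partial U$ looks like a small circle around $e$ that is crossed once by each of the $\deg_G(e)$ red edges incident to $e$; between successive crossings it runs into one of the incident faces. Consequently, taking the pieces of $\partial U$ inside each $\tilde{D}_r$ and reconnecting them across each edge $\varepsilon$ of $G$ (via the short arcs that ran close to the emerald endpoint of $\varepsilon$) exactly reconstitutes $\partial U$. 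Under the deformation retraction $\tilde{D}_r \to D_r$, this identifies the reconnected curve with the one obtained by gluing the $\Gamma_r$'s across the edges of $G$, so the latter is also a single connected curve.

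Having verified condition (iii), Lemma \ref{lem:potentially_tight_conditions} says $\Gamma_T$ is potentially tight, so there is a unique potentially tight configuration with this dividing set. By Proposition \ref{prop:potentially_tight_tight} every potentially tight configuration on $(M_G, L_G, S)$ is in fact tight, so $\Gamma_T$ gives rise to a tight contact structure $\xi_T$ on $(M_G, L_G)$, as desired.

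I do not expect any real obstacle: the proposition reduces to a disc-with-boundary picture in $S^2$ plus an appeal to the already-proved Proposition \ref{prop:potentially_tight_tight}. The only point that requires some care is the bookkeeping near emerald vertices to ensure that $\partial U$ genuinely splits into the $\Gamma_r$'s and across-edge connectors as described; but this follows from choosing the neighbourhood $U$ small enough relative to the given regular neighbourhoods $N(G)$ and $\tilde{D}_r \supset D_r$.
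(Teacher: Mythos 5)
Your proof is correct and follows the same route as the paper: verify condition (iii) of Lemma~\ref{lem:potentially_tight_conditions} by noting that $\Gamma_T = \partial U$ is a single connected curve since $U$ is a tubular neighbourhood of a tree, and then invoke Proposition~\ref{prop:potentially_tight_tight}. The only difference is that the paper declares the verification of condition (iii) to be ``clear'' while you spell out the bookkeeping near emerald vertices, which is a reasonable elaboration of the same argument.
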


\subsection{Hypertrees and Euler class}
\label{sec:hypertrees_Euler_class}

Recall from section \ref{sec:sutured_manifolds_trinities}  that $L_G$ and $F_G$ are assigned orientations so that, in our diagrams, $L_G$ runs anticlockwise around violet vertices, and clockwise around emerald vertices. 

Since $M^-$ lies below the projection plane, $\partial M^-$ has outward normal pointing upwards out of the projection plane, and induces the usual orientation on $\R^2$. Hence violet vertices (actually the polygons $H_v^-$ at such vertices $v$) correspond to positive regions on $\partial M^-$, and emerald vertices (actually their polygons $H_e^-$) correspond to negative regions. Conversely, $\partial M^+$ has violet vertices (polygons $H_v^+$) corresponding to negative regions, and emerald vertices ($H_e^+$) corresponding to positive regions.

We orient the discs $D_r$ according to the projection plane, hence as subsets of $\partial M^-$. Recall that the endpoints of any dividing set on $D_r$ are the points $f_{r,\varepsilon}$, one for each edge $\varepsilon$ around the boundary of the region $r$; the point $f_{r,\varepsilon}$ lies midway along $\varepsilon$, on the tube $H_\varepsilon$ around $\varepsilon$. The points $f_{r,\varepsilon}$ split $\partial D_r$ into arcs, and each such arc is associated to a vertex $v$ around the boundary of $r$. (Precisely, each arc intersects a single pair of polygons $H_v^\pm$, at one of their shared vertices.) The colour of $v$ alternates between violet and emerald. Since $D_r$ is oriented as $\partial M^-$, the arcs near violet vertices lie in $R_+$, and the arcs near  emerald vertices lie in $R_-$; here $R_\pm$ refer to the signed regions of an arbitrary dividing set on $D_r\subset\partial M^-$.

A configuration $(\Gamma_S, \xi')$ glues into a contact structure $\xi$ with Euler class $e(\xi) \in H^2(M_G, \partial M_G)$. Since the homology classes $[D_r]$ generate $H_2 (M_G, \partial M_G)$, it suffices to evaluate $e(\xi)$ on these discs. 
We compute each $e(\xi)[D_r]$ implicitly with respect to a section of $\xi$ over $\partial M_G$ which is positively tangent to the oriented Legendrian boundary $\partial D_r$, as discussed in section \ref{sec:convex_surfaces}. (Each $\partial D_r$ is non-separating on $\partial M_G$. Note we may use different boundary sections for different discs.)

If $(\Gamma_S, \xi')$, or just $\Gamma_S$, is a tight configuration, we can refer to the Euler class $e(\xi')$ as the \emph{Euler class of the configuration} and denote it with $e(\Gamma_S)$.

Now let $T$ be a spanning tree of $G_V$ with hypertree $f\colon R \To \Z_{\ge0}$ and let $\Gamma_T$ be the $T$-hugging configuration. Recall that the degree of $T$ at a red vertex $r$ is $f(r)+1$, and the degree of $r$ in $G_V$ is $n_r$. We now calculate the Euler class $e(\Gamma_T)$.

\begin{lem}
\label{lem:Euler_class_from_hypertree}
Let $T$ be a spanning tree of $G_V$ with hypertree $f$, and let $\Gamma_T$ be the $T$-hugging configuration. Then 
\[
e(\Gamma_T)[D_r] = 2f(r)- n_r +1.
\]
\end{lem}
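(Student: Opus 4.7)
The plan is to compute $e(\Gamma_T)[D_r]$ directly via the formula $e(\xi)[D_r] = \chi(R_+) - \chi(R_-)$ from section \ref{sec:convex_surfaces}, by analyzing the combinatorics of $\Gamma_T \cap D_r$ explicitly and counting components of the two signed regions. Since $\partial D_r$ is nonseparating on $\partial M_G$, the requisite boundary section of $\xi|_{\partial M_G}$ exists, and the Euler-class formula applies to each decomposing disc with its Legendrian boundary tangent section. The heart of the argument is thus combinatorial.

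First, I would describe $\Gamma_T \cap D_r$ as follows. The tree $T$ meets $\tilde{D}_r$ in the red vertex $r$ together with the $f(r)+1$ violet edges from $r$ to those emerald vertices on $\partial \tilde{D}_r$ to which $r$ is adjacent in $T$; call these vertices $e_1, \ldots, e_{f(r)+1}$, indexed cyclically around $\partial \tilde{D}_r$. The neighbourhood $U$, restricted to $\tilde{D}_r$, splits into one \emph{central} component (a thickened star around $r$) and $n_r - f(r) - 1$ \emph{outer} components (small neighbourhoods of the remaining emerald vertices on $\partial \tilde{D}_r$, i.e., those not joined to $r$ by an edge of $T$). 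Correspondingly, $\tilde D_r\setminus U$ consists of $f(r)+1$ ``wedges,'' one between each consecutive pair $e_i, e_{i+1}$; each wedge meets $\partial\tilde D_r$ in an arc passing through some violet vertices and past the cut-off non-adjacent emerald vertices.

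Next I would identify the sign of each region using the orientation discussion just before the statement: with $D_r$ oriented as a subsurface of $\partial M^-$, the arcs of $\partial D_r$ near violet (resp.\ emerald) vertices lie in $R_+$ (resp.\ $R_-$). Since each outer component of $U\cap \tilde{D}_r$ contains an emerald vertex, the outer components lie in $R_-$. The central component's boundary on $\partial D_r$ consists of small arcs near each of $e_1, \ldots, e_{f(r)+1}$, all emerald, so the central component also lies in $R_-$. Each wedge meets $\partial D_r$ through at least one violet vertex, so the wedges lie in $R_+$. All of these pieces are topological discs.

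Finally, I tally: $R_+$ consists of $f(r)+1$ discs, and $R_-$ consists of $1$ central disc together with $n_r - f(r) - 1$ outer discs, for $n_r - f(r)$ discs in all. Hence $\chi(R_+) = f(r)+1$ and $\chi(R_-) = n_r - f(r)$, giving
\[
e(\Gamma_T)[D_r] \;=\; \chi(R_+) - \chi(R_-) \;=\; (f(r)+1) - (n_r - f(r)) \;=\; 2f(r) - n_r + 1,
\]
as claimed. The main subtlety, if any, is bookkeeping: one must check that each piece of $D_r\setminus \Gamma_T$ really is a disc (true because $T$ is a tree and each wedge/outer piece is simply connected), and that the sign conventions are applied consistently with the orientation of $F_G$ fixed in section \ref{sec:sutured_manifolds_trinities}.
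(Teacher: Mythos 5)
Your proof is correct and takes essentially the same approach as the paper: both apply the formula $e(\Gamma_T)[D_r]=\chi(R_+)-\chi(R_-)$, identify $R_-$ as the $n_r-f(r)$ discs of $U\cap\tilde D_r$ (one central plus $n_r-f(r)-1$ outer), and read off the answer. The only cosmetic difference is that the paper counts $R_+$ by subtracting $|R_-|$ from the total $n_r+1$ complementary components, whereas you directly identify the $f(r)+1$ wedges.
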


\begin{proof}
According to (\ref{eq:eulerclass}) we have $e(\Gamma_T)[D_r] = \chi(R_+) - \chi(R_-)$, where $R_\pm$ are the signed complementary regions of $\Gamma_T$ on $D_r$. As $R_+$ and $R_-$ consist of discs, it suffices to count the number of components in $R_+$ and $R_-$. As discussed above, regions near violet (resp.\ emerald) vertices lie in $R_+$ (resp.\ $R_-$).

Using the homeomorphism $\tilde{D}_r \cong D_r$, we can regard $\Gamma_T$ as drawn on $\tilde{D}_r$ rather than $D_r$, which makes for simpler notation; the region $R_+$ (resp.\ $R_-$) then contains the violet (resp.\ emerald) vertices.

Now $T \cap \tilde{D}_r$ consists of the $f(r)+1$ edges connecting $r$ to emerald vertices, together with the remaining $n_r -f(r)-1$ emerald vertices not connected to $r$. As discussed in section \ref{sec:hypertree_configurations}, the regular neighbourhood $U$ of $T$ intersects $\tilde{D}_r$ in a central component (a regular neighbourhood of the $f(r)+1$ edges emanating from $r$) and $n_r - f(r) - 1$ outer components.

Thus, the regular neighbourhood $U$ of $T$ has $n_r - f(r)$ components in $D_r$, which are the components of $R_-$. Now $\Gamma_T$ consists of $n_r$ dividing curves drawn on $\tilde{D}_r$, cutting $\tilde{D}_r$ into $n_r + 1$ components. Hence $R_+$ has $f(r)+1$ components, and $e(\Gamma_T)[D_r] = 2f(r) - n_r + 1$.
\end{proof}

Therefore the Euler class of $\Gamma_T$ determines the hypertree of $T$, and vice versa. Hence, although several spanning trees might correspond to the same hypertree, all such spanning trees yield tree-hugging configurations with the same Euler class. Moreover, state transitions do not affect the Euler class of the contact structure induced by configurations (either on $M'$ or on $M_G$). From this we immediately obtain the following proposition.

\begin{prop}
\label{prop:distinct_hypertrees_distinct_contact_structures}
Let $T_1, T_2$ be spanning trees of $G_V$ with associated hypertrees $f_1, f_2\colon R \To \Z_{\ge0}$. If the tree-hugging configurations $\Gamma_{T_1}, \Gamma_{T_2}$ are related by state transitions, then $f_1 = f_2$.
\qed
\end{prop}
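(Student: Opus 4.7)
The plan is to observe that the hypertree associated to a tree-hugging configuration is recoverable from the Euler class of that configuration, and that state transitions preserve the Euler class. Combining these two facts gives the proposition immediately.

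First, by Lemma \ref{lem:Euler_class_from_hypertree}, for any spanning tree $T$ of $G_V$ with hypertree $f$, we have $e(\Gamma_T)[D_r] = 2f(r) - n_r + 1$ for every $r \in R$. Since the numbers $n_r$ depend only on the graph $G$ and not on $T$, the evaluations $e(\Gamma_T)[D_r]$ determine the values $f(r)$. As the classes $[D_r]$ generate $H_2(M_G, \partial M_G)$, the Euler class $e(\Gamma_T) \in H^2(M_G, \partial M_G)$ determines $f$ completely. In particular, if $e(\Gamma_{T_1}) = e(\Gamma_{T_2})$, then $f_1 = f_2$.

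It therefore suffices to show that a state transition preserves the Euler class of the associated contact structure on $(M_G, L_G)$. A state transition is, by Definition \ref{def:transition}, obtained by removing a bypass from one side of some disc $D_r$ and attaching it on the other side along the same attaching arc. Both bypass attachment and bypass removal change the isotopy class of the contact structure only by an isotopy of the 3-manifold that passes a bypass through $D_r$; crucially, as remarked in section \ref{sec:edge_rounding_bypasses}, bypass surgery is the simplest local modification of a dividing set which preserves the evaluation $\chi(R_+) - \chi(R_-)$ on the cut surface. Hence the contact structures $\xi$ and $\xi'$ obtained from two configurations related by a single state transition have identical Euler classes on every disc $D_r$, and thus $e(\xi) = e(\xi') \in H^2(M_G, \partial M_G)$.

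Iterating this observation along a sequence of state transitions connecting $\Gamma_{T_1}$ to $\Gamma_{T_2}$, we conclude $e(\Gamma_{T_1}) = e(\Gamma_{T_2})$, and hence $f_1 = f_2$ by the first paragraph. There is no genuine obstacle here: the only minor point worth checking carefully is that the evaluations $e(\Gamma_{T_i})[D_r]$ are computed with respect to a consistent choice of boundary section $s$ on $\partial M_G$ (so that the comparison is meaningful), but this follows since both configurations induce the same dividing set $L_G$ on $\partial M_G$, so the canonical sections tangent to $\partial D_r$ agree for both.
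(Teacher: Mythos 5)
Your proof is correct and follows essentially the same argument as the paper: lemma \ref{lem:Euler_class_from_hypertree} shows the Euler class of a tree-hugging configuration recovers its hypertree, and state transitions preserve the Euler class because bypass surgery on a dividing set leaves the quantity $\chi(R_+) - \chi(R_-)$ on the cut disc unchanged. The paper asserts this in two sentences immediately before the proposition; you have simply spelled out the same chain of reasoning in more detail, including the sensible remark about consistency of boundary sections.
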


\subsection{Tight contact structures are tree-hugging}
\label{sec:tight_tree-hugging}

We have seen that tree-hugging contact structures are tight. This section is devoted to proving the converse.

\begin{prop}
\label{prop:tight_implies_tree-hugging}
Any tight contact structure on $(M_G, L_G)$ is tree-hugging.
\end{prop}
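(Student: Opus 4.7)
The plan is to reduce any tight configuration $\Gamma_S$ to a tree-hugging one through finitely many state transitions, using a combinatorial complexity measure. By proposition \ref{prop:potentially_tight_tight}, tight configurations coincide with potentially tight ones, so it suffices to reach a tree-hugging configuration via state transitions.

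On each disc $\tilde D_r$, the dividing set $\Gamma_r$ is (up to isotopy rel boundary) a non-crossing matching of the $2n_r$ points of $\partial \Gamma_r$, and from the construction of section \ref{sec:hypertree_configurations}, $\Gamma_r$ is locally of tree-hugging form if and only if the $R_-$ components have emerald-count distribution $\{k, 1, \ldots, 1\}$ for some $k \geq 1$. Let $n_2(\Gamma_r)$ be the number of $R_-$ components of $\Gamma_r$ containing at least two emerald vertices, and define
\[
\mu(\Gamma_S) = \sum_{r \in R} \max\{0,\, n_2(\Gamma_r) - 1\}.
\]
Then $\mu(\Gamma_S) = 0$ exactly when every disc is locally tree-hugging. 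In this case, the locally-determined ``star'' subsets $\{S_r\}_{r \in R}$ assemble into a subgraph $T \subset G_V$ whose edge count $\sum_r|S_r| = |E|+|R|-1$ matches that of a spanning tree (by the hypertree identity $\sum_r f(r) = |E|-1$); tightness of $\Gamma_S$ forces $R_-$ on $\partial M^-$ to be connected, and routing paths in this region through central components (since the outer ones are dead-ends containing a single emerald each) shows that $T$ is connected, hence a spanning tree of $G_V$. Thus $\Gamma_S = \Gamma_T$ is globally tree-hugging.

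The key reduction step is: if $\mu(\Gamma_S) > 0$, some state transition strictly decreases $\mu$. Pick $r$ with $n_2(\Gamma_r) \geq 2$ and let $C_1, C_2$ be two $R_-$ components of $\Gamma_r$ each containing at least two emeralds. The non-crossing structure of $\Gamma_r$ yields an attaching arc $a$ on $\tilde D_r$ whose endpoints lie on chords of $\partial C_1$ and $\partial C_2$ and whose middle intersection lies on a chord separating them. Lemma \ref{lem:existence_of_state_transitions} produces a unique state transition along $a$, and by proposition \ref{prop:tight_to_tight} the resulting configuration is tight. A direct inspection of the three matchings in figure \ref{fig:a3} (illustrated by the nested matching $\{(f_1,f_6),(f_2,f_5),(f_3,f_4)\}$ rotating to $\{(f_1,f_2),(f_3,f_6),(f_4,f_5)\}$, which turns $R_-$ emerald counts $\{2,2\}$ into $\{3,1\}$) confirms that the bypass surgery merges $C_1$ and $C_2$ into a single larger $R_-$ component while possibly splitting an $R_+$ region; hence $n_2(\Gamma_r)$ strictly decreases and $\mu(\Gamma_S)$ does too. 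Since $\mu$ is a non-negative integer, iteration terminates at a tree-hugging configuration in the same component of $\GG_0(M_G, L_G, S)$ as $\Gamma_S$. The main obstacle is the local combinatorial verification that the chosen bypass indeed effects the desired merger in all possible global configurations of the two $R_-$ components within the non-crossing chord diagram; this amounts to a systematic case-check using the non-crossing structure.
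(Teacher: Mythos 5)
Your overall strategy — iteratively apply state transitions, tracking a non-negative integer monovariant, until the configuration is tree-hugging — is the same as the paper's (which proves Lemma~\ref{lem:increase_max_valence} and then iterates). However, there are several genuine gaps.

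First, the attaching arc you posit need not exist. You pick \emph{arbitrary} $R_-$ components $C_1,C_2$ of valence $\geq 2$ and assert an attaching arc from a chord of $\partial C_1$ to a chord of $\partial C_2$ hitting $\Gamma_r$ at exactly three points. But an attaching arc traverses exactly two complementary regions, so such an arc exists only when $C_1$ and $C_2$ share a common adjacent $R_+$ component — i.e., they are at distance two in the dual tree of the chord diagram. If they are farther apart, no such arc exists. The paper avoids this by \emph{choosing} the pair carefully: it takes $c$ of maximal valence, looks at all $R_-$ components at distance two, and shows by a counting argument that at least one of them has valence $>1$ (otherwise every distance-two $R_-$ component is a leaf, and one counts all $R_-$ components and gets a contradiction). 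You would need to supply this argument, or otherwise prove that some pair of valence-$\geq 2$ $R_-$ components is at distance two.

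Second, and more seriously, your monovariant claim is not verified. You assert that the bypass surgery ``merges $C_1$ and $C_2$ into a single larger $R_-$ component while possibly splitting an $R_+$ region,'' so $n_2(\Gamma_r)$ drops. But the attaching arc also runs through $C_2$ (from the middle crossing $\gamma_2$ to the endpoint chord $\gamma_3 \subset \partial C_2$), so the bypass can split $C_2$ into two pieces: one merges with $C_1$, the other becomes a new $R_-$ region. If $\gamma_3$ is not adjacent to $\gamma_2$ along $\partial C_2$, the split-off piece can itself have valence $\geq 2$, in which case $n_2$ does not decrease. You acknowledge that ``a systematic case-check'' is needed but do not carry it out, whereas for the paper's monovariant (maximum valence, which strictly increases because the merged component absorbs arcs beyond those of $c$) the verification from figure~\ref{fig:max_valence_increase} is immediate in either bypass direction. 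So the paper's bookkeeping buys a genuinely easier verification. Relatedly, your illustrative example is incorrect: the nested matching $\{(f_1,f_6),(f_2,f_5),(f_3,f_4)\}$ has $R_-$ valences $\{2,1\}$, not $\{2,2\}$, so it does not even satisfy the hypothesis $n_2 \geq 2$.

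Finally, your argument that $\mu = 0$ implies tree-hugging is circular as written: you invoke the hypertree identity $\sum_r f(r) = |E|-1$ to conclude that $T$ has the spanning-tree edge count, but that identity is a consequence of $T$ being a spanning tree, which is what you are trying to prove. The correct argument (used in the paper's Lemma~\ref{lem:tree-hugging_characterisation}) is topological: the connected dividing curve on $S^2$ separates it into two discs, $R_-$ is one of them, and $R_-$ deformation retracts onto $T$; a disc retracts onto a tree, so $T$ is a tree, and it spans since it contains every emerald and red vertex. You could salvage your version by observing that $R_-$ is a disc and noting a planar subgraph onto which a disc deformation retracts must be acyclic, but the edge-counting route as stated does not work.
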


The proof amounts to showing that any tight configuration is related by state transitions to a tree-hugging configuration. We prove this directly.

We first give a characterisation of tree-hugging configurations. We saw in section \ref{sec:hypertrees_Euler_class} that in a tight configuration $\Gamma_S = \sqcup_{r \in R} \Gamma_r$, each $\Gamma_r$ consists of $n_r$ dividing curves, cutting $D_r$ into positive and negative regions $R_+$ and $R_-$. The $2 n_r$ endpoints of $\Gamma_r$ are the points $f_{r,\varepsilon}$ associated to the edges $\varepsilon$ of $G$ around the boundary of $r$. These points $f_{r,\varepsilon}$ cut $\partial D_r$ into $2 n_r$ arcs, which alternately lie in $R_+$ and $R_-$; we call these arcs the \emph{positive and negative signed arcs} of $\partial D_r$. Viewed on $\tilde{D}_r$ via the homeomorphism $\tilde{D}_r \cong D_r$, each positive (resp.\ negative) arc contains exactly one violet (resp.\ emerald) vertex.

Each component of $R_+$ (resp.\ $R_-$) is a disc containing some number of positive (resp.\ negative) signed arcs in its boundary.

\begin{defn}
The \emph{valence} $v(c)$ of a component $c$ of $R_\pm$ is the number of signed arcs in its boundary.
\end{defn}

Note that a component of $R_\pm$ has valence $1$ if and only if its boundary consists of a single signed arc, and a single dividing curve on $D_r$. Since each of the $n_r$ positively (resp.\ negatively) signed arcs counts precisely $1$ towards the valence of precisely one component of $R_+$ (resp.\ $R_-$), the sum of the valences over all components $c_-$ of $R_-$ is $n_r$; similarly for the components $c_+$ of $R_+$:
\begin{equation}
\label{eq:sum_of_valences}
\sum_{c_-} v(c_-) = \sum_{c_+} v(c_+) = n_r.
\end{equation}

\begin{lem}
\label{lem:tree-hugging_characterisation}
A tight configuration $\Gamma_S = \sqcup_{r \in R} \Gamma_r$ is tree-hugging if and only if in each disc $D_r$, all components of $R_-$, with at most one exception, have valence $1$.
\end{lem}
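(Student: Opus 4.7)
The forward implication is immediate from the description of $\Gamma_T$ in Section \ref{sec:hypertree_configurations}: on each disc $D_r$, the neighbourhood $U \cap \tilde{D}_r$ splits into one central piece of valence $f(r)+1$ (around $r$ and its $f(r)+1$ incident $T$-edges) and $n_r - f(r) - 1$ outer pieces, each a neighbourhood of a single emerald vertex not adjacent to $r$ in $T$ and hence of valence $1$. So at most one component of $R_-$ has valence greater than $1$.

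For the converse, my plan is to construct a spanning tree $T$ of $G_V$ directly from $\Gamma_S$. On each $D_r$, designate a distinguished $R_-$ component $B_r$: take the unique valence-$\geq 2$ component if one exists, otherwise any valence-$1$ component; write $k_r+1 := v(B_r)$, so $k_r \geq 0$. Let $T \subseteq G_V$ be the subgraph containing, for each $r \in R$, the $k_r+1$ edges joining $r$ to the emerald vertices whose negative arcs bound $B_r$. A local comparison on each $D_r$ using Section \ref{sec:hypertree_configurations} shows $\Gamma_T = \Gamma_S$ regardless of the choices made when $k_r = 0$, so the only remaining task is to verify that $T$ is a spanning tree of $G_V$.

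For this I plan to pass to the global picture. By tightness and Lemma \ref{lem:potentially_tight_conditions}, the curve $\Gamma$ obtained by gluing the $\Gamma_r$ across the edges of $G$ is a single Jordan curve in $S^2$, so the negative global region $R^{\mathrm{glob}}_-$ is an open disc containing all $|E|$ emerald vertices. Its closure carries a natural CW structure with $|E|+n$ zero-cells (the emerald vertices and the $n$ crossings of $\Gamma$ with edges of $G$), $2n$ one-cells (the $n$ emerald half-edges of $G$ together with the $n$ arcs of $\Gamma$ between successive crossings), and $F$ two-cells, where $F = \sum_r m_r^-$ and $m_r^- = n_r - k_r$ is the number of $R_-$ components on $D_r$. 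Euler's formula gives $F = 1 + n - |E|$, and combined with $\sum_r n_r = n$ this forces the identity $\sum_r k_r = |E|-1$. In particular $|E(T)| = \sum_r(k_r+1) = |R|+|E|-1$, the correct number of edges for a spanning tree of $G_V$.

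To finish, introduce the bipartite incidence graph $N^*$ with vertex set (pieces) $\sqcup$ $E$, and one edge for each negative arc (joining the piece it belongs to with the emerald vertex at which it sits). By the cell count $N^*$ has $1+n$ vertices and $n$ edges; connectedness of $R^{\mathrm{glob}}_-$ implies $N^*$ is connected, hence a tree. Let $S \subseteq N^*$ be the subgraph obtained by discarding all pieces other than the designated $B_r$ (one per $r \in R$). As a subgraph of a tree, $S$ is a forest; it has $|R|+|E|$ vertices and $|R|+|E|-1$ edges, so exactly one component, and is itself a tree. Finally the projection $\pi\colon N^* \to G_V$ that collapses each piece on $D_r$ to the vertex $r$ restricts to a bijection on the vertex set of $S$ (one $B_r$ per $r$) and carries $S$ isomorphically onto $T$, so $T$ inherits the tree structure. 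The main obstacle I anticipate is the cell-counting identity $\sum_r k_r = |E|-1$: this is precisely where tightness enters, and everything downstream is formal edge-counting once it is in hand.
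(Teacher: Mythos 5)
Your proposal is correct, but it takes a genuinely different route to the converse. The paper defines $T$ as the union of local stars $T_r$ (red vertex $r$ plus edges to the centrally connected emerald vertices, plus all emerald vertices on $\partial\tilde{D}_r$), observes that on each $\tilde{D}_r$ the negative region deformation retracts onto $T_r$, and then uses tightness to conclude that the global negative region is a disc whose closure deformation retracts onto $T$ — so $T$ is contractible, hence a tree, and $\Gamma_S$ is the boundary of a regular neighbourhood. Your argument replaces the deformation-retraction step with an explicit Euler-characteristic computation on the closed disc $\overline{R^{\mathrm{glob}}_-}$, deriving the identity $\sum_r k_r = |E|-1$ (which is exactly the hypertree constraint of equation (\ref{eqn:sum_of_hypertree}), here re-proved directly from tightness rather than deduced afterwards for spanning trees), and then uses the auxiliary bipartite incidence graph $N^*$ of pieces and emerald vertices: it has $1+n$ vertices, $n$ edges, and is connected because $R^{\mathrm{glob}}_-$ is, hence is a tree; the induced subgraph $S$ on the distinguished pieces $B_r$ together with $E$ is a forest with $|R|+|E|-1$ edges on $|R|+|E|$ vertices, hence a tree, and projects isomorphically onto $T$. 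The two arguments buy different things: the paper's is shorter and more topological (but leaves the gluing of local deformation retractions somewhat implicit); yours is longer but entirely combinatorial and makes the edge-count mechanism visible. Your ``local comparison'' step asserting $\Gamma_T=\Gamma_S$ on each disc is left terse — it is true because, once the set of centrally connected negative arcs of $B_r$ is fixed, the chord diagram $\Gamma_r$ is forced (the chords of $B_r$ are the arcs joining consecutive centrally connected boundary arcs, and every other chord is the boundary-parallel cut-off of an outer negative arc) — but it would be worth spelling out. It would also be worth noting explicitly that connectedness of $N^*$ uses the fact that two pieces of $R^{\mathrm{glob}}_-$ in different complementary regions can only be adjacent across an emerald half-edge, i.e.\ near a common emerald vertex, so that paths in $\overline{R^{\mathrm{glob}}_-}$ between pieces project to paths in $N^*$.
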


Thus tree-hugging configurations have minimal valence $1$ in as many components of $R_-$ as possible, and concentrate ``excess valence" above $1$ into a single component.

\begin{proof}
Throughout this proof we view each $\Gamma_r$ on $\tilde{D}_r$, via the homeomorphism $\tilde{D}_r \cong D_r$.

Suppose $\Gamma_S = \Gamma_T$ hugs $T$, a spanning tree of $G_V$ with hypertree $f\colon R \To \Z_{\ge0}$. 
Then the region $R_-$ of $\tilde{D}_r$ is a neighbourhood of $T\cap\tilde{D}_r$, and 
only its central component can have valence more than 1.

Conversely, suppose $\Gamma_S$ is a tight configuration such that on each $\tilde{D}_r$, all components of $R_-$ have valence $1$, with at most one exception. We will find a spanning tree $T$ of $G_V$ such that $\Gamma_S$ hugs $T$.

Consider an element $r \in R$ and the dividing set $\Gamma_r$ on $\tilde{D}_r$. Take a component of $R_-$ with maximal valence and call it the ``central component"; this component has some emerald vertices on its boundary which we call ``centrally connected". Every non-central component of $R_-$ has valence $1$; we call these ``outer components". An outer component must have boundary consisting of a single negative arc, and a single arc of $\Gamma_r$, and must therefore be a regular neighbourhood of an emerald vertex in $\tilde{D}_r$.

After an isotopy of $\Gamma_r$ relative to endpoints if necessary, we may assume the red vertex $r$ lies in the central component, and the edges of $G_V$ from $r$ to the centrally connected emerald vertices also lie in the central component.

Let $T_r$ be the graph consisting of the red vertex $r$, the edges from $r$ to the centrally connected emerald vertices, and all the emerald vertices on $\partial \tilde{D}_r$. Let $T$ be the subgraph of $G_V$ obtained by taking the union of all the $T_r$.

On each $\tilde{D}_r$, the region $R_-$ deformation retracts onto $T_r$.
The union of these deformation retractions, over all $r \in R$, shows that when $\Gamma_r$ is drawn in the complementary region $\tilde{D}_r$ of $G$ and connected across the edges of $G$, the negative region $R_-$ deformation retracts onto $T$. Indeed, $\Gamma_S$ is the boundary of a regular neighbourhood of $T$.

Since $\Gamma_S$ is a tight configuration, by lemma \ref{lem:potentially_tight_conditions}, when each $\Gamma_r$ is drawn on $\tilde{D}_r$ and connected across the edges of $G$, we obtain a single connected curve. Hence the $R_-$ so obtained on $S^2$ is a single disc, which deformation retracts onto $T$. So $T$ is a tree and, since it contains all emerald and red vertices, a spanning tree of $G_V$. As $\Gamma_S$ is the boundary of a regular neighbourhood of $T$, it hugs $T$.
\end{proof}

To prove proposition \ref{prop:tight_implies_tree-hugging}, we demonstrate a sequence of state transitions, starting from any tight configuration, and ending at a configuration satisfying the conditions of lemma \ref{lem:tree-hugging_characterisation}. The idea is to successively concentrate excess valence into fewer components of $R_-$, as in the following lemma.

\begin{lem}
\label{lem:increase_max_valence}
Let $\Gamma_S = \sqcup_{r \in R} \Gamma_r$ be a tight configuration. Suppose that, for some $r_0 \in R$, there are two components of $R_-$ on $D_{r_0}$ with valence $> 1$. Let 
$\V$
be the maximum valence of the components of $R_-$ on $D_{r_0}$. Then there is a state transition from $\Gamma_S$ to another tight configuration $\Gamma'_S = \sqcup_{r \in R} \Gamma'_r$, which is identical to $\Gamma_S$ outside of $D_{r_0}$, and such that on $D_{r_0}$, the maximum valence of a component of $R_-$ is greater than 
$\V$.
\end{lem}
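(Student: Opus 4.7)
The plan is to exhibit a bypass surgery on $D_{r_0}$ that merges the maximum-valence component $c_1$ of $R_-$ with a ``second neighbour'' in the chord-diagram dual tree, yielding a configuration of strictly larger max valence. Consider the dual tree $\Tree$ of the decomposition of $D_{r_0}$ by $\Gamma_{r_0}$: its vertices are the components of $R_+ \cup R_-$, its edges are dividing arcs, and colours alternate across edges. It is indeed a tree because $D_{r_0}$ is a disc and all components of $R_\pm$ are discs. By the hypothesis, $c_1$ is not the only $R_-$ vertex of $\Tree$, so some $R_+$ vertex $p$ adjacent to $c_1$ must have valence $\geq 2$ in $\Tree$, connecting $c_1$ to another $R_-$ vertex $c'$.

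Let $\delta_1$ be the dividing arc separating $c_1$ from $p$, and $\delta_2$ that separating $p$ from $c'$. Since $v(c_1) = \V \geq 2$, there is another dividing arc $\delta_3 \neq \delta_1$ on $\partial c_1$. I would construct an attaching arc $a \subset D_{r_0}$ with endpoints on $\delta_3$ and $\delta_2$, interior intersection on $\delta_1$, first sub-arc lying in $c_1$ (running from $\delta_3$ to $\delta_1$) and second sub-arc lying in $p$ (running from $\delta_1$ to $\delta_2$). Such an $a$ exists because $c_1$ and $p$ are open discs with these arcs on their boundaries, and it meets $\Gamma_{r_0}$ in exactly three points as required. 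By Lemma \ref{lem:existence_of_state_transitions}, the arc $a$ determines a unique state transition, whose outcome $\Gamma'_S$ is tight by Proposition \ref{prop:tight_to_tight}, and which agrees with $\Gamma_S$ off of $D_{r_0}$.

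It remains to verify that this state transition merges $c_1$ and $c'$. The local bypass move of figure \ref{fig:a3} re-pairs the six boundary endpoints of $\delta_1, \delta_2, \delta_3$, with two possible outcomes. One outcome merges $c_1$, $p$ and $c'$ into a single $R_-$ region absorbing all negative arcs previously on $\partial c_1 \cup \partial c'$, yielding a component of valence at least $v(c_1) + v(c') \geq \V + 1 > \V$. The other outcome does not merge $c_1$ and $c'$, and a direct check using condition (iii) of Lemma \ref{lem:potentially_tight_conditions} shows that it would disconnect the global dividing curve obtained after cutting along $S$ and gluing across edges of $G$, hence cannot correspond to a tight configuration; so the unique tight state transition provided by $a$ must be the merging one. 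The main obstacle is precisely this last verification: pinning down the local bypass geometry carefully enough to confirm which of the two re-pairings merges the components, and ruling out the alternative direction globally via the connectivity criterion. Some additional case analysis may be needed, for instance when $\delta_3$ happens to border $p$ as well, or when several parallel dividing arcs cluster near $a$; in all cases the guiding principle is the same, namely that the tight direction is forced to be the one preserving global connectivity.
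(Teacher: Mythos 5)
Your choice of attaching arc creates a genuine gap. You route $a$ \emph{through} the maximum-valence component $c_1$ (from $\delta_3$ across $c_1$ to $\delta_1$), and the segment $a\cap c_1$ is then a properly embedded arc in the disc $c_1$ with both endpoints on $\partial c_1$: it separates $c_1$ into two pieces with, say, $k_1$ and $k_2$ negative arcs, $k_1+k_2=\V$, each $k_i\geq 1$ since $\delta_3\neq\delta_1$. Either bypass surgery along $a$ therefore \emph{splits} $c_1$ and merges only one of the two pieces with $c'$; the resulting negative component has valence $k_i+v(c')$, which need not exceed $\V$ (e.g.\ $\V=4$, $k_1=k_2=2$, $v(c')=1$ gives $3<\V$). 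Your claim that ``one outcome merges $c_1$, $p$ and $c'$ into a single $R_-$ region absorbing all negative arcs'' is wrong on two counts: bypass surgery preserves the sign of regions, so $p\subset R_+$ cannot be absorbed into $R_-$; and a local check (tracking the six stub-endpoints near $a$ under the two non-crossing re-pairings) shows that \emph{both} bypass outcomes merge a piece of $c_1$ with $c'$ (different pieces), so the ``one merges, the other disconnects'' dichotomy you rely on does not occur and cannot be resolved by the global connectivity criterion.

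The paper's argument avoids all of this by choosing the arc so it never enters the component $c$ of maximal valence. There, one first proves the combinatorial fact that some negative component $c_j^-$ with $v(c_j^-)>1$ lies at distance two from $c$ in the adjacency graph (otherwise $c$ together with the valence-$1$ negative components adjacent to the $c_i^+$'s would exhaust $R_-$, contradicting the hypothesis of two high-valence components). The attaching arc then starts at a point of $\partial c\cap\partial c_i^+$, runs through $c_i^+$, crosses into $c_j^-$, and ends on a \emph{second} dividing curve of $\partial c_j^-$ — possible precisely because $v(c_j^-)>1$. Since $a$ does not enter $c$, the component $c$ stays intact with all $\V$ of its negative arcs, and the segment $a\cap c_j^-$ splits $c_j^-$ into two pieces each containing at least one negative arc. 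Both bypass directions then merge the intact $c$ with one of these pieces, producing a negative component of valence $\geq\V+1$. Thus no case analysis about which direction is ``tight'' is needed: either direction increases the maximum valence, and Lemma~\ref{lem:existence_of_state_transitions} guarantees one of them is a state transition. To repair your proof you would need to replace your arc with one of this form and supply the missing combinatorial lemma producing $c_j^-$.
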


\begin{proof}
Let $c$ be a component of $R_-$ on $D_{r_0}$ with maximal valence $\V$. Its boundary consists of $\V$ negative arcs on $\partial D_{r_0}$, and $\V$ arcs of $\Gamma_{r_0}$. On the other side of each of these $\V$ arcs of $\Gamma_{r_0}$ lie components $c^+_1, \ldots, c^+_\V$ of $R_+$ adjacent to $c$; and in turn these positive components are adjacent to negative components $c^-_1, c^-_2, \ldots, c^-_M$ (other than $c$) for some non-negative integer $M$. 

We claim that some $c^-_j$ has valence $v(c^-_j) > 1$. If not, then every $c^-_j$ has valence $1$, so no positive regions other than the $c^+_i$ can be adjacent to the $c^-_j$, and $c,c^-_1, \ldots, c^-_M$ must be a complete list of the components of $R_-$. This contradicts the existence of two components with valence $>1$. So there exist $i$ and $j$ such that the components $c, c^+_i, c^-_j$ are consecutively adjacent and $v(c^-_j) > 1$.

Now let $a$ be an attaching arc which begins on the common boundary of $c$ and $c^+_i$, runs through $c^+_i$ and $c^-_j$, and ends on a distinct (not adjacent to $c_i^+$) dividing curve on the boundary of $c^-_j$. 
(Note that a component of $R_-$ and a component of $R_+$ can share at most one boundary arc.)
See figure \ref{fig:max_valence_increase}.

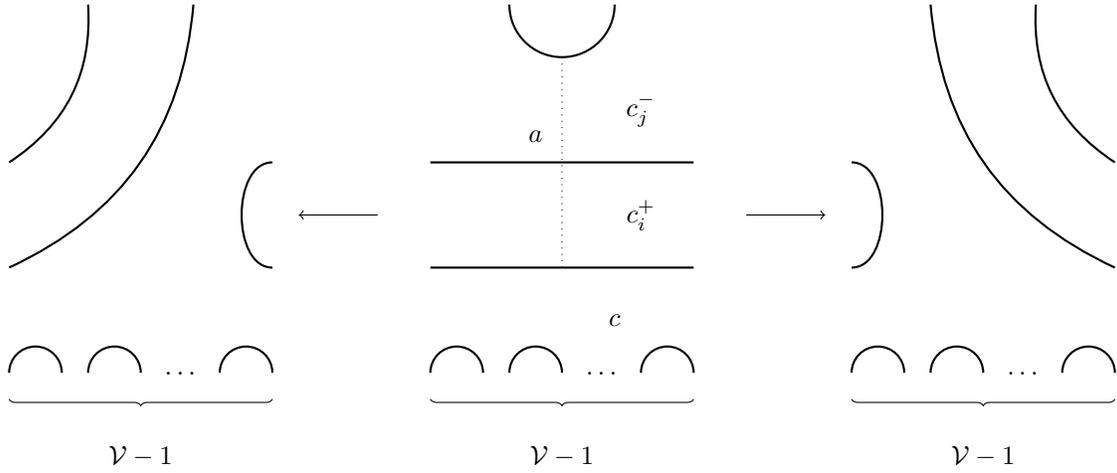
\begin{figure}
\begin{center}
\begin{tikzpicture}[
scale=0.7, 
suture/.style={thick},
boundary/.style={ultra thick},
attaching/.style={dotted},
vertex/.style={draw=red, fill=red}]

\foreach \x in {0, 8, -8}
{
\draw [suture, xshift= \x cm] (-2.5,-2) arc (180:0:0.5);
\draw [suture, xshift = \x cm] (-1,-2) arc (180:0:0.5);
\draw [xshift = \x cm] (0.75,-2) node {$\ldots$};
\draw [suture, xshift = \x cm] (1.5,-2) arc (180:0:0.5);
\draw [decoration = {brace, mirror}, decorate, xshift = \x cm] (-2.5,-2.5) -- (2.5,-2.5)
node [anchor = north, pos=0.5, yshift=-0.5 cm] {$\V-1$};
}

\draw [->] (3.5,1) -- (5,1);
\draw [->] (-3.5,1) -- (-5,1);

\draw [suture] (-2.5,0) -- (2.5,0);
\draw [suture] (-2.5,2) -- (2.5,2);
\draw [suture] (-1,5) arc (-180:0:1);
\draw [attaching] (0,0) -- (0,4);
\draw (1,-1) node {$c$};
\draw (1.5,1) node {$c^+_i$};
\draw (1.5,3) node {$c^-_j$};
\draw (-0.5,2.5) node {$a$};

\draw [suture, xshift = -8 cm] (-1,5) to [bend left=30] (-2.5,2);
\draw [suture, xshift = -8 cm] (1,5) to [bend left=30] (-2.5,0);
\draw [suture, xshift = -8 cm] (2.5,2) to [bend right=90] (2.5,0);

\draw [suture, xshift = 8 cm] (1,5) to [bend right=30] (2.5,2);
\draw [suture, xshift = 8 cm] (-1,5) to [bend right=30] (2.5,0);
\draw [suture, xshift = 8 cm] (-2.5,2) to [bend left=90] (-2.5,0);

\end{tikzpicture}
\end{center}
\caption{Increasing maximum valence. The region $c$ has maximum valence $\V$, so there are $\V$ dividing curves along its boundary, namely the common border with $c_i^+$, and $\V-1$ other dividing curves, shown at the bottom of the diagram. Performing either downwards (left) or upwards (right) bypass surgery yields a region with valence at least $\V+1$.}
\label{fig:max_valence_increase}
\end{figure}

By lemma \ref{lem:existence_of_state_transitions}, there is one (and only one) state transition from $\Gamma_S$ along $a$. This state transition goes from $\Gamma_S$ to another tight configuration $\Gamma'_S = \sqcup_{r \in R} \Gamma'_r$, which only differs from $\Gamma_S$ in effecting a bypass surgery along $a$. Now we observe that, whichever direction we perform bypass surgery along $a$, we obtain a component with valence strictly greater than $\V$.
\end{proof}

\begin{proof}[Proof of proposition \ref{prop:tight_implies_tree-hugging}]
Let $\xi$ be a tight contact structure on $(M_G, L_G)$ corresponding to a configuration $\Gamma_S = \sqcup_{r \in R} \Gamma_r$. Consider a disc $D_r$ with dividing set $\Gamma_r$ and negative region $R_-$. If there are two components of $R_-$ with valence $\geq 2$, then we repeatedly apply lemma \ref{lem:increase_max_valence}, successively performing state transitions to tight configurations, at each stage increasing the maximum valence of the components of $R_-$ on $D_r$.

By (\ref{eq:sum_of_valences}), the sum of the valences of the components of $R_-$ remains constant, so lemma \ref{lem:increase_max_valence} cannot be applied indefinitely. Thus, eventually we arrive at a tight configuration where all components of $R_-$ on $D_r$ have valence $1$, with at most one exception.

Doing the same for each disc $D_r$, we arrive at a configuration $\Gamma'_S$ which satisfies the condition of lemma \ref{lem:tree-hugging_characterisation}, hence is tree-hugging. As $\Gamma'_S$ is obtained from $\Gamma_S$ by a sequence of state transitions, $\Gamma'_S$ also yields the contact structure $\xi$, so $\xi$ is tree-hugging.
\end{proof}

\subsection{Spanning trees with the same hypertree yield equivalent structures}
\label{sec:hypertrees_same_hypergraph}

In this section we prove the following proposition.

\begin{prop}
\label{prop:same_degrees_same_contact_structure}
Let $T, T'$ be spanning trees of $G_V$ with the same associated hypertree $f\colon R \To \Z_{\ge0}$. Then the tree-hugging configurations $\Gamma_{T}, \Gamma_{T'}$ are related by state transitions.
\end{prop}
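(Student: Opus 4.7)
The plan is to combine a combinatorial reduction with an explicit geometric realisation.

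First, I would establish a combinatorial lemma: any two spanning trees $T, T'$ of $G_V$ with the same hypertree $f$ are connected by a sequence $T = T_0, \ldots, T_m = T'$ of spanning trees, all with hypertree $f$, in which consecutive trees differ by a single red-vertex edge swap, i.e.\ $T_{i+1} = T_i - (r, e) + (r, e')$ for some $r \in R$ and $e, e' \in E$. This is a matroid-intersection exchange result: such spanning trees are the common bases of the graphic matroid of $G_V$ and the partition matroid that fixes the number of edges at each red vertex, and the swap graph of these common bases under single-element exchanges is connected. A direct inductive argument on $|T \triangle T'|$ can also be given, using an auxiliary swap along an edge in $T \cap T'$ whenever no symmetric-difference-reducing swap is directly available.

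Second, I would observe that a single red-vertex swap at $r$ changes the tree-hugging configuration only on the disc $D_r$: for any $r' \neq r$, the trace $T \cap \tilde{D}_{r'}$ is unchanged, so $\Gamma_T$ and $\Gamma_{T_1}$ agree on every $D_{r'}$ with $r' \neq r$. Hence it suffices to connect $\Gamma_T \cap D_r$ and $\Gamma_{T_1} \cap D_r$ by a sequence of state transitions supported on $D_r$. By proposition \ref{prop:tight_to_tight}, any such sequence starting in a tight configuration stays in the tight component of $\GG_0(M_G, L_G, S)$, so I only need to exhibit \emph{some} sequence of bypass surgeries realising the local change. The two tree-hugging chord diagrams on $D_r$ differ only in that one emerald vertex $e_{\mathrm{in}}$ switches from ``centrally connected'' in $T$ to ``outer'' in $T_1$, while another $e_{\mathrm{out}}$ switches in the opposite direction.

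Third, I would realise this local swap by bypass surgeries. When $e_{\mathrm{in}}$ and $e_{\mathrm{out}}$ are adjacent in the cyclic order of emerald vertices around $\partial \tilde{D}_r$, a single bypass surgery along an attaching arc that passes through the central $R_-$ component, crosses one of the chords bounding the central star, and ends on the outer chord at $e_{\mathrm{out}}$, effects the swap. For non-adjacent pairs, one decomposes the required permutation of the ``centrally-connected/outer'' labelling into a sequence of such adjacent transpositions; intermediate chord diagrams need not be tree-hugging, but they remain tight by proposition \ref{prop:tight_to_tight}. The main obstacle is the explicit identification of the attaching arcs in step three and the careful verification that each bypass surgery produces the intended local chord diagram, so that at the end of the full sequence the dividing set on $D_r$ is exactly $\Gamma_{T_1} \cap D_r$; the combinatorial lemma of step one, while standard, also requires some care to prove directly in the bipartite degree-constrained setting.
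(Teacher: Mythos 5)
Your outline matches the paper's at the top level (reduce to single edge swaps at a red vertex, then realise each swap by bypass moves on one disc), but both halves contain genuine gaps. For the combinatorial step, the assertion you lean on --- that the common bases of two matroids form a connected graph under single-element exchanges --- is not a theorem: common-base exchange graphs of matroid intersections can be disconnected in general, so ``graphic matroid intersected with a partition matroid'' is not something you can simply cite, and your fallback ``direct inductive argument'' is not supplied. The statement itself is true (it is lemma \ref{lem:spanning_trees_by_local_moves}), but the paper proves it by a detour through theorem 10.1 of \cite{Kalman13_Tutte}: both $T$ and $T'$ are driven by hypertree-preserving single swaps to spanning trees whose planar duals are arborescences of $G_V^*$, and the injectivity of the arborescence-to-hypertree bijection forces the two endpoints to coincide. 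Some argument of this calibre is needed where you write ``standard''.

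For the geometric step, your logic is inverted. Proposition \ref{prop:tight_to_tight} says that \emph{state transitions} preserve tightness; it does not say that an arbitrary bypass surgery performed on a tight dividing set is realised by a state transition, nor that it preserves tightness --- surgery in the wrong direction along an attaching arc disconnects the rounded dividing set on one of $M^\pm$ and corresponds to no transition at all. So you cannot first pick a convenient sequence of surgeries and then invoke proposition \ref{prop:tight_to_tight} to conclude the intermediates stay tight; you must first check that each intermediate dividing set is globally connected on $S^2$, and only then does lemma \ref{lem:existence_of_state_transitions} promote each surgery to a state transition in the required direction. That check is precisely what you defer for non-adjacent swaps, and it is not automatic: a decomposition into ``adjacent transpositions'' can a priori pass through disconnected dividing sets. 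The paper avoids all explicit attaching-arc constructions by proving the stronger proposition \ref{prop:transitions_on_disc}: it creases $S^2$ into a sutured cylinder $\M(\Gamma_X^*,\cdot)$ and invokes isotopy discretisation (lemma \ref{lem:chord_diagrams_connected}) to obtain, between \emph{any} two tight local dividing sets with the same exterior, a sequence of bypass surgeries all of whose intermediates are tight. Replacing your third step with an argument of that kind, or genuinely carrying out the case analysis you defer, is what is missing.
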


Thus, two tree-hugging configurations with the same hypertree yield the same (isotopy class of) contact structure. Proposition \ref{prop:same_degrees_same_contact_structure} is proved in the following two steps.

\begin{lem}
\label{lem:spanning_trees_by_local_moves}
Let $T,T'$ be spanning trees of $G_V$ with the same associated hypertree $f$. Then there is a sequence $T = T_0, T_1, \ldots, T_n = T'$ of spanning trees of $G_V$, where each $T_i$ has associated hypertree $f$, and each $T_{i+1}$ is obtained from $T_i$ by removing one edge and adding one edge.
\end{lem}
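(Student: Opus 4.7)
The plan is to induct on $|T \setminus T'|$. The base case $T = T'$ is trivial. For the inductive step, pick any $e' = (r, \varepsilon') \in T' \setminus T$; since $T$ is a spanning tree, $T \cup \{e'\}$ contains a unique cycle $C$, and at the red endpoint $r$ of $e'$ this cycle contains exactly one other edge $e = (r, \varepsilon) \in T$. The exchange $T \mapsto T - e + e'$ produces a spanning tree by the standard basis exchange in the graphic matroid of $G_V$, and since both edges are incident to the same red vertex $r$ it preserves the degree at every vertex of $R$, hence the hypertree $f$. If $e \in T \setminus T'$ (the \emph{good} case), this single red-swap strictly reduces $|T \setminus T'|$ and the inductive hypothesis applies.

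The real difficulty is the \emph{stuck} case, where for every choice of $e' \in T' \setminus T$ the forced edge $e$ lies in $T \cap T'$, so the naive exchange leaves $|T \triangle T'|$ unchanged. To unblock this, I plan to perform a preparatory red-swap at some $r \in R$ with $N_T(r) \neq N_{T'}(r)$, removing a common edge at $r$ and introducing an edge of $T' \setminus T$ at $r$. This preserves the hypertree, rearranges the fundamental cycles of the edges of $T' \setminus T$ with respect to $T$, and eventually brings the good case within reach. A direct computation with $G_V$ equal to the complete bipartite graph $K_{2,3}$ shows that such a preparatory detour is genuinely necessary: two distinct spanning trees with the same hypertree can differ on four edges yet admit no single reducing red-swap.

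To organize this systematically I would recast the problem matroid-theoretically. The spanning trees of $G_V$ with hypertree $f$ are exactly the common bases of two matroids on $E(G_V)$: the graphic matroid $M(G_V)$, and the partition matroid grouping the edges by their red endpoint and requiring $f(r)+1$ chosen edges at each $r \in R$. A red-swap is then precisely a single exchange that is simultaneously valid in both matroids, and by Edmonds' matroid intersection theorem the convex hull of such common bases is an integral polytope. The main obstacle is showing that the red-swap graph (the $1$-skeleton of this polytope, restricted to genuine single exchanges) is connected — equivalently, that the preparatory-swap procedure always terminates. I expect to control termination using a secondary measure such as the total length of the fundamental cycles of the edges of $T' \setminus T$ with respect to $T$, and to exploit the planar bipartite structure of $G_V$ inherited from the trinity, in particular the decomposition of $T \triangle T'$ into cycles that alternate between $T$- and $T'$-edges at every red vertex.
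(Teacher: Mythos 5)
Your proposal correctly identifies the heart of the difficulty: a naive greedy exchange can get \emph{stuck}, so you need some global mechanism to guarantee that hypertree-preserving single exchanges connect $T$ to $T'$. But you do not resolve it, and the two tools you point to do not supply the missing step. The matroid-intersection observation is accurate as far as it goes (the common bases of the graphic matroid of $G_V$ and the partition matroid grouping edges by red endpoint with capacities $f(r)+1$ are exactly the spanning trees realising $f$, and the common-base polytope is integral), but integrality says nothing about the \emph{edges} of that polytope: for a general pair of matroids, adjacent vertices of the common-base polytope may differ in more than two elements, and the single-exchange ``red-swap graph'' need not coincide with the $1$-skeleton. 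So Edmonds' theorem does not give connectivity by single exchanges, which is precisely what the lemma asserts. Your proposed secondary potential (total length of fundamental cycles of $T'\setminus T$ with respect to $T$) is a reasonable candidate but you state that you only ``expect'' it to work; that expectation is exactly the content of the lemma and cannot be assumed.

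The paper avoids the stuck case altogether by a different strategy: instead of trying to push $T$ toward $T'$ directly, it fixes a root $v_0 \in V$ and routes \emph{both} trees to the same canonical tree. The key input is \cite[thm.\ 10.1]{Kalman13_Tutte} (summarised here as theorem~\ref{thm:tree_to_arborescence}): for any spanning tree $T$ realising $f$, there is a sequence of hypertree-preserving single exchanges terminating at a spanning tree $T_n$ whose planar dual $T_n^*$ is an arborescence of $G_V^*$ rooted at $v_0$, and moreover the map from arborescences to hypertrees is a \emph{bijection}, so this terminal tree depends only on $f$ (and $v_0$), not on $T$. Doing this for both $T$ and $T'$ and concatenating the first sequence with the reversal of the second proves the lemma. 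The termination of Kálmán's procedure is controlled by explicitly targeting the closest place where $T_i^*$ fails to be an arborescence, a concrete decreasing measure with a proof in the cited work; your proposal would need to reproduce something of this strength, and as written it does not. If you want a self-contained elementary argument, you would essentially have to rebuild the arborescence machinery or find and verify a genuinely decreasing potential for the direct exchange walk, neither of which your sketch accomplishes.
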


\begin{lem}
\label{lem:state_transitions_between_trees}
Let $T,T'$  be spanning trees of $G_V$ with associated hypertree $f$, where $T'$ is obtained from $T$ by removing one edge and adding one edge. Then the tree-hugging configurations $\Gamma_T, \Gamma_{T'}$ are related by a sequence of state transitions.
\end{lem}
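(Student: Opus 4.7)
The plan is to reduce the change from $T$ to $T'$ to a local combinatorial change on a single disc, and then to realise that change by bypass surgeries. The hypertree $f$ records the degree $f(r)+1$ at each red vertex $r$ and the one-edge swap alters exactly two red degrees by $\pm 1$, so preservation of $f$ forces the removed edge $\{r,e\}$ and the added edge $\{r,e'\}$ to share the same red endpoint $r$. Both edges lie inside $\tilde{D}_r$, so $T$ and $T'$ coincide outside this region, and the dividing sets $\Gamma_T, \Gamma_{T'}$ agree on every disc $D_{r'}$ with $r' \neq r$; it suffices to connect $\Gamma_T \cap D_r$ to $\Gamma_{T'} \cap D_r$ by state transitions localised on $D_r$.

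Label the $n_r$ emerald slots around $\partial\tilde{D}_r$ cyclically as $s_0, \ldots, s_{n_r-1}$, and let $U \subset \{s_0, \ldots, s_{n_r-1}\}$ be the set of slots used by $T$, so $|U|=f(r)+1$. The swap replaces $U$ by $U' = (U \setminus \{s\}) \cup \{s'\}$, where $s, s'$ are the slots of the removed and added edges. On $D_r$, the dividing set $\Gamma_T \cap D_r$ realises the used slots as the $f(r)+1$ tentacles of the central $R_-$ component and the unused slots as valence-$1$ outer $R_-$ components; $\Gamma_{T'} \cap D_r$ differs only by exchanging the roles of $s$ and $s'$.

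I will walk the used status from $s$ to $s'$ through a sequence of elementary moves, each of which exchanges the used/unused status of two cyclically adjacent emerald slots $s_i, s_{i+1}$ (exactly one of which is used). Each elementary move is realised by a bypass surgery whose attaching arc runs from the outer arc at the unused slot, crosses the positive region at the violet slot between them, and ends on a tentacle of the used slot; this local configuration is one face of a bypass triangle as in Figure \ref{fig:a3}, and the surgery produces exactly the intended swap. To realise an arbitrary exchange $U \to U'$ by such elementary moves, I walk directly along any cyclic arc from $s$ to $s'$ that contains an unused intermediate slot; when both cyclic arcs are entirely blocked by used slots, I first perform an auxiliary elementary move displacing a blocking slot to an adjacent unused position, walk past, and if necessary restore. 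A short combinatorial induction shows such a sequence always terminates, with intermediate stages corresponding to ``pseudo-trees'' at $r$ that need not extend to spanning trees of $G_V$.

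By Lemma \ref{lem:existence_of_state_transitions} each elementary bypass corresponds to a unique state transition, and by Proposition \ref{prop:tight_to_tight} every intermediate configuration is automatically tight. The composition of these transitions connects $\Gamma_T$ to $\Gamma_{T'}$ in the configuration graph. The principal obstacle is the direct verification that the attaching arc described above effects the intended swap in the correct direction rather than its ``opposite'' bypass: this reduces to a local case analysis identifying the three dividing arcs near an adjacent used/unused pair with the three arcs of Figure \ref{fig:a3}.
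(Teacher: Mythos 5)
Your reduction to a single disc $D_r$ is correct and matches the paper's first step, and your ``elementary move'' is sound as far as it goes: swapping the used/unused status of two cyclically adjacent emerald slots does replace three chords of the standard star-shaped dividing set by three other chords on the same six endpoints, in a pattern that is one step of the bypass triple of figure \ref{fig:a3}. But the walking argument has a genuine gap, and you have flagged the wrong obstacle as the principal one. The real problem is your admission that the intermediate stages are ``pseudo-trees that need not extend to spanning trees of $G_V$.'' If an intermediate used-set $U_j$ does not correspond to a spanning tree (because replacing the edges of $T$ at $r$ by edges to the slots of $U_j$ creates a cycle or disconnects the graph), then the standard configuration for $U_j$ fails condition (iii) of lemma \ref{lem:potentially_tight_conditions} and is \emph{not} tight. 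By proposition \ref{prop:tight_to_tight} a state transition from a tight configuration always lands on a tight configuration, so by lemma \ref{lem:existence_of_state_transitions} the unique state transition along your attaching arc must then produce the \emph{third} chord diagram of the bypass triple rather than the intended swap, and your walk derails. (This also shows that the direction-of-surgery issue you single out is not something you get to resolve by local case analysis: the direction is forced globally by which configurations are tight.) To rescue the combinatorial approach you would need to prove that the swap $U \to U'$ can always be achieved by adjacent exchanges every one of whose intermediate stages is a genuine spanning tree of $G_V$ --- a nontrivial planar-graph statement that your ``displace, walk past, restore'' sketch and unsubstantiated ``short combinatorial induction'' do not establish.

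For comparison, the paper avoids constructing the intermediate dividing sets altogether: it proves the stronger proposition \ref{prop:transitions_on_disc} by creasing $S^2$ along the boundary of $\tilde D_{r_0}$ into a sutured cylinder $\M(\Gamma_X^*,\cdot)$ and invoking isotopy discretisation (lemma \ref{lem:chord_diagrams_connected}), which supplies a sequence of bypass surgeries between $\Gamma$ and $\Gamma'$ all of whose intermediate cylinders are tight, with no need to identify the intermediate chord diagrams. If you want to keep a hands-on argument, you should either prove the spanning-tree interpolation statement above or route your proof through that contact-geometric existence result.
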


In lemma \ref{lem:spanning_trees_by_local_moves}, as $T_{i+1}$ is obtained from $T_i$, the edge removed is adjacent to some red vertex $r$; the edge added must also be adjacent to $r$, in order to maintain degree $f(r)+1$ at $r$. Thus, the operation which yields $T_{i+1}$ from $T_i$ is localised at a single red vertex $r$. Lemma \ref{lem:spanning_trees_by_local_moves} follows from a theorem of the first author; its proof is essentially contained in the proof of theorem 10.1 of \cite{Kalman13_Tutte}, and we summarise the relevant arguments here.

Consider the planar dual $G_V^*$ of the violet graph $G_V$, as discussed in section \ref{sec:arborescence_number}. Its vertex set is $V$, and its edges are naturally directed from black to white triangles of 
the trinity. Spanning trees of $G_V$ and $G_V^*$ are in bijective correspondence via planar duality.

We choose a root vertex $v_0 \in V$ of $G_V^*$ and consider arborescences in $G_V^*$. Recall (section \ref{sec:arborescence_number}) that an arborescence is a spanning tree whose edges point away from the root; and that in a balanced directed graph such as $G_V^*$, the number of arborescences does not depend on the choice of root.

In theorem 10.1 of \cite{Kalman13_Tutte} (adapted to present notation), the first author constructed a bijection from arborescences of $G_V^*$ to hypertrees in $(E,R)$. Given an arborescence $A$ in $G_V^*$, take the dual $A^*$, which is a spanning tree of $G_V$, and consider its hypertree $f_A\colon R \To \Z_{\ge0}$ in $(E,R)$. It is shown that the map $A \mapsto f_A$ is a bijection.

For present purposes, we only need to consider the proof that $A \mapsto f_A$ is surjective. The task is to take a hypertree $f\colon R \To \Z_{\ge0}$ of $(E,R)$ and to find an arborescence $A$ of $G_V^*$ such that $f_A = f$. In order to find $A$, we start from an arbitrary spanning tree $T$ of $G_V$ with hypertree $f$; its dual $T^*$ is a spanning tree of $G_V^*$ but may not be an arborescence. A sequence of modifications is then made to $T^*$, and hence to $T$, yielding a sequence of spanning trees $T = T_0, T_1, \ldots, T_n = T'$ arriving at a tree $T'$ such that $T'^*$ is an arborescence. Each modification consists of removing a single edge of $T_i^*$ (namely, one of the closest edges to the root where $T_i^*$ fails to be an arborescence), and adding another edge; this corresponds to adding and removing an edge of $T_i$. This is done in such a way that all $T_i$ represent the same hypertree $f$. Thus $f = f_{T'^*}$ and we can take the arborescence $A = T'^*$.

We summarise what we need from the above argument in the following theorem.
\begin{thm}
\label{thm:tree_to_arborescence}
Let $T$ be a spanning tree of $G_V$ realising the hypertree $f\colon R \To \Z_{\ge0}$. Choose a root vertex $v_0 \in V$. Then there is a sequence $T = T_0, T_1, \ldots, T_n$ of spanning trees of $G_V$, such that
\begin{enumerate}
\item 
each $T_{i+1}$ is obtained from $T_i$ by adding one edge and removing one edge; 
\item
each $T_i$ realises the hypertree $f$; and
\item
$T_n^*$ is an arborescence.
\qed
\end{enumerate}
\end{thm}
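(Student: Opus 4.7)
The plan is to directly adapt (and make partly self-contained) the surjectivity portion of the bijective proof of \cite[Theorem 10.1]{Kalman13_Tutte}, interpreting everything in the dual. Dualise the initial spanning tree $T$ to a spanning tree $T^* = T_0^*$ of $G_V^*$; recall the edges of $G_V^*$ carry a canonical orientation from black to white triangles. For any vertex $v \in V$, let $p(v)$ denote the unique path in the current $T_i^*$ from $v_0$ to $v$. The tree $T_i^*$ fails to be an arborescence at $v_0$ precisely when some such path $p(v)$ contains an edge traversed against its orientation. Define the \emph{defect vertex} to be a vertex $v$ of minimal distance from $v_0$ whose path $p(v)$ ends in a badly oriented edge $e^*$; if no defect vertex exists, $T_i^*$ is already an arborescence and we are done.

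The core step is a local swap that removes the edge $e^*$ from $T_i^*$ and, equivalently, adds the dual edge $e = (e^*)^*$ to $T_i$. This creates a cycle $C$ in $T_i\cup\{e\}$, and we must delete one other edge $e'\in C$ to obtain the next spanning tree $T_{i+1}$. To preserve the hypertree $f$, the edge $e'$ has to be incident to the same red vertex $r$ as $e$, since the degree of $T_{i+1}$ at every red vertex must coincide with that of $T_i$ (and $e,e'$ affect only degrees at their endpoints; the emerald endpoints can change but must remain inside $\Bip\HH$, which is guaranteed because $\Bip\HH$ is all of $G_V$). The forward-looking claim is that such an $e'$ can be chosen so that (a) the resulting $T_{i+1}$ is a spanning tree realising $f$, and (b) dualising back, the defect set of $T_{i+1}^*$ is strictly smaller, in the sense either that the minimum distance to the closest defect vertex strictly increases, or the number of defect vertices at that minimum distance strictly decreases. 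This claim is essentially the content of \cite[proof of Theorem 10.1]{Kalman13_Tutte}, where the existence of $e'$ is extracted from the bipartite planar combinatorics at $r$: among the emerald neighbours of $r$ around $\partial\tilde D_r$, there is always a suitable alternative incidence to transfer to, and its planar placement relative to $e^*$ in $G_V^*$ guarantees the progress condition.

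Granted the swap, the rest is a finiteness argument. A suitable monovariant is the pair (minimum distance in $T_i^*$ from $v_0$ to a defect vertex, number of defect vertices at that distance), ordered lexicographically; alternatively, one may just observe that the number of correctly oriented edges on the set of paths $\{p(v):v\in V\}$ strictly increases with each swap, and is bounded above by $|V|-1$. Hence the procedure terminates in finitely many steps $T = T_0, T_1,\ldots,T_n$ with $T_n^*$ an arborescence rooted at $v_0$, each $T_{i+1}$ obtained from $T_i$ by one swap, and $f_{T_i}=f$ for all $i$. The main obstacle, and the only part where real work is hidden, is producing the replacement edge $e'$ and verifying that the swap actually makes progress towards an arborescence; this is the delicate bookkeeping in \cite[Section 10]{Kalman13_Tutte} that relies on the planar trinity structure, and for a fully self-contained write-up one would want to include a short argument using the cyclic alternating structure of violet and emerald edges around each red vertex in the trinity.
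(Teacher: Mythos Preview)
Your proposal is correct and takes essentially the same approach as the paper: both dualise to $G_V^*$, locate the closest-to-root edge where the tree fails to be an arborescence, perform a single edge swap that preserves the hypertree (by exchanging edges at the same red vertex), and iterate while appealing to \cite[Theorem 10.1]{Kalman13_Tutte} for the verification that the swap exists and makes progress. The paper in fact gives no self-contained proof here---it summarises this procedure and cites \cite{Kalman13_Tutte}, exactly as you do; your write-up merely spells out a bit more of the mechanics (the cycle argument for why an $e'$ incident to the same $r$ exists, and the monovariant for termination).
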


\begin{proof}[Proof of lemma \ref{lem:spanning_trees_by_local_moves}]
Choose a root vertex $v_0 \in V$ arbitrarily, so that we may speak of arborescences in $G_V^*$. Given the spanning trees $T,T'$, theorem \ref{thm:tree_to_arborescence} provides sequences of spanning trees $T = T_0, T_1, \ldots, T_m$ and $T' = T'_0, T'_1, \ldots, T'_n$, where each $T_{i+1}$ (resp.\ $T'_{i+1}$) is obtained from $T_i$ (resp.\ $T'_i$) by adding one edge and removing one edge, each $T_i$ and $T'_i$ has hypertree $f$, and $T_m^*, T_n^{'*}$ are arborescences. 

Now $T_m^*$ and $T_n^{'*}$ are arborescences arising from the same hypertree $f$, i.e., $f = f_{T_m^*} = f_{T_n^{'*}}$. As discussed above, \cite[thm.\ 10.1]{Kalman13_Tutte} claims that the map $A \mapsto f_A$ is bijective, so $T_m^* = T_n^{'*}$ and hence $T_m = T'_n$. We conclude that the sequence
\[
T = T_0, T_1, \ldots, T_m = T'_n, T'_{n-1}, \ldots, T'_0 = T'
\]
has the desired properties.
\end{proof}

Now we turn to the proof of lemma \ref{lem:state_transitions_between_trees}. So suppose we have spanning trees $T,T'$ of $G_V$ with the same hypertree $f$, where $T'$ is obtained from $T$ by removing an edge and adding an edge. As discussed above, the removed edge and the added edge must both be incident to the same red vertex $r_0$. Hence the tree-hugging configurations $\Gamma_T, \Gamma_{T'}$ agree on each disc $D_r$ other than $D_{r_0}$. Denote the common restriction of $\Gamma_T$ and $\Gamma_{T'}$ to the (``external'') discs other than $D_{r_0}$ by $\Gamma_X$.

We must show that there is a sequence of state transitions from $\Gamma_T$ to $\Gamma_{T'}$. In fact, we will prove the following stronger result. 

\begin{prop}
\label{prop:transitions_on_disc}
Let $\Gamma, \Gamma'$ be dividing sets on the disc $D_{r_0}$ such that $\Gamma \cup \Gamma_X$ and $\Gamma' \cup \Gamma_X$ are tight configurations. Then there is a sequence of state transitions, which only involve bypass surgeries on $D_{r_0}$, from $\Gamma \cup \Gamma_X$ to $\Gamma' \cup \Gamma_X$.
\end{prop}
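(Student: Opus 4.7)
My plan is to reduce the proposition to the uniqueness of tight contact structures on $B^3$ and then to apply the isotopy discretisation principle in a neighbourhood of $D_{r_0}$. Let $N$ be the sutured manifold (with corners) obtained from $(M_G, L_G)$ by cutting along the external discs $\{D_r\}_{r \ne r_0}$, drawing $\Gamma_X$ on both copies of each cut, and edge-rounding. Since $D_{r_0}$ still lies in the interior of $N$ and cutting $N$ further along $D_{r_0}$ recovers $M^+ \sqcup M^-$, the manifold $N$ is the union of the two balls $M^\pm$ glued along the single disc $D_{r_0}$, and hence is itself a $3$-ball. Crucially, since $D_{r_0}$ is interior to $N$, the sutures on the sphere $\partial N$ depend only on $\Gamma_X$ and $L_G$, not on $\Gamma$ or $\Gamma'$.

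The configurations $\Gamma \cup \Gamma_X$ and $\Gamma' \cup \Gamma_X$ glue to tight contact structures $\xi$ and $\xi'$ on $M_G$, whose restrictions to $N$ are tight with identical dividing set on $\partial N$. That dividing set is connected by Giroux's criterion applied to the convex sphere $\partial N$ in either tight structure. Eliashberg's classification of tight contact structures on $B^3$ with connected boundary dividing set then gives that $\xi|_N$ and $\xi'|_N$ are isotopic rel $\partial N$.

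To extract state transitions localised on $D_{r_0}$, I use the isotopy to produce in $(N, \xi')$ a convex disc $\widetilde{D}$ parallel to $D_{r_0}$ and carrying dividing set $\Gamma$. After a small perturbation so that $\partial D_{r_0}$ and $\partial \widetilde{D}$ span a thin annulus of vertical sutures on $\partial N$, the region cobounded by the two discs is a tight cylinder $\M(\Gamma', \Gamma)$ embedded in $(N, \xi')$. Lemma \ref{lem:cylinders_from_bypasses} then furnishes a finite sequence $\Gamma' = \Gamma_0, \Gamma_1, \ldots, \Gamma_m = \Gamma$, where each $\Gamma_{i+1}$ differs from $\Gamma_i$ by a single bypass surgery, each $\M(\Gamma', \Gamma_i)$ is tight, and each $\Gamma_i$ is realised by a parallel convex disc $D_i$. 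Cutting $(N, \xi')$ along $D_i$ yields two tight balls, so $\Gamma_i \cup \Gamma_X$ is a tight configuration on $(M_G, L_G, S)$, and consecutive $\Gamma_i$'s are joined by a state transition on $D_{r_0}$ in the sense of Definition \ref{def:transition}. Reversing the sequence provides the desired chain of state transitions from $\Gamma \cup \Gamma_X$ to $\Gamma' \cup \Gamma_X$.

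The main obstacle is the third paragraph: packaging the abstract contact isotopy $\xi|_N \cong \xi'|_N$ into a concrete embedded $\M$-cylinder between two parallel copies of $D_{r_0}$ realising the dividing sets $\Gamma'$ and $\Gamma$, and then confirming that each intermediate parallel disc produced by Lemma \ref{lem:cylinders_from_bypasses} is a convex cutting disc whose associated configuration on $(M_G, L_G, S)$ is tight. The boundary perturbation onto the vertical suture annulus, the matching of side sutures with the standard $\M$-cylinder conventions, and the inheritance of tightness from $(N, \xi')$ to each sub-cylinder are each routine but must be assembled carefully.
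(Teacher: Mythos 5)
Your reduction to a tight ball $N$ and your intent to use Lemma~\ref{lem:cylinders_from_bypasses} point in the right direction, but the third paragraph contains the real gap, and it is not a routine verification. Eliashberg plus Gray stability give an ambient isotopy $\phi_t$ of $N$ (relative to $\partial N$) with $\phi_1^*\xi' = \xi$, so $\widetilde D := \phi_1(D_{r_0})$ is a convex disc in $(N,\xi')$ carrying dividing set $\Gamma$; but $\widetilde D$ is only \emph{smoothly isotopic} to $D_{r_0}$ rel boundary. It need not be disjoint from $D_{r_0}$, let alone $\partial$-parallel to it, so there is no embedded $\M$-cylinder cobounded by the two discs to which Lemma~\ref{lem:cylinders_from_bypasses} could be applied. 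Straightening $\widetilde D$ to a parallel disc while keeping it convex with dividing set $\Gamma$ is exactly the content of the isotopy discretisation principle of Colin, which is also the engine behind Lemma~\ref{lem:cylinders_from_bypasses}; asserting it as a preprocessing step before invoking that lemma is circular. There is also a secondary issue: even granting a parallel disc, the cylinder you obtain is $\M(\Gamma',\Gamma)$ or $\M(\Gamma,\Gamma')$ depending on which side of $D_{r_0}$ the disc lands, and there is no combinatorial guarantee that the one you named is tight (for chord diagrams with two chords, exactly one of the two mixed cylinders is tight), so the tightness you invoke rests entirely on the unestablished embedding.

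The paper avoids all of this by choosing its cylinders differently. Rather than trying to fit a cylinder between two copies of $D_{r_0}$ inside $N$, it creases the sphere $S^2$ carrying the configuration drawn in the complementary regions of $G$ along two parallel copies of $\partial D_{r_0}$, producing $\M(\Gamma_X^*,\Gamma)$ and $\M(\Gamma_X^*,\Gamma')$ with a \emph{common} fixed bottom $\Gamma_X^*$ encoding all the external data. Tightness of these cylinders is, by definition, the same combinatorial condition (connected rounded dividing set) as tightness of the respective configurations, so the hypotheses of Lemma~\ref{lem:chord_diagrams_connected} hold automatically, and the intermediate chord diagrams $\Gamma_i$ the lemma produces translate directly into tight configurations $\Gamma_i \cup \Gamma_X$ and state transitions on $D_{r_0}$. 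No contact isotopy, parallel disc, or separate tightness argument is required. If you want to stay inside $N$, a clean fix would be to apply isotopy discretisation directly to the smooth family $\phi_t(D_{r_0})$, obtaining a finite sequence of convex discs in $(N,\xi')$ related by bypasses with dividing sets interpolating from $\Gamma'$ to $\Gamma$; cutting the tight $N$ along each such disc gives two tight balls with connected dividing sets, hence a tight configuration, without ever constructing an $\M$-cylinder.
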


The key idea is that this statement is essentially identical to lemma \ref{lem:chord_diagrams_connected} about tight contact structures on cylinders. But first, we show how configurations such as $\Gamma \cup \Gamma_X$ correspond to contact structures on cylinders.

First we recall lemma \ref{lem:potentially_tight_conditions}, that is that a dividing set $\Gamma_S$ is tight if and only if, drawing each $\Gamma_r$ on the complementary region $\tilde{D}_r$ of $G$ and connecting them across the edges of $G$, we obtain a connected curve on the compactified plane $S^2$.

In such a diagram of the configurations $\Gamma \cup \Gamma_X$ and $\Gamma' \cup \Gamma_X$, the dividing sets differ on the disc $\tilde{D}_{r_0}$ but are identical on the other complementary regions of $G$. We can thus regard $D_{r_0}$ as the interior of a circle $C$ on $S^2$, and the other complementary regions as forming the exterior of this circle. Then $\Gamma, \Gamma'$ are dividing sets on the interior of $C$, and $\Gamma_X$ is a dividing set on the exterior of $C$.

Creasing the sphere $S^2$ along two parallel copies of $C$, we can obtain a 
sutured manifold with corners
of the form $\M(\cdot, \cdot)$, as discussed in section \ref{sec:dividing_sets_cylinders}. We can do this in such a way that the dividing sets on the interior and exterior of the circle $C$ become the dividing sets on the top and bottom of the cylinder respectively.

Thus, we can crease the sphere so that the configuration $\Gamma \cup \Gamma_X$ yields the cylinder $\M(\Gamma_X^*, \Gamma)$, and the configuration $\Gamma' \cup \Gamma_X$ yields the cylinder $\M(\Gamma_X^*, \Gamma')$. Here $\Gamma_X^*$ is the common dividing set on the bottom of the cylinders, obtained from the dividing set $\Gamma_X$ on the exterior of $C$ after creasing the sphere into a cylinder.

\begin{proof}[Proof of proposition \ref{prop:transitions_on_disc}]
As $\Gamma \cup \Gamma_X$ is a tight configuration, drawing $\Gamma \cup \Gamma_X$ on $S^2$ in the complementary regions of $G$ yields a single connected curve; and then creasing into a cylinder as discussed yields a tight cylinder $\M(\Gamma_X^*, \Gamma)$. Similarly, $\M(\Gamma_X^*, \Gamma')$ is tight.

Applying lemma \ref{lem:chord_diagrams_connected} then yields a sequence of chord diagrams $\Gamma = \Gamma_1, \ldots, \Gamma_m = \Gamma'$, where each $\Gamma_{i+1}$ is obtained from $\Gamma_i$ by a bypass surgery, and each $\M(\Gamma_X^*, \Gamma_i)$ is tight.

Rounding the cylinder back into the original compactified plane $S^2$, each $\Gamma_i \cup \Gamma_X$ yields a connected dividing set on the plane, consisting of $\Gamma_i$ drawn in $D_{r_0}$ and the same $\Gamma_X$ drawn on the other complementary regions. Thus each $\Gamma_i \cup \Gamma_X$ is a tight configuration.

Since each $\Gamma_{i+1} \cup \Gamma_X$ is obtained from $\Gamma_i \cup \Gamma_X$ by a bypass surgery on $D_{r_0}$, and is tight, these bypass surgeries correspond to transitions where an actual bypass is removed from one of the two balls of $M'$ along an inner attaching arc on $D_{r_0}$, and a bypass is added to the other ball of $M'$ along the same (now outer) attaching arc. (See the discussion at the end of section \ref{sec:edge_rounding_bypasses}.) This provides the desired sequence of state transitions.
\end{proof}

Lemma \ref{lem:state_transitions_between_trees} and proposition \ref{prop:same_degrees_same_contact_structure} now follow straightforwardly.

\begin{proof}[Proof of lemma \ref{lem:state_transitions_between_trees}]
The tree-hugging dividing sets $\Gamma_T, \Gamma_{T'}$ have common restriction $\Gamma_X$ outside $D_{r_0}$; let their restrictions to $D_{r_0}$ be $\Gamma_{0}, \Gamma'_{0}$ respectively. Applying proposition 
\ref{prop:transitions_on_disc} to $\Gamma_0, \Gamma'_0$ and $\Gamma_X$ yields the desired sequence of state transitions from $\Gamma_0 \cup \Gamma_X = \Gamma_T$ to $\Gamma'_0 \cup \Gamma_X = \Gamma_{T'}$.
\end{proof}

\begin{proof}[Proof of proposition \ref{prop:same_degrees_same_contact_structure}]
Given spanning trees $T,T'$ of $G_V$ with the same hypertree $f$, lemma \ref{lem:spanning_trees_by_local_moves} provides a sequence of spanning trees $T=T_0, \ldots, T_n = T'$ of $G_V$, all with hypertree $f$, where each $T_{i+1}$ is obtained from $T_i$ by removing one edge and adding another. Lemma \ref{lem:state_transitions_between_trees} then provides a sequence of state transitions from each $\Gamma_{T_i}$ to $\Gamma_{T_{i+1}}$, which together yield the desired sequence of state transitions from $\Gamma_T$ to $\Gamma_{T'}$.
\end{proof}

\subsection{Concluding the proof}

We now put the pieces together and complete the proof of theorem \ref{thm:classification_of_tight_contact_structures}.

\begin{proof}[Proof of theorem \ref{thm:classification_of_tight_contact_structures}]
The gluing theorem \ref{thm:Honda_gluing}, applied to the decomposition of $(M_G, L_G)$ along $S = \sqcup_{r \in R} D_r$ into $M' = M^+ \cup M^-$ as discussed in section \ref{sec:applying_gluing_thm}, says that isotopy classes of tight contact structures on $(M_G, L_G)$ are in bijection with $\pi_0 (\GG_0 (M_G, L_G, S))$, the tight connected components of the configuration graph. By proposition \ref{prop:potentially_tight_tight}, potentially tight configurations are tight. By lemma \ref{lem:potentially_tight_conditions} then tight configurations are dividing sets which, when drawn in the complementary regions of $G$ and connected across the edges of $G$, yield a connected curve.

Each spanning tree $T$ of $G_V$ yields a tree-hugging configuration $\Gamma_T$ (section \ref{sec:hypertree_configurations}), which is tight (proposition \ref{prop:tree-hugging_tight}).  Conversely, every tight contact structure is tree-hugging (proposition \ref{prop:tight_implies_tree-hugging}). In other words, all tight configurations are connected via state transitions to tree-hugging configurations.

Each spanning tree of $G_V$ has a corresponding hypertree $f\colon R \To \Z_{\ge0}$ in $(E,R)$. Spanning trees with distinct hypertrees yield configurations which have distinct Euler class, hence are not related by state transitions (proposition \ref{prop:distinct_hypertrees_distinct_contact_structures}), but spanning trees with the same hypertree yield configurations which are related by state transitions (proposition \ref{prop:same_degrees_same_contact_structure}).

Thus, the connected components of $\GG_0 (M_G, L_G, S)$ are in bijective correspondence with hypertrees in $(E,R)$, and by \cite{Kalman13_Tutte}, discussed in section \ref{sec:hypergraphs_hypertrees}, the number of hypertrees is the same in any of the six hypergraphs induced from the trinity of $G$.
\end{proof}

Note that the proof is explicit: given a hypertree $f$, we take a corresponding spanning tree $T$, and the tree-hugging configuration $\Gamma_T$ gives the corresponding contact structure. Let us denote the isotopy class of contact structures corresponding to $f$ by $\xi_f$.

\section{Properties of the contact structures}
\label{sec:properties_of_contact_structures}

\subsection{Inclusion into the 3-sphere}

All along, we have implicitly considered $(M_G, L_G)$ as a submanifold of $S^3$. We now consider the inclusion $(M_G, L_G) \hookrightarrow S^3$ explicitly.

\begin{prop}
\label{prop:inclusion_into_S3}
For each hypertree $f\colon R\To\Z_{\ge0}$, the contact structure $\xi_f$ extends from $(M_G, L_G)$ to a tight contact structure on $S^3$.
\end{prop}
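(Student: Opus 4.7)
The plan is to exhibit $\xi_f$ as the restriction to $M_G$ of the standard tight contact structure on $S^3$, exploiting the fact that tree-hugging configurations were built from a simple global picture on $S^2$.

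Choose a spanning tree $T$ of $G_V$ realising the hypertree $f$, and let $\Gamma_T$ be the corresponding tree-hugging configuration (so $\Gamma_T$ represents $\xi_f$). By the construction in section \ref{sec:hypertree_configurations}, $\Gamma_T$ is defined as the boundary in $S^2$ of a tubular neighbourhood $U$ of $T$. Since $T$ is a spanning tree of the connected graph $G_V$ embedded in $S^2$, its neighbourhood $U$ is a disc and its complement $S^2\setminus U$ is also a disc. Thus $\Gamma_T$ is a \emph{single} closed curve on $S^2$ dividing the sphere into two discs, so it satisfies Giroux's criterion.

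First I will construct a tight contact structure $\hat\xi$ on $S^3$ in which $S^2$ is convex with dividing set $\Gamma_T$. Any connected curve dividing $S^2$ into two discs may be realised as the dividing set of a convex 2-sphere bounding two tight balls, by Eliashberg's theorem on tight contact structures on the 3-ball with connected boundary dividing set (cf.\ section \ref{sec:sutured_manifolds}). Gluing the two balls back along $S^2$ yields a tight contact structure $\hat\xi$ on $S^3$; by Eliashberg's uniqueness \cite{ElMartinet}, $\hat\xi$ is the standard tight contact structure.

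Second I will show $\hat\xi|_{M_G}\simeq\xi_f$. Each cutting disc $D_r$ sits inside $S^2$ (as a slightly shrunken copy of $\tilde D_r$) and therefore inherits from the convex $S^2$ a convex sub-surface structure with dividing set $\Gamma_T\cap D_r$, which coincides with $\Gamma_r$ by construction. Using Giroux flexibility I arrange $\partial M_G$ to be convex with dividing set $L_G$, compatibly with the chosen configuration. Cutting $(M_G,L_G)$ along $\bigsqcup_r D_r$ in $\hat\xi$ then produces precisely the configuration $(\Gamma_T,\hat\xi|_{M'})$ representing $\hat\xi|_{M_G}$. Since this is the same configuration that defines $\xi_f$, and a tight configuration determines a unique isotopy class of tight contact structures on $(M_G,L_G)$ (by theorem \ref{thm:Honda_gluing} together with proposition \ref{prop:potentially_tight_tight}), we conclude $\hat\xi|_{M_G}\simeq\xi_f$.

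The main obstacle is the compatibility step in the previous paragraph: $\partial M_G$ is not literally $S^2$ but is obtained from it by replacing a neighbourhood of $G\subset S^2$ with $\partial N(F_G)$, which incorporates the negative half-twists in the bands of $F_G$. Showing that Giroux flexibility can be applied simultaneously to $S^2$ (with dividing set $\Gamma_T$), to $\partial M_G$ (with dividing set $L_G$), and to the cutting discs $D_r$ (with dividing sets $\Gamma_r$) demands a careful local analysis of characteristic foliations near each edge of $G$, reconciling the half-twists of the bands of $F_G$ with the extension of $\Gamma_T$ across $N(G)\cap S^2$ prescribed by $\hat\xi$.
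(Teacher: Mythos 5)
Your proposal takes essentially the same route as the paper's proof: build the standard tight $\xi_{st}$ on $S^3$ so that $S^2$ is convex with dividing set $\tilde\Gamma$ (the single closed curve bounding the tree neighbourhood $U$), then restrict to $M_G$ and recognise the restriction as $\xi_f$. But the ``main obstacle'' you flag and leave open at the end is exactly where the content of the argument lives, and you do not resolve it. In particular, invoking ``Giroux flexibility'' to ``arrange $\partial M_G$ to be convex with dividing set $L_G$'' is not a legitimate step: flexibility lets you realise a characteristic foliation compatible with an \emph{already-determined} dividing set on a convex surface; it does not let you prescribe the dividing set of $\partial N(G)$ at will. Once $\xi_{st}$ is fixed and $G$ is placed in $S^2$, the dividing set on a convex $\partial N(G)$ is determined by the contact geometry near $G$ and has to be computed, not decreed.

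The paper closes this gap in two precise steps that your sketch omits. First, the embedded \emph{graph} $G$ is Legendrian-realised inside the convex $S^2$. This requires the graph version of the Legendrian realisation principle (section \ref{sec:convex_surfaces}, after \cite{EH05_Cabling}), and the nonisolating hypothesis must be checked: it holds because the components of $S^2 \setminus (\tilde\Gamma \cup G)$ are exactly the components of each $\tilde D_r \setminus \Gamma_T$, all of which touch $\tilde\Gamma$. Second, one takes a regular neighbourhood $N_G$ of the Legendrian $G$ with $\partial N_G \cap S^2$ Legendrian and $\partial N_G$ convex --- the framing condition required for the convex perturbation holds because every face of $G$ has at least two sides --- and then observes that the contact planes make a negative half-turn along each Legendrian edge of $G$, so the dividing set on each tube $H_\varepsilon$ consists of the diagonally twisting arcs of $L_G$. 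This is precisely the ``careful local analysis'' your last paragraph calls for; until you supply it (Legendrian realisation of $G$ together with the twisting computation along edges), the proof is incomplete.
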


\begin{proof}
Take a spanning tree $T$ of $G_V$ representing $f$, and a $T$-hugging configuration $\Gamma_T = \sqcup_{r \in R} \Gamma_r$. Using the homeomorphism $\tilde{D}_r \cong D_r$ of section \ref{sec:graph_to_sutured_manifold}, draw each $\Gamma_r$ on $\tilde{D}_r$. As $\Gamma_r$ is tree-hugging, connecting up the dividing sets on each $\tilde{D}_r$ across the edges of $G$ yields the usual single connected dividing curve $\tilde{\Gamma}$ on $S^2$.

Take a contact structure $\xi_{st}$ on $S^3$ so that $S^2$ is convex with connected dividing set $\tilde{\Gamma}$, and the two balls $B^\pm$ on either side have their unique (up to isotopy) tight contact structure. Thus (see section \ref{sec:sutured_manifolds}) $\xi_{st}$ is the unique (up to isotopy) tight contact structure on $S^3$.

The graph $G \subset S^2$ is transverse to $\tilde{\Gamma}$ and is nonisolating: the components of $S^2 \setminus (\tilde{\Gamma} \cup G)$ are precisely the components into which each $\tilde D_r$ is cut by the dividing set $\Gamma_T$, and these all intersect $\tilde{\Gamma}$. Hence the Legendrian realisation principle (for graphs, as discussed in section \ref{sec:convex_surfaces}) 
applies, and after a small isotopy of $S^2$ we may assume that $G$ is Legendrian embedded in $S^2$. 

Consider a regular neighbourhood $N_G$ of $G$ in $S^3$. We can take $N_G$ so that $\partial N_G$ is convex, $N_G \cap S^2$ is a regular neighbourhood of $G$ in $S^2$, and $\partial N_G \cap S^2$ is a set of Legendrian curves bounding this neighbourhood. For instance, take a regular neighbourhood $N_G$, 
use the Legendrian realisation principle along $S^2$ to
perturb it so $\partial N_G \cap S^2$ is Legendrian, then perturb $\partial N_G$ holding $\partial N_G \cap S^2$ fixed to be convex.
For this last step, note that since every face of $G$ in $S^2$ has at least two sides, and every component of $\partial N_G \cap S^2$ gets the same framing from $S^2$ as from $N_G$, the framing condition of section \ref{sec:convex_surfaces} is satisfied.

Since the contact planes make a negative half-turn along each edge of $G$, the dividing set on $\partial N_G$ is isotopic to $L_G$.

Now consider removing $N_G$ from $S^3$. This yields $M_G$, and the tight contact structure $\xi_{st}$ on $S^3$ restricts to a contact structure $\xi_{st}|_{M_G}$ on $(M_G, L_G)$. As the restriction of a tight contact structure, $\xi_{st}|_{M_G}$ is also tight.
Moreover, the dividing set obtained on each $D_r$ is isotopic to the original $\Gamma_r$. So $\xi_{st}|_{M_G}$ is given by the tree-hugging configuration $\Gamma_T$, hence isotopic to $\xi_f$.
Thus $\xi_f$ is a restriction of the tight contact structure $\xi_{st}$ on $S^3$, which is another way of saying that $\xi_f$ extends to $\xi_{st}$.
\end{proof}

\subsection{Euler classes and spin-c structures} 
\label{sec:euler_classes_of_contact_structures}

We now consider the Euler classes $e(\xi_f) \in H^2(M_G, \partial M_G)$ of our tight contact structures. With the conventions discussed in section \ref{sec:hypertrees_Euler_class}, the following is immediate.

\begin{prop}
\label{prop:Euler_class_xif}
For each $f\in B_{(E,R)}$, the Euler class $e(\xi_f)$ is given on the generators $[D_r]$ by
\[
e(\xi_f)[D_r] = 2f(r) - n_r + 1. 
\]
\end{prop}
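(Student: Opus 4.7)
The statement is essentially a repackaging of Lemma \ref{lem:Euler_class_from_hypertree} at the level of contact structures rather than configurations, so the plan is to simply invoke that lemma after unpacking the construction of $\xi_f$.

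Specifically, given $f \in B_{(E,R)}$, I would first choose any spanning tree $T$ of $G_V$ realising $f$ (such a tree exists by definition of the hypertree), and appeal to the construction in the proof of Theorem \ref{thm:classification_of_tight_contact_structures}: the isotopy class $\xi_f$ is precisely the one glued from the $T$-hugging configuration $\Gamma_T$. This choice is unambiguous because different spanning trees realising the same hypertree give configurations related by state transitions (Proposition \ref{prop:same_degrees_same_contact_structure}), and state transitions preserve the Euler class of the resulting contact structure.

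Next, I would evaluate $e(\xi_f)$ on the generators $[D_r]$ of $H_2(M_G, \partial M_G)$, using the boundary section of $\xi_f|_{\partial M_G}$ that is tangent to the oriented Legendrian curves $\partial D_r$ (as set up in section \ref{sec:hypertrees_Euler_class}; each $\partial D_r$ is non-separating on $\partial M_G$, so such a section extends). Since each $D_r$ is convex with dividing set $\Gamma_r$ inherited from the configuration $\Gamma_T$, the relative Euler class evaluation equals the configuration's Euler class $\chi(R_+) - \chi(R_-)$ on $D_r$. Lemma \ref{lem:Euler_class_from_hypertree} computes this quantity to be $2f(r) - n_r + 1$, which is the desired formula.

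There is no real obstacle: the substantive combinatorial content lives in Lemma \ref{lem:Euler_class_from_hypertree}, and the only thing to check is the bookkeeping that the Euler class of the glued contact structure $\xi_f$ on $(M_G, L_G)$ matches the Euler class of the configuration $\Gamma_T$ on the cut-up pieces. This matching is immediate from the convention that $e(\xi_f)[D_r]$ is computed from the dividing set induced on a convex representative of $D_r$, and $\Gamma_T \cap D_r = \Gamma_r$ by construction.
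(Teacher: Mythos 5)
Your proposal is correct and follows essentially the same route as the paper: pick a spanning tree $T$ realising $f$, observe that $\xi_f$ is represented by the $T$-hugging configuration $\Gamma_T$ (with independence of the choice of $T$ coming from Proposition \ref{prop:same_degrees_same_contact_structure} and invariance of the Euler class under state transitions), and then apply Lemma \ref{lem:Euler_class_from_hypertree}. The paper's proof is just a terser version of the same argument.
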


\begin{proof}
Indeed, by proposition \ref{prop:same_degrees_same_contact_structure} and the definition of $\xi_f$, each spanning tree $T$ representing $f$ is such that $\xi_f$ is isotopic to (or rather, the class $\xi_f$ contains) a contact structure with a $T$-hugging dividing set. Then we may apply lemma \ref{lem:Euler_class_from_hypertree}.
\end{proof}

Now $H^2 (M_G, \partial M_G) \cong H_1(M_G) \cong \Z^{|R|-1}$, as $M_G$ is a handlebody of genus $|R|-1$. So we expect a relation between the $|R|$ evaluations $e(\xi_f)[D_r]$. In a similar vein, the $|R|$ values $f(r)$ of the hypertree $f\colon R \To \Z_{\ge0}$ also obey a relation. These are given in the next proposition.

\begin{prop}
\label{prop:affine_planes}
For any hypertree $f$ of $(E,R)$,
\begin{align}
\label{eqn:sum_of_hypertree}
\sum_{r \in R} f(r) &= |E| - 1 \\
\label{eqn:sum_of_Euler_class}
\sum_{r \in R} e(\xi_f)[D_r] &= |E| - |V|.
\end{align}
\end{prop}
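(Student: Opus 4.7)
The plan is to derive both identities from the combinatorics of hypertrees and the formula for $e(\xi_f)[D_r]$ provided by Proposition \ref{prop:Euler_class_xif}.

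For (\ref{eqn:sum_of_hypertree}), I would take a spanning tree $T$ of $G_V$ realising the hypertree $f$. Recall that $G_V$ is bipartite with vertex classes $E$ and $R$, so it has $|E|+|R|$ vertices and any spanning tree has exactly $|E|+|R|-1$ edges. Since $G_V$ is bipartite, each edge of $T$ has exactly one endpoint in $R$, so the sum of the degrees of vertices in $R$ equals the number of edges of $T$. By the definition of the hypertree realised by $T$, the degree at $r$ is $f(r)+1$, giving
\[
\sum_{r\in R}(f(r)+1) = |E|+|R|-1,
\]
from which (\ref{eqn:sum_of_hypertree}) follows after subtracting $|R|$.

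For (\ref{eqn:sum_of_Euler_class}), the strategy is to combine the first identity with Proposition \ref{prop:Euler_class_xif}. Summing $e(\xi_f)[D_r] = 2f(r)-n_r+1$ over $r \in R$, we get
\[
\sum_{r\in R} e(\xi_f)[D_r] = 2\sum_{r\in R}f(r) - \sum_{r\in R} n_r + |R| = 2(|E|-1) - \sum_{r\in R} n_r + |R|.
\]
So the only remaining task is to evaluate $\sum_{r\in R} n_r$. Since $2n_r$ equals the number of edges of $G$ around the boundary of the region $r$, and each edge of $G$ lies on the boundary of two regions (with an edge bordering the same region on both sides being counted twice), one obtains $\sum_{r\in R} 2n_r = 2|E(G)|$, i.e., $\sum_{r\in R} n_r = |E(G)|$. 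Applying Euler's formula to the connected plane graph $G$, whose vertex set has size $|V|+|E|$ and whose face set has size $|R|$, gives $|E(G)| = |V|+|E|+|R|-2$. Substituting back and simplifying produces $|E|-|V|$, as desired.

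The calculations involved are completely routine once the bipartite degree-counting argument and Euler's formula are in place; I do not anticipate any substantive obstacle. The only point that requires a moment's care is the book-keeping for $\sum_r n_r$ when $G$ has an edge whose two sides lie in the same region, but the standard convention of counting incidences (rather than edges) handles this automatically.
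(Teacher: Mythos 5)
Your proof is correct and follows essentially the same route as the paper: the first identity via degree-counting on a spanning tree of the bipartite graph $\Bip(E,R)$, and the second by summing $e(\xi_f)[D_r]=2f(r)-n_r+1$ and reducing to Euler's formula. The only (immaterial) difference is that you evaluate $\sum_{r}n_r$ by counting edge--face incidences in $G$ directly, whereas the paper counts white triangles of the trinity around each red vertex; both give $\sum_r n_r = |V|+|E|+|R|-2$.
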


\begin{proof}
For equation (\ref{eqn:sum_of_hypertree}), we recall the following short proof from \cite{Kalman13_Tutte}. A spanning tree $T$ of $G_V = \Bip(E,R)$ representing $f$ has degree $f(r)+1$ at each $r \in R$. Each edge is incident with precisely one red vertex, so summing $f(r)+1$ over all $r \in R$ gives the total number of edges in $T$. It has $|E|+|R|$ vertices and hence $|E|+|R|-1$ edges. We obtain
\(
\sum_{r \in R} (f(r) + 1) = |E| + |R| - 1,
\)
which yields (\ref{eqn:sum_of_hypertree}). 

To prove equation (\ref{eqn:sum_of_Euler_class}), we sum $e(\xi_f)[D_r] = 2f(r) - n_r + 1$ over all $r \in R$. Consider the triangulation that is the trinity of $G$. At each red vertex there are $2n_r$ triangles, $n_r$ white and $n_r$ black. Thus $\sum_{r \in R} n_r = n$, where $n$ is the number of white or black triangles. As discussed in section \ref{sec:arborescence_number}, $|V| + |E| + |R| - n - 2 = 0$. 
Hence our sum is $2(|E|-1)-n+|R|=|E|-|V|$.
\end{proof}

Let $(x_r)_{r \in R}$ denote coordinates on $\Z^R$. Define the rank $(|R|-1)$ affine hyperplane $\mathfrak{H}$ in $\Z^R$ by
\[
\sum_{r \in R} x_r = |E|-1.
\]
Each hypertree $f\colon R \To \Z_{\ge0}$ in $(E,R)$ can be regarded as a point of $\Z^R$, and equation (\ref{eqn:sum_of_hypertree}) says that in fact $f$ lies on $\mathfrak{H}$. (Similarly, there is an embedding $H^2(M_G, \partial M_G) \hookrightarrow \Z^R$ given by $h \mapsto (h[D_r])_{r \in R}$ and under this embedding $H^2(M_G, \partial M_G)$ lies on the affine hyperplane with equation $\sum_{r \in R} x_r = |E|-|V|$.)

The proof of theorem \ref{cor:hypertree_Euler_bijection} is now straightforward.

\begin{proof}[Proof of theorem \ref{cor:hypertree_Euler_bijection}]
Propositions \ref{prop:Euler_class_xif} and \ref{prop:affine_planes}
imply that the bijection which sends a hypertree $f$ to the Euler class $e(\xi_f)$ extends to an affine isomorphism $\mathfrak{H} \To H^2 (M_G, \partial M_G)$.
\end{proof}

As discussed in section \ref{sec:hypergraphs_hypertrees}, the points $B_{(E,R)}$of $\Z^R$ corresponding to hypertrees of $(E,R)$ generate a lattice polytope $\QQ_{(E,R)}$. We have just shown that in fact $B_{(E,R)}\subset\QQ_{(E,R)} \subset \mathfrak{H}$.

Turning to spin-c structures, as discussed in section \ref{sec:spin-c_structures}, $\Spin^c(M_G, L_G)$ is an affine space over $H^2 (M_G, \partial M_G) \cong H_1 (M_G) \cong \Z^{|R|-1}$.
We can now elaborate on our explanation in section \ref{sec:SFH_and_hypergraphs} of the main result of the first author, with Juh\'{a}sz and Rasmussen, in \cite{Juhasz-Kalman-Rasmussen12}. That result says that
\[
\Supp(M_G, L_G) \cong B_{(E,R)},
\]
where $\Supp(M_G, L_G) \subset \Spin^c(M_G, L_G)$ is the support of $SFH(M_G, L_G)$.
The equivalence here refers to an identification of the rank $(|R|-1)$ affine space $\Spin^c(M_G, L_G)$ with the affine hyperplane $\mathfrak{H} \subset \Z^R$.

Thus we see that both (Euler classes of) tight contact structures and the support of $SFH$ are combinatorially equivalent to hypertrees. In the next section we make this equivalence more geometric.

\subsection{Contact invariants}
\label{sec:contact_invariants}

We now consider the contact invariants $c(\xi_f)$ of each of the tight contact structures $\xi_f$ on $(M_G, L_G)$.
We combine the TQFT property of $SFH$ (reviewed in section \ref{sec:SFHbackground}) with proposition \ref{prop:inclusion_into_S3}.
Letting $\xi_G$ be a contact structure on a neighbourhood of $G$ such that $\xi_{st} = \xi_f \cup \xi_G$, we obtain a homomorphism of graded abelian groups
\[
\Phi_{\xi_G} \colon SFH(-M_G, -L_G) \To \widehat{HF}(-S^3).
\]
Since $\widehat{HF}(-S^3) \cong \Z$ and $c(\xi_{st}) = \pm 1$, we have $\Phi_{\xi_G} c(\xi_f) = \pm 1$. Hence $c(\xi_f) = \{ \pm x\}$ for some nonzero primitive element $x$ of $SFH(-M_G, -L_G)$.

Now recall from \cite{FJR11}, as discussed in section \ref{sec:sutured_L-spaces}, that $(M_G, L_G)$ is a sutured $L$-space, so for each $\s \in \Supp(M_G, L_G)$, we have $SFH(M_G, L_G, \s) \cong \Z$. Similarly, for each $\s \in \Supp(-M_G, -L_G)$ we have $SFH(-M_G, -L_G, \s) \cong \Z$. If $\xi_f$ has spin-c structure $\s$, then by proposition \ref{prop:contact_invariant_spin-c_summand}, $c(\xi_f) \subset SFH(-M_G, -L_G, \s)$. 
As $c(\xi_f)\ne\{0\}$, we then have $\s\in\Supp(-M_G, -L_G)$. Since
the only nonzero primitive elements in $\Z$ are $\pm 1$, we immediately obtain the following.

\begin{lem}
\label{lem:contact_invariants_are_ones}
If $\xi$ is a tight contact structure on $(M_G, L_G)$ with spin-c structure $\s$, then $c(\xi) = \{ \pm 1 \} \subset \Z \cong SFH(-M_G, -L_G, \s)$.
\qed
\end{lem}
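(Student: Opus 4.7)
My plan is to simply assemble the ingredients already developed in the paragraph immediately preceding the lemma into a clean argument. The key inputs are: (i) Proposition \ref{prop:inclusion_into_S3}, which says every tight $\xi$ on $(M_G, L_G)$ extends to the unique tight contact structure $\xi_{st}$ on $S^3$; (ii) the TQFT-like property of $SFH$ from \cite{HKM08}, which converts an inclusion of balanced sutured manifolds equipped with a contact structure on the complementary region into a natural map on $SFH$ that carries contact invariants to contact invariants; (iii) Proposition \ref{prop:contact_invariant_spin-c_summand}, which locates $c(\xi)$ inside the spin-c summand $SFH(-M_G, -L_G, \mathfrak{s}_\xi)$; and (iv) the sutured $L$-space property from \cite{FJR11} (recalled in section \ref{sec:sutured_L-spaces}), which ensures each such spin-c summand is either zero or isomorphic to $\Z$.

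Concretely, I would proceed as follows. Take a contact structure $\xi_G$ on a neighbourhood of $G$ (that is, on the closure of $S^3\setminus M_G$) so that $\xi_{st} = \xi \cup \xi_G$; Proposition \ref{prop:inclusion_into_S3} guarantees such a $\xi_G$ exists. The TQFT property then yields a map
\[
\Phi_{\xi_G}\colon SFH(-M_G, -L_G) \To \widehat{HF}(-S^3)
\]
sending $c(\xi)$ to $c(\xi_{st})$. Since $\xi_{st}$ is the standard tight structure on $S^3$ we have $c(\xi_{st}) = \pm 1 \in \widehat{HF}(-S^3) \cong \Z$. Hence $c(\xi) = \{\pm x\}$ for an element $x$ that maps to a generator of $\Z$, so $x$ is in particular nonzero and primitive.

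Next, Proposition \ref{prop:contact_invariant_spin-c_summand} places $c(\xi)$ inside $SFH(-M_G, -L_G, \mathfrak{s})$ where $\mathfrak{s} = \mathfrak{s}_\xi$. Because $(M_G, L_G)$ is a sutured $L$-space, this summand is either $0$ or $\Z$. Nontriviality of $c(\xi)$ (already established, since it maps to $\pm 1$) rules out the $0$ case, so $SFH(-M_G, -L_G, \mathfrak{s}) \cong \Z$ and $x$ is a primitive element of $\Z$, hence $x = \pm 1$. This forces $c(\xi) = \{\pm 1\}$ as claimed.

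There is essentially no obstacle here; the proof is a short synthesis. The only thing to be careful about is confirming that the contact structure $\xi_G$ on the complement of $M_G$ in $S^3$ is well-defined as data for the TQFT map (i.e.\ that the germs of $\xi$ on $\partial M_G$ and of $\xi_{st}$ near $G$ agree appropriately so that $\xi \cup \xi_G$ is a genuine contact structure on $S^3$), but this is exactly what the proof of Proposition \ref{prop:inclusion_into_S3} constructs. Everything else is a direct invocation of the cited results.
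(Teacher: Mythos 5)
Your proof is correct and follows essentially the same line as the paper: extend $\xi$ to $\xi_{st}$ on $S^3$ via Proposition \ref{prop:inclusion_into_S3}, apply the TQFT map $\Phi_{\xi_G}$ to deduce $c(\xi)$ is primitive, then locate $c(\xi)$ in a spin-c summand isomorphic to $\Z$ using Proposition \ref{prop:contact_invariant_spin-c_summand} and the sutured $L$-space property. The paper's argument is precisely this synthesis, presented in the paragraph preceding the lemma statement.
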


We have seen that if $f,f'\in B_{(E,R)}$ are distinct hypertrees, then $\xi_f, \xi_{f'}$ have distinct Euler classes, by proposition \ref{prop:Euler_class_xif}. We now consider their spin-c structures.

\begin{lem}
\label{lem:distinct_spin-c_structures}
If $f,f'$ are distinct hypertrees, then the spin-c structures $\s_f, \s_{f'}$ of $\xi_f, \xi_{f'}$ are distinct.
\end{lem}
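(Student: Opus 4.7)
The plan is to reduce the statement to the fact that, on $M_G$, distinct spin-c structures have distinct Euler classes. Combined with proposition \ref{prop:Euler_class_xif}, which shows distinct hypertrees produce distinct Euler classes (since $e(\xi_f)[D_r] = 2f(r) - n_r + 1$ recovers every coordinate of $f$), this immediately yields the claim.

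For the reduction, I would invoke the key identity (\ref{eqn:spin-c_and_euler}) from section \ref{sec:spin-c_structures}: under the affine $H_1(M_G)$-action on $\Spin^c(M_G, L_G)$, the Euler class transforms as
\[
e(\s + h) = 2\,PD(h) + e(\s).
\]
Thus $\s \mapsto e(\s)$ is an affine map with linear part $h \mapsto 2\,PD(h)$. Since $M_G$ is a handlebody, $H_1(M_G) \cong \Z^{|R|-1}$ is free abelian, and multiplication by $2$ is injective on it; hence the affine map $\s \mapsto e(\s)$ is injective. Moreover, a 2-plane field $\xi$ and its associated spin-c structure $\s_\xi$ satisfy $e(\xi) = e(\s_\xi)$, because both are detected on the 2-skeleton.

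Putting the pieces together: if $\s_f = \s_{f'}$ then $e(\xi_f) = e(\s_f) = e(\s_{f'}) = e(\xi_{f'})$, and by proposition \ref{prop:Euler_class_xif} this forces $f = f'$. There is no real obstacle in this argument; the only substantive input beyond earlier results is the observation that $H_1$ of a handlebody is torsion-free.
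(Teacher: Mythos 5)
Your proposal is correct and follows the paper's argument essentially verbatim: both use equation (\ref{eqn:spin-c_and_euler}) to show that the spin-c to Euler-class map has linear part multiplication by $2$, invoke torsion-freeness of $H^2(M_G, \partial M_G) \cong \Z^{|R|-1}$ to get injectivity, and then cite proposition \ref{prop:Euler_class_xif} to separate the Euler classes of distinct hypertrees.
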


\begin{proof}
As $\Spin^c (-M_G, -L_G)$ is affine over $H^2 (-M_G, \partial(-M_G))=H^2(M_G,\partial M_G)$, we have $\s_{f'} - \s_{f} \in H^2(M_G, \partial M_G)$. Since $e(\xi_f) \neq e(\xi_{f'})$, by (\ref{eqn:spin-c_and_euler}) applied to $u = \s_f$ and $h = PD(\s_{f'} - \s_f)$ we have
\[
0 \neq e(\xi_{f'}) - e(\xi_f) = e(\s_{f'}) - e(\s_f) = 2(\s_{f'} - \s_f).
\]
As $H^2(M_G, \partial M_G) \cong \Z^{|R|-1}$ is torsion-free, $\s_{f'} \neq \s_f$ follows.
\end{proof}

\begin{proof}[Proof of theorem \ref{thm:classification_of_contact_invariants}]
Two distinct (non-isotopic) contact structures $\xi_f, \xi_{f'}$ on $(M_G, L_G)$ have distinct hypertrees $f,f'$, hence by lemma \ref{lem:distinct_spin-c_structures} they have distinct spin-c structures.

The number of (isotopy classes of) tight contact structures on $(M_G, L_G)$ is given by $|B_{(E,R)}|$. All these contact structures $\xi_f$, over $f \in B_{(E,R)}$, have distinct spin-c structures, and their contact invariants $c(\xi_f)$ are all nonzero and primitive
in the sutured Floer hoology groups that correspond to those spin-c structures, cf.\ proposition \ref{prop:contact_invariant_spin-c_summand}.
Hence the $c(\xi_f)$ lie in $|B_{(E,R)}|$ distinct spin-c summands of $SFH(-M_G,-L_G)$. But since $|\Supp(-M_G, -L_G)| = |\Supp(M_G, L_G)| = |B_{(E,R)}|$, the spin-c structures of the $\xi_f$ must be precisely $\Supp(-M_G, -L_G)$. That is, a tight contact structure $\xi$ with spin-c structure $\s$ exists if and only if $SFH(-M_G, -L_G, \s)$ is nontrivial. In this case, $SFH(-M_G, -L_G, \s) \cong \Z$ and lemma \ref{lem:contact_invariants_are_ones} says $c(\xi) = \{\pm x\}$, where $x$ is a generator.
\end{proof}

\section{Contact structures and knot theory}
\label{sec:alexander}

In this final section we consider applications of our results to knot theory, starting with the Alexander polynomial. Let $G$ be a connected plane bipartite graph, so that the median construction gives a special alternating link $L_G$. Recall that the planar dual graph $G^*$ is naturally oriented. We denote the Alexander polynomial of a link $L$ by $\Delta_L(t)$. The following observation is due to Murasugi and Stoimenow \cite{Murasugi-Stoimenow03}.

\begin{prop}
\label{prop:alexander_magic}
The leading coefficient of $\Delta_{L_G}$ is equal to the arborescence number $\rho(G^*)$ of the planar dual $G^*$. (That is, by definition, to the magic number of the trinity of $G$.)
\qed
\end{prop}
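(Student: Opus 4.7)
The plan is to apply Kauffman's state sum formula for the Alexander polynomial to the special alternating diagram of $L_G$ produced by the median construction. That formula writes $\Delta_{L_G}(t)$, up to multiplication by a unit $\pm t^{k/2}$, as a signed sum $\sum_S \langle S\rangle$ over Kauffman states $S$ of the universe $\U_{L_G}$, where each weight $\langle S\rangle=\varepsilon(S)\,t^{e(S)/2}$ is assembled from local contributions at each crossing that depend on the crossing type and on which of the four quadrants at the crossing carries the marker of $S$.

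First I would record the combinatorial data of $\U_{L_G}$. Since $L_G$ has one crossing per edge of $G$ and all crossings inherit the same negative half-twist from the median construction, the quadrant weights follow a single uniform pattern throughout the diagram. The regions of $\U_{L_G}$ correspond bijectively to the vertices of the trinity of $G$, and the two checkerboard colour classes realise the Tait graphs of $L_G$ as $G=G_R$ itself and its planar dual $G_R^*$. Next I would place the two starred regions so that one of them is a fixed red face $r_0\in R$, and identify the Kauffman states contributing to the leading monomial of $\sum_S\langle S\rangle$. Uniformity of crossing type gives the local maximum of $e(S)$ at each crossing, and the constraint that every non-starred region receives exactly one marker pins down such ``leading states'' globally; a direct sign count, using the bipartite colouring of $G$, shows these contributions all share a single sign, so no cancellation occurs and the leading coefficient of $\Delta_{L_G}$ equals $\pm$ the number of leading states.

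Finally I would identify the leading states with arborescences of $G_R^*$ rooted at $r_0$. Such a state selects, at each edge of $G$, one of the two incident red faces, thereby producing a subgraph of $G_R^*$; the connectivity inherent in the state--trail correspondence of section~\ref{sec:FKT} forces this subgraph to be a spanning arborescence of $G_R^*$ rooted at $r_0$, and the resulting map from leading states to arborescences is a bijection. By the root-independence theorem of van Aardenne-Ehrenfest and de Bruijn recalled in section~\ref{sec:arborescence_number}, the count of such arborescences is $\rho(G_R^*)=\rho$. The main difficulty lies in the preceding sign-and-degree analysis: for a general alternating diagram, states contribute to a spread of degrees with delicate cancellations, and it is exactly the ``special'' hypothesis (equivalently, that every Seifert circle of the median diagram is innermost) that collapses the leading contributions into a single aligned count.
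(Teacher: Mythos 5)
Your overall plan is the same as the paper's: Kauffman's state sum for $\Delta$, leading coefficient equals the number of leading states, no cancellation for alternating diagrams, and leading states count arborescences. However, the step where you identify leading states with arborescences of $G_R^*$ has genuine errors. First, the claim that a (leading) state ``selects, at each edge of $G$, one of the two incident red faces'' is false. In any state of $\U_{L_G}$, one marker lands in each unstarred region, and only $|R|-1$ of the $n$ markers lie in red regions; the remaining $|V|+|E|-1$ markers lie in violet/emerald regions. The correct bookkeeping, and the one the paper uses, is that a state determines a \emph{dual pair} of spanning trees $(T,T^{*})$, with $T$ a spanning tree of $G$ (markers in $V\cup E$) and $T^{*}$ the dual spanning tree of $G^{*}$ (markers in $R$). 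Second, invoking the state--trail correspondence to force the subgraph of $G^{*}$ to be a \emph{spanning arborescence} conflates two different things: connectivity of the trail is exactly what makes $T^{*}$ a spanning \emph{tree}, but an arborescence is an \emph{oriented} spanning tree rooted at $r_0$, and that orientation condition comes from the degree/exponent analysis, not from connectivity. You have not supplied that analysis.

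Third, and related, your ``local maximum at each crossing'' heuristic cannot be made to work as stated, because the quadrant with the largest weight at a crossing lies in $R$, and the one-marker-per-region constraint forbids putting every marker in $R$. The paper's way around this is the observation you omit: the contribution of the $G$-tree (the markers in $V\cup E$) to the exponent is \emph{constant over all states} (the number of markers in $V$ and in $E$ is fixed once the stars are placed), so only the $G^{*}$-tree contribution varies, and it is maximised precisely when the induced orientation on $T^{*}$ points away from $r_0$, i.e.\ when $T^{*}$ is an arborescence. Without this neutrality observation and the resulting exponent analysis, the bijection between leading states and arborescences is not established. Your appeal to root-independence (van Aardenne-Ehrenfest--de Bruijn) and the ``no cancellation for alternating diagrams'' principle are both fine once that bijection is in place.
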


The proof is based on Kauffman's state summation formula for $\Delta$. The states may be described as a dual pair of spanning trees, one in $G$ and one in $G^*$. The contribution of the former turns out to be neutral (the same for all states), whereas the latter tree maximizes the exponent of $t$ if and only if it is an arborescence. As always for alternating diagrams, terms do not cancel.

Now we are in a position to prove one of the last two claims made in the Introduction.

\begin{proof}[Proof of corollary \ref{cor:Alexander_magic}]
We have seen that the arborescence number is equal for all three dual graphs of a trinity, given by the magic number. Proposition \ref{prop:alexander_magic} says that the leading coefficient of each Alexander polynomial is also the magic number, and theorem \ref{thm:classification_of_tight_contact_structures} says this is also the number of isotopy classes tight contact structures on the corresponding sutured manifolds.
\end{proof}

Finally we prove theorem \ref{thm:Kauffman_states}, by applying the above results to a universe in Kauffman's formal knot theory, as discussed in section \ref{sec:FKT}.

Let $\U$ be a (connected) universe. Our first observation is that there is naturally a trinity associated to $\U$. Place red vertices $R$ at the vertices of $\U$. Consider the planar dual graph $G$ of $\U$. Like any knot diagram, the complementary regions of $\U$ have a checkerboard colouring, so $G$ is naturally a bipartite planar graph. Let its colour classes be violet $V$ and emerald $E$. See figure \ref{fig:figure_8_red_graph}.

Each complementary region of $G$ corresponds to a vertex of $\U$; as each vertex of $\U$ has degree $4$, each region of $G$ has four sides. Joining each red vertex to the two violet and two emerald vertices on the boundary of its region yields the trinity, of which $G$ is the red graph. 

Next, consider the sutured manifold $(M_G, L_G)$, using the notation of section \ref{sec:contact_structures}. A potentially tight configuration is given by a dividing set $\Gamma_S = \sqcup_{r \in R} \Gamma_r$, where $S=\sqcup_{r\in R}D_r$ and $\Gamma_r$ is a chord diagram on the disc $D_r$, with endpoints midway between violet and emerald vertices on the boundary of $D_r$. By lemma \ref{lem:potentially_tight_conditions}, a collection of chord diagrams on each $D_r$ is potentially tight if and only if, drawing them in the complementary regions $\tilde{D}_r$ of $G$ and connecting them across each edge of $G$ results in a single connected curve. By proposition \ref{prop:potentially_tight_tight}, potentially tight and tight are equivalent.

Now in the graph $G$ arising from a universe $\U$, each complementary region has precisely four sides. So each chord diagram $\Gamma_r$ has precisely two chords. Hence, on each disc $D_r$, there are two possible chord diagrams (up to isotopy).
We may also observe that the two possible chord diagrams on $D_r$ are precisely given by the two splittings of $\U$ at the vertex $r$.

\begin{prop}
\label{prop:state_configuration_bijection}
Splitting vertices of $\U$ according to markers provides a bijection between states of $\U$ and tight configurations of $(M_G, L_G,S)$.
\end{prop}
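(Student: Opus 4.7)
The plan is to realise both states of $\U$ and tight configurations of $(M_{G_\U},L_{G_\U},S)$ as parametrisations of splittings of $\U$ yielding a single loop, and then to conclude via the state--trail correspondence recalled in section~\ref{sec:FKT}. The key structural input is that, because $\U$ is $4$-valent and $R$ is identified with the vertex set of $\U$, each face $\tilde D_r$ of $G_\U$ is a quadrilateral; hence each chord diagram $\Gamma_r$ on $D_r$ consists of exactly two chords on four boundary points, of which there are precisely two crossingless matchings --- matching, in number, the two possible splittings of the crossing $r$.

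First I would establish a local dictionary at each vertex $r\in R$: identify the four endpoint positions on $\partial D_r$ (the punctures $f_{r,\varepsilon}$) with the four half-edges of $\U$ at $r$, and check pictorially that each of the two crossingless chord diagrams on $D_r$ coincides, in a neighbourhood of $r$, with one of the two splittings shown in figure~\ref{fig:vertex_splitting}. Under this dictionary, a global choice $\{\Gamma_r\}_{r\in R}$ of chord diagrams is the same data as a splitting of every crossing of $\U$; moreover, the prescription of figure~\ref{fig:state_marker_splitting} assigns to a state marker at $r$ a specific one of these two chord diagrams on $D_r$.

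Next I would invoke condition~\ref{condition_number_three} of lemma~\ref{lem:potentially_tight_conditions}, together with proposition~\ref{prop:potentially_tight_tight} so that \emph{potentially tight} and \emph{tight} may be conflated: the family $\Gamma_S$ is a tight configuration iff drawing the $\Gamma_r$ in the faces $\tilde D_r$ of $G_\U$ and joining them across the edges of $G_\U$ yields a single connected closed curve on $S^2$. Under the local dictionary just described, the resulting family of curves on $S^2$ is precisely the collection of loops obtained by splitting every crossing of $\U$ in the chosen way. So tight configurations correspond bijectively to Euler--Jordan trails of $\U$.

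Composing this with the state--trail correspondence from section~\ref{sec:FKT} yields the desired bijection between states of $\U$ and tight configurations on $(M_{G_\U},L_{G_\U},S)$. The only real technical obstacle is the local verification in the first step: one must make consistent conventions so that the state-marker splitting of figure~\ref{fig:state_marker_splitting} genuinely matches the crossingless chord diagram whose boundary behaviour implements that splitting. Beyond this pictorial bookkeeping the argument is essentially immediate from earlier results in the paper.
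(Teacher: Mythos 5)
Your proposal is correct and follows essentially the same route as the paper: both invoke the state--trail correspondence and identify tight configurations with trails via lemma~\ref{lem:potentially_tight_conditions} and proposition~\ref{prop:potentially_tight_tight}. The only difference is that you spell out the local dictionary between the two crossingless chord diagrams on a quadrilateral face and the two vertex splittings, which the paper treats as self-evident.
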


\begin{proof}
The state-trail correspondence discussed in section \ref{sec:FKT} says that splitting vertices according to markers provides a bijection from states to trails on $\U$, i.e., choices of splittings at each vertex of $\U$ so as to obtain a single loop.

A tight configuration, on the other hand, consists of a choice of chord diagram $\Gamma_r$ in each disc $\tilde{D}_r$ so that the $\Gamma_r$ connect across each edge of $G$ to give a single connected curve. This corresponds precisely to a choice of splitting at each vertex of $\U$ so as to obtain a single loop, i.e., a trail. 
\end{proof}

Figure \ref{fig:figure_8_trail_configuration} shows a trail on $\U$, which can be regarded as a tight configuration of $(M_G, L_G)$.

\begin{figure}
\begin{center}
\begin{tikzpicture}[scale = 0.3,
knot/.style={ultra thick},
marker/.style={fill=black},
violetedge/.style={ultra thick, blue},
emeraldedge/.style={ultra thick, green!50!black},
rededge/.style={ultra thick, red}
]
\coordinate (b) at (0,-9);
\coordinate (m) at (0,-3);
\coordinate (l) at (-3,0);
\coordinate (r) at (3,0);
\coordinate (vl) at (-5 , -5);
\coordinate (vr) at (5,-5);
\coordinate (vt) at (0, 2);
\coordinate (eb) at (0,-6);
\coordinate (em) at (0,-1);
\coordinate (et) at (0, 6);


\draw [knot, rounded corners=0.5 cm] (0,5)  
.. controls ($ (0,5) + (180:2) $) and ($ (l) + (120:4) $)  .. (l)
.. controls ($ (l) + (195:8) $) and ($ (b) + (210:8) $) .. (b)
.. controls ($ (b) + (150:3) $) and ($ (m) + (210:3) $) .. (m)
to [out=150, in=-60] (l)
to [out=15, in=165] (r)
to [out=240, in=30] (m)
.. controls ($ (m) + (-30:3) $) and ($ (b) + (30:3) $) .. (b)
.. controls ($ (b) + (-30:8) $) and ($ (r) + (-15:8) $) .. (r)
.. controls ($ (r) + (60:4) $) and ($ (0,5) + (0:2) $) .. (0,5);
;

\draw (4,-9) node {\Huge $*$};
\draw (6,-11) node {\Huge $*$};

\draw [rededge] (vl) -- (eb);
\draw [rededge] (vr) -- (eb);
\draw [rededge] (vl) -- (em);
\draw [rededge] (vr) -- (em);
\draw [rededge] (vt) -- (em);
\draw [rededge] (vt) -- (et);
\draw [rededge] (vl) .. controls ($ (vl) + (180:8) $) and ($ (et) + (180:8) $) .. (et);
\draw [rededge] (vr) .. controls ($ (vr) + (0:8) $) and ($ (et) + (0:8) $) .. (et);

\foreach \x/\word in {(vl)/vl, (vr)/vr, (vt)/vt}
{
\draw [blue, fill=blue] \x circle (10pt);
}

\foreach \x/\word in {(eb)/eb, (em)/em, (et)/et}
{
\draw [green!50!black, fill=green!50!black] \x circle  (10pt);
}

\foreach \x/\word in {(b)/b, (m)/m, (l)/l, (r)/r}
{
\draw [red, fill=red] \x circle  (10pt);
}

\end{tikzpicture}
\end{center}
\caption{The trail on the figure-8 universe of figure \ref{fig:figure_8_knot_universe} can also be regarded as a configuration.}
\label{fig:figure_8_trail_configuration}
\end{figure}

Now we observe that, when each $\Gamma_r$ consists of only two chords, there are no nontrivial bypass surgeries on $\Gamma_r$. Hence there are no state transitions between tight configurations. So $\GG_0(M_G, L_G, S)$ has vertices given by the tight configurations, and no edges.

Moreover, the two possible dividing sets on each $D_r$ have Euler classes $1$ and $-1$. So each tight configuration on $(M_G, L_G, S)$ has a distinct Euler class. We can now prove theorem \ref{thm:Kauffman_states}.

\begin{proof}[Proof of theorem \ref{thm:Kauffman_states}]
Let $\U$ be a universe, $G$ the bipartite planar graph obtained as the planar dual of $\U$, and $(M_G, L_G)$ the corresponding sutured 3-manifold. By proposition \ref{prop:state_configuration_bijection}, tight configurations on $(M_G, L_G)$ are in bijective correspondence with states of $\U$. As $\GG_0(M_G, L_G, S)$ has no edges, each tight configuration yields a distinct isotopy class of tight contact structures, giving a bijection between states of $\U$ and isotopy classes of tight contact structures on $(M_G, L_G)$.
\end{proof}

\addcontentsline{toc}{section}{References}

\small

\bibliography{superbib}
\bibliographystyle{amsplain}

\end{document}